\newcommand\mathens[1]{\mathbb{#1}} 
\newcommand{\ud}{\mathrm{d}}
\newcommand{\N}{\mathens{N}}
\newcommand{\Z}{\mathens{Z}}
\newcommand{\R}{\mathens{R}}
\newcommand{\C}{\mathens{C}}
\newcommand{\T}{\mathens{T}}
\newcommand{\CP}{\C\mathrm{P}}
\newcommand{\RP}{\R\mathrm{P}}
\newcommand{\HP}{\mathens{H}\mathrm{P}}
\newcommand{\CaP}{\mathens{C}\mathrm{aP}^2}
\newcommand\sphere[1]{\mathens{S}^{#1}}
\DeclareMathOperator{\supp}{supp}
\newcommand{\ham}{\mathrm{Ham}}
\newcommand{\cont}{\mathrm{Cont}}
\newcommand{\conto}{\mathrm{Cont}_{0}}
\newcommand{\contoc}{\mathrm{Cont}_{0}^\mathrm{c}}
\newcommand{\tconto}{\widetilde{\mathrm{Cont}_{0}}}
\newcommand{\id}{\mathrm{id}}
\newcommand{\nlmas}\upmu
\newtheorem{thm}{Theorem}[section]
\newtheorem{lem}[thm]{Lemma}
\newtheorem{cor}[thm]{Corollary}
\newtheorem{prop}[thm]{Proposition}
\newtheorem{prop-def}[thm]{Definition-proposition}
\theoremstyle{definition}
\newtheorem{definition}[thm]{Definition}
\theoremstyle{remark}
\newtheorem{exs}[thm]{Examples}
\newtheorem{rem}[thm]{Remark}
\newtheorem{rems}[thm]{Remarks}
\newcommand\Leg{\mathcal{L}}
\newcommand\uLeg{\widetilde{\Leg}}
\newcommand\Gcont{\mathcal{G}}
\newcommand\uGcont{\widetilde{\Gcont}}
\newcommand\cleq{\preceq}
\newcommand\cgeq{\succeq}
\newcommand\spec{\mathrm{Spec}}
\newcommand\uLambda{\underline{\Lambda}}
\newcommand\dSCH{\mathrm{d}_\mathrm{SCH}}
\newcommand\dSH{\mathrm{d}_\mathrm{SH}}
\newcommand\dspec{\mathrm{d}_\mathrm{spec}}
\newcommand\dHosc{\mathrm{d}_\mathrm{H,osc}}
\newcommand\Nspec[1]{\left|#1\right|_\mathrm{spec}}
\newcommand\NHosc[1]{\left|#1\right|_\mathrm{osc}}
\newcommand\NSH[1]{\left|#1\right|_\mathrm{SH}}
\newcommand\NFPR[1]{\left|#1\right|_\mathrm{FPR}}
\newcommand\dFPR{\mathrm{d}_\mathrm{FPR}}
\newcommand\ldisc{\ell_\mathrm{disc}}
\newcommand\ddisc{\mathrm{d}_\mathrm{disc}}
\newcommand\losc{\nu_\mathrm{osc}}
\newcommand\dCSosc{\mathrm{d}_\mathrm{CS,osc}}
\DeclareMathOperator{\osc}{osc}
\DeclareFontFamily{U}{mathb}{\hyphenchar\font45}
\DeclareFontShape{U}{mathb}{m}{n}{
      <5> <6> <7> <8> <9> <10> gen * mathb
      <10.95> mathb10 <12> <14.4> <17.28> <20.74> <24.88> mathb12
}{}
\DeclareSymbolFont{mathb}{U}{mathb}{m}{n}
\DeclareMathSymbol{\cll}{3}{mathb}{"CE}
\DeclareRobustCommand{\cggP}{\text{\reflectbox{$\cll$}}}
\DeclareMathOperator{\cgg}{\cggP}
\let\@wraptoccontribs\wraptoccontribs\makeatother
\begin{document}

\title
{Spectral selectors and contact orderability}

\author[S. Allais]{Simon Allais}
\address{Simon Allais, IRMA,
Université de Strasbourg,
\newline\indent  7 rue Rene Descartes,
67084 Strasbourg, France}
\email{simon.allais@math.unistra.fr}
\urladdr{https://irma.math.unistra.fr/~allais/}

\author[P.-A. Arlove]{Pierre-Alexandre Arlove}
\address{P.-A. Arlove, Universit\'e de Strasbourg, IRMA UMR 7501, F-67000 Strasbourg, France}
\email{paarlove@unistra.fr}

\subjclass[2020]{53D35, 53D10}
\keywords{Orderability in contact geometry, spectral invariants, 
    Legendrian isotopies, contactomorphisms, Reeb chords,
    translated points,
    Arnold chord conjecture, Sandon conjecture}

\begin{abstract}
    We study the notion of orderability of isotopy classes
    of Legendrian submanifolds and their universal covers,
    with some weaker results concerning spaces of contactomorphisms.
    Our main result is that orderability is equivalent
    to the existence of spectral selectors analogous to
    the spectral invariants coming from Lagrangian Floer Homology.
    A direct application is the existence of Reeb chords between
    any closed Legendrian submanifolds of a same orderable isotopy class.
    Other applications concern the Sandon conjecture, the Arnold
    chord conjecture, Legendrian interlinking, the existence of time-functions and
    the study of metrics due to
    Hofer-Chekanov-Shelukhin, Colin-Sandon, Fraser-Polterovich-Rosen
    and Nakamura.
\end{abstract}

\maketitle

\section{Introduction}

\subsection{Historical background}
The abundance of interactions between Hofer geometry on the group of
Hamiltonian diffeomorphisms and symplectic geometry
\cite{hofer,lalondemcduff,geometricvariants,polterovich} led
Eliashberg and Polterovich in 2000 \cite{EP00} to introduce
the notion that will be later called \emph{orderability} on
the universal cover $\uGcont$ of the group of contactomorphisms isotopic to the
identity. To do so they define a bi-invariant
binary relation $\cleq$ on this group: $\varphi\cleq\psi$ if there
exists a non-negative contact Hamiltonian generating an isotopy
from $\varphi$ to $\psi$.
They show that for some contact manifolds this
binary relation turns out to be a partial order: we now
say that the group $\uGcont$ is orderable in this case (see Section \ref{se:contactorderability} for more details).
Some years later together with
Kim \cite{EKP} they show how orderability can be used to detect some
squeezing and non-squeezing phenomena.
They show in particular that there really is a dichotomy:
for some contact manifolds such as the standard contact sphere
$\uGcont$ is unorderable.

The study of orderability naturally extends to the group of
contactomorphisms isotopic to the identity $\Gcont$ and to
the isotopy class $\Leg$ of a closed Legendrian
submanifold $\Lambda_*$, i.e. the set of Legendrians that are Legendrian isotopic to $\Lambda_*$,  as well as its universal cover
$\uLeg$. The notion of orderability has been quite influential
in the study of contact topology: we refer to our paragraph
Examples~\ref{ex:orderable} in Section~\ref{se:contactorderability}
for an account on the study of the dichotomy
orderable/unorderable space throughout the last
two decades.
Nevertheless, rather little is known about the properties that
are specifically implied by orderability.
One of the most significant achievements in that direction is
the discovery of Albers-Fuchs-Merry that the unorderability
of $\Gcont(M,\xi)$ implies the Weinstein conjecture in $(M,\xi)$
\cite{AFM15}.
The goal of this article is to establish an equivalence between orderability
and the existence of (hopefully spectral) contact selectors.
\\

The notion of spectral selectors in contact geometry is a natural generalization
of spectral selectors defined in symplectic geometry.
Given a closed symplectic manifold $(M,\omega)$ satisfying some
mild hypothesis (\emph{e.g.} symplectic asphericality), one can
define spectral selectors on the space of Hamiltonian diffeomorphisms
$\ham(M,\omega)$ or its universal cover.
In this context a spectral selector is a continuous map
$c:\ham(M,\omega)\to\R$ associating to $\varphi$ a real number
$c(\varphi)$ belonging to its spectrum, which is the set of
action values of its fixed points.
Such selectors were first constructed by Viterbo for compactly
supported Hamiltonian diffeomorphisms of $\R^{2n}$
\cite{Vit} then generalized by the works of Schwarz \cite{schwarz}
and Oh \cite{Oh2005}.
Likewise, given an isotopy class of Lagrangian
submanifolds of $(M,\omega)$ satisfying some mild hypothesis,
one can associate spectral
selectors $\ell$ such that $\ell(\Lambda_1,\Lambda_0)$ is
in the spectrum of the couple of Lagrangian submanifolds
$(\Lambda_1,\Lambda_0)$ (which corresponds to symplectic areas
associated to couples of points of $\Lambda_0\cap\Lambda_1$)
\cite{Vit,Oh1997,Lec2008,LecZap2018}.
Among the spectral selectors $c$ or $\ell$ (a specific construction
being given), one can usually distinguish two specific selectors $c_-\leq c_+$
or $\ell_-\leq\ell_+$ (in the case of Hamiltonian Floer theory,
they correspond to the fundamental class and the class of a point)
which satisfy additional property, \emph{e.g.}
\begin{enumerate}[1.]
    \item (triangle inequality) $c_+(\varphi\psi) \leq
        c_+(\varphi) + c_+(\psi)$,
    \item (Poincaré duality) $c_+(\psi) = -c_-(\psi^{-1})$,
    \item (conjugation invariance) $c_\pm(\varphi\psi\varphi^{-1})
        = c_\pm(\psi)$,
    \item (non-degeneracy) $c_+(\psi)=c_-(\psi)$ implies
        $\psi = \id$.
\end{enumerate}
The number of applications of the existence of spectral selectors,
especially $c_+$, is quite large. Among other ones,
on the dynamical side, it has recently been used for the
study of the Hofer-Zehnder conjecture \cite{GG14,She19} or
the $C^\infty$-closing lemma \cite{CinSey2022},
it also has more topological applications:
the study of non-squeezing phenomena \cite{Vit},
the geometry of the group of Hamiltonian
diffeomorphisms \cite{BiaPol1994,Humiliere2008} or
$C^0$-symplectic geometry \cite{BHS2021}.   

This notion of spectral selectors has been extended to 
compactly supported contactomorphisms
of $\R^{2n}\times S^1$ isotopic to the identity by Sandon
in \cite{San11}. She discovered that the notion of translated
points was key in order to define the spectrum of
contactomorphisms (see Section~\ref{se:intro:spectral} below
for the definition).
These notions of spectral selectors were then applied in multiple contexts
in order to address numerous questions of contact topology:
study of bi-invariant metrics on the group of compactly
supported contactomorphisms isotopic to identity \cite{Sandon2010,discriminante},
contact non-squeezing phenomena \cite{San11,camel}, orderability \cite{San2011}.
Albers-Fuchs-Merry use the Rabinowitz Floer theory to define contact
selectors in a class of Liouville fillable contact manifolds
\cite{AFM15,albers} (see in Section~\ref{se:existenceChords} some applications of
their work).
Our main result is that orderability is the necessary and sufficient
condition in order to define contact analogues of selectors
$c_\pm$ and $\ell_\pm$ in full generality
(although our $c_\pm$'s are not spectral \emph{a priori})
whose signs only are invariant by contactomorphisms
(\emph{cf.} paragraph just above Corollary~\ref{cor:orderEquivSelector}).

\subsection{Conventions}

Whenever no precision regarding regularity is given, maps and manifolds considered
are $C^\infty$-smooth.
Every contact manifold $(M,\xi)$ considered are assumed to be connected and cooriented
(\emph{i.e.} $TM/\xi$ is an oriented line bundle).
We write that a contact form $\alpha$ is supporting $\xi$ if
$\ker\alpha=\xi$ and $\alpha$ respects the coorientation.
A contactomorphism of $(M,\xi)$ will always preserve the coorientation.
Given a contact manifold
$(M,\xi)$ and a closed Legendrian submanifold $\Lambda_*\subset M$,
one denotes $\Leg(\Lambda_*)$ the isotopy class of $\Lambda_*$.  
One denotes $\Gcont(M)$
the group $\contoc(M,\xi)$ of compactly supported
contactomorphisms of $(M,\xi)$
isotopic to $\id$ through compactly supported
contactomorphisms while $\uGcont(M)$ will denote its universal cover. For more details about the topology considered on these spaces we refer the reader to Section \ref{se:topologyLeg}.

Usually, we will only write $\Leg$ for $\Leg(\Lambda_*)$, $\uLeg$ for
$\uLeg(\Lambda_*)$ etc. implicitly meaning that $\Lambda_*$ and $M$ are fixed.
By a slight abuse of notation:
\begin{enumerate}[ 1.]
    \item $\id\in\uGcont$ will denote
the class of the constant isotopy $s\mapsto \id$.
Given a contact form $\alpha$ supporting $\xi$ and $t\in\R$,
let $\phi^\alpha_t\in\cont(M,\xi)$ denote the $\alpha$-Reeb flow at
time $t\in\R$ (see \emph{e.g.} Section~\ref{se:LegendrianIsotopies}); 
    \item when it is understood that $\phi^\alpha_t$ must be an element of
the universal cover of the identity component of $\cont(M,\xi)$
(\emph{e.g.} when acting on $\uGcont$ or $\uLeg$),
it will denote the isotopy $[0,1]\to\conto(M,\xi)$, $s\mapsto \phi^\alpha_{st}$;
    \item the cover maps
$\uGcont\to\Gcont$ and  $\uLeg\to\Leg$ will both be denoted $\Pi$.
\end{enumerate}

On $O$ being either $\Leg$, $\uLeg$, $\Gcont$ or $\uGcont$,
we write $x\cleq y$, or equivalently $y\cgeq x$, if there exists
a non-negative isotopy in $O$ from $x$ to $y$ (see Section~\ref{se:prelim} for more details).
$O$ is called orderable if and only if $\cleq$ defines a partial
order (see Section~\ref{se:contactorderability} for details).
In particular, the orderability of $\Leg$ (resp. $\Gcont$) implies
the orderability of $\uLeg$ (resp. $\uGcont$).
We refer to Examples~\ref{ex:orderable} for a list of known orderable
spaces.

\subsection{Order spectral selectors}
\label{se:intro:spectral}
Let $(M,\xi)$ be a closed cooriented contact manifold.
For any contact form $\alpha$ supporting $\xi$,
let us define $c_-^\alpha$ and $c_+^\alpha$
as the maps $\Gcont\to\R\cup\{\pm\infty\}$ (resp. $\uGcont\to\R\cup\{\pm\infty\}$)
defined by
\begin{equation}\label{eq:c}
    c_-^\alpha(\psi) := \sup\{ t\in\R \ |\ 
    \psi \cgeq \phi_t^\alpha \} \quad\text{and}\quad
    c_+^\alpha(\psi) := \inf\{ t\in\R \ |\ 
    \psi \cleq \phi_t^\alpha \},
\end{equation}
for $\psi\in\Gcont$ (resp. $\psi\in\uGcont$).
These maps are highly inspired by constructions of
Fraser-Polterovich-Rosen \cite{FPR} and were already considered by the second
author in his PhD-thesis \cite{ArlovePhD} and very recently by
Nakamura \cite{Nakamura2023} from a metrical point of view.

Let us now withdraw the closeness
assumption on $M$ and let $\Leg = \Leg(\Lambda_*)$ for some closed Legendrian
$\Lambda_*\subset M$.   For any complete contact form $\alpha$
supporting $\xi$ (\emph{i.e.} a contact form the Reeb vector
field of which is complete),
let us define $\ell_-^\alpha$ and $\ell_+^\alpha$ as the maps
$\Leg\times\Leg\to\R\cup\{\pm\infty\}$ (resp.
$\uLeg\times\uLeg\to\R\cup\{\pm\infty\}$) given by
\begin{equation*}
    \ell_-^\alpha(\Lambda_1,\Lambda_0) := \sup\{ t\in\R \ |\ 
    \Lambda_1 \cgeq \phi_t^\alpha \Lambda_0 \} \quad\text{and}\quad
    \ell_+^\alpha(\Lambda_1,\Lambda_0) := \inf\{ t\in\R \ |\ 
    \Lambda_1 \cleq \phi_t^\alpha \Lambda_0 \},
\end{equation*}
for $\Lambda_0,\Lambda_1\in\Leg$ (resp. $\uLeg$).

For $\psi\in\Gcont$ and $\alpha$ supporting $\xi$,
the $\alpha$-spectrum of $\psi$ is the set of time-shifts of its $\alpha$-translated
points that is
\begin{equation*}
    \spec^\alpha(\psi) := \{ t\in\R \ | \ \exists p \in M,
        (\psi^*\alpha)_p = \alpha_p \text{ and }
    \psi(x) = \phi_t^\alpha(x) \}.
\end{equation*}
For $\psi\in\uGcont$, the $\alpha$-spectrum is defined as the
$\alpha$-spectrum of $\Pi\psi$.
For $\Lambda_0,\Lambda_1\in\Leg$, the $\alpha$-spectrum of $(\Lambda_1,\Lambda_0)$ is
the set of (positive and non-positive) lengths of $\alpha$-Reeb chords
joining $\Lambda_0$ to $\Lambda_1$ that is
\begin{equation*}
    \spec^\alpha(\Lambda_1,\Lambda_0) := \{ t\in\R \ | \ \Lambda_1\cap\phi_t^\alpha \Lambda_0
    \neq\emptyset \}.
\end{equation*}
For $\Lambda_0,\Lambda_1\in\uLeg$, the $\alpha$-spectrum of $(\Lambda_1,\Lambda_0)$
is defined as the $\alpha$-spectrum of $(\Pi\Lambda_1,\Pi\Lambda_0)$.
The latter notion of spectrum can be seen as a generalization of the former.
Indeed, if $\psi\in\cont(M,\ker\alpha)$, its contact graph
\begin{equation*}
    \mathrm{gr}^\alpha(\psi):= \{ (x,\psi(x),g(x)) \ |\ x\in M\},
    \text{ where } \psi^*\alpha = e^g\alpha,
\end{equation*}
is a Legendrian submanifold of the contact manifold $M\times M\times\R$ endowed
with the contact form $\tilde{\alpha}:=\alpha_2-e^\theta\alpha_1$ defined in
Example~\ref{ex:orderable}.\ref{ex:it:SH}.  Then,
\begin{equation*}
    \spec^\alpha(\psi) = \spec^{\tilde{\alpha}}(\mathrm{gr}^\alpha(\psi),
    \mathrm{gr}^\alpha(\id)).
\end{equation*}

\begin{thm}[Legendrian order spectral selectors]\label{thm:LegSS}
    Let $\Lambda_*$ be a closed Legendrian submanifold of $(M,\xi)$
    such that $\Leg(\Lambda_*)$ (resp. $\uLeg(\Lambda_*)$) is orderable
    and let $\alpha$ be a complete contact form supporting $\xi$.
    The selectors $\ell_\pm^\alpha$ are real-valued and satisfy the
    following properties for every $\Lambda_0,\Lambda_1,\Lambda_2\in\Leg$ (resp. $\uLeg$),
    \begin{enumerate}[ 1.]
        \item (normalization) $\ell_\pm^\alpha(\Lambda_0,\Lambda_0) = 0$
            and $\ell_\pm^\alpha(\phi^\alpha_t \Lambda_1,\Lambda_0)
            = t + \ell_\pm^\alpha(\Lambda_1,\Lambda_0)$, $\forall t\in\R$,
        \item (monotonicity) $\Lambda_2\cleq \Lambda_1$ implies
            $\ell_\pm^\alpha(\Lambda_2,\Lambda_0) \leq \ell_\pm^\alpha(\Lambda_1,\Lambda_0)$,
        \item (triangle inequalities) $\ell_+^\alpha(\Lambda_2,\Lambda_0) \leq
            \ell_+^\alpha(\Lambda_2,\Lambda_1) + \ell_+^\alpha(\Lambda_1,\Lambda_0)$
            and $\ell_-^\alpha(\Lambda_2,\Lambda_0) \geq \ell_-^\alpha(\Lambda_2,\Lambda_1)
            + \ell_-^\alpha(\Lambda_1,\Lambda_0)$,
        \item (Poincaré duality) $\ell_+^\alpha(\Lambda_1,\Lambda_0) =
            - \ell_-^\alpha(\Lambda_0,\Lambda_1)$,
        \item (compatibility) $\ell_\pm^\alpha(\varphi(\Lambda_1),\varphi(\Lambda_0))
            = \ell_\pm^{\varphi^*\alpha}(\Lambda_1,\Lambda_0)$ 
            
    \noindent        
(resp. $\ell^\alpha_\pm(\varphi(\Lambda_1),\varphi(\Lambda_2))=\ell_\pm^{\Pi(\varphi)^*\alpha}(\Lambda_1,\Lambda_0)$),
            for every $\varphi\in\cont(M,\xi)$
            (resp. $\varphi\in\widetilde{\cont}(M,\xi)$),
        \item (non-degeneracy) $\ell_+^\alpha(\Lambda_1,\Lambda_0) = \ell_-^\alpha(\Lambda_1,\Lambda_0) = t$
            for some $t\in\R$ implies $\Lambda_1 = \phi_t^\alpha \Lambda_0$
            (resp. 
            $\Pi \Lambda_1 = \phi_t^\alpha\Pi \Lambda_0\in\Leg$).
        \item (spectrality) $\ell_\pm^\alpha(\Lambda_1,\Lambda_0) \in \spec^\alpha(\Lambda_1,\Lambda_0)$.
    \end{enumerate}
\end{thm}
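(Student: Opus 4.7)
The plan is to organize the proof around the half-lines $S_+^\alpha(\Lambda_1,\Lambda_0) := \{t \in \R : \Lambda_1 \cleq \phi_t^\alpha \Lambda_0\}$ and $S_-^\alpha(\Lambda_1,\Lambda_0) := \{t \in \R : \Lambda_1 \cgeq \phi_t^\alpha \Lambda_0\}$, whose infimum and supremum recover $\ell_\pm^\alpha$. Because the Reeb flow preserves $\alpha$ exactly, concatenation with $\phi_s^\alpha$ shows $S_+^\alpha$ is an upper set and $S_-^\alpha$ a lower set of $\R$, and that $S_\pm^\alpha(\phi_s^\alpha\Lambda_1, \Lambda_0) = s + S_\pm^\alpha(\Lambda_1, \Lambda_0)$, which immediately gives the shift identity in (1). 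Monotonicity (2) is transitivity of $\cleq$. The triangle inequality (3) follows from concatenating a non-negative isotopy $\Lambda_2 \rightsquigarrow \phi_{s_0}^\alpha \Lambda_1$ with the $\phi_{s_0}^\alpha$-image of a non-negative isotopy $\Lambda_1 \rightsquigarrow \phi_{t_0}^\alpha \Lambda_0$, yielding $\Lambda_2 \cleq \phi_{s_0 + t_0}^\alpha \Lambda_0$, and then taking infima over $s_0,t_0$. Poincaré duality (4) is the tautology $\Lambda_1 \cleq \phi_t^\alpha \Lambda_0 \Leftrightarrow \Lambda_0 \cgeq \phi_{-t}^\alpha \Lambda_1$ obtained by applying $\phi_{-t}^\alpha$, rephrased as $S_+^\alpha(\Lambda_1,\Lambda_0) = -S_-^\alpha(\Lambda_0,\Lambda_1)$. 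Compatibility (5) comes from the conjugation identity $\varphi^{-1}\phi_t^\alpha\varphi = \phi_t^{\varphi^*\alpha}$ together with the fact that contactomorphisms preserve the sign of contact Hamiltonians up to a positive factor.

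Real-valuedness and the equality $\ell_\pm^\alpha(\Lambda_0,\Lambda_0) = 0$ are where orderability first enters. Non-emptiness is direct: given any Legendrian isotopy from $\Lambda_0$ to $\Lambda_1$ with contact Hamiltonian $h_s$, composition with $\phi_{sT}^\alpha$ produces one with Hamiltonian $h_s + T$, non-negative for $T$ sufficiently large (and the analogous statement for $S_-^\alpha$). Boundedness of $\inf S_+^\alpha$ from below and the normalization at $(\Lambda_0,\Lambda_0)$ both reduce to ruling out non-trivial positive loops of Legendrians: if either failed, iteratively composing the offending isotopy with the Reeb flow would produce such a loop, non-constant thanks to the transversality of the Reeb vector field to every Legendrian, contradicting orderability of $\Leg$ or---in the universal-cover case---producing a contractible positive loop contradicting orderability of $\uLeg$.

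Spectrality (7) is the crux, and I would argue it by contradiction. Suppose $t := \ell_+^\alpha(\Lambda_1,\Lambda_0)$ satisfies $\Lambda_1 \cap \phi_t^\alpha\Lambda_0 = \emptyset$. By compactness of the Legendrians and continuity of the Reeb flow, this disjointness persists for $s \in [t-2\delta, t+2\delta]$ for some $\delta > 0$. The technical core is to construct, for small $\epsilon > 0$, a non-negative isotopy $\gamma^\epsilon$ from $\Lambda_1$ to $\phi_{t+\epsilon}^\alpha\Lambda_0$ whose contact Hamiltonian is uniformly bounded below by some $c = c(\delta) > 0$, achieved by a bump perturbation supported in the ``room'' created by the disjointness and away from both endpoints. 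The shortening trick $\tilde\gamma_s := \phi_{-cs}^\alpha(\gamma^\epsilon_s)$, whose Hamiltonian equals $-c + H^\epsilon_s \circ \phi_{cs}^\alpha \geq 0$, then yields a non-negative isotopy from $\Lambda_1$ to $\phi_{t+\epsilon-c}^\alpha\Lambda_0$; for $\epsilon < c$ this contradicts $t = \inf S_+^\alpha$, forcing $\Lambda_1 \cap \phi_t^\alpha\Lambda_0 \neq \emptyset$, i.e. $t \in \spec^\alpha(\Lambda_1,\Lambda_0)$.

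Non-degeneracy (6) is obtained by a sharpening of the same scheme. If $\ell_+^\alpha = \ell_-^\alpha = t$, then for every $\epsilon > 0$ one has both $\Lambda_1 \cleq \phi_{t+\epsilon}^\alpha\Lambda_0$ and $\Lambda_1 \cgeq \phi_{t-\epsilon}^\alpha\Lambda_0$; applying the bump-and-shorten argument to the sandwiching isotopies forces the infimum and supremum to be attained at $t$, giving simultaneously $\Lambda_1 \cleq \phi_t^\alpha\Lambda_0$ and $\Lambda_1 \cgeq \phi_t^\alpha\Lambda_0$ in $\Leg$ (resp.\ in $\uLeg$). Antisymmetry of $\cleq$ under orderability then yields the claimed equality setwise in $\Leg$, or, in the universal-cover case, after projection $\Pi$. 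The main obstacle is the bump perturbation itself: one must construct, along a given non-negative isotopy with disjoint endpoints, a modified non-negative isotopy whose Hamiltonian is uniformly bounded away from zero without displacing the endpoints, and in the $\uLeg$ case this perturbation must additionally preserve the homotopy class of the isotopy; the remaining algebraic steps (shortening, deduction of non-degeneracy) are then short.
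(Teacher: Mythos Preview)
Your treatment of properties (1)--(5) and of real-valuedness is correct and essentially coincides with the paper's: these follow from transitivity and bi-invariance of $\cleq$, together with $\varphi^{-1}\phi_t^\alpha\varphi = \phi_t^{\varphi^*\alpha}$.

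For spectrality, your bump-and-shorten scheme has a gap at precisely the point you flag as the ``main obstacle.'' You need a non-negative isotopy $\Lambda_1\rightsquigarrow\phi_{t+\epsilon}^\alpha\Lambda_0$ whose Hamiltonian is bounded below by some $c>0$ \emph{independent of $\epsilon$}; but the only disjointness you possess is between $\Lambda_1$ and $\phi_s^\alpha\Lambda_0$ for $s$ near $t$, while the intermediate Legendrians of the given non-negative isotopy are completely uncontrolled, so there is no evident place to support a bump that lifts the Hamiltonian uniformly without moving the endpoints. The paper bypasses this entirely via the Chernov--Nemirovski trick: since $\Pi\Lambda_1$ avoids $\phi_s^\alpha\Pi\Lambda_0$ for $|s-t|<\varepsilon$, one cuts off the Hamiltonian of the negative Reeb push $s\mapsto\phi_{(t+\varepsilon/2)-s\varepsilon}^\alpha\Lambda_0$ inside $M\setminus\Pi\Lambda_1$, obtaining $g\in\Gcont$ (resp.\ $\uGcont$) that fixes $\Lambda_1$ and sends $\phi_{t+\varepsilon/2}^\alpha\Lambda_0$ to $\phi_{t-\varepsilon/2}^\alpha\Lambda_0$. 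Invariance of $\cleq$ under $g$ then converts $\Lambda_1\cleq\phi_{t+\varepsilon/2}^\alpha\Lambda_0$ into $\Lambda_1\cleq\phi_{t-\varepsilon/2}^\alpha\Lambda_0$, contradicting $\ell_+^\alpha=t$. No positive isotopy is ever constructed.

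For non-degeneracy, the argument breaks down. You claim the bump-and-shorten forces the infimum and supremum to be \emph{attained}, i.e.\ $\Lambda_1\cleq\phi_t^\alpha\Lambda_0\cleq\Lambda_1$, whence antisymmetry would finish. But even granting spectrality, all you get is $\Lambda_1\cap\phi_t^\alpha\Lambda_0\neq\emptyset$; nothing forces the order relation to hold at $t$, and in general the infimum defining $\ell_+^\alpha$ is not a minimum. When $\Lambda_1$ and $\phi_t^\alpha\Lambda_0$ intersect without being equal, your scheme yields no contradiction. The paper's route is quite different and requires an additional ingredient: a positivity lemma asserting that $\Lambda\cgeq\Lambda_0$ with $\Lambda\neq\Lambda_0$ implies $\ell_+^\alpha(\Lambda,\Lambda_0)>0$, proved by an Eliashberg--Polterovich covering trick (conjugating by finitely many contactomorphisms of a Weinstein neighborhood to spread a single point of positivity of the Hamiltonian over all of $\Pi\Lambda_0$, then using the triangle inequality and sign invariance). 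Non-degeneracy is then obtained by choosing $p\in\Pi\Lambda_1\setminus\phi_t^\alpha\Pi\Lambda_0$, a non-negative Hamiltonian supported off $\phi_t^\alpha\Pi\Lambda_0$ and positive at $p$, and playing the positivity lemma, sign invariance under the resulting flow, and the triangle inequality against each other. Neither the positivity lemma nor this contradiction appears in your sketch.
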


\begin{thm}[Contactomorphism order selectors]
    \label{thm:contSS}
    Let $(M,\xi)$ be a closed contact manifold such that
    $\Gcont$ (resp. $\uGcont$) is orderable and let
    $\alpha$ be a contact form supporting $\xi$.
    The selectors $c_\pm^\alpha$ are real-valued and satisfy
    the following properties for $\varphi,\psi\in\Gcont$
    (resp. $\varphi,\psi\in\uGcont$):
    \begin{enumerate}[ 1.]
        \item (normalization) $c_\pm^\alpha(\id) = 0$ and
            $c_\pm^\alpha(\phi_t^\alpha\psi) = t + c_\pm^\alpha(\psi)$,
            $\forall t\in\R$,
        \item (monotonicity) $\varphi\cleq\psi$ implies $c_\pm^\alpha(\varphi)\leq
            c_\pm^\alpha(\psi)$,
        \item (triangle inequalities) $c_+^\alpha(\varphi\psi) \leq c_+^\alpha(\varphi)
            + c_+^\alpha(\psi)$ and
            $c_-^\alpha(\varphi\psi) \geq c_-^\alpha(\varphi) + c_-^\alpha(\psi)$,
        \item (Poincaré duality) $c_+^\alpha(\psi) = - c_-^\alpha(\psi^{-1})$,
        \item (compatibility) $c_\pm^\alpha(\varphi\psi\varphi^{-1})
            = c_\pm^{\varphi^*\alpha}(\psi)$ (resp. $c_\pm^\alpha(\varphi\psi\varphi^{-1})=c_\pm^{\Pi(\varphi)^*\alpha}(\psi)$), which extends to every
            $\varphi\in\cont(M,\xi)$ (resp. $\varphi\in\widetilde{\cont}(M,\xi)$),
        \item (non-degeneracy) $c_-^\alpha(\psi) = c_+^\alpha(\psi) = t$ 
            for some $t\in\R$ implies $\psi = \phi^\alpha_t$
            (resp. $\Pi\psi = \phi^\alpha_t\in\Gcont$).
    \end{enumerate}
\end{thm}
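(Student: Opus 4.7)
The plan is to verify the six properties in order, leveraging the bi-invariance of $\cleq$, the additive group law $\phi_s^\alpha\phi_t^\alpha=\phi_{s+t}^\alpha$, and orderability in an essential way for the first and last items. I focus on the $\uGcont$ case, the $\Gcont$ case being analogous.

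\textbf{Real-valuedness.} Given $\psi\in\uGcont$ generated by a contact isotopy $\psi_s$ with Hamiltonian $h_s$ with respect to $\alpha$, the closedness of $M$ gives $\|h_s\|_\infty<\infty$. A short computation of the contact Hamiltonian of the composition $\phi_{Ts}^\alpha\psi_s^{-1}$ shows that for $T\gg 0$ this isotopy is generated from $\id$ by a non-negative Hamiltonian, so $\psi\cleq\phi_T^\alpha$. Applying the same argument to $\psi^{-1}$ gives $\psi\cgeq\phi_{-T'}^\alpha$, so both defining sets are non-empty. Orderability then prevents these half-lines from collapsing to $\pm\infty$, since $\phi_{t_1}^\alpha\cleq\psi\cleq\phi_{t_2}^\alpha$ combined with $\phi_{t_2}^\alpha\cleq\phi_{t_1}^\alpha$ would force $\phi_{t_1}^\alpha=\phi_{t_2}^\alpha$.

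\textbf{Properties 1--5.} Normalization at $\id$ reduces to the statement that $\phi_t^\alpha=\id$ in $\uGcont$ (resp. $\Gcont$) forces $t=0$; this follows from orderability via a halving argument: if $\phi_s^\alpha=\id$ for some $s>0$, then $\id\cleq\phi_{s/2}^\alpha\cleq\phi_s^\alpha=\id$, hence $\phi_{s/2}^\alpha=\id$ by orderability, and iteration contradicts the nonvanishing of $R_\alpha$. The shift identity $c_\pm^\alpha(\phi_t^\alpha\psi)=t+c_\pm^\alpha(\psi)$ is direct from bi-invariance and the Reeb group law. Monotonicity is immediate from transitivity, and the triangle inequality from $\varphi\cleq\phi_s^\alpha$ and $\psi\cleq\phi_t^\alpha$ yielding $\varphi\psi\cleq\phi_s^\alpha\phi_t^\alpha=\phi_{s+t}^\alpha$. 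Poincaré duality exploits that inverting each element of a non-negative isotopy from $\psi_0$ to $\psi_1$ produces a non-negative isotopy from $\psi_1^{-1}$ to $\psi_0^{-1}$, yielding the identification of defining sets that implies $c_+^\alpha(\psi)=-c_-^\alpha(\psi^{-1})$. Compatibility uses the identity $\varphi^{-1}\phi_t^\alpha\varphi=\phi_t^{\varphi^*\alpha}$ together with the invariance of $\cleq$ under conjugation: conjugating a non-negative isotopy by $\varphi$ yields a non-negative isotopy between the conjugates, since a contact Hamiltonian $h_s$ wrt $\alpha$ pulls back to $h_s\circ\varphi$ wrt $\varphi^*\alpha$ up to a positive conformal factor.

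\textbf{Non-degeneracy.} This is the main obstacle and the step where orderability plays its essential role beyond mere antisymmetry. Using property 1, one reduces to showing that $c_+^\alpha(\psi)=c_-^\alpha(\psi)=0$ implies $\psi=\id$. By definition, one has the sandwich $\phi_{-\epsilon}^\alpha\cleq\psi\cleq\phi_\epsilon^\alpha$ for every $\epsilon>0$. The aim is to upgrade this to $\id\cleq\psi\cleq\id$, from which orderability concludes $\psi=\id$; equivalently, one must show that the sets $\{t:\psi\cleq\phi_t^\alpha\}$ and $\{t:\psi\cgeq\phi_t^\alpha\}$ contain their respective infimum and supremum. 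The delicate point is that a naive concatenation of a positive isotopy from $\psi$ to $\phi_\epsilon^\alpha$ with the backward Reeb flow $\phi_{-\epsilon s}^\alpha$ fails to be non-negative, since the latter contributes a negative constant $-\epsilon$ to the generating contact Hamiltonian. The argument must instead use a reparametrization and limit procedure: for a sequence $\epsilon_n\to 0^+$, combining the positive isotopies with appropriate time rescalings and using the compactness of $M$ to control the conformal factors, one extracts in the limit a genuine non-negative isotopy from $\id$ to $\psi$. The same argument applied to $\psi^{-1}$ (via Poincaré duality) yields $\id\cleq\psi^{-1}$, and orderability concludes. For the $\uGcont$ version, the conclusion $\Pi\psi=\phi_t^\alpha$ in $\Gcont$ follows by projecting the identity through the covering map.
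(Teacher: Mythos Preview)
Your treatment of real-valuedness and properties 1--5 is correct and matches the paper's approach: these follow formally from bi-invariance of $\cleq$, the Reeb group law, and the conjugation identity $\varphi^{-1}\phi_t^\alpha\varphi=\phi_t^{\varphi^*\alpha}$.

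The non-degeneracy argument, however, has a genuine gap. You correctly reduce to showing that $c_+^\alpha(\psi)=c_-^\alpha(\psi)=0$ forces $\psi=\id$, and you correctly identify that the issue is whether the half-lines $\{t:\psi\cleq\phi_t^\alpha\}$ and $\{t:\psi\cgeq\phi_t^\alpha\}$ contain their endpoints. But the ``reparametrization and limit procedure'' you invoke does not work: for each $\epsilon>0$ you only know there \emph{exists} a non-negative isotopy from $\psi$ to $\phi_\epsilon^\alpha$, with no uniform control on its $C^1$-size, its time-length, or its conformal factors as $\epsilon\to 0$. There is no compactness available on the space of such isotopies, so one cannot extract a limiting non-negative isotopy from $\psi$ to $\id$. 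Equivalently, you are asserting that $\{\varphi:\varphi\cleq\id\}$ is closed, which is not a formal consequence of orderability (only $\cll$ is known to be open).

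The paper's route is completely different and avoids this closure question. One observes that
\[
Z:=\{\psi:c_-^\alpha(\psi)=c_+^\alpha(\psi)=0\}
\]
is a \emph{normal subgroup}: it is a subgroup by normalization, duality, and the two triangle inequalities, and it is normal because the sign of $c_\pm^\alpha$ is conjugation-invariant (via the comparison $e^{-\sup|f_g|}c_\pm^\alpha(\psi)\leq c_\pm^\alpha(g\psi g^{-1})\leq e^{\sup|f_g|}c_\pm^\alpha(\psi)$ coming from compatibility and the change-of-form estimate). Since $\phi_t^\alpha\notin Z$ for small $t\neq 0$, the group $Z$ is proper. One then appeals to Tsuboi's theorem that the group of $C^1$-contactomorphisms isotopic to the identity is simple, after extending $c_\pm^\alpha$ continuously to this $C^1$-completion; simplicity forces $Z=\{\id\}$. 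For $\uGcont$ one projects the analogous normal subgroup through $\Pi$ and applies the same simplicity result, which is why the conclusion there is only $\Pi\psi=\phi_t^\alpha$ rather than $\psi=\phi_t^\alpha$.
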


The major results in these statements concern the non-degeneracy property and the
spectrality. The non-degeneracy property has also been recently proven
by Nakamura with a different approach \cite{Nakamura2023}.

We emphasize the fact that we do not know that the $c_\pm^\alpha(\psi)$'s
do actually select spectral values of $\psi$.
Nevertheless, in the few examples we are able to compute explicitly $c^\alpha_\pm$ (for example see Remark \ref{rem:spectraliteconj} and \cite[Corollary 5.1]{allais2024c1localflatnessgeodesicslegendrian}), these maps do take spectral values and therefore strongly suggest that they should be spectral in general.

The following direct corollary states that orderability is equivalent to the
existence of selectors in a rather weak sense
(a similar statement for spaces $\Leg$ and $\uLeg$ also holds). In contrast with their symplectic counterparts, these selectors
are not actually invariant: identities
$\ell(\varphi\Lambda_1,\varphi\Lambda_0) = \ell(\Lambda_1,\Lambda_0)$
and
$c(\varphi\psi\varphi^{-1}) = c(\psi)$ are not verified
for every contactomorphism $\varphi$ (resp. lift in the universal cover),
$\Lambda_0,\Lambda_1$ and $\psi$ being fixed.
In that sense, the $\alpha$-selectors are only invariant under
$\alpha$-strict contactomorphisms:
$\ell^\alpha_\pm(\varphi\Lambda_1,\varphi\Lambda_0)
= \ell^\alpha_\pm(\Lambda_1,\Lambda_0)$ 
and $c^\alpha_\pm(\varphi\psi\varphi^{-1})=c^\alpha_\pm(\psi)$ when
$\varphi^*\alpha = \alpha$.
Of course, this is compatible with the fact that the $\alpha$-spectrum
is only invariant under strict contactomorphisms:
as in general $\varphi^{-1}\phi^\alpha_t\varphi = \phi^{\varphi^*\alpha}_t$ for
all $t\in\R$, one has
\begin{equation*}
    \spec^\alpha(\varphi\Lambda_1,\varphi\Lambda_0)
    = \spec^{\varphi^*\alpha}(\Lambda_1,\Lambda_0)
    \text{ and }
    \spec^\alpha(\varphi\psi\varphi^{-1})
    = \spec^{\varphi^*\alpha}(\psi).
\end{equation*}
Nonetheless, the sign of these selectors is invariant
(see Lemmata~\ref{lem:sign} and \ref{lem:cont-sign}).

\begin{cor}\label{cor:orderEquivSelector}
    Let $(M,\xi)$ be a closed cooriented contact manifold.
    The space $\Gcont$ (resp. $\uGcont$) is orderable
    if and only if there exists
    a non-decreasing map $c:(\Gcont,\cleq)\to(\R,\leq)$
    (resp. $(\uGcont,\cleq)\to(\R,\leq)$) such that
    \begin{enumerate}[ 1.]
        \item $c(\id) = 0$,
        \item$\psi\cgeq\id$ and $\psi\neq\id$ implies $c(\psi)>0$.
    \end{enumerate}
\end{cor}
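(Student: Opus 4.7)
The statement is an equivalence, so I will treat the two directions independently. Throughout, I use that $\cleq$ is always reflexive (via the constant isotopy) and transitive (by concatenation of non-negative isotopies), and that it is bi-invariant: the sign of the contact Hamiltonian generating an isotopy is preserved under right and left composition by a contactomorphism. Consequently, orderability amounts to antisymmetry of $\cleq$, so the forward direction reduces to exhibiting a suitable $c$, and the converse to deriving antisymmetry from the existence of such a $c$.

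\textbf{Forward direction.} The thematic choice is $c:=c_+^\alpha$ for any contact form $\alpha$ supporting $\xi$, as delivered by Theorem~\ref{thm:contSS}: property~1 and the non-decreasing property are the normalization and monotonicity clauses, while property~2 follows from $c_-^\alpha\leq c_+^\alpha$ (immediate from $\phi^\alpha_s\cleq\phi^\alpha_t$ for $s\leq t$), monotonicity applied to $\id\cleq\psi$, and the non-degeneracy clause; in the $\uGcont$ case, non-degeneracy only yields $\Pi\psi=\id$, and one must additionally argue that $\psi\cleq\phi^\alpha_t$ for all $t>0$ upgrades to $\psi\cleq\id$ and invoke orderability. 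A fully elementary alternative that avoids any such analytic step is the indicator function $c(\psi):=1$ if $\psi\cgeq\id$ and $\psi\neq\id$, $c(\psi):=0$ otherwise: properties~1 and~2 are built-in, and the non-decreasing property reduces to the case $c(\varphi)=1$, where $\id\cleq\varphi\cleq\psi$ gives $\psi\cgeq\id$, and $\psi=\id$ is excluded because it would combine with $\varphi\cgeq\id$ through orderability to force $\varphi=\id$.

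\textbf{Converse direction.} Assume a map $c$ with the stated properties exists. Given $\varphi\cleq\psi$ and $\psi\cleq\varphi$, bi-invariance turns these into $\id\cleq\eta\cleq\id$ for $\eta:=\psi\varphi^{-1}$. If $\eta\neq\id$, then property~2 yields $c(\eta)>0$; but the non-decreasing property applied to $\eta\cleq\id$ combined with $c(\id)=0$ yields $c(\eta)\leq 0$, a contradiction. Hence $\eta=\id$, i.e.\ $\varphi=\psi$, proving antisymmetry and therefore orderability.

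\textbf{Main obstacle.} There is essentially no obstacle: both directions collapse to a one-line case analysis once bi-invariance of $\cleq$ is acknowledged. The only place where one is tempted to invest more work is in verifying property~2 for the selector $c_+^\alpha$ on $\uGcont$, where non-degeneracy sees only the projection to $\Gcont$; this is precisely why the indicator-function construction is offered as a smoother alternative for the existence part.
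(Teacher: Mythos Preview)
Your converse direction is essentially identical to the paper's proof.

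For the forward direction, the paper takes $c:=c_+^\alpha$ and defers property~2 to Corollary~\ref{cor:cont-positivity}. In the $\uGcont$ case that corollary does \emph{not} argue by ``upgrading $\psi\cleq\phi_t^\alpha$ for all $t>0$ to $\psi\cleq\id$'' as you sketch; that passage to the limit is not justified by the listed properties. Instead, the paper picks a non-negative isotopy $(\varphi_t)$ from $\id$ to $\psi$, finds $t_0$ with $\Pi\varphi_{t_0}\neq\id$ (possible since the isotopy is non-constant), applies non-degeneracy at $\varphi_{t_0}$ to get $c_+^\alpha(\varphi_{t_0})>0$, and concludes by monotonicity. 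So your first route has a genuine gap at the step you yourself flag.

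Your indicator-function alternative, on the other hand, is correct and genuinely different from the paper: it makes the forward direction a purely order-theoretic triviality, bypassing Theorem~\ref{thm:contSS} entirely. This is cleaner as a proof of the bare equivalence, but it loses the conceptual content the paper is after, namely that the analytically meaningful selector $c_+^\alpha$ is what witnesses orderability. Either approach suffices for the corollary as stated.
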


\begin{proof}[Proof of Corollary~\ref{cor:orderEquivSelector}]
    The direct implication is a consequence of Theorem~\ref{thm:contSS}
    (\emph{cf.} Corollary~\ref{cor:cont-positivity} below).
    Conversely, assuming the existence of $c$, if $\varphi,\psi\in\Gcont$
    (resp. $\uGcont$) are such that $\varphi\cleq\psi$ and
    $\psi\cleq\varphi$ while $\varphi\neq\psi$,
    then $c(\varphi\psi^{-1})>0$ by the positivity property of $c$
    while $\varphi\psi^{-1}\cleq\id$ implies $c(\varphi\psi^{-1})\leq c(\id)=0$
    by monotonicity of $c$, a contradiction.
\end{proof}

When $\Leg$ (resp. $\uLeg$) is orderable and $\Lambda_0\in\Leg$
(resp. $\uLeg$),
any non-decreasing map $\ell(\cdot,\Lambda_0):\Leg\to\R$
(resp. $\uLeg\to\R$)
that is normalized with respect to the contact form $\alpha$ in the sense of
Theorem~\ref{thm:LegSS} satisfies
\begin{equation*}
    \ell_-^\alpha(\cdot,\Lambda_0) \leq \ell(\cdot,\Lambda_0) \leq 
    \ell_+^\alpha(\cdot,\Lambda_0),
\end{equation*}
so that $\ell_-^\alpha$
can be thought of as the
minimal $\alpha$-spectral selector and $\ell_+^\alpha$ can be thought
as the maximal one.
A similar statement can be asserted for $c_\pm^\alpha$.

\begin{rem}[Removing the closeness assumption on $M$]
    When studying $\Gcont$ (resp. $\uGcont$), we have added the additional
    hypothesis of closeness for the contact space $M$.
    Following Fraser-Polterovich-Rosen \cite{FPR}, one can remove this hypothesis
    by asking orderability not only for the group $\Gcont$ (resp. $\uGcont$)
    but for the group $\Gcont_\alpha$ (resp. $\uGcont_\alpha$) generated by
    $\Gcont$ (resp. $\uGcont$) and the
    elements of the Reeb flow $(\phi^\alpha_t)$, once fixed a complete supporting
    contact form $\alpha$. This way the relation $\psi\cleq\phi_t^\alpha$
    makes sense for $\psi\in\Gcont$ (resp. $\uGcont$) and $t\in\R$,
    and Theorem~\ref{thm:contSS} still holds.
\end{rem}

\subsection{Applications to the existence of Reeb chords
and translated points}
\label{se:existenceChords}

A direct consequence of the existence of spectral selectors in orderable
Legendrian isotopy classes, is the non-triviality of the spectrum.

\begin{cor}[Existence of Reeb chords]\label{cor:intro:existence}
    Let $\Lambda_*$ be a closed Legendrian submanifold of
    a cooriented contact manifold $(M,\xi)$ such that
    $\uLeg(\Lambda_*)$ is orderable.
    Then any two Legendrian submanifolds $\Lambda_0,\Lambda_1\in\Leg(\Lambda_*)$
    are joined by two distinct (positive or non-positive)
    $\alpha$-Reeb chords, given any complete $\alpha$ supporting $\xi$.\\
    In particular, if $(M,\xi)$ is a closed contact manifold such that
    $\uLeg(\Delta\times \{ 0\})$ as defined in
    Example~\ref{ex:orderable}.\ref{ex:it:SH} is orderable,
    any contactomorphism
    of $M$ isotopic to $\id$ has an $\alpha$-translated point
    (and at least two distinct couples $(p,t)$ where $p$ is a translated
    point of time-shift $t$), given any $\alpha$ supporting $\xi$.
\end{cor}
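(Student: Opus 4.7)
The plan is to extract the two required Reeb chords directly from the two selectors $\ell_\pm^\alpha$ provided by Theorem~\ref{thm:LegSS}, splitting cases according to whether they coincide. Fix lifts $\tilde\Lambda_0,\tilde\Lambda_1\in\uLeg(\Lambda_*)$ of $\Lambda_0,\Lambda_1$ and set $t_\pm:=\ell_\pm^\alpha(\tilde\Lambda_1,\tilde\Lambda_0)$. Because $\uLeg(\Lambda_*)$ is orderable, Theorem~\ref{thm:LegSS} applies, giving $t_\pm\in\R$; by the spectrality property and the definition of $\spec^\alpha$ in $\uLeg$, we have $t_\pm\in\spec^\alpha(\Lambda_1,\Lambda_0)$, which exactly means that $\Lambda_1\cap\phi_{t_\pm}^\alpha\Lambda_0\neq\emptyset$. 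Each such intersection point yields an $\alpha$-Reeb chord of length $t_\pm$ joining $\Lambda_0$ to $\Lambda_1$.

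If $t_-<t_+$, the two chords produced have distinct lengths and are therefore distinct. If instead $t_-=t_+=t$, the non-degeneracy clause of Theorem~\ref{thm:LegSS} forces $\Pi\tilde\Lambda_1=\phi_t^\alpha\Pi\tilde\Lambda_0$, i.e.\ $\Lambda_1=\phi_t^\alpha\Lambda_0$ as subsets of $M$. Since $\Lambda_*$ is a closed Legendrian in a cooriented contact manifold (hence of positive dimension in dimension $\geq 3$, the only interesting case), $\Lambda_0$ contains at least two distinct points, and each of them is the starting point of a distinct Reeb chord of length $t$ from $\Lambda_0$ to $\Lambda_1$. In either case we obtain two distinct Reeb chords, proving the first assertion.

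For the translated points statement, apply the first part to the pair $\Lambda_0:=\mathrm{gr}^\alpha(\id)=\Delta\times\{0\}$ and $\Lambda_1:=\mathrm{gr}^\alpha(\psi)$ inside $(M\times M\times\R,\ker\tilde\alpha)$. A contact isotopy $(\psi_s)_{s\in[0,1]}$ from $\id$ to $\psi$ induces the Legendrian isotopy $s\mapsto\mathrm{gr}^\alpha(\psi_s)$, so $\mathrm{gr}^\alpha(\psi)\in\Leg(\Delta\times\{0\})$, and the orderability hypothesis on $\uLeg(\Delta\times\{0\})$ allows the previous paragraph to produce two distinct $\tilde\alpha$-Reeb chords between $\Delta\times\{0\}$ and $\mathrm{gr}^\alpha(\psi)$. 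By the identification $\spec^{\tilde\alpha}(\mathrm{gr}^\alpha(\psi),\mathrm{gr}^\alpha(\id))=\spec^\alpha(\psi)$ recalled before Theorem~\ref{thm:LegSS}, these chords translate into two distinct couples $(p,t)$ where $p$ is an $\alpha$-translated point of $\psi$ of time-shift $t$.

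The main obstacle is essentially bookkeeping rather than conceptual: the whole argument is an unpacking of spectrality plus non-degeneracy. The one point requiring care is the degenerate case $t_-=t_+$, where one must use positive dimensionality of $\Lambda_*$ to promote the single equality $\Lambda_1=\phi_t^\alpha\Lambda_0$ into the existence of two distinct chords; and, for the second assertion, one must check that contact graphs of a smooth contact isotopy indeed form a Legendrian isotopy so that the orderability hypothesis can be invoked.
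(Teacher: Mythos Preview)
Your argument is correct and matches the paper's approach: the paper does not write out a separate proof for this corollary, but the explicit proof it gives for the closely related Theorem~\ref{thm:ArnoldChord} uses exactly your dichotomy --- two chords of distinct lengths from spectrality when $\ell_-^\alpha<\ell_+^\alpha$, and infinitely many chords of the same length from non-degeneracy when $\ell_-^\alpha=\ell_+^\alpha$. Your caveat about positive dimension in the degenerate case is appropriate and is implicit in the paper's own ``infinitely many distinct chords'' phrasing.
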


The existence of Reeb chords between two isotopic Legendrian
submanifolds has been widely studied.
When Legendrian submanifolds are isotopic to $\RP^n$ in the standard
$\RP^{2n+1}$ (\emph{cf.} Example~\ref{ex:orderable}.\ref{ex:it:RP}),
the non-linear Maslov index developed by Givental \cite{Giv90}
directly implies that the number of chords whose images are
not overlapping each other is at least $n+1$
for the standard contact form.
When Legendrian submanifolds are Legendrian isotopic to the zero-section of
the standard $J^1 N$ where $N$ is closed (\emph{cf}
Example~\ref{ex:orderable}.\ref{ex:it:JN}), Chekanov gave
lower bounds on the number of chords based on Morse theory
(sum of the Betti numbers plus twice torsion numbers in the
generic case, cuplength in the general case) for the standard
contact form \cite{Chekanov1996}.
With the exception of these two cases and the question of translated
points discussed below, 
most of the known results concern the case of a \emph{small} Legendrian
isotopy in various senses (\emph{e.g.} regarding $C^1$-metric,
$C^0$-metric or Hofer type metrics),
we refer to \cite{RizSul2020,Li2021} and references therein
where bounds similar to Chekanov's are obtained.
Of course, one can easily construct examples of isotopic Legendrian submanifolds
without any Reeb chord joining them (\emph{e.g.} a Legendrian circle
in $J^1\R$ pushed horizontally).
In contrast with these local studies, Corollary~\ref{cor:intro:existence}
asserts that orderability of $\uLeg(\Lambda_*)$ is enough
for the existence of two chords in between any two Legendrian submanifolds
isotopic to $\Lambda_*$, no matter how far they are from each other.
In addition to this global statement,
it is remarkable that one gets the existence of two distinct
Reeb chords without any transversality assumption.
Indeed, sharper estimates on the number of Reeb chords involving Morse type estimates
are known in the aforementioned local studies but most of them require
additional transversality hypothesis.
Without these hypothesis, it is not clear that
one could keep two distinct Reeb chords: when passing from a generic case to the
general one, one usually looses the multiplicity (\emph{e.g.}
the classical Lefschetz fixed-point theorem or
more recently the critical point theory of closed 1-forms
\cite{Farber2002}).

The existence of translated points is an important question that
had been asked by Sandon in \cite{San13}.
Originally, Sandon asked whether the number of translated points
of any element of $\Gcont(M,\xi)$ was at least the minimal number
of critical points of a map $M\to\R$, by analogy with the Arnold conjecture
bounding from below the number of fixed points a Hamiltonian diffeomorphism
can have. This original statement was proven in the case of projective
spaces and lens spaces endowed with their standard contact form
\cite{San13,lens}. Lower bound related to Morse theory have been discovered
in multiple other cases including the whole class of non-degenerate
contactomorphisms of hypertight
contact manifolds (\emph{i.e.} admitting a Reeb flow without any
contractible closed orbit) \cite{AFM15,MN18,Allais2022UnitTangent}.
However, Cant proved recently that some elements of $\Gcont(\sphere{2n+1})$
do not admit any translated point for the standard contact form
of the sphere of dimension $2n+1\geq 3$, answering negatively
to Sandon's question.
As the space $\uGcont(\sphere{2n+1})$ is unorderable, this result
is compatible with our conjecture that the selectors $c_\pm^\alpha$
are spectral (which implies the existence of translated points
when $\uGcont$ is orderable).
Concerning the lower bound, the original conjecture of Sandon might
be too optimistic but variants exist:
see the introduction of \cite{Allais2022UnitTangent}.

Corollary~\ref{cor:intro:existence} can in fact be generalized
to a statement about positive Legendrian paths.

\begin{thm}[Intersection of a positive Legendrian path]\label{thm:existence}
    Let $\Lambda_*$ be a closed Legendrian submanifold of
    a cooriented contact manifold such that
    $\uLeg(\Lambda_*)$ is orderable.
    Given any uniformly positive isotopy $\uLambda$ in $\Leg(\Lambda_*)$
    (\emph{c.f.} the beginning of Section~\ref{se:spectralSelectors}),
    any Legendrian submanifold $\Lambda\in\Leg(\Lambda_*)$
    intersect $\uLambda_t$ for at least two distinct $t\in\R$
    unless $\Lambda=\uLambda_t$ for some $t\in\R$.
\end{thm}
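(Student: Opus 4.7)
The plan is to mirror the construction of the spectral selectors $\ell_\pm^\alpha$ of Theorem~\ref{thm:LegSS}, but using the uniformly positive path $\uLambda$ as the reference instead of the Reeb flow. Lift to the universal cover: let $\widetilde{\Lambda}\in\uLeg$ be a lift of $\Lambda$ and view $\uLambda$ as a lifted path. Define
\[
    \tau_-(\widetilde{\Lambda}) := \sup\{t\in\R \ |\ \widetilde{\Lambda}\cgeq\uLambda_t\}, \qquad
    \tau_+(\widetilde{\Lambda}) := \inf\{t\in\R \ |\ \widetilde{\Lambda}\cleq\uLambda_t\}.
\]
Uniform positivity yields a Reeb domination of the form $\uLambda_t\cgeq\phi_{ct}^\alpha\uLambda_0$ for some $c>0$ and $t\geq 0$, with the symmetric bound for $t\leq 0$. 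Combining this with the finiteness of $\ell_\pm^\alpha(\widetilde{\Lambda},\uLambda_0)$ given by Theorem~\ref{thm:LegSS} shows that $\tau_\pm(\widetilde{\Lambda})$ are finite real numbers.

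Assume henceforth that $\Lambda\neq\uLambda_t$ for every $t$. I would first check $\tau_-\leq\tau_+$: if $\tau_+<\tau_-$, pick $t_a<t_b$ in $(\tau_+,\tau_-)$; then $\widetilde{\Lambda}\cleq\uLambda_{t_a}$ and $\uLambda_{t_b}\cleq\widetilde{\Lambda}$, while positivity of $\uLambda$ gives $\uLambda_{t_a}\cleq\uLambda_{t_b}$, so the orderability of $\uLeg$ forces $\widetilde{\Lambda}=\uLambda_{t_a}$, contradicting our hypothesis.

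The heart of the proof, and its main obstacle, is the spectrality of the thresholds: $\Lambda\cap\uLambda_{\tau_\pm}\neq\emptyset$. For $\tau_+$ I would argue by contradiction: if $\Lambda\cap\uLambda_{\tau_+}=\emptyset$, then, via a local contact-topological argument using a Weinstein neighborhood of $\uLambda_{\tau_+}$ and the compactness of both Legendrians (essentially the argument establishing property~7 of Theorem~\ref{thm:LegSS}, with a generic positive isotopy in place of the Reeb flow), one concatenates a small negative push along $\uLambda$ to an existing non-negative isotopy to produce $\widetilde{\Lambda}\cleq\uLambda_{\tau_+-\epsilon}$ for some $\epsilon>0$, contradicting the infimum defining $\tau_+$. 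The same argument also yields $\widetilde{\Lambda}\cleq\uLambda_{\tau_+}$, and a symmetric version gives $\Lambda\cap\uLambda_{\tau_-}\neq\emptyset$ together with $\widetilde{\Lambda}\cgeq\uLambda_{\tau_-}$.

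To conclude: under the hypothesis $\Lambda\neq\uLambda_t$ for all $t$, one has $\tau_-<\tau_+$, since equality $\tau_-=\tau_+=t_0$ would combine with the previous step to give $\widetilde{\Lambda}\cleq\uLambda_{t_0}$ and $\widetilde{\Lambda}\cgeq\uLambda_{t_0}$, hence $\widetilde{\Lambda}=\uLambda_{t_0}$ by orderability, a contradiction. Therefore $\tau_-$ and $\tau_+$ are two distinct real numbers at which $\Lambda$ meets $\uLambda_t$. The delicate point throughout is the spectrality step, which adapts the compactness and Weinstein-neighborhood argument from the proof of Theorem~\ref{thm:LegSS} to a generic uniformly positive reference isotopy.
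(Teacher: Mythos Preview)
Your overall strategy coincides with the paper's: Section~\ref{se:spectralSelectors} defines precisely your $\tau_\pm$ as $\ell_\pm(\Lambda,\uLambda)$ for a general uniformly positive reference path, proves their finiteness, proves spectrality (Proposition~\ref{prop:spectrality}), and proves non-degeneracy (Proposition~\ref{prop:nondeg}); Theorem~\ref{thm:existence} is then the immediate consequence you describe. Two remarks, one minor and one identifying a genuine gap.

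\emph{Minor.} Your description of the spectrality argument as a ``Weinstein neighborhood'' argument is off. The paper's proof (Proposition~\ref{prop:spectrality}) is the Chernov--Nemirovski trick: when $\Pi\Lambda\cap\Pi\uLambda_s=\emptyset$ for $s$ near $t$, one extends the Hamiltonian of the reversed piece of $\uLambda$ compactly in $M\setminus\Pi\Lambda$ to obtain $g\in\uGcont$ fixing $\Lambda$ and sending $\uLambda_{t+\varepsilon/2}$ to $\uLambda_{t-\varepsilon/2}$; applying $g$ to $\Lambda\cleq\uLambda_{t+\varepsilon/2}$ gives $\Lambda\cleq\uLambda_{t-\varepsilon/2}$. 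No Weinstein chart is used. Since you point to the correct statement anyway, this is harmless.

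\emph{Gap.} Your non-degeneracy step does not work as written. You assert that the spectrality argument ``also yields $\widetilde{\Lambda}\cleq\uLambda_{\tau_+}$'', and then conclude from $\tau_-=\tau_+=t_0$ and orderability that $\widetilde{\Lambda}=\uLambda_{t_0}$. But the spectrality argument only gives $\Pi\Lambda\cap\Pi\uLambda_{\tau_+}\neq\emptyset$; it says nothing about $\widetilde{\Lambda}\cleq\uLambda_{\tau_+}$. The set $\{t:\widetilde{\Lambda}\cleq\uLambda_t\}$ is upward-closed but not known to be closed, so the infimum $\tau_+$ need not be attained. The paper's proof of non-degeneracy (Proposition~\ref{prop:nondeg}) avoids this entirely: assuming $\ell_\pm(\Lambda,\uLambda)=t$ and $\Pi\Lambda\neq\Pi\uLambda_t$, it passes via sign invariance (Lemma~\ref{lem:sign}) to $\ell_\pm^\alpha(\Lambda,\uLambda_t)=0$, then uses a non-negative Hamiltonian supported off $\Pi\uLambda_t$ but moving a point of $\Pi\Lambda$ to produce $g_s$ with $g_s\Lambda\cgeq\Lambda$, $g_s\Lambda\neq\Lambda$, $g_s\uLambda_t=\uLambda_t$. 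The positivity Lemma~\ref{lem:positivity} (itself a nontrivial Eliashberg--Polterovich covering argument) then forces $\ell_+^\alpha(g_s\Lambda,\Lambda)>0$, while sign invariance under $g_s$ keeps $\ell_+^\alpha(g_s\Lambda,\uLambda_t)=0$, contradicting the triangle inequality. You should replace your final paragraph by an appeal to this mechanism.
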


Another consequence of our spectral selectors relating to Reeb chords
is the following theorem addressing the Arnold chord conjecture
(see a discussion of the conjecture below).

\begin{thm}[About the Arnold chord conjecture]\label{thm:ArnoldChord}
    Let $\Lambda_*\subset (M,\xi)$ be a Legendrian submanifold
    such that $\Leg(\Lambda_*)$ is unorderable and
    $\uLeg(\Lambda_*)$ is orderable.
    Then for every Legendrian submanifold $\Lambda\in\Leg(\Lambda_*)$
    and every complete contact form $\alpha$ supporting $\xi$, there
    exist two distinct non-constant Reeb chords $\gamma_i:[0,1]\to M$,
    $i\in\{1,2\}$, such that $\gamma_i(\{0,1\})\subset\Lambda$.
\end{thm}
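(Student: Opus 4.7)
The plan is to exploit the gap between $\Leg$ and its universal cover to produce two distinct $\cleq$-comparable lifts of $\Lambda$ in $\uLeg$, then feed them into the spectral selectors of Theorem~\ref{thm:LegSS} to extract two distinct non-constant Reeb chords of $\Lambda$ to itself.

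First I would extract a non-constant, non-contractible, non-negative loop $\eta$ in $\Leg$ based at $\Lambda$. Unorderability of $\Leg$ provides such a loop at some Legendrian $\Lambda_0\in\Leg(\Lambda_*)$, and orderability of $\uLeg$ forces it to be non-contractible: otherwise its lift to $\uLeg$ would be a non-constant non-negative loop, contradicting antisymmetry of $\cleq$. A contactomorphism $\varphi$ with $\varphi(\Lambda_0)=\Lambda$, furnished by contact isotopy extension, transports $\eta$ to a non-constant, non-contractible, non-negative loop based at $\Lambda$, since contactomorphisms preserve the coorientation and induce isomorphisms at the level of $\pi_1(\Leg)$. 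Lifting to $\uLeg$ starting at a lift $\tilde{\Lambda}_0$ of $\Lambda$ produces $\tilde{\Lambda}_1\in\uLeg$ with $\Pi\tilde{\Lambda}_1=\Lambda$, $\tilde{\Lambda}_0\neq\tilde{\Lambda}_1$, and $\tilde{\Lambda}_0\cleq\tilde{\Lambda}_1$.

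By spectrality, both $\ell_\pm^\alpha(\tilde{\Lambda}_1,\tilde{\Lambda}_0)$ belong to $\spec^\alpha(\tilde{\Lambda}_1,\tilde{\Lambda}_0)=\spec^\alpha(\Lambda,\Lambda)$. Closedness of $\cleq$ combined with orderability of $\uLeg$ forces $s:=\ell_+^\alpha(\tilde{\Lambda}_1,\tilde{\Lambda}_0)>0$: were $s$ to vanish, passing to the limit in the defining infimum would yield $\tilde{\Lambda}_1\cleq\tilde{\Lambda}_0$, forcing $\tilde{\Lambda}_0=\tilde{\Lambda}_1$ by antisymmetry. This produces a first non-constant chord of length $s$. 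If in addition $\ell_+^\alpha(\tilde{\Lambda}_1,\tilde{\Lambda}_0)=\ell_-^\alpha(\tilde{\Lambda}_1,\tilde{\Lambda}_0)$, the non-degeneracy property of Theorem~\ref{thm:LegSS} gives $\Lambda=\phi_s^\alpha\Lambda$, so every point of the closed Legendrian $\Lambda$ sits on a distinct non-constant chord of length $s$, far more than two. If instead $\ell_-^\alpha(\tilde{\Lambda}_1,\tilde{\Lambda}_0)$ is nonzero and distinct from $s$, then $\{s,\ell_-^\alpha(\tilde{\Lambda}_1,\tilde{\Lambda}_0)\}\subset\spec^\alpha(\Lambda,\Lambda)$ directly supplies two distinct non-constant chords.

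The delicate remaining case is $\ell_-^\alpha(\tilde{\Lambda}_1,\tilde{\Lambda}_0)=0$, which I would attack by iteration: set $\tilde{\Lambda}_n\in\uLeg$ to be the $n$-fold iterate of $\tilde{\Lambda}_0$ under the deck transformation associated with $\eta$. Each pair $(\tilde{\Lambda}_0,\tilde{\Lambda}_n)$ yields $\ell_\pm^\alpha(\tilde{\Lambda}_n,\tilde{\Lambda}_0)\in\spec^\alpha(\Lambda,\Lambda)$, with $\ell_+^\alpha(\tilde{\Lambda}_n,\tilde{\Lambda}_0)$ non-decreasing in $n$ and bounded above by $ns$ via the triangle inequality. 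I would argue that either these iterated selectors realize at least two distinct positive values along the tower (directly providing the second non-constant chord), or the non-degeneracy property of Theorem~\ref{thm:LegSS} applies at some pair $(\tilde{\Lambda}_0,\tilde{\Lambda}_n)$, forcing $\Lambda=\phi_{s'}^\alpha\Lambda$ for some $s'>0$ and hence infinitely many chords. The main obstacle is ruling out the pathological scenario in which every iterated selector collapses to the single value $s$ without any Reeb invariance ever occurring; the planned inputs are the subadditivity of $n\mapsto\ell_+^\alpha(\tilde{\Lambda}_n,\tilde{\Lambda}_0)$ arising from the triangle inequality and compatibility with the deck action, together with the freeness of the deck action on the universal cover, to preclude an infinite strictly $\cleq$-increasing chain of distinct lifts accumulating at a single spectral value.
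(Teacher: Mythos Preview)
Your overall strategy matches the paper's, but there is a genuine gap in the ``delicate case'' $\ell_-^\alpha(\tilde\Lambda_1,\tilde\Lambda_0)=0$, and the paper sidesteps it with a single observation you are missing.

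The paper does not start from a non-negative loop. It invokes Lemma~\ref{lem:caracteriseNonOrderability}: unorderability of $\Leg$ is equivalent to the existence of a \emph{positive} loop. Transporting this to $\Lambda$ and lifting, one obtains $\tilde\Lambda_0\cll\tilde\Lambda_1$ rather than merely $\tilde\Lambda_0\cleq\tilde\Lambda_1$. Openness of $\cll$ then gives $\tilde\Lambda_1\cgg\phi_\varepsilon^\alpha\tilde\Lambda_0$ for small $\varepsilon>0$, so $\ell_-^\alpha(\tilde\Lambda_1,\tilde\Lambda_0)>0$ immediately, and your delicate case never arises. The dichotomy is then clean: either $\ell_+^\alpha>\ell_-^\alpha>0$ and spectrality yields two chords of distinct positive length, or $\ell_+^\alpha=\ell_-^\alpha>0$ and non-degeneracy gives $\Lambda=\phi_t^\alpha\Lambda$ for some $t>0$, hence infinitely many chords. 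Your iteration argument is unnecessary, and as you yourself note, the ``pathological scenario'' you describe is not actually excluded by subadditivity and freeness of the deck action alone.

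A secondary point: your justification that $\ell_+^\alpha(\tilde\Lambda_1,\tilde\Lambda_0)>0$ via ``closedness of $\cleq$'' is not something the paper establishes, and passing to the limit in $\tilde\Lambda_1\cleq\phi_t^\alpha\tilde\Lambda_0$ as $t\downarrow 0$ is not obviously licit. The correct reference is Lemma~\ref{lem:positivity}, which is a nontrivial input proved via the Eliashberg--Polterovich averaging trick. With the positive loop in hand this issue also disappears, since $\ell_+^\alpha\geq\ell_-^\alpha>0$.
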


Having a periodic Reeb flow for some contact form supporting
$\xi$ is a sufficient condition to ensure the unorderability of
$\Leg$ (in this case Theorem~\ref{thm:ArnoldChord} is relevant
when applied to other contact forms supporting $\xi$).
This can be seen as a Legendrian analogue of the theorem of
Albers-Fuchs-Merry addressing the Weinstein conjecture:
every closed contact manifold such that $\Gcont$ is unorderable
admits a closed Reeb chord for any supporting contact form
\cite{AFM15}.
In our setting, we still ask for the orderability
of the universal cover of $\Leg$.
Nonetheless, let us remark that this theorem can be applied to
non closed contact manifolds, only the Legendrian submanifolds need
to be closed, and the statement gives at least two distinct Reeb chords.
Therefore, it implies the Weinstein conjecture on $(M,\xi)$ in the case
$\Lambda_* := \Delta\times\{ 0\} \subset M\times M\times\R$ satisfies
the hypothesis of Theorem~\ref{thm:ArnoldChord} for the contact structure of
Example~\ref{ex:orderable}.\ref{ex:it:SH};
this set of $(M,\xi)$ could possibly include cases not treated by the
aforementioned theorem of Albers-Fuchs-Merry.

In its original setting, the Arnold chord conjecture asked about
the existence of a non-trivial Reeb chord the endpoints of which
lie on a same Legendrian circle of the standard contact 3-sphere, given
any Legendrian circle \cite[\S 8]{Arnold1986}.
In this setting Mohnke proved a generalization to closed
contact manifolds arising as the boundary of subcritical Stein
manifolds
(Legendrian circles are replaced by closed Legendrian submanifolds)
\cite{Mohnke2001}.
Hutching and Taubes extended it to any closed contact 3-manifolds
\cite{HutTau2011}.
Very recent works still discuss generalized forms of the original conjecture:
Chantraine proved a version for a specific kind of fillable contact manifolds
with Lagrangian slices taking the place of Legendrian submanifolds
\cite{Chantraine2022}, the case of Legendrian submanifolds lying in
$P\times\R$, for a Liouville manifold $P$, has also received
a lot of attention (see \cite{Li2021} and references therein).

Soon before releasing this article,
Leonid Polterovich brought to our attention another potential
class of applications: the study of robust Legendrian interlinking
that had been introduced by Entov-Polterovich \cite{EntPol2017}.
It turns out that our spectral selectors do bring
new insights to this notion.
We refer directly to Section~\ref{se:interlinked} for more details.

\subsection{Applications to the metric structures of
$\Leg$, $\uLeg$, $\Gcont$ and $\uGcont$}

Since the discovery by Hofer of a bi-invariant metric on the
group of Hamiltonian diffeomorphisms \cite{hofer},
multiple attempts to obtain a contact counterpart where made.
On the one hand, the obvious generalization of the Hofer metric,
often referred to as the Shelukhin-Hofer metric, is not bi-invariant.
On the other hand, one can define interesting bi-invariant metrics on $\Gcont$
and $\uGcont$ but they are discrete, as was discovered by Colin-Sandon
\cite{discriminante} and
Fraser-Polterovich-Rosen \cite{FPR}.
Our spectral selectors allow us to define a contact analogue of the
spectral metric discovered by Viterbo in \cite{Vit}.
We use this new metric to study both Hofer type contact metrics
and the discrete bi-invariant metrics of Colin-Sandon and Fraser-Polterovich-Rosen. In particular, this new metric allows us to study questions of non-degeneracy, unboundedness, geodesics and equivalence of the different mentionned metrics.
We directly refer to Section~\ref{se:metrics} for details.

We point out that the spectral metric has also been introduced by
the very recent work of Nakamura \cite{Nakamura2023} (without reference to
spectrality) in order to show the metrizability of the interval topology
and to study the Hofer type contact metrics.
Our results and his do overlap on these two subjects.

A last metric application of our spectral selectors concerns the recent
Lorentz-Finsler structure introduced by
Abbondandolo-Benedetti-Polterovich \cite{ABP2022}.
A natural question arising from their study was the existence of
a so-called \emph{time function}, a natural object coming from
the theory of relativity \cite[Question~K.1]{ABP2022}.
The following theorem is a positive answer in a broader setting to their
question.

\begin{thm}[Existence of time functions]\label{thm:intro:timefct}
    Let $O$ be either $\Leg,\uLeg,\Gcont$ or $\uGcont$ associated
    with a cooriented contact manifold $(M,\xi)$
    (which is closed if $O=\Gcont$ or $\uGcont$).
    If $O$ is orderable and $\alpha$ is supporting $\xi$, then 
    there exists a continuous non-decreasing map
    $\tau^\alpha:(O,\cleq)\to(\R,\leq)$ such that $x\cleq y$ and $x\neq y$
    implies $\tau^\alpha(x)<\tau^\alpha(y)$ and
    $\tau^\alpha(\phi_t^\alpha x) = t + \tau^\alpha(x)$ for all $t\in\R$ and
    $x,y\in O$.
\end{thm}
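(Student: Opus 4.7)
My plan is to build $\tau^\alpha$ directly from the spectral selectors of Theorems~\ref{thm:LegSS} and~\ref{thm:contSS}, as an averaged ``symmetrized'' selector. Fix a basepoint $x_0 \in O$ (taking $x_0 = \Lambda_*$ in the Legendrian case and $x_0 = \id$ in the contactomorphism case). My first candidate will be the half-sum
\[
    \tau^\alpha(x) := \tfrac{1}{2}\bigl(\ell_+^\alpha(x,x_0) + \ell_-^\alpha(x,x_0)\bigr)
\]
(respectively $\tfrac{1}{2}(c_+^\alpha(\psi) + c_-^\alpha(\psi))$ for $O = \Gcont$ or $\uGcont$).

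The easy properties follow essentially formally from the properties listed in Theorems~\ref{thm:LegSS} and~\ref{thm:contSS}. Normalization gives $\tau^\alpha(\phi_t^\alpha x) = t + \tau^\alpha(x)$, monotonicity of $\ell_\pm^\alpha$ (resp. $c_\pm^\alpha$) in the first argument yields that $\tau^\alpha$ is non-decreasing along $\cleq$, and continuity of $\tau^\alpha$ reduces to continuity of the selectors themselves, which follows from their definitions as $\inf/\sup$ together with continuity of the Reeb flow for the topology considered in Section~\ref{se:topologyLeg}.

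The main obstacle is strict monotonicity: proving that $x \cleq y$ and $x \neq y$ force $\tau^\alpha(x) < \tau^\alpha(y)$. Assume $\tau^\alpha(x) = \tau^\alpha(y)$ for some $x \cleq y$ with $x\neq y$. By monotonicity of each of $\ell_+^\alpha$ and $\ell_-^\alpha$ separately, equality of the half-sum forces $\ell_+^\alpha(x,x_0) = \ell_+^\alpha(y,x_0)$ and $\ell_-^\alpha(x,x_0) = \ell_-^\alpha(y,x_0)$. Combining the triangle inequalities with Poincar\'e duality yields $\ell_-^\alpha(y,x) \leq 0$; together with $y \cgeq x$ (which directly gives $\ell_-^\alpha(y,x) \geq 0$), one obtains $\ell_-^\alpha(y,x) = 0$. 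The matching upper bound $\ell_+^\alpha(y,x) \leq 0$ does not follow from a single basepoint, so I expect to refine the construction by averaging over a countable dense family $(x_n) \subset O$ with positive weights $w_n$ summing to $1/2$ and decaying fast enough to ensure convergence, setting
\[
    \tau^\alpha(x) := \sum_{n} w_n \bigl(\ell_+^\alpha(x,x_n) + \ell_-^\alpha(x,x_n)\bigr).
\]
This preserves the three easy properties, but now equality $\tau^\alpha(x) = \tau^\alpha(y)$ forces $\ell_\pm^\alpha(y,x_n) = \ell_\pm^\alpha(x,x_n)$ for \emph{every} $n$; density together with continuity of the selectors in the second argument (obtained from Poincar\'e duality and first-argument continuity) extends this to $\ell_\pm^\alpha(y,x) = \ell_\pm^\alpha(x,x) = 0$, and non-degeneracy of Theorem~\ref{thm:LegSS} (resp. Theorem~\ref{thm:contSS}) yields $y = \phi_0^\alpha x = x$, a contradiction. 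The technical point that will need care is verifying separability of the Hofer-type topology on $O$ and choosing weights so that the sum converges uniformly enough to propagate continuity.
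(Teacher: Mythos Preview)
Your approach---averaging spectral selectors over a countable dense family with suitably decaying weights---is essentially the one the paper takes (see Theorems~\ref{thm:cont-time-function} and~\ref{thm:leg-time-function}). The paper uses only $\ell_+^\alpha$ (resp.\ $c_+^\alpha$) rather than your symmetrized $\ell_+^\alpha+\ell_-^\alpha$, and normalizes the $n$-th term by $2^{-n}\max(1,|\ell_\pm^\alpha(\Lambda_*,\Lambda_n)|)$ to secure convergence, but these are cosmetic differences. Separability is established for the $C^1$-topology (Lemmata~\ref{lem:legseparable} and~\ref{lem:contseparable}), not the Hofer topology; this suffices since the selectors are $C^1$-continuous.

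There is, however, a genuine gap in your strict-monotonicity step when $O=\uLeg$ or $O=\uGcont$. You reach $\ell_+^\alpha(y,x)=\ell_-^\alpha(y,x)=0$ and then invoke non-degeneracy (item~6 of Theorems~\ref{thm:LegSS} and~\ref{thm:contSS}); but in the universal-cover case non-degeneracy only yields $\Pi y=\Pi x$, not $y=x$, and nothing in your argument rules out $y$ differing from $x$ by a nontrivial deck transformation. What actually closes the gap is the strictly stronger \emph{positivity} property (Lemma~\ref{lem:positivity} for $\uLeg$, Corollary~\ref{cor:cont-positivity} for $\uGcont$): if $y\cgeq x$ and $y\neq x$ in the universal cover, then $\ell_+^\alpha(y,x)>0$ (resp.\ $c_+^\alpha(yx^{-1})>0$). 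The paper's own argument goes through positivity directly---given $x\cleq y$ with $x\neq y$ it obtains $\ell_+^\alpha(y,x)>0$ and then chooses a basepoint $x_n$ near $x$ to witness a strict increase in one term of the series---never appealing to non-degeneracy at all. With positivity in place of non-degeneracy your route works as well.
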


The construction is highly non-canonical and one cannot expect much more
natural properties satisfied by our time functions.
Nevertheless, it is natural to ask whether there exists a time function
on the orderable group $\Gcont$ or $\uGcont$
that can be conjugation invariant.
We show that such a map does not exist (Theorem~\ref{thm:noInvTimeFunction}).

\subsection*{Organization of the paper}
In Section~\ref{se:prelim}, we provide the background on
contact geometry needed throughout the article.
In Section~\ref{se:spectralSelectors}, we study the order spectral
selectors defined in the introduction and prove
Theorems~\ref{thm:LegSS} and \ref{thm:contSS} as well as
the results introduced in Section~\ref{se:existenceChords}.
In Section~\ref{se:metrics}, we define the spectral metric and use
it to study Hofer type metrics, Colin-Sandon metrics and the
Fraser-Polterovich-Rosen metric.
In Section~\ref{se:LorentzFinsler}, we show Theorem~\ref{thm:intro:timefct}.

\subsection*{Acknowledgement} We are grateful to the organizers of the
\emph{School on conformal symplectic dynamics and related fields}, this school
was the starting point of our collaboration on the subject.  We would like to
thank Margherita Sandon for her constant support.  The first author also thanks
Baptiste Chantraine, Erman \c Cineli, Vincent Humilière and Egor Shelukhin for
their support and fruitful discussions. The second author also thanks Alberto
Abbondandolo, Souheib Allout, Jakob Hedicke and Stefan Nemirovski for
interesting and stimulating discussions.

The first author was supported by the postdoctoral fellowship of the Fondation
Sciences Math\'ematiques de Paris, the postdoctoral fellowship of the
Interdisciplinary Thematic Institute IRMIA++ and the ANR AAPG 2021 PRC CoSyDy:
Conformally symplectic dynamics, beyond symplectic dynamics, ANR-CE40-0014. The
second author is partially supported by the Deutsche Forschungsgemeinschaft
under the Collaborative Research Center SFB/TRR 191 - 281071066 (Symplectic
Structures in Geometry, Algebra and Dynamics).

\section{Preliminaries}
\label{se:prelim}

\subsection{Legendrian and contact isotopies and their Hamiltonian maps}
\label{se:LegendrianIsotopies}

Throughout this section $I\subset\R$ will always be an interval
containing $0$.
Let $(M,\xi)$ be a cooriented contact manifold. A contact form
$\alpha$ supporting $\xi$ (\emph{i.e.} $\ker\alpha=\xi$) is called
\emph{complete} if its Reeb vector field $R^\alpha$ induces a
complete Reeb flow.
We recall that the Reeb vector field $R^\alpha$ is uniquely defined
by the equations $\alpha(R^\alpha)\equiv 1$ and $\iota_{R^\alpha}\ud\alpha=0$.
Let us fix a complete contact form $\alpha$ supporting $\xi$.

Let us recall that a contact isotopy $(\varphi_t)_{t\in I}$ is a family of contactomorphisms such that the map $I\times M\to M, (t,x)\mapsto \varphi_t(x)$ is smooth. For any contact isotopy $(\varphi_t)$ such that $\varphi_0=\id$,
there exists a unique smooth map $H:I\times M\to\R$ related to
the time-dependent contact vector field $(X_t)_{t\in I}$ generating the isotopy, i.e. $\frac{d}{dt}\varphi_t=X_t\circ\varphi_t$, by
\begin{equation}\label{eq:contactHamilton}
    \begin{cases}
        \alpha(X_t) = H_t, \\
        \iota_{X_t}\ud H_t = (\ud H_t\cdot R^\alpha)\alpha - \ud H_t,
    \end{cases}
\end{equation}
where $R^\alpha$ denotes the Reeb vector field of $\alpha$
and $H_t:M\to\R$ is induced by the restriction of $H$ to $\{ t\}\times M$
(see \emph{e.g.} \cite[\S 2.3]{Gei}). 
 \begin{definition}
     Let $(\varphi_t)_{t\in I}$ be a contact isotopy such that $\varphi_0=\id$. We call the map $H : I\times M\to\R$ satisfying \eqref{eq:contactHamilton} the $\alpha$-contact Hamiltonian map of $(\varphi_t)$.

 \end{definition}

Conversely, to any smooth map $H:I\times M\to\R$
is associated a unique time-dependent contact vector field $(X_t)$
by the system (\ref{eq:contactHamilton}).
When the contact vector field is complete (\emph{e.g.} when the
differential of $H$ is compactly supported), we say that $H$
generates the contact isotopy obtained by integrating $(X_t)$.\\

The notion of Hamiltonian map naturally extends to Legendrian isotopies.
Throughout the paper, we only consider closed Legendrian submanifolds.
A Legendrian isotopy $(\Lambda_t)_{t\in I}$ is a family of
Legendrian submanifolds $\Lambda_t\subset M$, such that there
exists a smooth map $j:I\times\Lambda_0\to M$
whose restriction $j_t$ to $\{ t\}\times\Lambda_0$ induces a
diffeomorphism $\Lambda_0\simeq\Lambda_t$ for every $t\in I$.
Given such an isotopy $(\Lambda_t)$ and such a map $j$,
let us define the family of maps $H_t:\Lambda_t\to\R$, $t\in I$,
by
\begin{equation}\label{eq:LegHam}
    H_t\circ j(t,x) := \alpha\left(\frac{\partial j}{\partial t}(t,x)\right),
    \quad \forall (t,x)\in I\times\Lambda_0.
\end{equation}
As $j_t^*\alpha = 0$ for all $t\in I$, one can deduce that $(H_t)$
only depends on the isotopy $(\Lambda_t)$ and not the specific
choice of parametrization $j$.

\begin{definition}
Let $(\Lambda_t)_{t\in I}$ be a Legendrian isotopy. The family $(H_t : \Lambda_t\to\R)_{t\in I}$ satisfying \eqref{eq:LegHam} is called the $\alpha$-contact Hamiltonian map
of $(\Lambda_t)$.
\end{definition}

The contact Hamiltonian map of a Legendrian isotopy $(\Lambda_t)_{t\in I}$ can be seen as a smooth
map $H:N\to\R$ defined on the submanifold $N := \bigcup_t \{t\}\times\Lambda_t$
of $I\times M$.
If $\Lambda_t=\varphi_t(\Lambda_0)$ for some contact isotopy
$(\varphi_t)$ starting at the identity whose $\alpha$-Hamiltonian map is $K : I\times M\to\R$, the Hamiltonian of $(\Lambda_t)$
is the restriction of $K$ to $N$.

We will use the following unparametrized version of the
Legendrian isotopy extension theorem.

\begin{lem}\label{lem:isotopyExtension}
    Let $I\subset\R$ be an interval containing $0$ and
    let $(\Lambda_t)_{t\in I}$ be a Legendrian
    isotopy on $(M,\ker\alpha)$,
    the $\alpha$-contact Hamiltonian map of which is $H:\bigcup_t
    \{t\}\times\Lambda_t\to\R$.
    Let $K:I\times M\to\R$ be an $\alpha$-contact Hamiltonian smoothly
    extending $H$ and generating a contact isotopy $(\varphi_t)$,
    then $\varphi_t(\Lambda_0) = \Lambda_t$ for all $t\in I$.
\end{lem}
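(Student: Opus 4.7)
The plan is to show that the generator of the Legendrian isotopy $(\Lambda_t)$ differs from the restriction of the contact vector field $X^K$ generated by $K$ by a vector field that is tangent to the moving Legendrian at each time; the lemma then follows from the usual uniqueness part of the Legendrian isotopy extension theorem, i.e., the Legendrian evolution depends only on the generator modulo its tangential component.

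Fix a smooth parametrization $j:I\times\Lambda_0\to M$ of $(\Lambda_t)$ and set $V(t,x):=\partial_t j(t,x)-X^K_t(j_t(x))$. The first (easy) step is $\alpha(V)\equiv 0$: by \eqref{eq:LegHam}, the identity $\alpha(X^K_t)=K_t$, and the hypothesis $K|_N=H$, one has $\alpha(\partial_t j)=H_t\circ j_t=K_t\circ j_t=\alpha(X^K_t\circ j_t)$. Hence $V\in\xi$. The key step is to prove $V\in T\Lambda_t$. Since $T\Lambda_t$ is Lagrangian in $(\xi|_{\Lambda_t},d\alpha)$, it suffices to verify $d\alpha(V(t,x),dj_t(u))=0$ for every $u\in T_x\Lambda_0$. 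Using $j_t^*\alpha=0$ one gets $j^*\alpha=(H\circ j)\,dt$, from which a direct contraction gives $d\alpha(\partial_t j,dj_t(u))=-dH_t(dj_t(u))$. The contact Hamilton equation \eqref{eq:contactHamilton} gives $d\alpha(X^K_t,w)=-dK_t(w)$ for every $w\in\xi$. Since $K|_N=H$ forces $dK_t|_{T\Lambda_t}=dH_t$, the two contributions cancel.

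The remaining step is to convert the infinitesimal tangency into the global identity $\Lambda_t=\varphi_t(\Lambda_0)$. One way is to consider the pulled-back time-dependent vector field $Y_t(x):=(d\varphi_t^{-1})_{j_t(x)}V(t,x)$ on $\Lambda_0$ and to integrate it into a flow $g_t:\Lambda_0\to\Lambda_0$ with $g_0=\id$; a direct computation then shows $j_t=\varphi_t\circ g_t$, giving $\Lambda_t\subset\varphi_t(\Lambda_0)$ and hence equality by dimension. Equivalently, one may simply invoke the standard uniqueness statement that two Legendrian isotopies starting at $\Lambda_0$ and generated by vector fields that agree modulo vectors tangent to the moving Legendrian coincide, applied to $(\Lambda_t)$ and $(\varphi_t(\Lambda_0))$. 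I expect this last bookkeeping to be the main delicate point, since one must verify that the candidate flow $g_t$ indeed stays on $\Lambda_0$ — this is precisely what the tangency of $V$ established above guarantees.
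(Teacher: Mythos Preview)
Your argument is correct in substance and takes a different route from the paper. The paper works in a Weinstein neighborhood of $\Lambda_0$, writes both $\Lambda_t$ and $\varphi_t(\Lambda_0)$ as $1$-jets $j^1 f_t$ and $j^1 g_t$ for small $t$, observes that both $f_t$ and $g_t$ satisfy the same Hamilton--Jacobi equation $\partial_t u_t(q)=K_t(q,\ud u_t(q),u_t(q))$ with $u_0\equiv 0$, and concludes by uniqueness of smooth solutions; the global statement follows by connectivity of $I$. Your approach is more direct and avoids the Weinstein chart and the PDE machinery entirely: you show by a symplectic linear-algebra computation that the defect $V=\partial_t j-X^K\circ j$ lies in $T\Lambda_t$, and then argue that a tangential defect does not affect the image of the isotopy.

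One point to tighten: your $Y_t(x)=(d\varphi_t^{-1})_{j_t(x)}V(t,x)$ is a vector based at $\varphi_t^{-1}(j_t(x))$, which is not known \emph{a priori} to lie in $\Lambda_0$, so it is not yet a vector field on $\Lambda_0$ that one can integrate --- this is exactly the circularity you flag in your last sentence, and the tangency of $V$ to $\Lambda_t$ does not by itself resolve it. The clean fix is to pull back by $j_t$ rather than by $\varphi_t$: since $V(t,x)\in T_{j_t(x)}\Lambda_t=dj_t(T_x\Lambda_0)$, set $W_t:=(dj_t)^{-1}V(t,\cdot)$, a genuine time-dependent vector field on the closed manifold $\Lambda_0$. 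Let $h_t$ be the flow of $-W_t$ on $\Lambda_0$ and put $\tilde{j}_t:=j_t\circ h_t$. Then $\partial_t\tilde{j}_t(x)=X^K_t(\tilde{j}_t(x))$, so by ODE uniqueness $\tilde{j}_t=\varphi_t\circ j_0$, giving $\Lambda_t=\varphi_t(\Lambda_0)$. With this adjustment your argument is complete and arguably more elementary than the paper's, while the paper's proof has the virtue of making explicit the link to the Hamilton--Jacobi equation that is used elsewhere in the article.
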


Before proving Lemma~\ref{lem:isotopyExtension},
let us recall the link between Legendrian isotopies and the Hamilton-Jacobi
equation. Let $(\Lambda_t)$ be a Legendrian isotopy. According to the
Legendrian Weinstein neighborhood theorem, there exists an open subset $W\subset M$
containing $\Lambda_0$ that is contactomorphic to a neighborhood
of the zero-section of the 1-jet space $J^1\Lambda_0:= T^*\Lambda_0\times\R$,
through this identification $\Lambda_0$ is sent to the zero-section.
Moreover, our chosen contact form $\alpha$ is sent to the
standard contact form $\alpha_0 := \ud z - \lambda$,
$z$ being the $\R$-coordinate and $\lambda$ being the
pull-back of the tautological form of $T^*\Lambda_0$.
With this local identification, for $t\in I$ close to $0$,
$\Lambda_t$ is a Legendrian graph above the zero section $\Lambda_0$,
so it is the $1$-jet of a smooth map $f_t : \Lambda_0\to\R$,
\begin{equation*}
    j^1 f_t := \{ (q,\ud f_t(q),f_t(q))\ |\ q\in\Lambda_0\}
    \subset J^1\Lambda_0.
\end{equation*}
Applying the parametrization $j(t,q):=(q,\ud f_t(q),f_t(q))$
of $(\Lambda_t)$, $t$ small enough, to Equation~(\ref{eq:LegHam}),
one then derives the Hamilton-Jacobi equation
\begin{equation}\label{eq:HamiltonJacobi}
    H_t(q,\ud f_t(q),f_t(q))
    = \frac{\partial f_t}{\partial t}(q),
\end{equation}
for all $q\in\Lambda_0$ and $t\in I$ close to $0$
(we have used that $\alpha$ is identified to $\alpha_0$ in $W$).

\begin{proof}[Proof of Lemma~\ref{lem:isotopyExtension}]
    By connectivity of the interval $I$, it is enough to prove this
    statement for $t\in I$ close enough to $0$.
    By considering a Weinstein neighborhood of $\Lambda_0$ as above,
    one can then identify $\Lambda_t$ and $\varphi_t(\Lambda_0)$ to the $1$-jet
    of the respective maps $f_t:\Lambda_0\to\R$ and $g_t:\Lambda_0\to\R$,
    for all $t\in I$, shrinking $I$ if necessary.
    Let us remark that the contact Hamiltonian map of $(\varphi_t(\Lambda_0))$
    is the restriction of $K$ to $\bigcup_t \{ t\}\times\varphi_t(\Lambda_0)$.
    Therefore, by the above discussion, both families of maps
    $(f_t)$ and $(g_t)$ satisfy the same Hamilton-Jacobi equation
    \begin{equation*}
        \frac{\partial u_t}{\partial t}(q) = K_t(q,\ud u_t(q),u_t(q)),
        \quad \forall q\in\Lambda_0, \forall t\in I,
    \end{equation*}
    with the initial condition $u_0 \equiv 0$.
    By unicity of the smooth solutions of the Hamilton-Jacobi equation
    in a small interval of time (see \emph{e.g.}
    \cite[Exercice 3.5.17]{mcduffsalamon}), $f_t = g_t$ for all $t\in I$ and
    $\Lambda_t = \varphi_t(\Lambda_0)$.
\end{proof}

\begin{cor}\label{cor:istopyExtension}
    Let $(M,\ker\alpha)$ be a contact manifold endowed
    with a complete contact form $\alpha$.
    Let $I\subset\R$ be an interval containing $0$ and
    let $(\Lambda_t)_{t\in I}$ be a (closed) Legendrian
    isotopy on $(M,\ker\alpha)$,
    the $\alpha$-contact Hamiltonian map of which is $H$.
    Then there exist contact Hamiltonians $K,G:I\times M\to\R$
    such that $\inf K = \inf H$ and $\sup G = \sup H$,
    the respective contact flows of which $(\varphi_t)$
    and $(\psi_t)$ satisfy $\varphi_t(\Lambda_0)=
    \psi_t(\Lambda_0)=\Lambda_t$ for all $t\in I$.
\end{cor}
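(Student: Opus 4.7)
The plan is to realize the Legendrian Hamiltonian $H$ as the restriction to $N:=\bigcup_{t\in I}\{t\}\times\Lambda_t\subset I\times M$ of a globally defined contact Hamiltonian $K$ satisfying $\inf K=\inf H$ and generating a complete flow; Lemma~\ref{lem:isotopyExtension} will then give $\varphi_t(\Lambda_0)=\Lambda_t$ for all $t\in I$. The construction of $G$ will be symmetric, with $\sup H$ in place of $\inf H$ and inequalities reversed.

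First, I would apply the tubular neighborhood theorem to the submanifold $N\subset I\times M$, assembled locally over $I$ via a partition of unity (enabled by compactness of each $\Lambda_t$ and smoothness of the isotopy), to construct an open neighborhood $U\supset N$ together with a smooth retraction $r:U\to N$, arranged so that for every compact subinterval $J\subset I$ the set $\bigcup_{t\in J}\overline{U_t}$ is compact in $M$ (where $U_t:=\{x\in M : (t,x)\in U\}$). Setting $\hat H:=H\circ r$ then yields a smooth extension of $H$ to $U$ that takes values in $H(N)$; in particular $\hat H\geq c:=\inf H$ on $U$ (assume $c>-\infty$; the case $c=-\infty$ is a trivial variant, taking $c=0$ in the formula below).

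Next, I would pick a smooth bump $\rho:I\times M\to[0,1]$ with $\rho\equiv 1$ on a smaller neighborhood of $N$ with closure inside $U$, and with $\supp\rho\subset U$, and set
\begin{equation*}
    K(t,x):=\rho(t,x)\hat H(t,x)+(1-\rho(t,x))\,c,
\end{equation*}
understood to equal $c$ outside $U$. As a convex combination of $\hat H\geq c$ and the constant $c$, one has $K\geq c$ everywhere; since $\rho\equiv 1$ on $N$, $K|_N=\hat H|_N=H$; and $K\equiv c$ outside $\supp\rho$. Thus $K$ is smooth on $I\times M$ and $\inf K=c=\inf H$.

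The main obstacle is to establish that $X_K$ generates a complete flow. Using the linearity of the correspondence $H\mapsto X_H$ (which gives in particular $X_c=c R^\alpha$), I would decompose $X_K=X_{K-c}+c R^\alpha$: the second summand is the complete Reeb vector field (by hypothesis on $\alpha$), and the first vanishes outside $\supp\rho$, which over any compact subinterval $J\subset I$ is contained in the compact set $J\times\bigcup_{t\in J}\overline{U_t}$. A standard ODE argument combining these two facts rules out finite-time escape and produces a contact isotopy $(\varphi_t)_{t\in I}$ whose $\alpha$-contact Hamiltonian is $K$, which extends $H$. Lemma~\ref{lem:isotopyExtension} then yields $\varphi_t(\Lambda_0)=\Lambda_t$ for all $t\in I$, as required.
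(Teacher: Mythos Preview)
Your proof is correct and follows essentially the same approach as the paper: take a tubular neighborhood of $N$ with retraction, extend $H$ via the retraction, cut off to a constant using a bump function, and apply Lemma~\ref{lem:isotopyExtension}. The only differences are that the paper uses $\sup H$ (rather than $\inf H$) as the outside constant for $K$---both choices give $\inf K=\inf H$, since $K$ is a convex combination of values in $[\inf H,\sup H]$ and equals $H$ on $N$---and the paper leaves the completeness of the resulting flow implicit, whereas you address it carefully.
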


\begin{proof}
    Let $U\subset I\times M$ be a tubular neighborhood of
    $N:=\bigcup_t \{ t\} \times\Lambda_t$ and let
    $\pi:U\to N$ be the associated smooth retraction.
    Let $\chi:I\times M\to[0,1]$ be a smooth map such that
    $\chi|_N\equiv 1$ and $\chi|_{(I\times M)\setminus U}\equiv 0$.
    The statement is now a consequence of Lemma~\ref{lem:isotopyExtension}
    applied to the following smooth extensions of $H$:
    \begin{equation*}
        K := \chi\cdot H\circ\pi + (1-\chi)\cdot\sup H \text{ and }
        G := \chi\cdot H\circ\pi + (1-\chi)\cdot\inf H.\qedhere
    \end{equation*}
\end{proof}

\subsection{The $C^1$-topology and the universal covers}
\label{se:topologyLeg}

Let $\Lambda_*$ be a closed Legendrian submanifold of $(M,\xi)$.
Let us briefly describe the $C^1$-topology of the space $\Leg(\Lambda_*)$
consisting of Legendrian submanifolds $\Lambda$ that are Legendrian isotopic to $\Lambda_*$, i.e. there exists a Legendrian isotopy $(\Lambda_t)$ such that $\Lambda_0=\Lambda_*$ and $\Lambda_1=\Lambda$,  and the topology of
the space $\Gcont(M,\xi)$ consisting contactomorphisms $\varphi$
contact isotopic to the identity through compactly supported contactomorphisms, i.e. there exists a contact isotopy $(\varphi_t)$ such that $\varphi_0=\id$, $\varphi_1=\varphi$ and $\varphi_t$ is compactly supported.
The $C^1$-topology on $\Gcont$ is understood as the topology induced
by the strong Whitney $C^1$-topology on diffeomorphisms
as it is described in \cite[Chapter~2]{Hirsch1994}.

One way to define the $C^1$-topology on $\Leg(\Lambda_*)$
is to consider $\Leg(\Lambda_*)$ as a subset of the quotient
$\mathrm{Emb}(\Lambda_*,M)/\mathrm{Diff}(\Lambda_*)$ 
of smooth embedding $\Lambda_*\hookrightarrow M$
by the right-action by diffeomorphisms of $\Lambda_*$.
This quotient being endowed with the topology induced by
the Whitney $C^1$-topology on $\mathrm{Emb}(\Lambda_*,M)$\footnote{$\Lambda_*$ being closed the strong and weak Whitney topologies agree},
the $C^1$-topology of $\Leg(\Lambda_*)$ is the induced topology
as a subset of this quotient.

Let us fix $\Lambda_0\in\Leg(\Lambda_*)$.
According to the Legendrian Weinstein neighborhood theorem,
there exists a neighborhood $W\subset M$ of $\Lambda_0$ that is
contactomorphic to a neighborhood of the $0$-section of
$J^1\Lambda_0$ endowed with the standard contact structure
and that identifies $\Lambda_0$ with the $0$-section.
Every Legendrian submanifold $\Lambda$ that is $C^1$-close to
$\Lambda_0$ is the $1$-jet of a map $f:\Lambda_0\to\R$ which is $C^2$-close to the $0$-map.
Therefore, a base of neighborhoods of the point $\Lambda_0$ in its isotopy class
$\Leg(\Lambda_*)$ is identifiable with the set
described by the balls $B_r := \{ f\in C^\infty(\Lambda_0,\R)\ |\ \| f\|_{C^2} < r
\}$, $0<r<\varepsilon$,
for a small $\varepsilon>0$.
In other words, the topological space $\Leg(\Lambda_*)$ is locally modeled
on the space of smooth maps on $\Lambda_*$ endowed with the $C^2$-topology.
Therefore, it is a locally contractible space and it admits a genuine
universal cover $\uLeg(\Lambda_*)$ that can be formally described as
a set of equivalent classes of paths $\gamma:[0,1]\to\Leg(\Lambda_*)$,
with $\gamma(0)=\Lambda_*$, with the equivalence relation given
by homotopy relative to endpoints.
With this description of $\uLeg(\Lambda_*)$, the cover
$\Pi:\uLeg\to\Leg$ is the evaluation $\gamma\mapsto\gamma(1)$.



When $M$ is closed, following a similar argument applied to the contact graphs
$\mathrm{gr}^\alpha(\psi)\subset M\times M\times\R$ of
contactomorphisms $\psi\in\Gcont(M)$,
the space $\Gcont(M)$ endowed with
the $C^1$-topology is locally modeled on the space of
compactly supported smooth maps $C^\infty(M,\R)$ endowed with
the $C^2$-topology (see \emph{e.g.} \cite[\S 1]{San13}). It is therefore a
locally contractible space
and admits a universal cover $\uGcont(M)$ that can be described
as a space of homotopy classes.
The space $\uGcont$ has a natural group structure making the cover
$\Pi:\uGcont\to\Gcont$ a morphism (coming from composing homotopies
$(\varphi_t)$ and $(\psi_t)$ timewise: $(\varphi_t\circ\psi_t)$).
The action by conjugation of contactomorphisms of $(M,\xi)$ on
$\Gcont$ naturally lift to an action on $\uGcont$:
for $g,x\in\uGcont$, $gxg^{-1}$ only depends on
$\Pi g$ and $x$. In particular, $\Pi^{-1}\{\id\}$ lies in the center of $\uGcont$.

Similarly, when $M$ is open we define $\uGcont$ (resp. $\widetilde{\cont})$ as the set of equivalence classes of $C^1$-continuous paths $\gamma : [0,1]\to \Gcont$ (resp. $\cont$) starting at the identity with the equivalence relation given by homotopy relative to endpoints.

One has a natural map $\conto(M)\times\Leg(\Lambda_*)\to\Leg(\Lambda_*)$
given by $(\varphi,\Lambda)\mapsto \varphi(\Lambda)$.
Let $I$ be an interval containing $0$,
if $(\varphi_t)_{t\in I}$ is a contact isotopy with $\varphi_0(\Lambda_*)=\Lambda_*$,
its elements act naturally on $\uLeg(\Lambda_*)$ by defining
$\varphi_t(\Lambda)$ for $\Lambda\in\uLeg(\Lambda_*)$ and $t\in I$ as the endpoint
of the path lifting $s\mapsto \varphi_{st}(\Pi\Lambda)$, $s\in[0,1]$, and
starting at $\Lambda$.
It induces in particular a continuous
map $\uGcont(M)\times\uLeg(\Lambda_*)\to\uLeg(\Lambda_*)$.

\subsubsection{Isotopies in $\uLeg$ and $\uGcont$ and their Hamiltonian maps}\label{se:uLegIsotopies}
Section~\ref{se:LegendrianIsotopies} can be naturally extended to
Legendrian isotopies of $\uLeg(\Lambda_*)$ and contact isotopies in $\uGcont$. More precisely, a family $(x_t)$ of elements of $\uLeg$  (resp. $\uGcont$) will be called
a Legendrian isotopy  (resp. a contact isotopy) if $(\Pi x_t)$ is a Legendrian isotopy (resp. a contact isotopy).  

The contact Hamiltonian map of a Legendrian isotopy of $\uLeg(\Lambda_*)$ will
designate the contact Hamiltonian map of the projected Legendrian isotopy
on $\Leg(\Lambda_*)$ and similarly for isotopies of $\uGcont$ (see Section \ref{se:LegendrianIsotopies}).

Using the natural action of contact flows on $\uLeg$
discussed above we get immediatly the following extension of Lemma~\ref{lem:isotopyExtension}.

\begin{lem}\label{lem:UisotopyExtension}
    The statement of Lemma~\ref{lem:isotopyExtension} is still true
   if the Legendrian isotopy $(\Lambda_t)$ belongs to $\uLeg(\Lambda_*)$
   for some closed Legendrian submanifold $\Lambda_*\subset M$ instead of $\Leg(\Lambda_*)$.
   In particular, the action of $\uGcont(M)$ on $\uLeg(\Lambda_*)$ is
    transitive.
\end{lem}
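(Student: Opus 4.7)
The plan is to reduce both assertions to Lemma~\ref{lem:isotopyExtension} applied downstairs in $\Leg(\Lambda_*)$ and then transport the conclusion back to $\uLeg(\Lambda_*)$ by unique path lifting. This works because, by the definitions recalled in Section~\ref{se:uLegIsotopies}, a Legendrian isotopy $(\Lambda_t)_{t\in I}$ in $\uLeg(\Lambda_*)$ is one whose projection $(\Pi\Lambda_t)_{t\in I}$ is a Legendrian isotopy in $\Leg(\Lambda_*)$, and its $\alpha$-contact Hamiltonian map is by convention that of the projected isotopy; so the hypotheses on $K$ transfer verbatim to the projected isotopy.

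First I would apply Lemma~\ref{lem:isotopyExtension} to $(\Pi\Lambda_t)$ with the same extension $K$, obtaining $\varphi_t(\Pi\Lambda_0)=\Pi\Lambda_t$ for every $t\in I$. Then, using the definition recalled in Section~\ref{se:topologyLeg}, the element $\varphi_t(\Lambda_0)\in\uLeg(\Lambda_*)$ is by construction the endpoint of the $\Pi$-lift starting at $\Lambda_0$ of the path $s\mapsto\varphi_{st}(\Pi\Lambda_0)$, $s\in[0,1]$. By the previous step this path equals $s\mapsto\Pi\Lambda_{st}$, and the path $s\mapsto \Lambda_{st}$ in $\uLeg(\Lambda_*)$ is itself a $\Pi$-lift of $s\mapsto\Pi\Lambda_{st}$ starting at $\Lambda_0$ and ending at $\Lambda_t$. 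Uniqueness of path lifting for the covering $\Pi$ then forces the two lifts to coincide, yielding $\varphi_t(\Lambda_0)=\Lambda_t$ in $\uLeg(\Lambda_*)$.

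For transitivity, given $\Lambda\in\uLeg(\Lambda_*)$ I would represent it by a path of Legendrians $(\Pi\Lambda_t)_{t\in[0,1]}$ from $\Lambda_*$ to $\Pi\Lambda$, producing a Legendrian isotopy with Hamiltonian map $H$ defined on the compact submanifold $N=\bigcup_t\{t\}\times\Pi\Lambda_t\subset[0,1]\times M$. Slightly adapting the proof of Corollary~\ref{cor:istopyExtension}, I would pick a relatively compact tubular neighborhood $U$ of $N$ with smooth retraction $\pi:U\to N$ and a cut-off $\chi:[0,1]\times M\to[0,1]$ with $\supp\chi\subset U$ compact and $\chi|_N\equiv 1$, and set $K:=\chi\cdot H\circ\pi$. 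This is a compactly supported smooth extension of $H$, so its contact flow $(\varphi_t)$ lies in $\Gcont(M)$ and lifts to $\uGcont(M)$. Applying the first part of the lemma gives $\varphi_1\cdot\Lambda_*=\Lambda$, proving transitivity.

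The only mildly subtle point is arranging compact support when $M$ is open, since Corollary~\ref{cor:istopyExtension} as stated extends $H$ by the constants $\sup H$ and $\inf H$ away from $N$; here I trade those sup/inf normalizations, which are not needed for transitivity, for a genuine compactly supported cut-off. Beyond this bookkeeping, the argument is essentially formal, relying only on the fact that $\Pi:\uLeg(\Lambda_*)\to\Leg(\Lambda_*)$ is a covering of a locally contractible space so that path lifting is unique.
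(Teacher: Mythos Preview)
Your proof is correct and follows the same approach the paper has in mind: the paper simply asserts that the lemma ``immediately'' follows from the natural action of contact flows on $\uLeg$ described just before it, and your argument via Lemma~\ref{lem:isotopyExtension} downstairs plus unique path lifting is precisely the unpacking of that claim. Your added care in the transitivity part---replacing the constant extensions of Corollary~\ref{cor:istopyExtension} by a compactly supported cut-off so that the resulting flow lies in $\Gcont(M)$ when $M$ is open---is a genuine detail the paper glosses over, and you handle it correctly.
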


\subsubsection{Separability of $\Gcont,\uGcont,\Leg$ and $\uLeg$}
The topology of $\Leg$ and $\uLeg$ can
be made into the topology of a length metric space.
An auxiliary contact form $\alpha$ supporting $\xi$ and
an auxiliary Riemannian metric $g$ on $M$ being fixed,
one can define the length metric
\begin{equation}\label{eq:C1LegMetric}
    d_{C^1}(\Lambda_0,\Lambda_1) :=
    \inf_{(\Lambda_t)} \int_0^1 \| H_t\|_{C^2(\Lambda_t)} \ud t,\quad
    \forall \Lambda_0,\Lambda_1\in\Leg \text{ (resp. $\uLeg$)},
\end{equation}
where the infimum is taken over every isotopy $(\Lambda_t)$
from $\Lambda_0$ to $\Lambda_1$, the Hamiltonian of which is
denoted $H_t$, and
$\|\cdot\|_{C^2(\Lambda)}$ is the usual $C^2$-norm on maps
of the Riemannian submanifold $\Lambda\subset (M,g)$.
The equivalence between the topology induced by $d_{C^1}$
and the previously described topology is a consequence of the
equivalence of their base of neighborhoods which comes
from the Hamilton-Jacobi equation (\ref{eq:HamiltonJacobi}).

\begin{rem}
When $M$ is closed, one defines similarly a length metric on $\Gcont$ and $\uGcont$
inducing the $C^1$-topology:
\begin{equation*}\label{eq:C1HamMetric}
   d_{C^1}(\varphi,\psi) :=
   \inf_{(H_t)} \int_0^1 \| H_t\|_{C^2(M,g)} \ud t,\quad
    \forall \varphi,\psi\in\Gcont
    \text{ (resp. $\uGcont$)},
\end{equation*}
where the infimum is taken over every Hamiltonian map $(H_t)_{t\in[0,1]}$
generating a flow from $\id$ to $\psi\circ\varphi^{-1}$.
\end{rem}

The following lemmas will be useful in order to construct time-functions.

\begin{lem}\label{lem:legseparable}
    Let $(M,\xi)$ be a contact manifold and $\Lambda_*$ be a closed Legendrian submanifold. The spaces $\Leg(\Lambda_*)$ and $\uLeg(\Lambda_*)$ endowed with the $C^1$-topology are both separable.  
\end{lem}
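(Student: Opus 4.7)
The plan is to reduce the statement to two ingredients: the separability of a concrete local model, and a standard topological fact that a connected, locally separable metrizable space is separable.

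First, I would recall that $C^\infty(\Lambda_*,\R)$ endowed with the $C^2$-topology is a separable metric space. Indeed, it sits as a subspace of the separable Banach space $C^2(\Lambda_*,\R)$ (the closed manifold $\Lambda_*$ admits a countable dense subset of $C^2$-maps via local polynomial approximation with rational coefficients), and a subspace of a separable metric space is separable.

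Second, I would combine this with the local model provided by the Legendrian Weinstein neighborhood theorem, as recalled in Section~\ref{se:topologyLeg}: every $\Lambda_0\in\Leg(\Lambda_*)$ admits a $C^1$-neighborhood in $\Leg(\Lambda_*)$ homeomorphic to an open subset of $C^\infty(\Lambda_0,\R)$ with the $C^2$-topology. Upon fixing a diffeomorphism $\Lambda_0\simeq\Lambda_*$, this produces a separable open neighborhood of $\Lambda_0$ in $\Leg(\Lambda_*)$. For $\uLeg(\Lambda_*)$ the covering map $\Pi$ is a local homeomorphism, so the same argument yields separable open neighborhoods of every point. Thus both $\Leg(\Lambda_*)$ and $\uLeg(\Lambda_*)$ are locally separable.

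Third, I would observe that $\Leg(\Lambda_*)$ is path-connected by its very definition (any element is Legendrian isotopic to $\Lambda_*$), and $\uLeg(\Lambda_*)$ is path-connected as the universal cover of a path-connected, locally path-connected space. Both spaces carry the metric $d_{C^1}$ introduced just above the lemma, which induces the $C^1$-topology, so they are metrizable. It then suffices to invoke the standard fact that a connected, locally separable metrizable space is separable: using paracompactness of metric spaces one extracts a locally finite open refinement of the cover by separable neighborhoods, and a chain-of-intersections argument exploiting connectedness shows that only countably many elements of this refinement are needed to cover the space, whence a countable union of separable sets yields a countable dense subset.

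The main potential obstacle is the topological folklore statement in the last step, but since it is classical (and amounts to observing that a connected topological manifold modelled on a separable space is itself separable), the substantive content of the proof lies in identifying the local model via the Weinstein theorem; everything else is a routine transfer of separability from $C^2(\Lambda_*,\R)$ to $\Leg(\Lambda_*)$ and then to its universal cover $\uLeg(\Lambda_*)$.
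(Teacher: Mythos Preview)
Your proof is correct but follows a genuinely different route from the paper's. The paper argues globally for $\Leg(\Lambda_*)$: it realizes $\Leg(\Lambda_*)$ as a subset of the quotient $\mathrm{Emb}(\Lambda_*,M)/\mathrm{Diff}(\Lambda_*)$, observes that the quotient map is open, and then uses the Whitney embedding theorem to view $\mathrm{Emb}(\Lambda_*,M)$ as a subspace of the separable metrizable space $C^1(\Lambda_*,\R^N)$. For $\uLeg(\Lambda_*)$ it upgrades separability of $\Leg(\Lambda_*)$ to second-countability (via metrizability) and invokes the Poincar\'e--Volterra lemma. By contrast, you work locally via the Weinstein neighborhood model already set up in Section~\ref{se:topologyLeg}, establish local separability of both $\Leg$ and $\uLeg$ in one stroke, and then pass from local to global separability using connectedness and metrizability. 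Your approach has the virtue of treating $\Leg$ and $\uLeg$ uniformly and of using only ingredients the paper has already developed; the paper's approach is slightly more direct for $\Leg$ (no need to invoke the Weinstein model) but requires the separate Poincar\'e--Volterra step for the cover. Both ultimately rest on the same ``connectedness plus countable local structure'' principle, packaged differently.
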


\begin{proof}
    Recall that the topological space $\Leg(\Lambda_*)$ is seen as a subset of the base space of the projection $\mathrm{Emb}(\Lambda_*,M)\to \mathrm{Emb}(\Lambda_*,M)/\mathrm{Diff}(\Lambda_*)$. The latter projection is open since $\mathrm{Diff}(\Lambda_*)$ is a topological group acting continuously on $\mathrm{Emb}(\Lambda_*,M)$, when both spaces are endowed with the $C^1$-topology. Therefore the separability of $\Leg(\Lambda_*)$ would follow from the separability of $\mathrm{Emb}(\Lambda_*,M)$. To prove the separability of $\mathrm{Emb}(\Lambda_*,M)$, note that by Whitney embedding theorem one can see $\mathrm{Emb}(\Lambda_*,M)$ as a subset of $C^1(\Lambda_*,\R^N)$ for $N$ large enough. Moreover $C^1(\Lambda_*,\R^N)$ endowed with the $C^1$-topology is metrizable and separable (\cite[Chapter 2.1]{Hirsch1994}); therefore, as a subspace of such a topological space, $\mathrm{Emb}(\Lambda_*,M)$ is also separable, which concludes the proof of the separability of $\Leg(\Lambda_*)$. 

By \eqref{eq:C1LegMetric} and what we have just  said $\Leg(\Lambda_*)$ is metrizable and separable endowed with the $C^1$-topology, therefore it is second-countable. Thus  the Poincaré-Volterra Lemma allows to conclude that $\uLeg(\Lambda_*)$ is also second-countable and thus separable.\end{proof}

\begin{lem}\label{lem:contseparable}
   Let $(M,\xi)$ be a  closed contact manifold then the spaces $\Gcont$ and $\uGcont$ endowed with the $C^1$-topology are separable.
\end{lem}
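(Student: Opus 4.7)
The plan is to follow exactly the strategy of Lemma~\ref{lem:legseparable}: first establish separability of $\Gcont$ by realizing it as a subspace of a known separable metrizable space, then bootstrap to the universal cover $\uGcont$ via the Poincaré-Volterra Lemma.

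For the first step, I would exploit that $M$ is closed. By Whitney's embedding theorem, $M$ embeds as a smooth submanifold of some $\R^N$, which induces an inclusion $C^1(M,M)\hookrightarrow C^1(M,\R^N)$ continuous for the Whitney $C^1$-topologies (the strong and weak $C^1$-topologies on $C^1(M,M)$ coincide because $M$ is compact as the source). The target space $C^1(M,\R^N)$ is metrizable and separable for the Whitney $C^1$-topology by \cite[Chapter~2.1]{Hirsch1994}, so every subset is separable. In particular, the inclusions $\Gcont \subset \mathrm{Diff}(M)\subset C^1(M,M)\subset C^1(M,\R^N)$ imply that $\Gcont$ is separable and metrizable in the $C^1$-topology.

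For the second step, I would exactly reproduce the Poincaré-Volterra argument used for $\uLeg$. Since $\Gcont$ is metrizable and separable, it is second-countable. As recalled in Section~\ref{se:topologyLeg}, the space $\Gcont$ is locally modeled on $C^\infty(M,\R)$ endowed with the $C^2$-topology, hence is locally path-connected and locally simply connected (in fact locally contractible). The covering $\Pi:\uGcont\to\Gcont$ therefore satisfies the hypotheses of the Poincaré-Volterra Lemma, which yields that $\uGcont$ is also second-countable, and hence separable.

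No serious obstacle is expected: the argument is essentially parallel to Lemma~\ref{lem:legseparable}, with $\mathrm{Emb}(\Lambda_*,M)/\mathrm{Diff}(\Lambda_*)$ replaced by $\mathrm{Diff}(M)$ and Whitney's embedding trick reused to land inside a separable space of vector-valued $C^1$-maps. The only minor point requiring a sentence of justification is that the strong Whitney $C^1$-topology used on $\mathrm{Diff}(M)$ coincides with the weak one, which is immediate from compactness of $M$.
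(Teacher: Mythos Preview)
Your proposal is correct and follows essentially the same approach as the paper: embed $\Gcont$ into $C^1(M,\R^N)$ via Whitney's theorem to obtain separability and metrizability, then apply the Poincar\'e--Volterra Lemma to deduce separability of $\uGcont$. The paper's proof is just a more compressed version of what you wrote, without the extra remarks on the coincidence of strong and weak Whitney topologies or the local contractibility.
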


\begin{proof}
    Since $\Gcont$ can be seen as a subset of $C^1(M,\R^N)$ for $N$ large enough by Whitney embedding theorem, one concludes as previously that $\Gcont$ is separable and metrizable. Applying the Poincaré-Volterra Lemma we deduce the separability of $\uGcont$.
\end{proof}

\subsection{Hofer type pseudo-metrics}
\label{se:Hofermetrics}

In this section, we recall the definitions of the standard generalization
of the Hofer metric of symplectic geometry to the contact setting.

Let us first recall that a \emph{pseudo-distance} $d$ on a set $X$ is
a symmetric map $d:X\times X\to [0,+\infty)$ vanishing on the diagonal
and satisfying the triangle inequality.
The pseudo-distance $d$ is \emph{non-degenerate} if $d$ is
a genuine distance, \emph{i.e.} it only vanishes on the diagonal.

We denote $\dSCH^\alpha$ the Shelukhin-Chekanov-Hofer pseudo-distance on
$\Leg$ (resp. $\uLeg$):
\begin{equation}\label{eq:dSCH}
    \dSCH^\alpha(\Lambda_0,\Lambda_1) := \inf_{(H_t)} \int_0^1 \max |H_t|\ud t,
\end{equation}
where the infimum is taken over Hamiltonian maps $H_t:\Lambda_t\to\R$,
$t\in[0,1]$, generating a Legendrian isotopy $(\Lambda_t)_{t\in[0,1]}$
from $\Lambda_0$ to $\Lambda_1$.
The Hofer oscillation pseudo-distance is defined similarly with
$\osc(H_t) := \max H_t - \min H_t$ in place of $|H_t|$:
\begin{equation}\label{eq:dHosc}
    \dHosc^\alpha(\Lambda_0,\Lambda_1) := \inf_{(H_t)} \int_0^1 \osc (H_t)\ud t.
\end{equation}
Let us remark that the Hofer oscillation pseudo-distance is clearly
degenerate as $\dHosc^\alpha(\Lambda,\phi_t^\alpha\Lambda) = 0$
for all $\Lambda$ and all $t\in\R$.

The pseudo-distance $\dSCH^\alpha$ was first studied by Rosen-Zhang in
\cite{RosenZhang2020}
where it was more generally defined on non-Legendrian subsets
(the equivalence of their definition with ours comes from
Corollary~\ref{cor:istopyExtension}).
This pseudo-distance is known to be non-degenerate in multiple
cases including when $\Leg$ is orderable \cite[Theorem~5.2]{Hedicke2022}
(see also Corollary~\ref{cor:dspec-nondeg}).
However, this pseudo-distance can also be degenerate on unorderable
$\Leg$ \cite{Cant2023}.
In the case it is degenerate on $\Leg$, $\dSCH^\alpha$ is actually
identically zero, according to the generalization of
the Chekanov's dichotomy proven by Rosen-Zhang \cite[Theorem~1.10]{RosenZhang2020}.

Given a group $G$, a \emph{pseudo-(group-)norm} $\nu$ on $G$ is
a map $G\to [0,+\infty)$ satisfying the triangle inequality
$\nu(gh)\leq \nu(g) + \nu(h)$ for all $g,h\in G$ and
the invariance under the inversion: $\nu(g)=\nu(g^{-1})$, $g\in G$.
It is non-degenerate if it only vanishes at the neutral element,
in which case $\nu$ is called a \emph{(group-)norm}.
Any group pseudo-norm $\nu$ induces a right-invariant and a left-invariant
pseudo-distance on $G$ by defining either
$(g,h)\mapsto \nu(gh^{-1})$ or $(g,h)\mapsto \nu(h^{-1}g)$.
The non-degeneracy of the induced metrics is equivalent
to the non-degeneracy of $\nu$ while
the bi-invariance of the induced pseudo-distance (which are then equal)
is equivalent to the conjugation-invariance of $\nu$.

Given a contact form $\alpha$ supporting the contact structure,
we denote $\NSH{\cdot}^\alpha$ the associated Shelukhin-Hofer pseudo-norm on
$\Gcont$
(resp. $\uGcont$):
\begin{equation}\label{eq:NSH}
    \NSH{\varphi}^\alpha := \inf_{(H_t)} \int_0^1 \max |H_t|\ud t,\quad
    \forall \varphi\in\Gcont \text{ (resp. $\uGcont$)},
\end{equation}
where the infimum is taken over Hamiltonian maps generating
a contact flow $(\varphi_t)_{t\in[0,1]}$ joining $\id$ to $\varphi$.
The Hofer oscillation pseudo-norm is defined similarly with
$\osc(H_t) := \max H_t - \min H_t$ in place of $|H_t|$:
\begin{equation}\label{eq:NHosc}
    \NHosc{\varphi}^\alpha := \inf_{(H_t)} \int_0^1 \osc (H_t)\ud t.
\end{equation}
As before, the Hofer oscillation pseudo-norm is clearly
degenerate as it vanishes on $\{ \phi_t^\alpha\}_t$.
We denote $\dSH^\alpha$ and $\dHosc^\alpha$ the respectively induced
right-invariant pseudo-distances.
These pseudo-norms were studied in depth by Shelukhin in \cite{shelukhin}.
He proved that $\NSH{\cdot}^\alpha$ is always non-degenerate on
$\Gcont$ using an energy-capacity inequality intimately linked to
non-squeezing phenomena.

In contrast to their symplectic counterparts, neither of these pseudo-distances
is invariant by the left-action of contactomorphisms, but
the following compatibility property for $g\in\cont(M,\xi)$,
$\Lambda_0,\Lambda_1\in\Leg$ and $\varphi,\psi\in\Gcont$ holds:
\begin{equation*}
    \dSCH^\alpha(g\Lambda_0,g\Lambda_1) =
    \dSCH^{g^*\alpha}(\Lambda_0,\Lambda_1) \text{ and }
    \dSH^\alpha(g\varphi,g\psi) = \dSH^{g^*\alpha}(\varphi,\psi).
\end{equation*}
The same identities hold by considering $g,\Lambda_0,\Lambda_1,\varphi$ and
$\psi$ in the universal cover of their respective spaces.
We refer to Remark~\ref{rem:invariantStructures} below for a discussion on
the apparent lack of invariance of these contact metrics.
Topologies induced by these pseudo-distances are coarser than the $C^1$-topology.

\subsection{Contact orderability}\label{se:contactorderability}

An isotopy $(\Lambda_t)$ in $\Leg$ or $\uLeg$ is said to be
non-negative (resp. positive) if its contact Hamiltonian (see Sections \ref{se:LegendrianIsotopies} and \ref{se:uLegIsotopies})
for some, and thus for any, supporting contact form is non-negative (resp. positive).
Given $\Lambda_0,\Lambda_1\in\Leg$ or $\uLeg$, we write
$\Lambda_0\cleq\Lambda_1$ (resp. $\Lambda_0 \cll\Lambda_1$)
if there exists a non-negative (resp. positive) isotopy $(\Lambda_t)$ joining $\Lambda_0$
and $\Lambda_1$.

One defines similarly relations $\cleq$ and $\cll$ on $\Gcont$ and $\uGcont$. More precisely, a contact isotopy $(\varphi_t)\subset \Gcont$ or $\uGcont$, is said to be non-negative (resp. positive) if its contact Hamiltonian (as described in Sections \ref{se:LegendrianIsotopies} and \ref{se:uLegIsotopies}) for some, and thus for any, supporting contact form is non-negative (resp. positive); and for any $\varphi,\psi\in\Gcont$ or $\uGcont$, we write $\varphi\cleq\psi$ (resp. $\varphi\cll \psi$) if there exists a non-negative (resp. positive) contact isotopy $(\varphi_t)\subset\Gcont$ or $\uGcont$ starting at $\varphi$ and ending at $\psi$. 


The relation $\cleq$ is reflexive and transitive (\emph{i.e.} it
is a pre-order), whereas $\cll$ is only transitive. 
As already remarked in \cite{EP00} on $\uGcont$, these relations on $\Gcont$ and $\uGcont$ satisfy properties of invariance with respect to the action of
contactomorphisms: we have for all $x_1,y_1,x_2,y_2\in \Gcont$ (resp. $\uGcont$)

\begin{equation}\label{eq:bi-invariant property}
    \begin{cases}
        x_1 \cleq y_1\text{ and } x_2 \cleq y_2 \Rightarrow
        x_1x_2 \cleq y_1y_2,\\
        x_1 \cll y_1\text{ and } x_2 \cleq y_2 \Rightarrow
        x_1x_2 \cll y_1y_2,\\
        x_1 \cleq y_1\text{ and } x_2 \cll y_2 \Rightarrow
        x_1x_2 \cll y_1y_2.\\
    \end{cases}
\end{equation}
\begin{prop}\label{prop:bi-inv}
The implications stated in \eqref{eq:bi-invariant property}  still hold formally when $x_1,y_1\in\Gcont$ (resp. $\uGcont)$ and $x_2,y_2\in\Leg$ (resp. $\uLeg$).
\end{prop}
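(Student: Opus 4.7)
My plan is to handle the three implications simultaneously via a single Hamiltonian computation. Given $x_1\cleq y_1$ in $\Gcont$ and $x_2\cleq y_2$ in $\Leg$, pick a non-negative contact isotopy $(\varphi_t)_{t\in[0,1]}$ from $x_1$ to $y_1$ and a non-negative Legendrian isotopy $(\Lambda_t)_{t\in[0,1]}$ from $x_2$ to $y_2$, and consider the path
\begin{equation*}
    \Lambda'_t := \varphi_t(\Lambda_t),\qquad t\in[0,1],
\end{equation*}
which is a Legendrian isotopy from $\varphi_0(\Lambda_0)=x_1x_2$ to $\varphi_1(\Lambda_1)=y_1y_2$. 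I will then show that the $\alpha$-contact Hamiltonian of $(\Lambda'_t)$ is the sum of the two input Hamiltonians up to a positive conformal factor, which immediately takes care of the three implications.

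The main technical step is the Hamiltonian computation. Fix a supporting contact form $\alpha$, denote by $H^\varphi$ and $H^\Lambda$ the $\alpha$-contact Hamiltonians of $(\varphi_t)$ and $(\Lambda_t)$, and write $\varphi_t^*\alpha=e^{f_t}\alpha$ for the conformal factor. Pick a parametrization $j_0:[0,1]\times\Lambda_0\to M$ of $(\Lambda_t)$ and set $j(t,x):=\varphi_t(j_0(t,x))$, which parametrizes $(\Lambda'_t)$. Letting $X_t$ be the contact vector field generating $(\varphi_t)$, the chain rule gives
\begin{equation*}
    \partial_t j(t,x)=X_t(j(t,x))+\mathrm{d}\varphi_t\bigl(\partial_t j_0(t,x)\bigr).
\end{equation*}
Applying $\alpha$ and using $\alpha(X_t)=H^\varphi_t$ together with $\varphi_t^*\alpha=e^{f_t}\alpha$ yields
\begin{equation*}
    H^{\Lambda'}_t(j(t,x))=H^\varphi_t(j(t,x))+e^{f_t(j_0(t,x))}H^\Lambda_t(j_0(t,x)).
\end{equation*}
Since $e^{f_t}>0$ everywhere, the sign of $H^{\Lambda'}_t$ is controlled by the signs of $H^\varphi_t$ and $H^\Lambda_t$: if both are $\geq 0$ the sum is $\geq 0$, and if one of them is $>0$ while the other is $\geq 0$, the sum is $>0$. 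This delivers the three implications in the $(\Gcont,\Leg)$ setting.

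For the universal cover case, the isotopies $(\varphi_t)\subset\uGcont$ and $(\Lambda_t)\subset\uLeg$ are chosen as lifts, and by definition of the action $\uGcont\times\uLeg\to\uLeg$ recalled in Section~\ref{se:topologyLeg} the path $t\mapsto\varphi_t\cdot\Lambda_t$ is a continuous lift of $t\mapsto\Pi(\varphi_t)(\Pi(\Lambda_t))$ starting at $x_1x_2$ and ending at $y_1y_2$. Since non-negativity (resp. positivity) of an isotopy in $\uLeg$ is defined via its projection in $\Leg$, and the Hamiltonian computation above applies verbatim to the projected path, the same sign analysis concludes. The only subtle point, which is really the heart of the argument, is the Hamiltonian formula; the rest is bookkeeping.
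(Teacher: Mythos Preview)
Your proof is correct and takes a genuinely different route from the paper's.

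The paper proceeds indirectly: it first invokes the Legendrian isotopy extension theorem (Lemma~\ref{lem:isotopyExtension}) to establish the equivalence $x\cleq y \Leftrightarrow \exists g\in\Gcont$ with $\id\cleq g$ and $gx=y$, then deduces left-invariance $x\cleq y \Leftrightarrow \varphi x\cleq\varphi y$, and finally combines these with the already known bi-invariance on $\Gcont$ to conclude. Your approach bypasses all of this by directly producing the required Legendrian isotopy $\Lambda'_t=\varphi_t(\Lambda_t)$ and computing its Hamiltonian from first principles. The formula $H^{\Lambda'}_t = H^\varphi_t\circ j + e^{f_t\circ j_0}\,H^\Lambda_t\circ j_0$ is correct and immediately settles all three implications at once, whereas the paper only writes out the first and leaves the strict ones to the reader. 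Your argument is more elementary in that it does not rely on the isotopy extension theorem; what the paper's approach buys instead is the intermediate characterization of $\cleq$ on $\Leg$ in terms of $\cleq$ on $\Gcont$, which is conceptually clean even if not strictly needed here.
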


\begin{proof}
We will only prove the first of the three implications stated in \eqref{eq:bi-invariant property} since the proofs of the other implications follow the same lines. 

First remark that by applying Lemma \ref{lem:isotopyExtension} (resp. Lemma \ref{lem:UisotopyExtension}), for any $x,y\in\Leg$ (resp. $\uLeg$), we have the following equivalence
 \begin{equation}\label{eq:invleg1}
     x\cleq y \Leftrightarrow \text{ there exists } g\in\Gcont \text{ (resp. } \uGcont) \text{ such that }\id\cleq g \text{ and }gx=y. 
   \end{equation}

With that in hand, let us show that the following equivalence holds
 \begin{equation}\label{eq:invleg2}
     x\cleq y \Leftrightarrow \varphi x\cleq \varphi y \text{ for all } \varphi\in\Gcont \text{ (resp. } \uGcont).
   \end{equation}
   The implication $\Leftarrow$ being obvious, we prove only the implication $\Rightarrow$. To start, suppose $x\cleq y$. Then by \eqref{eq:invleg1} there exists $g\in\Gcont$ (resp. $\uGcont$) such that $\id\cleq g$ and $g x=y$. Moreover by \eqref{eq:bi-invariant property} $\id\cleq \varphi g\varphi^{-1}$ for any $\varphi\in\Gcont$ (resp. $\uGcont$). Therefore applying the converse direction of \eqref{eq:invleg1} to $\varphi x$ we get that $\varphi x \cleq \varphi g\varphi^{-1}(\varphi x)=\varphi y$ which proves \eqref{eq:invleg2}.

   Finally take any $\varphi,\psi\in\Gcont$ (resp. $\uGcont$) such that $\varphi\cleq \psi$ and any $x,y\in\Leg$ (resp. $\uLeg$) satisfying $x\cleq y$. So by definition $\id\cleq \varphi^{-1}\psi$ which implies by \eqref{eq:invleg1} that $x\cleq \varphi^{-1}\psi x$ and by \eqref{eq:invleg2} that $\varphi x\cleq\psi x$. Using \eqref{eq:invleg1} again, $\psi x\cleq \psi y$ and so by transitivity $\varphi x\cleq \psi y$ which concludes the proof. \end{proof}

A space $\Leg$, $\uLeg$, $\Gcont$ or $\uGcont$ is said to
be \emph{orderable} if and only if $\cleq$ is antisymmetric
(\emph{i.e.} $\cleq$ is a partial order) which is equivalent that
they do not contain a non-negative and non constant loop.
Obviously, the orderability of $\Leg$ (resp. $\Gcont$) implies the
orderability of $\uLeg$ (resp. $\uGcont$), but the orderability
of the latter is much more common (see Examples~\ref{ex:orderable} below).
The notion of orderability has been introduced by Eliashberg-Polterovich
\cite{EP00} and has then been investigated by numerous authors.
A short account on these investigations is given by 
the paragraph Examples~\ref{ex:orderable} below.

\begin{rem}[On the terminology]
    Depending on the authors, the actual meaning of orderability
    may differ.
    When a contact manifold is referred to as orderable, it seems to always
    mean that either $\Gcont$ or $\uGcont$ is orderable.
    Most of the time, it means that $\uGcont$ is orderable,
    following the initial focus on $\uGcont$
    put by Eliashberg-Polterovich \cite{San11,CCR,Nakamura2023}.
    In \cite{CCR,Liu2020}, a contact manifold $(M,\xi)$ is called
    strongly orderable if $\uLeg(\Delta\times\{ 0\})$ is
    orderable (see Example~\ref{ex:orderable}.\ref{ex:it:SH}) but in
    \cite{CasPre2016,Nakamura2023} it
    means that $\Gcont$ is orderable.
    Chernov-Nemirovski call $\Leg$ (resp. $\Gcont$) universally orderable
    when $\uLeg$ (resp. $\uGcont$) is orderable \cite{chernovnemirovski1}.
\end{rem}

The following lemmata will be useful.

\begin{lem}[{\cite[Proposition~2.1.B]{EP00},
    \cite[Proposition~4.5]{chernovnemirovski2}}]\label{lem:caracteriseNonOrderability}
    Let $O$ be either $\Leg$ or $\uLeg$.
    The space $O$ is orderable if and only if there does not
    exist any positive loop among isotopies of $O$.
    The same is true for $O$ being either $\Gcont$ or
    $\uGcont$ for a \emph{closed} contact manifold.
\end{lem}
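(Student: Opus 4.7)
The easy direction is clear: a positive loop $(x_t)_{t\in[0,1]}$ has strictly positive Hamiltonian, so it is non-constant, and any $t_0\in(0,1)$ with $x_{t_0}\neq x_0=x_1$ yields $x_0\cleq x_{t_0}\cleq x_0$ with $x_0\neq x_{t_0}$, contradicting antisymmetry of $\cleq$.

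For the converse, suppose $O$ is non-orderable. Then there exist $x\neq y$ in $O$ with $x\cleq y$ and $y\cleq x$; concatenating the two witnessing non-negative isotopies produces a non-constant non-negative loop $(w_t)$ in $O$, with Hamiltonian $H\geq 0$ and $H\not\equiv 0$. By continuity, $H$ is bounded below by some $c>0$ on a spacetime box $(a,b)\times V\subset [0,1]\times M$ (or $(a,b)\times V\subset [0,1]\times \Lambda_*$ in the Legendrian cases). The plan is to combine finitely many spatial and temporal translates of $(w_t)$ into a single positive loop.

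For the group cases ($\Gcont$ and $\uGcont$ on closed $M$), spatial translates of the positivity region are produced by conjugation: for any $\varphi\in\uGcont$, the loop $(\varphi w_t\varphi^{-1})$ is again non-negative (its Hamiltonian is $H_t\circ \Pi(\varphi)^{-1}$ multiplied by the strictly positive conformal factor of $\Pi(\varphi)^{-1}$), and its positivity region becomes $\Pi(\varphi)(V)\times (a,b)$. Since $M$ is compact, finitely many conjugates suffice to cover $M$. Temporal translates are obtained via $(w_{t+s}\,w_s^{-1})$ for $s\in [0,1]$, which shifts the positivity time-interval; finitely many such shifts cover $[0,1]$. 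Taking the product in $\uGcont$ of all the resulting loops at each time $t$ yields a new loop at $\id$ whose Hamiltonian, by the contact Hamiltonian composition rule, is a sum of conjugated and conformally rescaled copies of $H$; at every spacetime point at least one summand is strictly positive by the covering property, so the composed loop is positive.

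The Legendrian cases $\Leg$ and $\uLeg$ are handled in parallel by lifting $(w_t)$ to an ambient contact isotopy via Corollary~\ref{cor:istopyExtension} (keeping the global Hamiltonian non-negative), translating and composing as above, and then acting on the Legendrian; the relevant bi-invariance is Proposition~\ref{prop:bi-inv}, and one only needs strict positivity of the composed Hamiltonian along the moving Legendrian. The main obstacle is the bookkeeping in the composition step: contact Hamiltonian composition involves conformal rescalings and time-dependent conjugations, and one must check that the individual positivity regions, once conjugated and rescaled, still collectively cover the relevant spacetime in the composed loop; however, since all rescaling factors are strictly positive and all contributions are non-negative, strict positivity at a point is inherited from any single dominant summand, which is exactly what the covering construction guarantees.
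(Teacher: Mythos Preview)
The paper does not give its own proof here (it cites Eliashberg--Polterovich and Chernov--Nemirovski), but the conjugation trick is exactly what appears in the paper's proof of Lemma~\ref{lem:positivity}. Your outline has the right ingredients, yet the composition step has a real gap. When you multiply loops $u^{(1)}_t\cdots u^{(N)}_t$ based at $\id$, the $j$-th summand in the Hamiltonian of the product is of the form $\mu^{(j)}_t\cdot K^{(j)}_t\circ(\Psi^{(j)}_t)^{-1}$ with $\Psi^{(j)}_t=u^{(1)}_t\cdots u^{(j-1)}_t$, so its positivity region at time $t$ is $\Psi^{(j)}_t(\{K^{(j)}_t>0\})$, not $\{K^{(j)}_t>0\}$. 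Your spacetime covering is by the latter sets, so your claim that ``the covering construction guarantees'' positivity of some summand at every $(t,p)$ does not follow: the diffeomorphisms $\Psi^{(j)}_t$ move the spatial positivity regions in an uncontrolled way for $t\neq 0$. (There is also a minor slip: your concatenated loop is based at $x$, not at $\id$, so the conjugates $\varphi w_t\varphi^{-1}$ are not based at $\id$; right-translate first.)

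The repair is to separate the two coverings. After right-translating to $\id$ and time-shifting so that $H_0>0$ on an open $V\subset M$, compose only the spatial conjugates $\varphi_i w_t\varphi_i^{-1}$; at $t=0$ every $\Psi^{(j)}_0=\id$, so the summand positivity regions are exactly the $\varphi_i(V)$ and finitely many cover the compact $M$. The product loop then has Hamiltonian positive on all of $M$ at $t=0$, hence on $[0,\delta]$ by continuity. \emph{Now} compose temporal shifts $w'_{t+s_k}(w'_{s_k})^{-1}$: each factor has Hamiltonian positive on all of $M$ whenever $t+s_k\in[0,\delta]$, and ``positive on all of $M$'' is preserved under precomposition by any $\Psi^{(j)}_t$, so choosing finitely many $s_k$ covering $[0,1]$ makes the final product positive for every $t$. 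In the Legendrian case $M$ need not be compact; following the proof of Lemma~\ref{lem:positivity} one takes conjugators $g_i$ fixing $\Lambda_0$ and covers the compact $\Lambda_0$ by the $g_i(U)$ rather than all of $M$.
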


\begin{lem}\label{lem:mixedTransitivity}
    Let $O$ be either $\Leg$, $\uLeg$, $\Gcont$ or $\uGcont$
    for a given contact manifold $(M,\xi)$ and possibly a
    closed Legendrian submanifold $\Lambda_*$.
    Then
    \begin{equation*}
        \begin{cases}
            x\cleq y \text{ and } y\cll z \Rightarrow
            x\cll z,\\
            x\cll y \text{ and } y\cleq z \Rightarrow
            x\cll z,
        \end{cases}
        \quad \forall x,y,z\in O.
    \end{equation*}
\end{lem}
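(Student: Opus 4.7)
All four cases will reduce to the bi-invariance properties \eqref{eq:bi-invariant property}. For the group cases $O=\Gcont$ or $\uGcont$, the first line of \eqref{eq:bi-invariant property} applied to $(x,y)$ and $(x^{-1},x^{-1})$ (combined with reflexivity of $\cleq$) yields the tautology $x\cleq y\Leftrightarrow\id\cleq yx^{-1}$, and the analogous application of the second line yields $x\cll y\Leftrightarrow\id\cll yx^{-1}$. Hence $x\cleq y$ and $y\cll z$ translate into $\id\cleq yx^{-1}$ and $\id\cll zy^{-1}$, and the second line of \eqref{eq:bi-invariant property} applied to $(x_1,y_1)=(\id,zy^{-1})$ and $(x_2,y_2)=(\id,yx^{-1})$ gives $\id\cll(zy^{-1})(yx^{-1})=zx^{-1}$, that is $x\cll z$. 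The symmetric implication is identical, using the third line of \eqref{eq:bi-invariant property} instead.

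For the Legendrian cases $O=\Leg$ or $\uLeg$, I would first establish the positive analogue of equivalence \eqref{eq:invleg1} from the proof of Proposition~\ref{prop:bi-inv}, namely
\[
x\cll y\ \Longleftrightarrow\ \exists\, g\in\Gcont\ (\text{resp.\ }\uGcont),\ \id\cll g\text{ and }gx=y.
\]
The $(\Leftarrow)$ direction is immediate since restricting a strictly positive contact Hamiltonian to the orbit of $x$ yields a strictly positive Legendrian Hamiltonian. For $(\Rightarrow)$, a positive Legendrian Hamiltonian $H$ on the \emph{compact} set $\bigcup_t\{t\}\times\Lambda_t$ satisfies $\inf H>0$ by compactness of $\Lambda_*$; Corollary~\ref{cor:istopyExtension} (resp.\ Lemma~\ref{lem:UisotopyExtension}) then furnishes a smooth extension $K$ with $\inf K=\inf H>0$, which is strictly positive and whose flow provides the desired $g$.

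With this equivalence in hand, the Legendrian cases reduce directly to the group cases: given $x\cleq y$ and $y\cll z$ in $\Leg$ (resp.\ $\uLeg$), lift to $g\cgeq\id$ and $h\cgg\id$ in $\Gcont$ (resp.\ $\uGcont$) with $gx=y$ and $hy=z$; the group case yields $\id\cll hg$, and evaluating a positive contact isotopy from $\id$ to $hg$ at $x$ produces a positive Legendrian isotopy from $x$ to $hgx=z$, whence $x\cll z$. The remaining implication is verbatim symmetric. The only genuinely nontrivial step is the positive strengthening of \eqref{eq:invleg1}, and this rests on the compactness of $\Lambda_*$ together with the infimum-preserving extension of Corollary~\ref{cor:istopyExtension}.
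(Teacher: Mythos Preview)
Your reduction to the bi-invariance properties \eqref{eq:bi-invariant property} is clean and correct for $O=\Gcont$ and $\uGcont$. For the Legendrian cases, however, there is a genuine gap when the ambient manifold $M$ is open.

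The problem is with your positive analogue of \eqref{eq:invleg1}. You claim that $x\cll y$ in $\Leg$ implies the existence of $g\in\Gcont$ with $\id\cll g$ and $gx=y$, using Corollary~\ref{cor:istopyExtension} to extend the positive Legendrian Hamiltonian $H$ to a contact Hamiltonian $K$ with $\inf K=\inf H>0$. But when $M$ is non-compact, a strictly positive contact Hamiltonian cannot be compactly supported, so the time-one map of its flow does not lie in $\Gcont=\contoc(M,\xi)$. In fact the relation $\id\cll g$ is \emph{vacuous} in $\Gcont$ on an open $M$: no compactly supported contact isotopy has everywhere positive Hamiltonian. So the lifting step, and hence the reduction to the group case, breaks down precisely in the setting the paper cares about most (the Legendrian selectors are developed with no closedness assumption on $M$).

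The paper's proof avoids this by never leaving the Legendrian world: it concatenates the two isotopies into a non-negative Legendrian isotopy $(\Lambda_t)_{t\in[0,2]}$ positive on $(1,2]$, and perturbs it by the Reeb flow via $\uLambda^\varepsilon_t:=\phi^\alpha_{\varepsilon v(t)}\Lambda_t$ for a function $v$ with $v(0)=v(2)=0$ and $v'>0$ on $[0,3/2]$. The Reeb flow is not compactly supported, but that is irrelevant since only the induced \emph{Legendrian} isotopy matters; its Hamiltonian is $\geq\varepsilon v'(t)>0$ on $[0,3/2]$, and for small $\varepsilon$ it stays positive on $[3/2,2]$ by $C^1$-openness of positivity. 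When $M$ is closed your argument is a legitimate alternative route, but the Reeb-perturbation trick is what makes the lemma go through in full generality.
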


\begin{proof}
    Let us prove the statement for $O=\Leg$, the proof being
    similar for the other cases.
    Let us assume $\Lambda_0,\Lambda_1,\Lambda_2\in\Leg$
    are such that $\Lambda_0\cleq\Lambda_1$ and
    $\Lambda_1\cll\Lambda_2$; we want to show that
    $\Lambda_0\cll\Lambda_2$.
    By smoothly concatenating isotopies,
    one can assume the $\Lambda_i$'s, $i\in\{0,1,2\}$,
    to be part of a non-negative isotopy $(\Lambda_t)_{t\in[0,2]}$
    that is positive for $t\in (1,2]$.
    Let us now adapt a construction of Fraser-Polterovich-Rosen
    in the proof of \cite[Proposition~2.6]{FPR} in order to
    find a positive isotopy from $\Lambda_0$ to $\Lambda_2$.
    Let $v:[0,2]\to\R$ be a smooth map such that
    $v(0)=v(2)=0$ with $v'$ positive on $[0,3/2]$.
    For $\varepsilon>0$, the isotopy
    $\uLambda^\varepsilon:=(\phi_{\varepsilon v(t)}^\alpha\Lambda_t)$ 
    from $\Lambda_0$ to $\Lambda_2$ is then
    positive for $t\in [0,3/2]$.
    Indeed, by Lemma~\ref{lem:isotopyExtension}, one can
    write $\Lambda_t = \psi_t\Lambda_0$ with $(\psi_t)$
    contact flow generated by a non-negative Hamiltonian map,
    so $(\phi^\alpha_{\varepsilon v(t)}\psi_t)$
    has a $\alpha$-contact Hamiltonian $H_t \geq \varepsilon v'(t)$
    for all $t\in[0,2]$ and so does $\uLambda^\varepsilon$.
    Since the positivity of an isotopy is a $C^1$-open condition,
    the isotopy $\uLambda^\varepsilon$ is positive on $[3/2,2]$
    for $\varepsilon>0$ small enough, which brings the conclusion.
    The proof of the case $\Lambda_0\cll\Lambda_1$ and
    $\Lambda_1\cleq\Lambda_2$ is similar.
\end{proof}

\begin{cor}
    If $(x_t)_{t\in [0,1]}$ is a non-negative isotopy in either $\Leg$, $\uLeg$,
    $\Gcont$ or $\uGcont$, the Hamiltonian $(H_t)$ of which is positive
    at some $t_0\in [0,1]$, then $x_0\cll x_1$.
\end{cor}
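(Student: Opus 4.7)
The plan is to exploit Lemma~\ref{lem:mixedTransitivity} by slicing the given isotopy $(x_t)_{t\in [0,1]}$ into three pieces: two non-negative pieces and a middle piece on which the Hamiltonian is genuinely positive. The key observation is that because $H$ is smooth on $N := \bigcup_t \{t\}\times \Pi x_t$ (or simply on $[0,1]\times M$ in the contactomorphism case) and $\Pi x_{t_0}$ is compact, the condition $H_{t_0}>0$ propagates by continuity to a whole open time-interval around $t_0$.

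More precisely, I would first pick $\varepsilon > 0$ small enough that $H_t > 0$ for every $t\in (t_0-\varepsilon, t_0+\varepsilon)\cap [0,1]$ (treating the boundary cases $t_0\in\{0,1\}$ by using a half-interval). Set $a := \max(t_0-\varepsilon,0)$ and $b := \min(t_0+\varepsilon,1)$. The restricted subisotopies $(x_t)_{t\in[0,a]}$ and $(x_t)_{t\in[b,1]}$ remain non-negative, so after a smooth reparametrization we obtain
\begin{equation*}
    x_0 \cleq x_a \quad\text{and}\quad x_b \cleq x_1.
\end{equation*}
The middle subisotopy $(x_t)_{t\in[a,b]}$ has strictly positive Hamiltonian on its entire time interval, so after reparametrization it is a positive isotopy, whence $x_a \cll x_b$.

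Finally, I would combine these using the mixed transitivity from Lemma~\ref{lem:mixedTransitivity}: from $x_a \cll x_b$ and $x_b \cleq x_1$ one gets $x_a \cll x_1$, and then from $x_0 \cleq x_a$ and $x_a \cll x_1$ one concludes $x_0 \cll x_1$. The only mildly delicate point is to check that the restricted/reparametrized isotopies genuinely live in the ambient space (in particular in $\uLeg$ or $\uGcont$, the reparametrized subpath must represent an element of the universal cover relating the correct endpoints, which is immediate from how isotopies and their Hamiltonians were defined in Section~\ref{se:uLegIsotopies}). I do not expect a real obstacle beyond this bookkeeping, since the heavy lifting is already done by Lemma~\ref{lem:mixedTransitivity}.
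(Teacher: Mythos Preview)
Your proof is correct and follows essentially the same approach as the paper's own proof: find a subinterval $[a,b]\subset[0,1]$ around $t_0$ on which the Hamiltonian is positive, so that $x_a\cll x_b$, and then conclude via the mixed transitivity of Lemma~\ref{lem:mixedTransitivity} using $x_0\cleq x_a$ and $x_b\cleq x_1$. Your treatment is in fact slightly more careful than the paper's at the boundary cases $t_0\in\{0,1\}$.
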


\begin{proof}
    By continuity of $(H_t)$, there is an interval
    $[a,b]\subset [0,1]$ containing $t_0$ in its interior
    such that $H_t$ is positive for $t\in [a,b]$.
    Therefore, $x_a \cll x_b$ and the conclusion follows
    from Lemma~\ref{lem:mixedTransitivity}.
\end{proof}

\begin{exs}[Orderable and unorderable spaces]
    \label{ex:orderable}
    \
    \begin{enumerate}[1.]
        \item \label{ex:it:RP}
            $\uGcont(\RP^{2n+1})$ is orderable
            for the standard contact structure pulled-back from
            the sphere.
            Moreover $\uLeg(\RP^n)$ is orderable, where
            $\RP^n\subset\RP^{2n+1}$ is the projectivization
            of the Lagrangian subspace $\R^{n+1}\times\{ 0\}$
            of $\R^{2(n+1)}$ \cite{Giv90,EP00}.
            It generalizes to lens spaces \cite{Mil2008,San2011,GKPS}.
        \item $\uGcont(\sphere{2n+1})$ is not orderable
            for $n\geq 1$ \cite{EKP}.
        \item \label{ex:it:JN}
            Let $J^1 N=T^* N\times\R$ be the $1$-jet space of some manifold
            $N$ endowed with the standard contact structure $\ker(\ud z -
            \lambda)$, $z$ being the $\R$-coordinate and $\lambda$ the
            pull-back of the Liouville form on $T^* N$. This contact structure
            induces a contact structure on the quotient space
            $J^1N/\Z\partial_z = T^*N\times S^1$.  Let $0_N\subset J^1N$
            denote the zero-section and $p:J^1N\to J^1N/\Z\partial_z$ the
            quotient map.  Then $\Leg(0_N)$ and $\uLeg(p(0_N))$ are orderable
            when $N$ is closed (it also generalizes to compactly supported
            Legendrian isotopies when $N$ is open) \cite{bhupal,CFP,Zapolsky}.
        \item Given any contact manifold, there always are
            many Legendrian submanifolds for which $\uLeg$ is
            unorderable. Such submanifolds can be obtain by
            ``stabilizing'' any Legendrian submanifold.
            For any loose Legendrian submanifold of dimension
            $\geq 2$,
            $\uLeg$ is unorderable \cite{CFP,Liu2020}.
        \item \label{ex:it:SN}
            Given a Riemannian manifold $(N,g)$, let $SN$ denote
            the unit tangent bundle of $N$ endowed with the
            contact form $\alpha_{(x,v)}\cdot\eta
            := g(v,\ud\pi\cdot\eta)$, where $\pi:SN\to N$ is the bundle
            map.
            For any closed $N$, $\uLeg(S_x N)$ is orderable for
            any fiber $S_x N$ of $\pi$ \cite{chernovnemirovski2}.
            In particular, $\uGcont(SN)$ is orderable.
            Moreover, $\Leg(S_x N)$ is orderable, and thus so is
            $\Gcont(SN)$, when the universal
            cover of $N$ is open and $\dim N\geq 2$
            \cite{CFP,chernovnemirovski1}.
        \item \label{ex:it:SH}
            Given a closed contact manifold $(M,\xi)$ and a supporting
            contact form $\alpha$, let us consider the manifold
            $M\times M\times\R$
            endowed with the contact structure
            $\tilde{\xi}$ induced by the contact form $\tilde{\alpha}
            :=\alpha_2-e^\theta\alpha_1$,
            where $\alpha_i$ is the pull-back of $\alpha$ under the
            projection on the $i$-th factor and $\theta$ is the
            $\R$-coordinate.
            The contact form $\tilde{\alpha}$ is complete and its
            Reeb flow is given by
            \begin{equation*}
                \phi_t^{\tilde{\alpha}}(x_1,x_2,\theta) =
                (x_1,\phi_t^\alpha(x_2),\theta),\quad
                \forall (x_1,x_2,\theta)\in M\times M\times\R,
                \forall t\in\R.
            \end{equation*}
            The contact structure $\tilde{\xi}$ is independent of the
            choice of $\alpha$ supporting $\xi$, up to isomorphism.
            Let $\Delta\subset M\times M$
            be the diagonal.
            If $(M,\xi)$ is the boundary of a Liouville domain
            the symplectic homology of which
            does not vanish for some choice of coefficients,
            then $\uLeg(\Delta\times\{ 0\})$ is orderable.
            In particular, $\uGcont(M)$ is orderable
            \cite{Wei2015,albers,CCR}.
        \item A contact manifold $(M,\xi)$ is called hypertight if it
            admits a contact form having no contractible periodic
            Reeb orbit, the latter contact form is called hypertight as well.
            A Legendrian submanifold $\Lambda\subset (M,\xi)$ is called
            hypertight if there is a hypertight contact form for which
            $\Lambda$ has no contractible Reeb chord, \emph{i.e.}
            no Reeb chord in the null homotopy class of $\pi_1(M,\Lambda)$.
            If $\Lambda$ is a closed hypertight Legendrian of a
            closed contact manifold, $\uLeg(\Lambda)$ is orderable.
            If $(M,\xi)$ is a closed hypertight contact manifold,
            $\uLeg(\Delta\times\{ 0\})$
            (hence $\uGcont(M)$) is orderable for
            the contact manifold $M\times M\times\R$ defined just above
            \cite{albers,CCR}.
    \end{enumerate}
\end{exs}

\section{Order spectral selectors}
\label{se:spectralSelectors}
In this Section we first prove Theorem \ref{thm:LegSS} in Subsection \ref{se:SSLeg} and then prove Theorem \ref{thm:contSS} in Subsection \ref{se:contSS}.
\subsection{The Legendrian case}
\label{se:SSLeg}
Let $(M,\xi)$ be a cooriented contact manifold.
Let $\uLambda := (\uLambda_t)_{t\in\R}$ be a
uniformly positive path in $\Leg$
(resp. $\uLeg$), \emph{i.e.} a positive path, the $\alpha$-Hamiltonian
of which satisfies $\inf_t \min H_t > 0$ for some complete $\alpha$ supporting the
contact structure. Given $\Lambda\in\Leg$ (resp. in $\uLeg$), let
us define the spectrum of $(\Lambda,\uLambda)$ by
\begin{equation*}
    \spec(\Lambda,\uLambda) := \{ t\in\R \ | \
    \Lambda\cap\uLambda_t \neq\emptyset \},
\end{equation*}
(resp. $\spec(\Pi\Lambda,\Pi\uLambda)$).
The associated order spectral selectors are defined by
\begin{equation*}
    \ell_-(\Lambda,\uLambda) := \sup\{ t\in\R \ |\ 
    \Lambda \cgeq \uLambda_t \} \quad\text{and}\quad
    \ell_+(\Lambda,\uLambda) := \inf\{ t\in\R \ |\ 
    \Lambda \cleq \uLambda_t \}.
\end{equation*}
According to Lemma~\ref{lem:mixedTransitivity},
since $\uLambda$ is positive, one could replace
$\cleq$ (resp. $\cgeq$) in the definition of the selectors
with $\cll$ (resp. $\cgg$).
The selectors $\ell^\alpha_\pm$ correspond to the special case where
$\uLambda = (\phi_t^\alpha \Lambda_0)$.

\begin{prop}
    If $\Leg$ (resp. $\uLeg$) is orderable, the order spectral selectors
    are real-valued.
\end{prop}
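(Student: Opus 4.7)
The plan is to show that the two sets $\{t\in\R : \Lambda\cleq \uLambda_t\}$ and $\{t\in\R : \uLambda_t\cleq\Lambda\}$ defining $\ell_\pm(\Lambda,\uLambda)$ are both non-empty and bounded in the opposite direction, which will give real-valuedness of both selectors.

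For non-emptiness, I would construct explicit $T_\pm\in\R$ with $\uLambda_{T_-}\cleq\Lambda\cleq\uLambda_{T_+}$. Since $\Lambda$ and $\uLambda_0$ lie in the same connected component of $\Leg(\Lambda_*)$ (respectively the same lift class in $\uLeg(\Lambda_*)$), pick a Legendrian isotopy $(\Gamma_s)_{s\in[0,1]}$ from $\uLambda_0$ to $\Lambda$; its Legendrian $\alpha$-Hamiltonian is uniformly bounded by some $C>0$ in absolute value. Concatenate with the positive path $(\uLambda_t)_{t\in[-T,0]}$ to obtain an isotopy from $\uLambda_{-T}$ to $\Lambda$; its Hamiltonian is non-negative only on the $\uLambda$-piece. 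I would then apply a Reeb-shift correction $s\mapsto\phi^\alpha_{f(s)}\Gamma_s$ with $f(0)=f(1)=0$: a short computation using the invariance of $\alpha$ under its own Reeb flow shows that this adds the single term $f'(s)$ to the Legendrian Hamiltonian. Taking $f'$ sufficiently negative on the $\uLambda$-piece (where the Hamiltonian already exceeds a uniform $\epsilon>0$) and sufficiently positive (at least $C$) on the $\Gamma$-piece yields a non-negative total Hamiltonian; the constraint $\int f' = 0$ is compatible with both sign requirements as soon as $T \geq C/\epsilon$. Smoothing the resulting piecewise construction produces an honest non-negative Legendrian isotopy from $\uLambda_{-T}$ to $\Lambda$, that is $\uLambda_{-T}\cleq\Lambda$. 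The symmetric construction yields $\Lambda\cleq\uLambda_{T_+}$ for some $T_+ > 0$.

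Next I would invoke orderability to bound the opposite side of each set. Suppose $\uLambda_t\cleq\Lambda$ and $\Lambda\cleq\uLambda_s$. Transitivity of $\cleq$ gives $\uLambda_t\cleq\uLambda_s$. If $s<t$, the uniform positivity of $\uLambda$ makes $(\uLambda_r)_{r\in[s,t]}$ a positive isotopy, so $\uLambda_s\cll\uLambda_t$; combined with $\uLambda_t\cleq\uLambda_s$, Lemma~\ref{lem:mixedTransitivity} produces $\uLambda_s\cll\uLambda_s$, a positive loop, contradicting orderability via Lemma~\ref{lem:caracteriseNonOrderability}. Hence $t\leq s$ whenever both relations hold. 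Applying this with $s=T_+$ gives $\ell_-(\Lambda,\uLambda)\leq T_+ < +\infty$; applying it with $t=T_-$ gives $\ell_+(\Lambda,\uLambda)\geq T_- > -\infty$. Together with the trivial bounds $\ell_+\leq T_+$ and $\ell_-\geq T_-$ coming from the non-emptiness step, both selectors are finite.

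The only genuine obstacle is the explicit Reeb-shift construction in the first step: this is precisely where the \emph{uniform} (rather than merely pointwise) positivity of $\uLambda$ is needed, as its positive Hamiltonian supplies the ``budget'' used to absorb the a priori uncontrolled sign of the Hamiltonian of the connecting isotopy $\Gamma$. Once non-emptiness is in place, the appeal to orderability is entirely formal, relying only on its characterization as the non-existence of positive loops.
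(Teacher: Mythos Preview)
Your proof is correct and the overall architecture matches the paper's: establish $\uLambda_{T_-}\cleq\Lambda\cleq\uLambda_{T_+}$ for suitable $T_\pm$, then use orderability to bound the opposite ends of the defining sets. The orderability step you spell out is exactly what the paper compresses into the phrase ``since $\cleq$ is a partial order, it amounts to proving $\uLambda_s\cleq\Lambda\cleq\uLambda_t$''.

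The genuine difference is in how non-emptiness is obtained. The paper lifts both the positive path $\uLambda$ and the connecting isotopy $\Lambda\leadsto\uLambda_0$ to ambient contact flows via the isotopy extension theorem (Corollary~\ref{cor:istopyExtension}), then invokes the composition formula for contact Hamiltonians: the extension of $\uLambda|_{[0,t]}$ has Hamiltonian $\geq t\varepsilon$, the extension of the connecting isotopy has compactly supported (hence bounded) Hamiltonian, and for $t$ large the composite flow is generated by a positive Hamiltonian. Your Reeb-shift trick accomplishes the same absorption entirely at the Legendrian level: you concatenate the two Legendrian paths and then redistribute positivity along the concatenation via $s\mapsto\phi^\alpha_{f(s)}(\cdot)$, using that this adds exactly $f'(s)$ to the Legendrian Hamiltonian. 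The quantitative condition $T\geq C/\varepsilon$ you obtain is the same one implicit in the paper's ``$t$ large enough''. Your route is arguably more elementary, since it avoids the full composition formula and the isotopy extension theorem, replacing them by a direct one-line computation with the Reeb flow; the paper's route has the mild advantage that it packages the argument via contact flows, which is closer to how the bi-invariance properties~(\ref{eq:bi-invariant property}) are typically used elsewhere.
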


\begin{proof}
  Let us prove the statement for $\Lambda$ and elements in $\uLambda$
    in the universal cover $\uLeg$ assumed orderable.
    Since $\cleq$ is a partial order,
    it amounts to proving $\uLambda_s\cleq \Lambda\cleq \uLambda_t$
    for some real numbers $s<t$.
    Let us fix a complete $\alpha$ supporting $\xi$ such that
    $\varepsilon := \inf_t \min H_t$ is positive.

    Then according to the Legendrian isotopy extension theorem
    as expressed at Lemma~\ref{lem:UisotopyExtension}
    (see also Corollary~\ref{cor:istopyExtension}),
    for every $t\geq 0$, there exists $g_t\in\tconto(M,\xi)$
    sending $\uLambda_0$ on $\uLambda_t$
    that is the time-one map of a (non necessarily compactly supported) contact flow,
    the Hamiltonian map $h:[0,1]\times M \to\R$ of which
    satisfies $\inf h = t\varepsilon$.
    Let us pick an isotopy from $\Lambda$ to $\uLambda_0$
    and extend its Hamiltonian to a compactly supported map $[0,1]\times M\to\mathbb{R}$. The induced flow $(\varphi_t)$ is compactly supported and
    $\varphi_1\Lambda = \uLambda_0$.
    
    The composition formula of Hamiltonian maps (see for example \cite[(i) Lemma 4.1]{survey})
    implies that $g_t\varphi_1$ is the time-one map of a flow
    generated by a positive  Hamiltonian
    when $t$ is taken large enough. Since $\uLambda_t=g_t\varphi_1\Lambda$,
    $\Lambda\cleq\uLambda_t$  
    when $t$ is taken large enough. 
    One proceeds similarly to prove that
    $\uLambda_s\cleq\Lambda$ for $-s>0$ large enough and
    the proof in $\Leg$ is formally identical.
\end{proof}

From now on, we always assume that $\Leg$ (resp. $\uLeg$) is orderable.

The properties of normalization, monotonicity, triangle inequalities,
Poincaré duality stated in
Theorem~\ref{thm:LegSS} directly follow from the invariance and
transitivity of $\cleq$
and the definition of the selectors $\ell_\pm^\alpha$'s.
The compatibility property is a direct consequence of the fact that
for any complete contact form $\alpha$ supporting $\xi$,
\begin{equation}\label{eq:ReebConjugation}
    g^{-1}\phi^\alpha_t g = \phi^{g^*\alpha}_t,\quad
    \forall g\in\cont(M,\xi),\forall t\in\R.
\end{equation}
The following consequence of the invariance of $\cleq$ will
be useful to restrict ourself to the spectral selectors $\ell_\pm^\alpha$
when needed.

\begin{lem}[Sign invariance]\label{lem:sign}
    For every complete contact form $\alpha$ supporting $\xi$
    and every $\Lambda\in\Leg$ (resp. $\uLeg$),
    \begin{equation*}
        \ell_\pm(\Lambda,\uLambda) < 0 \text{ (resp. $=0$, resp. $>0$)}
        \Leftrightarrow
        \ell_\pm^\alpha(\Lambda,\uLambda_0) < 0
        \text{ (resp. $=0$, resp. $>0$)}.
    \end{equation*}
    Moreover, if $\beta=e^f\alpha$ for some smooth $f:M\to\R$,
    $\forall \Lambda_0,\Lambda_1\in\Leg$ (resp. $\uLeg$),
    \begin{equation*}
        \begin{cases}
            e^{\inf f}\ell_\pm^\alpha(\Lambda_1,\Lambda_0)
            \leq \ell_\pm^\beta(\Lambda_1,\Lambda_0) \leq
            e^{\sup f}\ell_\pm^\alpha(\Lambda_1,\Lambda_0)
            &\text{when } \ell_\pm^\beta(\Lambda_1,\Lambda_0)\geq 0,\\
            e^{\sup f}\ell_\pm^\alpha(\Lambda_1,\Lambda_0)
            \leq \ell_\pm^\beta(\Lambda_1,\Lambda_0) \leq
            e^{\inf f}\ell_\pm^\alpha(\Lambda_1,\Lambda_0)
            &\text{when }\ell_\pm^\beta(\Lambda_1,\Lambda_0)\leq 0.
        \end{cases}
    \end{equation*}
\end{lem}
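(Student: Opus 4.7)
The proof has two parts, both relying on a single elementary \emph{Hamiltonian comparison lemma}: if $(\Lambda_u)_{u\in[0,1]}$ is a Legendrian isotopy whose $\alpha$-Hamiltonian $H_u$ on $\Lambda_u$ satisfies $m\leq H_u\leq M$, then $\phi_m^\alpha\Lambda_0\cleq\Lambda_1\cleq\phi_M^\alpha\Lambda_0$. To prove the upper bound, I consider the modified isotopy $u\mapsto\phi_{-uM}^\alpha\Lambda_u$ from $\Lambda_0$ to $\phi_{-M}^\alpha\Lambda_1$: writing $\Lambda_u=\varphi_u\Lambda_0$ via Lemma~\ref{lem:UisotopyExtension} and using that the Reeb flow preserves $\alpha$ (so that $\alpha((\phi_t^\alpha)_*X)=\alpha(X)\circ\phi_{-t}^\alpha$), I compute that the $\alpha$-Hamiltonian of the modified isotopy at the point $\phi_{-uM}^\alpha(x)$ with $x\in\Lambda_u$ equals $-M+H_u(x)\leq 0$; the modified isotopy is therefore non-positive, hence $\phi_{-M}^\alpha\Lambda_1\cleq\Lambda_0$, i.e., $\Lambda_1\cleq\phi_M^\alpha\Lambda_0$ after applying $\phi_M^\alpha$ (bi-invariance, Proposition~\ref{prop:bi-inv}). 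The lower bound is symmetric.

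For the first statement, the same calculation run with $M$ replaced by $-\varepsilon$ and hypothesis $m=\delta>0$ yields the following \emph{absorption principle}: if $\Lambda\cll\Lambda'$ via a positive isotopy with $\alpha$-Hamiltonian bounded below by $\delta>0$, then $\Lambda\cleq\phi_{-\varepsilon}^\alpha\Lambda'$ for every $0<\varepsilon<\delta$. The same argument with $\phi^\alpha$ replaced by a contact flow $h_t$ satisfying $\uLambda_t=h_t\uLambda_0$ (which exists by Lemma~\ref{lem:UisotopyExtension}) shows that $\Lambda\cll\uLambda_0$ also implies $\Lambda\cleq\uLambda_{-\varepsilon}$ for $\varepsilon>0$ small enough. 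Combined with Lemma~\ref{lem:mixedTransitivity}, these yield the chain
\[
\ell_+(\Lambda,\uLambda)<0\iff\Lambda\cll\uLambda_0\iff\ell_+^\alpha(\Lambda,\uLambda_0)<0.
\]
The ``$=0$'' and ``$>0$'' cases follow by combining this equivalence with the comparison lemma applied to the short isotopy $u\mapsto\uLambda_{ut}$ for small $t>0$ (whose $\alpha$-Hamiltonian is bounded on the relevant compact set) and the trivial observation $\phi_0^\alpha\uLambda_0=\uLambda_0$; the statement for $\ell_-$ is obtained symmetrically by swapping $\cleq$ with $\cgeq$.

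For the second statement, the identity $\alpha(R^\beta)=e^{-f}$ shows that the isotopy $u\mapsto\phi_{us}^\beta\Lambda_0$ has $\alpha$-Hamiltonian $se^{-f}$, so the comparison lemma gives, for $s\geq 0$,
\[
\phi_{se^{-\sup f}}^\alpha\Lambda_0\cleq\phi_s^\beta\Lambda_0\cleq\phi_{se^{-\inf f}}^\alpha\Lambda_0,
\]
together with the symmetric bound $\phi_{te^{\inf f}}^\beta\Lambda_0\cleq\phi_t^\alpha\Lambda_0\cleq\phi_{te^{\sup f}}^\beta\Lambda_0$ for $t\geq 0$; both chains reverse for negative time. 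Substituting $s=\ell_+^\beta+\varepsilon$ and $t=\ell_+^\alpha+\varepsilon$ into the defining relations $\Lambda_1\cleq\phi_s^\beta\Lambda_0$ and $\Lambda_1\cleq\phi_t^\alpha\Lambda_0$ and letting $\varepsilon\to 0^+$ yields, in each sign regime, the stated inequalities. The first statement ensures $\ell_+^\alpha$ and $\ell_+^\beta$ share the same sign, keeping the case distinction consistent; the case of $\ell_-$ follows via Poincar\'e duality (property~4 of Theorem~\ref{thm:LegSS}).

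The main technical subtlety lies in the absorption step when the reference flow is the general $h_t$ rather than $\phi^\alpha$: since $h_t$ is not strict, the Hamiltonian of the modified isotopy $h_{-\varepsilon u}\varphi_u\Lambda$ picks up the conformal factor of $h_t$ together with its own $\alpha$-Hamiltonian, but both are bounded on the compact set traced by the relevant isotopies, so the perturbation of $H_u$ is uniformly $O(\varepsilon)$ and positivity is preserved for $\varepsilon$ small enough.
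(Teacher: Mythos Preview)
Your proof is correct and follows essentially the same route as the paper's: both reduce the sign equivalence to the characterization $\ell_+<0\iff\Lambda\cll\uLambda_0$ and both derive the second statement from the comparison of the Hamiltonians of the two Reeb flows (you compute $\alpha(R^\beta)=e^{-f}$, the paper computes $\beta(R^\alpha)=e^f$, which are dual and yield the same chain of inequalities). The paper is terser on the ``$<0$'' step, implicitly relying on the $C^1$-openness of $\cll$, whereas your absorption argument with the non-strict flow $h_t$ makes this explicit; your version is slightly heavier but self-contained.
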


\begin{proof}

    One has $\ell_+(\Lambda,\uLambda)<0$ if and only if $\Lambda\cll\uLambda_0$
    so we have the first equivalence for $\ell_+$.
    Let us assume $\ell_+(\Lambda,\uLambda)>\varepsilon>0$.
    If $\ell_+^\alpha(\Lambda,\uLambda_0)=0$, then $\Lambda\cleq\phi^\alpha_t\uLambda_0$
    for all $t>0$. As $\uLambda_{\varepsilon/2}\cgg\uLambda_0$,
    there exists $t>0$ such that $\uLambda_{\varepsilon/2}\cgg\phi_t^\alpha\uLambda_0$
    so $\Lambda\cleq\uLambda_{\varepsilon/2}$, contradicting $\ell_+(\Lambda,\uLambda)
    >\varepsilon$. Conversely, if $\ell_+(\Lambda,\uLambda)=0$
    while $\ell_+^\alpha(\Lambda,\uLambda_0)>\varepsilon>0$,
    one gets a contradiction by using $\phi^\alpha_{\varepsilon/2}\uLambda_0
    \cgg \uLambda_t$ for small $t>0$.
    This implies the two other equivalences of the statement for $\ell_+$.
    The proof for $\ell_-$ is similar.

    Let us prove the second statement regarding the comparison
    of $\ell_\pm^\alpha$ and $\ell_\pm^\beta$ for $\beta:=e^f\alpha$.
    The $\beta$-Hamiltonian map of $(\phi^\alpha_t)$
    is $\beta(R^\alpha)=e^f$ while the $\beta$-Hamiltonian map
    of $(\phi^\beta_t)$ is the constant $\equiv 1$. The comparison of Hamiltonian maps
    $e^{\inf f}1\leq e^f\leq e^{\sup f}1$ implies
    \begin{equation*}
        \begin{cases}
            \phi_{e^{\inf f}t}^\beta \cleq \phi_t^\alpha \cleq
            \phi_{e^{\sup f}t}^\beta
            &\text{for } t\geq 0,\\
            \phi_{e^{\sup f}t}^\beta \cleq \phi_t^\alpha \cleq
            \phi_{e^{\inf f}t}^\beta
            &\text{for } t\leq 0,
        \end{cases}
    \end{equation*}
    which easily brings the conclusion.
\end{proof}

\begin{lem}\label{lem:HoferInequality}
    A contact form $\alpha$ supporting $\xi$ being fixed,
    if $(\Lambda_t)_{t\in[0,1]}$ is a Legendrian isotopy of
    $\Leg$ (resp. $\uLeg$), the Hamiltonian maps of which are
    denoted $H_t:\Lambda_t\to\R$, $t\in[0,1]$, one has
    \begin{equation*}
        \int_0^1 \min H_t \ud t \leq \ell_-^\alpha(\Lambda_1,\Lambda_0)
        \leq \ell_+^\alpha(\Lambda_1,\Lambda_0) \leq \int_0^1 \max H_t \ud t.
    \end{equation*}
\end{lem}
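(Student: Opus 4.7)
The plan is to establish the three inequalities separately. The middle inequality $\ell_-^\alpha(\Lambda_1,\Lambda_0) \leq \ell_+^\alpha(\Lambda_1,\Lambda_0)$ follows from orderability: if it failed one could pick $s > t$ with $\Lambda_1 \cgeq \phi_s^\alpha\Lambda_0$ and $\Lambda_1 \cleq \phi_t^\alpha\Lambda_0$, hence $\phi_s^\alpha\Lambda_0 \cleq \phi_t^\alpha\Lambda_0$; combining this with the positive Reeb path $u\mapsto \phi_{t+u(s-t)}^\alpha\Lambda_0$, which gives $\phi_t^\alpha\Lambda_0 \cll \phi_s^\alpha\Lambda_0$, Lemma~\ref{lem:mixedTransitivity} would produce a positive loop at $\phi_s^\alpha\Lambda_0$, contradicting Lemma~\ref{lem:caracteriseNonOrderability}.

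For the two outer inequalities, the key observation is that for any smooth $f:[0,1]\to\R$, the family $t\mapsto \phi_{f(t)}^\alpha\Lambda_t$ is a Legendrian isotopy whose $\alpha$-Hamiltonian at the point $\phi_{f(t)}^\alpha(q)$ with $q\in\Lambda_t$ equals $f'(t) + H_t(q)$; this is immediate from~\eqref{eq:LegHam} and the identity $(\phi_s^\alpha)^*\alpha = \alpha$. To prove $\ell_+^\alpha(\Lambda_1,\Lambda_0) \leq T^+ := \int_0^1 \max H_t\,\ud t$, I would time-reverse the given isotopy into a path from $\Lambda_1$ to $\Lambda_0$ with Hamiltonian $-H_{1-t}$ and set $\tilde\Lambda_t := \phi_{g(t)}^\alpha\Lambda_{1-t}$ for a smooth $g$ with $g(0) = 0$, $g(1) = T^+ + \varepsilon$ and $g'(t) > \max H_{1-t}$ pointwise. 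The Hamiltonian $g'(t) - H_{1-t}$ is then strictly positive, so $\Lambda_1 \cleq \phi_{T^+ + \varepsilon}^\alpha\Lambda_0$, and letting $\varepsilon \to 0$ gives the bound. Symmetrically, for $\ell_-^\alpha(\Lambda_1,\Lambda_0) \geq T^- := \int_0^1 \min H_t\,\ud t$, I would consider $\tilde\Lambda_t := \phi_{h(t)}^\alpha\Lambda_t$ with smooth $h$ satisfying $h(0) = T^- - \varepsilon$, $h(1) = 0$ and $h'(t) > -\min H_t$, producing a non-negative isotopy from $\phi_{T^- - \varepsilon}^\alpha\Lambda_0$ to $\Lambda_1$.

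The only real technical point will be smoothness: the envelopes $t\mapsto \max H_t$ and $t\mapsto \min H_t$ are only continuous, hence cannot serve directly as derivatives. This is precisely why I introduce the strict inequalities together with the buffer $\varepsilon>0$, choosing $g$ and $h$ smooth and slightly exceeding the corresponding continuous envelopes. The statement in $\uLeg$ requires no extra work: the construction of $\tilde\Lambda$ is entirely intrinsic, and by the homotopy furnished by $(u,s)\mapsto \phi_{ug(s)}^\alpha\Pi\Lambda_{1-s}$ (and its analogue for $h$) the lift of $\tilde\Lambda$ starting at $\Lambda_1\in\uLeg$ ends precisely at $\phi_{g(1)}^\alpha\Lambda_0\in\uLeg$ in the sense fixed by the conventions of Section~\ref{se:topologyLeg}.
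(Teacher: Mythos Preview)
Your argument is correct, but it proceeds differently from the paper. The paper invokes Corollary~\ref{cor:istopyExtension} to extend the Legendrian Hamiltonian $(H_t)$ to a global contact Hamiltonian $(K_t)$ on $M$ with $\max K_t = \max H_t$, generating a flow $(g_t)$ with $g_t\Lambda_0=\Lambda_t$; the pointwise inequality $K_t\leq I'(t):=\max H_t$ then gives $g_1\cleq\phi^\alpha_{I(1)}$ at the \emph{group} level via~\eqref{eq:bi-invariant property}, and acting on $\Lambda_0$ yields $\Lambda_1\cleq\phi^\alpha_{I(1)}\Lambda_0$ directly, without any $\varepsilon$-buffer. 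You instead stay entirely in $\Leg$ (resp.\ $\uLeg$) and build the comparison path $\phi^\alpha_{g(t)}\Lambda_{1-t}$ by hand. Your route avoids the isotopy extension theorem and the group-level order comparison, at the cost of the smoothing step for $g$; the paper's route is shorter once Corollary~\ref{cor:istopyExtension} is in place. Both yield the same inequality, and the middle inequality $\ell_-^\alpha\leq\ell_+^\alpha$---which the paper takes as already established from orderability---is handled correctly by your positive-loop argument.
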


\begin{proof}
    By the isotopy extension theorem, one can find a contact Hamiltonian
    flow $(g_t)_{t\in [0,1]}$ of $(M,\xi)$ sending $\Lambda_0$
    on $\Lambda_1$, the Hamiltonian
    map $(K_t)$ of which satisfies $\max K_t = \max H_t$ for all $t\in [0,1]$.
    Let $I(t) := \int_0^t \max H_s\ud s$, the time derivative $I'$ of which
    generates the reparametrized Reeb flow $(\phi^\alpha_{I(t)})$.
    By compatibility of $\cleq$ with contactomorphisms
    (\emph{cf.} properties (\ref{eq:bi-invariant property})),
    $g_1 \cleq \phi^\alpha_{I(1)}$ implies $\Lambda_1\cleq \phi^\alpha_{I(1)}\Lambda_0$
    and the monotonicity of $\ell_+^\alpha$ together with its normalization
    property under the Reeb flow implies the last inequality of the statement.
    The first inequality is the consequence of a similar argument or the application
    of the ``Poincaré duality'' property.
\end{proof}

\begin{cor}[Continuity]\label{cor:continuity}
    A complete contact form $\alpha$ being fixed,
    \begin{equation*}
        |\ell_\pm^\alpha(\Lambda_1,\Lambda) - \ell_\pm^\alpha(\Lambda_0,\Lambda)|
        \leq \dSCH(\Lambda_0,\Lambda_1),\quad
        \forall \Lambda,\Lambda_0,\Lambda_1\in\Leg \text{ (resp. $\uLeg$)}.
    \end{equation*}
    In particular,
    the maps $\Lambda\mapsto \ell_\pm(\Lambda,\uLambda)$ are continuous
    with respect to the $C^1$-topology.
\end{cor}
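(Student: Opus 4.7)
The plan is to derive the inequality as a formal corollary of Lemma~\ref{lem:HoferInequality} combined with the triangle inequality and Poincaré duality of Theorem~\ref{thm:LegSS}. The central intermediate fact I target is $|\ell_\pm^\alpha(\Lambda_1,\Lambda_0)| \leq \dSCH(\Lambda_0,\Lambda_1)$; after this is in hand, a standard three-point manipulation via the triangle inequality turns it into the Lipschitz estimate of the statement. The only delicate point I anticipate is the passage from the Lipschitz bound for $\ell_\pm^\alpha$ to the continuity statement for the more general $\ell_\pm(\cdot,\uLambda)$, but this should be handled by sandwiching $\uLambda$ between two Reeb-rescaled copies of $\uLambda_0$ using uniform positivity.

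First I would establish the intermediate bound. Given any Legendrian isotopy $(\Lambda_t)_{t\in[0,1]}$ from $\Lambda_0$ to $\Lambda_1$ with Hamiltonian maps $H_t:\Lambda_t\to\R$, Lemma~\ref{lem:HoferInequality} yields
\[
-\int_0^1 \max|H_t|\,dt \leq \int_0^1 \min H_t\,dt \leq \ell_-^\alpha(\Lambda_1,\Lambda_0) \leq \ell_+^\alpha(\Lambda_1,\Lambda_0) \leq \int_0^1 \max H_t\,dt \leq \int_0^1 \max|H_t|\,dt,
\]
and taking the infimum over all such isotopies produces $|\ell_\pm^\alpha(\Lambda_1,\Lambda_0)| \leq \dSCH(\Lambda_0,\Lambda_1)$. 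Alternatively, Poincaré duality (Theorem~\ref{thm:LegSS}(4)) can be invoked to deduce the $\ell_-^\alpha$ bounds from the $\ell_+^\alpha$ ones.

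Next I would apply the triangle inequality of Theorem~\ref{thm:LegSS}(3) to the three Legendrians $(\Lambda_1,\Lambda_0,\Lambda)$ in this order, giving $\ell_+^\alpha(\Lambda_1,\Lambda) \leq \ell_+^\alpha(\Lambda_1,\Lambda_0) + \ell_+^\alpha(\Lambda_0,\Lambda)$ and hence $\ell_+^\alpha(\Lambda_1,\Lambda) - \ell_+^\alpha(\Lambda_0,\Lambda) \leq \ell_+^\alpha(\Lambda_1,\Lambda_0) \leq \dSCH(\Lambda_0,\Lambda_1)$. Swapping the roles of $\Lambda_0$ and $\Lambda_1$ and using the symmetry of $\dSCH$ produces the reverse bound, completing the $\ell_+^\alpha$ estimate. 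The corresponding $\ell_-^\alpha$ estimate follows either by symmetric use of the reversed triangle inequality of Theorem~\ref{thm:LegSS}(3) or, more expediently, from the identity $\ell_-^\alpha(\Lambda_1,\Lambda) = -\ell_+^\alpha(\Lambda,\Lambda_1)$ of Poincaré duality, which reduces the $\ell_-^\alpha$-case to the $\ell_+^\alpha$-case already settled.

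For the continuity statement, I would compare the two length-type pseudo-metrics: since $\max|H_t|\leq\|H_t\|_{C^2}$, the definitions (\ref{eq:dSCH}) of $\dSCH$ and (\ref{eq:C1LegMetric}) of $d_{C^1}$ give $\dSCH \leq d_{C^1}$, so $C^1$-convergence implies $\dSCH$-convergence; the Lipschitz bound then promotes $\Lambda\mapsto\ell_\pm^\alpha(\Lambda,\Lambda_0)$ to a $C^1$-continuous map. For a general uniformly positive $\uLambda$, the continuity of $\Lambda\mapsto\ell_\pm(\Lambda,\uLambda)$ follows by sandwiching $\uLambda_t$, on any compact interval, between $\phi^\alpha_{ct}\uLambda_0$ and $\phi^\alpha_{Ct}\uLambda_0$ for suitable constants $0<c\leq C$ extracted from the uniform positivity of the Hamiltonian of $\uLambda$ via Lemma~\ref{lem:HoferInequality}; this transfers the continuity from $\ell_\pm^\alpha(\cdot,\uLambda_0)$ to $\ell_\pm(\cdot,\uLambda)$.
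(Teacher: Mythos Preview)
Your proof of the displayed Lipschitz inequality is correct and follows the paper's approach exactly: the triangle inequality from Theorem~\ref{thm:LegSS} combined with Lemma~\ref{lem:HoferInequality}, then symmetrizing and invoking Poincar\'e duality. The passage to $C^1$-continuity of $\ell_\pm^\alpha(\cdot,\Lambda_0)$ via $\dSCH\leq d_{C^1}$ is also fine.

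The gap is in your final step, the continuity of $\Lambda\mapsto\ell_\pm(\Lambda,\uLambda)$ for a general uniformly positive path $\uLambda$. Sandwiching $\uLambda_t$ between $\phi^\alpha_{ct}\uLambda_0$ and $\phi^\alpha_{Ct}\uLambda_0$ only yields a multiplicative comparison of the type (say, in the positive regime)
\[
\frac{1}{C}\,\ell_+^\alpha(\Lambda',\uLambda_0)\ \leq\ \ell_+(\Lambda',\uLambda)\ \leq\ \frac{1}{c}\,\ell_+^\alpha(\Lambda',\uLambda_0),
\]
which controls the sign and the finiteness of $\ell_+(\cdot,\uLambda)$ but does \emph{not} transfer continuity: if $\ell_+^\alpha(\Lambda_n,\uLambda_0)\to L>0$, the bound only traps $\ell_+(\Lambda_n,\uLambda)$ in the fixed interval $[L/C,L/c]$, with no convergence. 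The paper proceeds differently: for each threshold $x\in\R$ it shifts to $\uLambda'=(\uLambda_{t+x})_t$ and uses sign invariance (Lemma~\ref{lem:sign}) to obtain
\[
\{\ell_+(\cdot,\uLambda)<x\}=\{\ell_+(\cdot,\uLambda')<0\}=\{\ell_+^\alpha(\cdot,\uLambda_x)<0\},
\]
which is $C^1$-open by the Lipschitz bound already established; the set $\{\ell_+(\cdot,\uLambda)>x\}$ is treated likewise. Your sandwich \emph{can} be repaired, but only after rebasing at $\uLambda_{t_0}$ with $t_0=\ell_+(\Lambda,\uLambda)$ and invoking sign invariance to know $\ell_+^\alpha(\Lambda,\uLambda_{t_0})=0$; at that point you have essentially reproduced the paper's argument.
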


This corollary is somehow reinterpreted in Corollary~\ref{cor:dspec-nondeg}
(see also inequality (\ref{eq:dspecIneq})).

\begin{proof}
    Given any Legendrian isotopy $(\Lambda_t)$ joining $\Lambda_0$ and $\Lambda_1$,
    the Hamiltonian map of which is $(H_t)$, the triangle identity together
    with Lemma~\ref{lem:HoferInequality} imply
    \begin{equation*}
        \ell_+^\alpha(\Lambda_1,\Lambda) - \ell_+^\alpha(\Lambda_0,\Lambda)
        \leq \ell_+^\alpha(\Lambda_1,\Lambda_0) \leq \int_0^1 \max H_t \ud t.
    \end{equation*}
    Intertwining $\Lambda_0$ and $\Lambda_1$ one then gets
    \begin{equation*}
        |\ell_+^\alpha(\Lambda_1,\Lambda) - \ell_+^\alpha(\Lambda_0,\Lambda)|
        \leq
        \max\left( -\int_0^1 \min H_t\ud t, \int_0^1 \max H_t\ud t\right)
        \leq \int_0^1 \max |H_t|\ud t,
    \end{equation*}
    which brings the desired inequality by taking the infimum over all
    isotopies. The analogous inequality for $\ell_-^\alpha$ follows
    by Poincaré duality or a similar proof.

    Since $\dSCH$ is $C^1$-continuous, the  maps
    $\Lambda\mapsto \ell_\pm^\alpha(\Lambda,\Lambda')$
    are $C^1$-continuous, $\Lambda'$ being fixed.
    In order to prove that a map $f:X\to\R$ is continuous,
    it is enough to prove that $f^{-1}(-\infty,x)$ and
    $f^{-1}(x,+\infty)$ are open for all $x\in\R$.
    Let $f:=\ell_\pm(\cdot,\uLambda)$ for a fixed $\uLambda$.
    Given a fixed $x\in\R$, let $\uLambda' := (\uLambda_{t+x})_{t\in\R}$,
    then
    \begin{equation*}
        f^{-1}(-\infty,x) = \{ \Lambda\ |\ \ell_+(\Lambda,\uLambda') < 0\}
        = \{ \Lambda\ |\ \ell^\alpha_+(\Lambda,\uLambda_x) < 0\},
    \end{equation*}
    where we applied the sign invariance property to some
    supporting contact form $\alpha$ at the last step.
    By $C^1$-continuity of $\ell_+^\alpha(\cdot,\uLambda_x)$, this
    last set is $C^1$-open. The other cases are treated similarly.
\end{proof}

Let us remark that the $C^1$-continuity is also a direct consequence of
the $C^1$-openness of the relations $\cll$ and $\cgg$.

\begin{prop}[Spectrality]\label{prop:spectrality}
    For every $\Lambda\in\Leg$ (resp. $\uLeg$),
    both $\ell_\pm(\Lambda,\uLambda)$ belong to $\spec(\Lambda,\uLambda)$.
\end{prop}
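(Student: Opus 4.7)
The plan is to prove the statement for $\ell_+$; the case of $\ell_-$ then follows by Poincaré duality (item 4 of Theorem~\ref{thm:LegSS}, whose proof is purely formal). Set $t_+ := \ell_+(\Lambda,\uLambda)\in\R$. By the uniform positivity of $\uLambda$ together with Lemma~\ref{lem:mixedTransitivity}, one has $\Lambda\cleq\uLambda_t$ for all $t>t_+$. I argue by contradiction: assuming $\Pi\Lambda\cap\Pi\uLambda_{t_+}=\emptyset$, I produce some $\varepsilon>0$ for which $\Lambda\cleq\uLambda_{t_+-\varepsilon}$, contradicting the definition of the infimum.

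The compactness of the Legendrian submanifolds together with the $C^1$-continuity of the path $s\mapsto\Pi\uLambda_s$ yields $\varepsilon>0$ and an open neighborhood $U\subset M$ of the compact set $\bigcup_{s\in[t_+-\varepsilon,t_++\varepsilon]}\Pi\uLambda_s$ with $U\cap\Pi\Lambda=\emptyset$. The uniform positivity ensures that the $\alpha$-Hamiltonian $H$ of $\uLambda|_{[t_+-\varepsilon,t_++\varepsilon]}$ is strictly positive. Using the standard cutoff construction from the proof of Corollary~\ref{cor:istopyExtension}, namely $K:=\chi\cdot H\circ\pi$ where $\chi$ equals $1$ on $\bigcup_s\{s\}\times\Pi\uLambda_s$ and vanishes outside $[t_+-\varepsilon,t_++\varepsilon]\times U$, I obtain a non-negative contact Hamiltonian $K$ compactly supported in $U$. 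By Lemma~\ref{lem:isotopyExtension} (any smooth extension suffices), its flow $(\varphi_s)$ starting at $\id$ at $s=t_+-\varepsilon$ satisfies $\varphi_{t_++\varepsilon}(\Pi\uLambda_{t_+-\varepsilon})=\Pi\uLambda_{t_++\varepsilon}$, while $\varphi_s=\id$ on $\Pi\Lambda$ for every $s$.

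Let $g:=\varphi_{t_++\varepsilon}$ and let $\tilde g\in\widetilde{\cont}(M,\xi)$ denote the class of the reparametrized path $s\mapsto\varphi_{t_+-\varepsilon+2\varepsilon s}$. Using the definition of the $\widetilde{\cont}$-action on $\uLeg$ given in Section~\ref{se:topologyLeg}: the lift of the constant path at $\Pi\Lambda$ starting at $\Lambda$ is constant, while the lift of $s\mapsto\Pi\uLambda_{t_+-\varepsilon+2\varepsilon s}$ starting at $\uLambda_{t_+-\varepsilon}$ ends at $\uLambda_{t_++\varepsilon}$. Hence $\tilde g\cdot\Lambda=\Lambda$ and $\tilde g\cdot\uLambda_{t_+-\varepsilon}=\uLambda_{t_++\varepsilon}$. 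Pick any non-negative isotopy $(\Lambda_s)$ from $\Lambda$ to $\uLambda_{t_++\varepsilon}$ and apply $\tilde g^{-1}$: the resulting isotopy goes from $\Lambda$ to $\uLambda_{t_+-\varepsilon}$, and its Hamiltonian is rescaled from the original by the positive conformal factor of $\tilde g^{-1}$ (acting via pullback), hence remains non-negative. This yields $\Lambda\cleq\uLambda_{t_+-\varepsilon}$, the desired contradiction.

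The main technical point is the simultaneous requirement on $K$: non-negative (to preserve positivity when transported), compactly supported in $U$ (to fix $\Lambda$), and realizing the prescribed motion of $\Pi\uLambda_s$. The flexibility of Lemma~\ref{lem:isotopyExtension} — which asks only that $K$ smoothly extend $H$ on the track — makes this a matter of choosing a suitable bump function, once $U$ has been produced from the disjointness hypothesis. The only genuinely delicate point is the bookkeeping in $\uLeg$, which is handled by observing that the path $(\varphi_s)$ acts on both $\Lambda$ and $\uLambda_{t_+-\varepsilon}$ through its unique path-lift, thanks to the support condition on $U$.
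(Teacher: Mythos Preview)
Your proof is correct and follows essentially the same approach as the paper's: both construct a contactomorphism supported away from $\Pi\Lambda$ that moves $\uLambda_{t_++\varepsilon}$ to $\uLambda_{t_+-\varepsilon}$ (the paper goes backward directly, you go forward and invert), then use invariance of $\cleq$ under contactomorphisms to obtain the contradiction $\Lambda\cleq\uLambda_{t_+-\varepsilon}$.

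Two small remarks. First, your requirement that $K$ be non-negative is superfluous, and the reason you give for it (``to preserve positivity when transported'') is not the right one: as you correctly note in the body of the argument, the transported isotopy $\tilde g^{-1}(\Lambda_s)$ stays non-negative because the conformal factor of any coorientation-preserving contactomorphism is positive; this has nothing to do with the sign of $K$. Second, your appeal to ``Poincar\'e duality'' for $\ell_-$ is only stated in the paper for $\ell_\pm^\alpha$ (Theorem~\ref{thm:LegSS}), not for general $\uLambda$; the paper simply says the $\ell_-$ case follows from a similar argument, which is the honest statement here.
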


\begin{proof}
    Let us prove the statement in $\uLeg$.
    Let us prove that $\Pi\Lambda\cap \Pi\uLambda_t = \emptyset$
    and $\ell_+(\Lambda,\uLambda) \leq t$ implies $\ell_+(\Lambda,\uLambda)<t$;
    it would imply the result for $\ell_+$ and the result for $\ell_-$
    will follow from a similar argument.
    Since $\Pi\Lambda\cap\Pi\uLambda_t = \emptyset$, by compactness
    $\exists \varepsilon >0$, $\forall s\in (-\varepsilon,\varepsilon)$,
    $\Pi\Lambda\cap\Pi\uLambda_{t+s} = \emptyset$.
    We now essentially apply the trick
    employed by Chernov-Nemirovski in \cite[Lemma~2.2]{chernovnemirovski1}.
    By compactly extending the Hamiltonian generated by the isotopy
    $s\mapsto \uLambda_{(t+\varepsilon/2)-s\varepsilon}$, $s\in[0,1]$,
    in $[0,1]\times (M\setminus\Pi\Lambda)$,
    one finds a compactly supported $g\in\uGcont$
    sending $\uLambda_{t+\varepsilon/2}$
    to $\uLambda_{t-\varepsilon/2}$ and fixing $\Lambda$
    (according to the isotopy extension theorem
    as expressed in Lemma~\ref{lem:UisotopyExtension}).
    By definition of $\ell_+$ and positivity of $\uLambda$,
    $\Lambda \cleq \uLambda_{t+\varepsilon/2}$.
    By invariance of $\cleq$ under the action of $\uGcont$ (see Proposition \ref{prop:bi-inv}), applying
    $g$, $\Lambda \cleq \uLambda_{t-\varepsilon/2}$
    so $\ell_+(\Lambda,\uLambda)\leq t-\varepsilon/2$.

    The proof in $\Leg$ is formally the same (removing $\Pi$'s and
    tildes).
\end{proof}

\begin{lem}\label{lem:positivity}
    Given any $\Lambda\in\Leg$ (resp. $\uLeg$),
    if $\Lambda \cgeq \uLambda_0$ and $\Lambda\neq\uLambda_0$,
    then $\ell_+(\Lambda,\uLambda) > 0$.
\end{lem}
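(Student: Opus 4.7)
The plan is to reduce to the Reeb-flow selector $\ell_+^\alpha$ and then bound $\ell_-^\alpha$ strictly away from zero. First, by the sign invariance of Lemma~\ref{lem:sign} it suffices to show $\ell_+^\alpha(\Lambda,\uLambda_0) > 0$ for some complete contact form $\alpha$ supporting $\xi$. Second, one always has $\ell_-^\alpha \leq \ell_+^\alpha$: otherwise, picking $\ell_+^\alpha < t_1 < t_2 < \ell_-^\alpha$ one would get $\phi_{t_2}^\alpha \uLambda_0 \cleq \Lambda \cleq \phi_{t_1}^\alpha \uLambda_0 \cll \phi_{t_2}^\alpha \uLambda_0$, and mixed transitivity (Lemma~\ref{lem:mixedTransitivity}) would yield a positive loop at $\phi_{t_2}^\alpha \uLambda_0$, contradicting orderability. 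Thus the problem reduces to exhibiting $\varepsilon > 0$ with $\Lambda \cgg \phi_\varepsilon^\alpha \uLambda_0$.

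Next I would upgrade the hypothesis $\uLambda_0 \cleq \Lambda$ to the strict relation $\uLambda_0 \cll \Lambda$. A non-negative isotopy from $\uLambda_0$ to $\Lambda$ carries a Hamiltonian $(H_s)$ on the traversed Legendrians; if $H\equiv 0$ then the isotopy is stationary, forcing $\Lambda=\uLambda_0$ contrary to the hypothesis. Hence $H_{s_0}(x_0) > 0$ at some point, and the corollary preceding Examples~\ref{ex:orderable} delivers $\uLambda_0 \cll \Lambda$. By compactness of the Legendrians, the Hamiltonian in fact satisfies $\inf H =: \delta > 0$, and Corollary~\ref{cor:istopyExtension} produces a compactly supported contact flow $(\varphi_s)_{s \in [0,1]}$ on $M$ with $\varphi_1(\uLambda_0) = \Lambda$ and global Hamiltonian $K$ satisfying $\inf K = \delta$.

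The key geometric construction, for small $\varepsilon > 0$, is the contactomorphism isotopy $\rho_s^\varepsilon := \varphi_s \circ \phi_{-\varepsilon s}^\alpha$, $s \in [0,1]$, running from $\rho_0^\varepsilon = \id$ to $\rho_1^\varepsilon = \varphi_1 \circ \phi_{-\varepsilon}^\alpha$. Applied to $\phi_\varepsilon^\alpha \uLambda_0$, the resulting Legendrian path $s \mapsto \rho_s^\varepsilon(\phi_\varepsilon^\alpha \uLambda_0)$ terminates at $\varphi_1(\phi_{-\varepsilon}^\alpha \phi_\varepsilon^\alpha \uLambda_0) = \varphi_1(\uLambda_0) = \Lambda$. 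The contact Hamiltonian of $(\rho_s^\varepsilon)$, obtained from the standard composition formula, takes the form $K_s - \varepsilon\, e^{a_s} \circ \varphi_s^{-1}$, where $e^{a_s}$ is the conformal factor of $\varphi_s$. At $\varepsilon = 0$ this equals $K_s \geq \delta$; on the compact set swept by the construction $e^{a_s} \circ \varphi_s^{-1}$ is uniformly bounded, so the Hamiltonian remains $\geq \delta/2 > 0$ for $\varepsilon$ sufficiently small. The corresponding Legendrian isotopy is then positive, yielding $\phi_\varepsilon^\alpha \uLambda_0 \cll \Lambda$, hence $\ell_-^\alpha(\Lambda, \uLambda_0) \geq \varepsilon > 0$; chaining back through the reductions above concludes.

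The main technical obstacle is this last Hamiltonian estimate: one must correctly identify the contact Hamiltonian of a composition of two contact flows and control the conformal factor uniformly on a compact set, without inadvertently appealing to properties of $\ell_\pm$ that depend on the very statement being proved. All other ingredients are invariance properties of $\cleq$ and $\cll$ together with routine applications of the Legendrian isotopy extension theorem.
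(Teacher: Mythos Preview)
Your argument has a genuine gap at the step where you upgrade $\uLambda_0 \cleq \Lambda$ to $\uLambda_0 \cll \Lambda$. From a non-constant non-negative isotopy you correctly extract a single point $(s_0,x_0)$ with $H_{s_0}(x_0)>0$, but the corollary you invoke requires the Hamiltonian $H_{t_0}$ to be positive \emph{everywhere on the Legendrian} at some time $t_0$ (its proof uses that $H_t$ is positive for $t$ in an interval to conclude $x_a\cll x_b$). Positivity at a single point is not enough. Consequently the subsequent claim ``by compactness $\inf H=:\delta>0$'' also fails: the original Hamiltonian is only known to be non-negative and somewhere positive.

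This is not a minor technicality: passing from pointwise positivity on an open subset to positivity on the whole Legendrian is precisely the content of the lemma, and it is what the paper's proof accomplishes via the Eliashberg--Polterovich averaging trick. One covers $\Pi\Lambda_0$ by translates $g_i(U)$ of the open set $U$ on which the Hamiltonian is positive at time $0$, conjugates the flow by each $g_i$ to obtain $\psi_i$, and observes that the time-$0$ Hamiltonian of the composed flow $\psi_n^t\cdots\psi_1^t$ restricted to $\Pi\Lambda_0$ is a sum of non-negative terms, at least one of which is positive at every point. This yields $\psi_n\cdots\psi_1\Lambda_0\cgg\Lambda_0$, and then the triangle inequality together with sign invariance forces $\ell_+^\alpha(\psi_k\Lambda_0,\Lambda_0)>0$ for some $k$, hence $\ell_+^\alpha(\Lambda,\Lambda_0)>0$. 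Once you have $\cll$ your remaining construction with $\rho_s^\varepsilon$ is fine, but it only establishes the (standard) openness of $\cll$; the hard step is getting there.
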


\begin{proof}
    Let us prove the statement in $\uLeg$.
    Let $\Lambda_0 := \uLambda_0$ and let $\alpha$ be a complete contact form
    supporting $\xi$.
    By sign invariance, it is enough to prove $\ell_+^\alpha(\Lambda,\Lambda_0)>0$.
    Let us consider a non-negative path $(\Lambda_t)$ from $\Lambda_0$
    to $\Lambda_1 = \Lambda$.
    Since $(\Lambda_t)$ is non-negative $\ell_+(\Lambda,\Lambda_0)\geq\ell_+(\Lambda_t,\Lambda_0)$ for all $t\in[0,1]$. Thus replacing $\Lambda$ by $\Lambda_t$ and taking $t\in(0,1]$ to be small enough one can moreover assume that $\Pi\Lambda$ belongs to a Weinstein neighborhood of $\Pi\Lambda_0$.
    Therefore, one can assume that $(M,\xi)$ is an open neighborhood
    of the zero-section of $J^1\Pi\Lambda_0$, $\Pi\Lambda_0$
    being identified with the zero-section and $\Pi\Lambda$ being
    $C^1$-close to $\Pi\Lambda_0$.
    Let $(H_t)$ be a non-negative Hamiltonian, the flow $(\psi^t)$
    of which satisfies $\psi^t\Lambda_0 = \Lambda_t$ for all $t\in[0,1]$.
    Since $\Pi\Lambda_0\neq\Pi\Lambda$, there exists $t_0\in[0,1)$, and
    a non-empty neighborhood $U\subset\Pi\Lambda_0$, such that
    $H_{t_0}\circ\psi^{t_0}(q)>0$ for $q\in U$.
    One can
    furthermore assume without loss of generality $t_0 = 0$ 
    (by replacing $\Lambda_0$ with $\psi^{t_0}\Lambda_0$,
    $U$ with $\psi^{t_0}U$ etc.) so that
    $H_0(q)>0$ for all $q\in U$.
    
    We will adapt a procedure due to Eliashberg-Polterovich
    (see the proof of \cite[Proposition~2.1.B]{EP00}).
    Let $(\varphi_i)_{1\leq i\leq n}$ be a finite family of diffeomorphisms
    of $\Pi\Lambda_0$
    isotopic to the identity such that $(\varphi_i(U))$ covers $\Pi\Lambda_0$
    (it exists by closeness of $\Pi\Lambda_0$).
    It lifts to $J^1\Pi\Lambda_0$ as a family of contactomorphisms
    isotopic to identity and preserving the zero-section:
    associating to a diffeomorphism $\varphi$, the contactomorphism
    $(q,p,z)\mapsto (\varphi(q),p\circ\ud\varphi^{-1},z)$.
    By cutting-off their Hamiltonian maps away from the zero section,
    one gets contactomorphisms $(g_i)_{1\leq i\leq n}$ of $(M,\xi)$
    fixing $\Lambda_0$ such that $(g_i(U))$ covers $\Pi\Lambda_0$.
    Let $\psi^t_i := g_i\psi^t g_i^{-1}$ for $t\in[0,1]$ and $1\leq i\leq n$.
    
    The key point is that $\psi_n\cdots\psi_1\Lambda_0\cgg\Lambda_0$,
    where $\psi_k:=\psi_k^1$.
    Indeed, it is enough to prove that the Hamiltonian
    map $(K_t)$ of the flow $(\psi_n^t\psi_{n-1}^t\cdots\psi_1^t)_t$ is positive
    along $\Pi\Lambda_0$ at time $t=0$ (see Lemma~\ref{lem:mixedTransitivity}).
    But for all $q\in \Pi\Lambda_0$,
    \begin{equation*}
            K_0(q) = 
            \alpha\left(\frac{\ud}{\ud
            t}\big(\psi_n^t\cdots\psi_1^t(q)\big)\Big|_{t=0}\right)
                   =
                   \sum_{i=1}^n \alpha\left(\dot{\psi}_i^0(q)\right),
    \end{equation*}
    where $\dot{\psi}_i^0$ stands for the time-derivative of $\psi_i^t$
    taken at time $t=0$.
    As $g_i^*\alpha = \lambda_i\alpha$ for some positive $\lambda_i:M\to (0,+\infty)$,
    $\alpha(\dot{\psi}_i^0(q))$ has the sign of
    $\alpha(\dot{\psi}^0(g_i^{-1}(q))) = H_0(g_i^{-1}(q))$.
    So each term of the summand is non-negative and the $i$-th term is
    positive when $q\in g_i(U)$.
    As $(g_i(U))$ covers $\Pi\Lambda_0$, one concludes that $K_0$ is positive
    along $\Pi\Lambda_0$.

    Therefore $\psi_n\cdots\psi_1\Lambda_0 \cgg \Lambda_0$
    so $\ell^\alpha_+(\psi_n\cdots\psi_1\Lambda_0,\Lambda_0)>0$.
    Now, by the triangle inequality,
    \begin{equation*}
        0 < \ell^\alpha_+(\psi_n\cdots\psi_1\Lambda_0,\psi_n\cdots\psi_2\Lambda_0)
        + \ell_+^\alpha(\psi_n\cdots\psi_2\Lambda_0,\psi_n\cdots\psi_3\Lambda_0)
        + \cdots + \ell_+^\alpha(\psi_n\Lambda_0,\Lambda_0),
    \end{equation*}
    so $\ell_+^\alpha(\psi_n\cdots\psi_k\Lambda_0,\psi_n\cdots\psi_{k+1}\Lambda_0)>0$
    for some $k$. By sign invariance under the contactomorphism $\psi_n\cdots\psi_{k+1}$,
    $\ell_+^\alpha(\psi_k\Lambda_0,\Lambda_0) > 0$.
    Since $g_k\Lambda_0 = \Lambda_0$, sign invariance implies
    $\ell_+^\alpha(\psi^1\Lambda_0,\Lambda_0)>0$, but
    $\psi^1\Lambda_0 = \Lambda$.

    The proof in $\Leg$ is similar.
\end{proof}

\begin{prop}[Non-degeneracy]\label{prop:nondeg}
    Given any $\Lambda\in\Leg$ (resp. $\uLeg$),
    if $\ell_-(\Lambda,\uLambda) = \ell_+(\Lambda,\uLambda) = t$,
    then $\Lambda = \uLambda_t$ (resp. for $\uLeg$ it only
    implies $\Pi\Lambda = \Pi\uLambda_t$).
\end{prop}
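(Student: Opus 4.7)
The plan is to prove the contrapositive after reducing via the structural properties of the selectors. First, the normalization identity $\ell_\pm^\alpha(\phi_t^\alpha\Lambda_1,\Lambda_0) = t + \ell_\pm^\alpha(\Lambda_1,\Lambda_0)$, suitably extended by shifting $\uLambda$ to $(\uLambda_{s+t})_s$, reduces the statement to $t=0$. Second, by Lemma~\ref{lem:sign} (sign invariance), it suffices to treat $\uLambda = (\phi_s^\alpha\uLambda_0)_s$ and assume $\ell_\pm^\alpha(\Lambda,\uLambda_0)=0$. The goal becomes: assuming $\Pi\Lambda\neq\Pi\uLambda_0$ (which is just $\Lambda\neq\uLambda_0$ in $\Leg$), derive a contradiction with $\ell_+^\alpha(\Lambda,\uLambda_0)=0$.

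Under this assumption, since $\Pi\Lambda$ and $\Pi\uLambda_0$ are closed Legendrian submanifolds of the same dimension, their symmetric difference is nonempty; I pick $q\in\Pi\uLambda_0\setminus\Pi\Lambda$ with a neighborhood $V\subset M\setminus\Pi\Lambda$. Choose a compactly supported, non-negative contact Hamiltonian $H$ with $\supp H\subset V$ and $H(q)>0$, and let $(g_t)$ be its flow, lifted to $\uGcont$ via this isotopy. Since $\supp H$ is disjoint from $\Pi\Lambda$, the $\uGcont$-action of $g_1$ on $\uLeg$ fixes $\Lambda$. The induced path $(g_t\uLambda_0)$ is non-negative with Hamiltonian positive at $q$, so by the corollary following Lemma~\ref{lem:mixedTransitivity}, $\uLambda_0\cll g_1\uLambda_0$. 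A strictly positive isotopy realizing this relation has Hamiltonian bounded below by some $c>0$; Lemma~\ref{lem:HoferInequality} then gives $\ell_-^\alpha(g_1\uLambda_0,\uLambda_0)\geq c>0$, so $g_1\uLambda_0\cgeq\phi_\varepsilon^\alpha\uLambda_0$ for some $\varepsilon>0$. Combining with $\phi_{\varepsilon/2}^\alpha\uLambda_0\cll\phi_\varepsilon^\alpha\uLambda_0$ and Lemma~\ref{lem:mixedTransitivity}, in fact $g_1\uLambda_0\cgg\phi_{\varepsilon/2}^\alpha\uLambda_0$.

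Applying the bi-invariance (Proposition~\ref{prop:bi-inv}) by $g_1^{-1}$, together with the identity $g_1^{-1}\phi_t^\alpha=\phi_t^{g_1^*\alpha}g_1^{-1}$ (which holds in $\uGcont$ because the two isotopies $s\mapsto g_1^{-1}\phi_{st}^\alpha$ and $s\mapsto \phi_{st}^{g_1^*\alpha}g_1^{-1}$ agree pointwise), I obtain $\uLambda_0\cgg\phi_{\varepsilon/2}^{g_1^*\alpha}g_1^{-1}\uLambda_0$. Meanwhile, $\ell_+^\alpha(\Lambda,\uLambda_0)=0$ implies $\Lambda\cleq\phi_{\varepsilon/2}^\alpha\uLambda_0$; applying the same $g_1^{-1}$-invariance together with $g_1^{-1}\Lambda=\Lambda$ yields $\Lambda\cleq\phi_{\varepsilon/2}^{g_1^*\alpha}g_1^{-1}\uLambda_0$. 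Lemma~\ref{lem:mixedTransitivity} then delivers $\Lambda\cll\uLambda_0$. A strictly positive isotopy from $\Lambda$ to $\uLambda_0$, combined with Lemma~\ref{lem:HoferInequality} and Poincaré duality, forces $\ell_+^\alpha(\Lambda,\uLambda_0)<0$, contradicting the standing assumption.

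The main obstacle is carefully verifying the invariance identities in the universal cover $\uLeg$: that $g_1^{-1}$ fixes $\Lambda$ via its $\uGcont$-action, and that $g_1^{-1}\phi_t^\alpha=\phi_t^{g_1^*\alpha}g_1^{-1}$ holds as an identity in $\uGcont$ (not merely in $\cont$), so that bi-invariance of $\cll$ and $\cleq$ can be legitimately applied in $\uLeg$. Both follow from the definition of the $\uGcont$-action via isotopy lifts (Section~\ref{se:topologyLeg}) combined with the standard identity~(\ref{eq:ReebConjugation}). The upgrade from a non-negative isotopy whose Hamiltonian is positive somewhere to a strictly positive isotopy (with a uniform lower bound on its Hamiltonian) is handled by the Fraser-Polterovich-Rosen perturbation already employed in the proof of Lemma~\ref{lem:mixedTransitivity}.
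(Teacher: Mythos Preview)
There is a genuine gap in your argument at the step where you claim $\uLambda_0\cll g_1\uLambda_0$. You invoke the corollary following Lemma~\ref{lem:mixedTransitivity}, but that corollary requires the Hamiltonian $H_{t_0}$ of the Legendrian isotopy to be positive \emph{as a function on all of} $\Lambda_{t_0}$ at some time $t_0$; it does not suffice that $H$ be positive at a single point. Your Hamiltonian $H$ is compactly supported in a small neighbourhood $V$ of $q$, so its restriction to $g_t\Pi\uLambda_0$ vanishes identically on the (nonempty) complement of $V$ in $\Pi\uLambda_0$. Hence the hypothesis of the corollary fails at every time, and the conclusion $\uLambda_0\cll g_1\uLambda_0$ is unjustified. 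The Fraser--Polterovich--Rosen perturbation from the proof of Lemma~\ref{lem:mixedTransitivity} does not help either: that trick turns a concatenation of a non-negative path with a \emph{positive} path into a positive path, but here no subinterval of the isotopy $(g_t\uLambda_0)$ is positive.

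In fact the claim $\uLambda_0\cll g_1\uLambda_0$ is false in general. Take $M=J^1N$ with the standard contact form, $\uLambda_0=0_N$, and $H$ as in your construction. Then for small $t>0$ one has $g_t(0_N)=j^1f_t$ with $f_t\geq 0$ vanishing outside a small set, so $\ell_-^\alpha(g_t(0_N),0_N)=\min f_t=0$. This forces $0_N\not\cll g_t(0_N)$, since $x\cll y$ is equivalent to $\ell_-^\alpha(y,x)>0$. Thus the approach cannot be repaired by a better perturbation argument alone.

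The paper's proof circumvents this by reversing the roles: it picks $p\in\Pi\Lambda\setminus\Pi\Lambda_t$ and a non-negative Hamiltonian supported away from $\Pi\Lambda_t$, so that the flow $g_s$ \emph{fixes the base point} $\Lambda_t$ and moves $\Lambda$. Then Lemma~\ref{lem:positivity} yields $\ell_+^\alpha(g_s\Lambda,\Lambda)>0$, while sign invariance under $g_s$ keeps $\ell_+^\alpha(g_s\Lambda,\Lambda_t)=0$; the triangle inequality (together with Poincar\'e duality for the term $\ell_+^\alpha(\Lambda_t,\Lambda)$) then gives the contradiction. The essential input you are missing is precisely Lemma~\ref{lem:positivity}, whose proof uses the Eliashberg--Polterovich averaging trick to manufacture genuine positivity from a non-negative isotopy that is positive only at a point.
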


\begin{proof}
    Let us prove the statement in $\uLeg$,
    the case of $\Leg$ being similar.
    By contradiction, let us assume that
    $\ell_\pm(\Lambda,\uLambda)\equiv t$ and 
    $\Pi\Lambda\neq \Pi\Lambda_t$, where $\Lambda_t:=\uLambda_t$.
    By shifting the parametrization of $\uLambda$ and fixing
    a complete contact form $\alpha$,
    Lemma~\ref{lem:sign} implies $\ell_\pm^\alpha(\Lambda,\Lambda_t)\equiv 0$.
    Let $p\in \Pi\Lambda\setminus \Pi\Lambda_t$ and let us consider a
    non-negative Hamiltonian map
    $H:M\to\R$ supported outside $\Pi\Lambda_t$ and such that $H(p) > 0$,
    the flow of which is denoted $(g_s)$.
    Then $g_s\Lambda \cgeq \Lambda$ with $g_s\Lambda \neq\Lambda$ when
    $s>0$ is sufficiently small (as $g_s(p)\notin\Pi\Lambda$)
    whereas $g_s\Lambda_t = \Lambda_t$.
    By Lemma~\ref{lem:positivity}, $\ell_+^\alpha(g_s\Lambda,\Lambda) > 0$
    whereas sign invariance under $g_s$ implies
    $\ell^\alpha_+(g_s\Lambda,\Lambda_t) = \ell^\alpha_+(\Lambda,\Lambda_t) = 0$.
    This contradicts the triangle inequality
    $\ell_+^\alpha(g_s\Lambda,\Lambda) \leq \ell_+^\alpha(g_s\Lambda,\Lambda_t)
    + \ell_+^\alpha(\Lambda_t,\Lambda)$ as $\ell_+^\alpha(\Lambda_t,\Lambda)
    = - \ell_-^\alpha(\Lambda,\Lambda_t)=0$.
\end{proof}

As a corollary of Proposition~\ref{prop:spectrality} and \ref{prop:nondeg},
one gets Theorem~\ref{thm:ArnoldChord} stated in the introduction
(Section~\ref{se:existenceChords}).

\begin{proof}[Proof of Theorem~\ref{thm:ArnoldChord}]
    According to Lemma~\ref{lem:caracteriseNonOrderability},
    there exists a positive isotopy $(\Lambda_t)$ in $\Leg$ with
    $\Lambda_1=\Lambda_0$.
    Let $\Lambda\in\Leg$. By applying a contactomorphism
    sending $\Lambda$ on $\Lambda_0$, one can assume $\Lambda_0=\Lambda$.
    Let $(\widetilde{\Lambda}_t)$ be a lift of $(\Lambda_t)$ in $\uLeg$.
    By positivity, $\widetilde{\Lambda}_1\cgg\widetilde{\Lambda}_0$,
    which implies
    $\ell_\pm^\alpha(\widetilde{\Lambda}_1,\widetilde{\Lambda}_0)>0$.
    If $\ell_+^\alpha(\widetilde{\Lambda}_1,\widetilde{\Lambda}_0)>
    \ell_-^\alpha(\widetilde{\Lambda}_1,\widetilde{\Lambda}_0)$,
    one then gets two Reeb chords of distinct lengths
    by spectrality (Proposition~\ref{prop:spectrality}).
    Otherwise, one gets infinitely many distinct chords
    of the same length by non-degeneracy of the spectral selectors
    (Proposition~\ref{prop:nondeg}).
\end{proof}

\subsection{The case of contactomorphisms}\label{se:contSS}

Let us assume that $(M,\xi)$ is a closed cooriented contact
manifold such that $\Gcont$ (reps. $\uGcont$) is orderable.
Following Equations~(\ref{eq:c}), one defines
maps $c^\alpha_\pm$ for any supporting contact form $\alpha$.
Similarly to the Legendrian case, compactness of $M$ and orderability
imply that these maps are real-valued and the basic properties
of normalization, monotonicity, triangle inequalities, duality and compatibility
are straightforward consequences of the compatibility of the partial order $\cleq$
with the composition of contactomorphisms and (\ref{eq:ReebConjugation}). They
also obey a sign invariance property
analogous to Lemma~\ref{lem:sign}.

\begin{lem}[Sign invariance]\label{lem:cont-sign}
    For all contact forms $\alpha$ and $\beta$ supporting $\xi$ 
    (it is always assumed that such forms preserve the coorientation)
    and all $\varphi\in\Gcont$ (resp. $\uGcont$),
    \begin{equation*}
        c^\alpha_\pm(\varphi) < 0 \text{ (resp. $=0$, resp. $>0$)}
        \Leftrightarrow
        c^\beta_\pm(\varphi) < 0 \text{ (resp. $=0$, resp. $>0$)}.
    \end{equation*}
    More precisely, if $f:M\to\R$ is such that $\beta=e^f\alpha$,
    $\forall \varphi\in\Gcont$ (resp. $\uGcont$),
    \begin{equation*}
        \begin{cases}
            e^{\inf f}c_\pm^\alpha(\varphi)
            \leq c_\pm^\beta(\varphi) \leq
            e^{\sup f}c_\pm^\alpha(\varphi)
            &\text{when } c_\pm^\beta(\varphi)\geq 0,\\
            e^{\sup f}c_\pm^\alpha(\varphi)
            \leq c_\pm^\beta(\varphi) \leq
            e^{\inf f}c_\pm^\alpha(\varphi)
            &\text{when } c_\pm^\beta(\varphi)\leq 0.\\
        \end{cases}
    \end{equation*}
\end{lem}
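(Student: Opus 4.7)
The plan is to mirror the proof of Lemma \ref{lem:sign} line by line; the same Reeb-flow comparison works in the contactomorphism setting because $\cleq$ enjoys the same monotonicity properties on $\Gcont$ (resp. $\uGcont$) as it does on $\Leg$ (resp. $\uLeg$).

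First I would establish the key Reeb-flow comparison: the $\beta$-contact Hamiltonian of $(\phi^\alpha_t)$ is $\beta(R^\alpha) = e^f$, while that of $(\phi^\beta_t)$ is identically $1$. The pointwise bounds $e^{\inf f} \leq e^f \leq e^{\sup f}$, combined with monotonicity of $\cleq$ under pointwise domination of contact Hamiltonians (the same mechanism used in Lemma \ref{lem:HoferInequality}), yield
\begin{equation*}
    \phi^\beta_{e^{\inf f} t} \cleq \phi^\alpha_t \cleq \phi^\beta_{e^{\sup f} t}
    \quad\text{for } t \geq 0,
\end{equation*}
with all inequalities reversed for $t \leq 0$. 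Equivalent rewritings such as $\phi^\beta_s \cleq \phi^\alpha_{s e^{-\inf f}}$ for $s \geq 0$ will also be used.

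Granted this comparison, sign invariance for $c^\alpha_+$ is one line: if $c^\alpha_+(\varphi) < 0$, there exists $t < 0$ with $\varphi \cleq \phi^\alpha_t \cleq \phi^\beta_{e^{\inf f} t}$ and $e^{\inf f} t < 0$, so $c^\beta_+(\varphi) < 0$; the reverse implication and the $>0$ case follow by interchanging the roles of $\alpha$ and $\beta$ (that is, $f \leftrightarrow -f$), and the $=0$ case follows by elimination. Sign invariance for $c^\alpha_-$ is then deduced from the Poincaré duality $c^\alpha_-(\varphi) = -c^\alpha_+(\varphi^{-1})$ proved in Theorem \ref{thm:contSS}.

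For the quantitative bounds in the case $c^\beta_+(\varphi) \geq 0$, sign invariance gives $c^\alpha_+(\varphi) \geq 0$, and for $\varepsilon > 0$ the relation $\varphi \cleq \phi^\alpha_{c^\alpha_+(\varphi)+\varepsilon} \cleq \phi^\beta_{e^{\sup f}(c^\alpha_+(\varphi)+\varepsilon)}$ yields $c^\beta_+(\varphi) \leq e^{\sup f} c^\alpha_+(\varphi)$ in the limit; symmetrically, $\varphi \cleq \phi^\beta_{c^\beta_+(\varphi)+\varepsilon} \cleq \phi^\alpha_{e^{-\inf f}(c^\beta_+(\varphi)+\varepsilon)}$ produces $e^{\inf f} c^\alpha_+(\varphi) \leq c^\beta_+(\varphi)$. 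The case $c^\beta_+(\varphi) \leq 0$ is analogous, using the reversed Reeb-flow comparison for negative times (which swaps which of $e^{\inf f}$ and $e^{\sup f}$ gives the upper, resp. lower, bound, matching the statement), and the bounds for $c^\alpha_-$ follow again from Poincaré duality. The only potentially delicate point is the monotonicity step $H \leq K \Rightarrow \phi^H_1 \cleq \phi^K_1$ of contact flows, but this is standard and already implicit in earlier lemmas of the paper, so no genuine obstacle arises.
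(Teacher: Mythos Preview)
Your proposal is correct and takes essentially the same approach as the paper, which in fact gives no separate proof of Lemma~\ref{lem:cont-sign} beyond noting that it is ``analogous to Lemma~\ref{lem:sign}.'' The Reeb-flow comparison $\phi^\beta_{e^{\inf f}t}\cleq\phi^\alpha_t\cleq\phi^\beta_{e^{\sup f}t}$ for $t\geq 0$ (reversed for $t\leq 0$) is exactly the engine used in the paper's proof of Lemma~\ref{lem:sign}, and your deduction of the sign and quantitative statements from it, together with the shortcut via Poincar\'e duality for $c_-^\alpha$, is a faithful translation of that argument to $\Gcont$/$\uGcont$.
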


The following results are consequence of the basic properties of the maps
$c^\alpha_\pm$ and are proved in a way similar to their Legendrian counterparts.

\begin{lem}\label{lem:cont-HoferInequality}
    A contact form $\alpha$ supporting $\xi$ being fixed, if
    $(\varphi_t)$ is an isotopy of $\Gcont$ (resp. $\uGcont$)
    with $\varphi_0=\id$,
    the Hamiltonian map of which is $H_t:M\to\R$, $t\in[0,1]$, one has
    \begin{equation*}
        \int_0^1 \min H_t\ud t\leq c_-^\alpha(\varphi_1) \leq
        c_+^\alpha(\varphi_1) \leq \int_0^1 \max H_t \ud t.
    \end{equation*}
\end{lem}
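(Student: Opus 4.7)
The plan is to adapt the proof of Lemma~\ref{lem:HoferInequality} to the contactomorphism setting; in fact the argument is slightly cleaner here since $(\varphi_t)$ is already a contact flow, so the isotopy extension step is unnecessary. I will define $I(t) := \int_0^t \max H_s\,\ud s$ and $J(t):=\int_0^t \min H_s\,\ud s$, so that the reparametrized Reeb flows $(\phi^\alpha_{I(t)})$ and $(\phi^\alpha_{J(t)})$ are generated by the $\alpha$-Hamiltonians $t\mapsto\max H_t$ and $t\mapsto\min H_t$ respectively. The three inequalities will then follow, the middle one from orderability alone.

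For the upper bound $c_+^\alpha(\varphi_1)\leq I(1)$, I would exhibit the non-negative isotopy $s\mapsto \eta_s:=\phi^\alpha_{I(1)-I(1-s)}\circ\varphi_{1-s}$, $s\in[0,1]$, running from $\eta_0=\varphi_1$ to $\eta_1=\phi^\alpha_{I(1)}$. Using that the Reeb flow is strict (so $(\phi^\alpha_t)^*\alpha=\alpha$, eliminating conformal factors in the composition formula), the $\alpha$-Hamiltonian of $(\eta_s)$ at a point $p=\eta_s(q)$ computes to
\begin{equation*}
    I'(1-s) - H_{1-s}(\varphi_{1-s}(q)) \;=\; \max H_{1-s} - H_{1-s}(\varphi_{1-s}(q)) \;\geq\; 0.
\end{equation*}
This yields $\varphi_1\cleq\phi^\alpha_{I(1)}$ directly from the definition of $\cleq$, hence $c_+^\alpha(\varphi_1)\leq I(1)$. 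The same argument lifts verbatim to $\uGcont$ by reading each composition as taking place between elements of the universal cover (the path $(\eta_s)$ has a canonical starting lift).

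For the lower bound, I use the symmetric construction: the isotopy $s\mapsto \zeta_s := \phi^\alpha_{J(1)-J(s)}\circ\varphi_s$ runs from $\zeta_0=\phi^\alpha_{J(1)}$ to $\zeta_1=\varphi_1$, and an analogous computation gives an $\alpha$-Hamiltonian equal to $H_s(\varphi_s(q))-\min H_s\geq 0$. Hence $\phi^\alpha_{J(1)}\cleq\varphi_1$, which by definition means $c_-^\alpha(\varphi_1)\geq J(1)$. Finally, for the middle inequality, suppose by contradiction that $c_-^\alpha(\varphi_1)>c_+^\alpha(\varphi_1)$; then one finds $t<s$ with $\phi^\alpha_s\cleq\varphi_1\cleq\phi^\alpha_t$, while the strict positivity of the Reeb direction gives $\phi^\alpha_t\cll\phi^\alpha_s$, so by Lemma~\ref{lem:mixedTransitivity} we obtain $\phi^\alpha_s\cll\phi^\alpha_s$, contradicting the orderability of $\Gcont$ (resp. $\uGcont$).

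The main obstacle is purely bookkeeping: writing out the Hamiltonian of a composition $\phi^\alpha_{a(s)}\circ\varphi_{b(s)}$ in the contact setting and checking that all conformal-factor corrections vanish because the Reeb flow is $\alpha$-preserving. Once this computation is carried out carefully in one of the two symmetric cases, the rest is immediate.
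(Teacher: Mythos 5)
Your proof is correct and follows essentially the same route as the paper, which proves the contact case by the same comparison used for Lemma~\ref{lem:HoferInequality}: dominate the isotopy by the reparametrized Reeb flow $\phi^\alpha_{I(t)}$ generated by $t\mapsto\max H_t$ (and dually by $\phi^\alpha_{J(t)}$), deduce $\phi^\alpha_{J(1)}\cleq\varphi_1\cleq\phi^\alpha_{I(1)}$, and conclude from the definition of $c_\pm^\alpha$, with $c_-^\alpha\leq c_+^\alpha$ coming from orderability. Your explicit non-negative paths $(\eta_s)$ and $(\zeta_s)$ just make concrete the Hamiltonian-comparison step that the paper leaves implicit, and as you note the isotopy-extension step of the Legendrian proof is indeed superfluous here.
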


\begin{cor}[Continuity]\label{cor:cont-continuity}
    A supporting contact form $\alpha$ being fixed,
    \begin{equation*}
        |c_\pm^\alpha (\varphi)-c_\pm^\alpha(\psi)| \leq
        \dSH^\alpha(\varphi,\psi),\quad\forall\varphi,\psi\in\Gcont
        \text{ (resp. $\uGcont$)}.
    \end{equation*}
    In particular, the maps $c_\pm^\alpha$ are continuous with respect
    to the $C^1$-topology.
\end{cor}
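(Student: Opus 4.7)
The proof will mirror that of Corollary~\ref{cor:continuity} with the obvious adaptations from the Legendrian to the contactomorphism setting. First, for any $\varphi,\psi\in\Gcont$ (resp. $\uGcont$), I would apply the triangle inequality for $c_+^\alpha$ from Theorem~\ref{thm:contSS}:
\begin{equation*}
    c_+^\alpha(\varphi) = c_+^\alpha\big((\varphi\psi^{-1})\psi\big)
    \leq c_+^\alpha(\varphi\psi^{-1}) + c_+^\alpha(\psi),
\end{equation*}
which gives $c_+^\alpha(\varphi)-c_+^\alpha(\psi)\leq c_+^\alpha(\varphi\psi^{-1})$, together with the analogous bound obtained by exchanging $\varphi$ and $\psi$, namely $c_+^\alpha(\psi)-c_+^\alpha(\varphi)\leq c_+^\alpha(\psi\varphi^{-1})$.

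Next, for any Hamiltonian $H:[0,1]\times M\to\R$ generating a contact flow from $\id$ to $\varphi\psi^{-1}$, Lemma~\ref{lem:cont-HoferInequality} yields
\begin{equation*}
    c_+^\alpha(\varphi\psi^{-1}) \leq \int_0^1 \max H_t\,\ud t \leq \int_0^1 \max |H_t|\,\ud t,
\end{equation*}
and an analogous bound holds for $c_+^\alpha(\psi\varphi^{-1})$. Taking the infimum over such Hamiltonians on both sides and using the inversion invariance $\NSH{\varphi\psi^{-1}}^\alpha = \NSH{\psi\varphi^{-1}}^\alpha$ recalled in Section~\ref{se:Hofermetrics}, one obtains $|c_+^\alpha(\varphi)-c_+^\alpha(\psi)|\leq\dSH^\alpha(\varphi,\psi)$. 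The corresponding statement for $c_-^\alpha$ follows from the reverse triangle inequality $c_-^\alpha(\varphi\psi)\geq c_-^\alpha(\varphi)+c_-^\alpha(\psi)$ combined with the lower bound $c_-^\alpha(\varphi\psi^{-1})\geq\int_0^1 \min H_t\,\ud t\geq -\int_0^1 \max|H_t|\,\ud t$, argued identically.

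For the continuity statement, I would invoke the fact, recorded at the end of Section~\ref{se:Hofermetrics}, that the topology induced by $\dSH^\alpha$ is coarser than the $C^1$-topology. The inequality just established shows that $c_\pm^\alpha$ is $1$-Lipschitz with respect to $\dSH^\alpha$, hence continuous for the $\dSH^\alpha$-topology, and therefore continuous for the finer $C^1$-topology. No genuine obstacle is expected: all the ingredients (the triangle inequality from Theorem~\ref{thm:contSS}, Lemma~\ref{lem:cont-HoferInequality}, the inversion invariance of $\NSH{\cdot}^\alpha$, and the comparison of topologies) are already at hand, and the proof amounts to a direct transcription of the Legendrian argument with $\dSH^\alpha$ in place of $\dSCH^\alpha$ and $c_\pm^\alpha$ in place of $\ell_\pm^\alpha$.
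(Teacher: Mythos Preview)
Your proposal is correct and follows essentially the same route as the paper: the paper does not write out a separate proof for this corollary but simply states that it is ``proved in a way similar to [its] Legendrian counterpart'' (Corollary~\ref{cor:continuity}), which is precisely the triangle-inequality-plus-Lemma~\ref{lem:cont-HoferInequality} argument you give. Your handling of the $C^1$-continuity via the comparison of topologies is also in line with the paper's treatment.
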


\begin{rem}[Extensions of the selectors to completions]
    \label{rem:completion}
    Corollaries~\ref{cor:continuity} and \ref{cor:cont-continuity}
    allow us to naturally extend the selectors to $C^1$,
    Hofer or spectral-completions
    of the spaces $\Leg$, $\uLeg$, $\Gcont$ and $\uGcont$.
    The Hofer-1-Lipchitzness of the selectors implies their
    uniform continuity with respect to the $C^1$-metrics
    defined in Section~\ref{se:topologyLeg}.
    But we actually do not need this remark to extend the spectral
    selectors to $C^1$-contactomorphisms or Legendrian
    $C^1$-submanifolds: the definition of $\cleq$ and $\cll$
    naturally extends to these spaces and the orderability
    of the $C^1$-completion is equivalent to the orderability
    of the smooth space.

    On the other hand, it could be interesting to study the
    Hofer-completion or the spectral-completion of orderable
    $\Leg$, $\uLeg$, $\Gcont$ and $\uGcont$
    (see Section~\ref{se:spectraldistance} and
    Remark~\ref{rem:invariantStructures} below), as was initiated by
    Humilière \cite{Humiliere2008} and recently revitalized by
    Viterbo \cite{Viterbo2022} in the symplectic setting.
    It would also be interesting to compare these completions
    to the respective $C^0$-completions, which in the case of
    $\Gcont$ correspond to its $C^0$-closure inside the
    group of homeomorphism \cite{Usher2021}.
    In the symplectic setting, such a comparison can be done in some
    special cases thanks to the $C^0$-continuity of the selectors
    \cite{BHS2021}.
\end{rem}

Contrary to the Legendrian case, we only conjecture that the maps $c_\pm^\alpha$
are indeed spectral selectors while the non-degeneracy will follow from
the theorem of Tsuboi on the simplicity of the $C^1$-contactomorphisms
isotopic to the identity \cite{Tsuboi3}.

\begin{prop}\label{prop:cont-nondeg}
    Let $\psi\in\Gcont$ (resp. $\uGcont$) be such that
    $c_-^\alpha(\psi) = c_+^\alpha(\psi) = t$ for some $t\in\R$.
    Then $\psi=\phi_t^\alpha$ (resp. $\Pi\psi = \phi_t^\alpha$).
\end{prop}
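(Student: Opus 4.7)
The plan is to follow the hint in the paper and invoke Tsuboi's simplicity theorem for the $C^1$-group $\Gcont$ after reducing to the case $t=0$. Replacing $\psi$ by $\phi_{-t}^\alpha\psi$ and using the normalization $c_\pm^\alpha(\phi_{-t}^\alpha\psi)=-t+c_\pm^\alpha(\psi)$, it suffices to prove that $c_-^\alpha(\psi)=c_+^\alpha(\psi)=0$ forces $\psi=\id$ (resp.\ $\Pi\psi=\id$).

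I would then introduce the ``selector-zero'' set. In the $\Gcont$-case let $N:=\{\varphi\in\Gcont:c_-^\alpha(\varphi)=c_+^\alpha(\varphi)=0\}$; in the $\uGcont$-case let $\tilde N:=\{\varphi\in\uGcont:c_-^\alpha(\varphi)=c_+^\alpha(\varphi)=0\}$ and $N:=\Pi(\tilde N)\subseteq\Gcont$. Both $\tilde N$ and $N$ are normal subgroups of the ambient group: closure under product follows from the triangle inequalities combined with $c_-^\alpha\leq c_+^\alpha$, closure under inverse from Poincar\'e duality, and normality from the compatibility identity $c_\pm^\alpha(g\varphi g^{-1})=c_\pm^{(\Pi g)^*\alpha}(\varphi)$ combined with the sign invariance of Lemma~\ref{lem:cont-sign} (which says $c_\pm^{(\Pi g)^*\alpha}(\varphi)$ has the same sign as $c_\pm^\alpha(\varphi)=0$, hence vanishes). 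As the image of $\tilde N$ under the covering homomorphism $\Pi$, $N$ is also normal in $\Gcont$ in the universal-cover case. Tsuboi's theorem then forces $N=\{\id\}$ or $N=\Gcont$.

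The $\Gcont$-case is closed immediately: $\phi_1^\alpha\in\Gcont$ satisfies $c_\pm^\alpha(\phi_1^\alpha)=1\neq 0$ by normalization, so $\phi_1^\alpha\notin N$ and therefore $N=\{\id\}$. The $\uGcont$-case is the main obstacle. Assuming for contradiction that $\Pi(\tilde N)=\Gcont$, every $\psi\in\uGcont$ factors as $\psi=\tilde\varphi\cdot k$ with $\tilde\varphi\in\tilde N$ and $k\in\ker\Pi$; writing $\psi=\tilde\varphi k$ and $k=\tilde\varphi^{-1}\psi$ and applying the triangle inequality to each, together with $c_+^\alpha(\tilde\varphi)=0$ and $c_+^\alpha(\tilde\varphi^{-1})=-c_-^\alpha(\tilde\varphi)=0$, yields
\[
c_+^\alpha(\psi)=c_+^\alpha(k),
\]
so $c_+^\alpha(\uGcont)\subseteq c_+^\alpha(\ker\Pi)$. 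The left-hand side contains $\R$ because normalization gives $c_+^\alpha(\phi_t^\alpha)=t$ for every $t\in\R$, whereas $\ker\Pi$ is the discrete fiber of the cover $\Pi$ inside the separable metrizable space $\uGcont$ (Lemma~\ref{lem:contseparable} together with the $C^1$-length metric of Section~\ref{se:topologyLeg}), hence countable, so $c_+^\alpha(\ker\Pi)$ is countable --- contradicting $\R\subseteq c_+^\alpha(\ker\Pi)$.

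In either case $N=\{\id\}$, so the reduced $\psi$ (resp.\ $\Pi\psi$) lies in $N$ and must equal $\id$; un-shifting by $\phi_t^\alpha$ gives the proposition. The hardest step is the cardinality argument in the $\uGcont$-case, which is unavoidable because Tsuboi's simplicity is a statement about $\Gcont$ rather than $\uGcont$, forcing the detour through the factorization $\uGcont=\tilde N\cdot\ker\Pi$ and the comparison of the uncountable image of $c_+^\alpha$ with the countable range to which it would be restricted.
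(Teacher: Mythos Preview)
Your argument is correct and follows the same skeleton as the paper's: reduce to $t=0$, show the zero-locus of $(c_-^\alpha,c_+^\alpha)$ is a normal subgroup, and kill it by simplicity. Two points of comparison are worth noting.

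First, a regularity mismatch: Tsuboi's theorem asserts simplicity of the group $\Gcont^1$ of $C^1$-contactomorphisms isotopic to the identity, not of the smooth group $\Gcont$. The paper therefore passes to the $C^1$-completion, extending $c_\pm^\alpha$ by continuity (Remark~\ref{rem:completion}) before defining the normal subgroup inside $\Gcont^1$ and applying Tsuboi there. Your argument as written applies simplicity directly to the smooth $\Gcont$; to make this rigorous you must either perform the same $C^1$-extension or replace Tsuboi's result by Rybicki's simplicity theorem for the smooth contactomorphism group.

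Second, your treatment of the $\uGcont$-case genuinely diverges from the paper's. The paper simply asserts that the projected normal subgroup omits $\phi_t^\alpha$ for $t\neq 0$ small enough and then invokes simplicity. You instead argue by contradiction that $\Pi(\tilde N)=\Gcont$ would force $c_+^\alpha(\uGcont)\subset c_+^\alpha(\ker\Pi)$ via the factorisation $\psi=\tilde\varphi k$ and the triangle inequalities, which is impossible since the former is all of $\R$ while $\ker\Pi$, being a discrete subset of the separable metric space $\uGcont$ (Lemma~\ref{lem:contseparable}), is countable. This cardinality argument is a clean and self-contained alternative; it actually spells out the obstruction that the paper's one-line assertion leaves to the reader.
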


\begin{proof}
    Let us first deal with the case of $\Gcont$.
    We want to apply the theorem of Tsuboi asserting that
    the group of $C^1$-contactomorphisms isotopic to identity
    $\Gcont^1$ is a simple group.
    The group of $C^1$-contactomorphisms is defined by Tsuboi in \cite[\S 2]{Tsuboi3},
    it contains $\Gcont$ and its identity component
    is contained in the $C^1$-completion
    of $\Gcont$: $\varphi$ is a $C^1$-contactomorphism
    if it is a $C^1$-diffeomorphism
    such that $\varphi^*\alpha = e^f \alpha$
    for a $C^1$-map $f:M\to\R$.
    As $\Gcont^1$ is contained in the $C^1$-completion of $\Gcont$,
    we endow it with the $C^1$-topology and
    the maps $c_\pm^\alpha$ naturally extend to $C^1$-continuous maps
    $\bar{c}_\pm^\alpha:\Gcont^1\to\R$
    (see Remark~\ref{rem:completion} just above).
    Let us define the subset $Z^1\subset\Gcont^1$
    \begin{equation*}
        Z^1 := \{ \varphi\in\Gcont^1 \ |\ \bar{c}_-^\alpha(\varphi) =
            \bar{c}_+^\alpha(\varphi) = 0 \},
    \end{equation*}
    and denote $Z:= Z^1\cap\Gcont$.
    The subset $Z^1$ is in fact a normal subgroup of $\Gcont^1$ (resp. $\uGcont$).
    It is indeed a subgroup: $\id\in Z^1$ by the normalization property, if
    $\varphi,\psi\in Z^1$,
    then $\varphi^{-1}\in Z^1$ by the ``Poincaré duality'' property, while
    $\varphi\psi\in Z^1$ by applying both triangle inequalities and
    $\bar{c}_-^\alpha \leq \bar{c}_+^\alpha$.

    In order to prove that $Z^1$ is normal, let us prove that
    $\bar{c}^\alpha_\pm(\varphi)=0$ implies
    $\bar{c}^\alpha_\pm(g\varphi g^{-1})=0$ (for $\bar{c}^\alpha_+$ or
    $\bar{c}^\alpha_-$ independently) for all $g,\varphi\in\Gcont^1$.
    Since $c^\alpha_\pm(g\varphi g^{-1})=c^{g^*\alpha}_\pm(\varphi)$
    for $g,\varphi\in\Gcont$, by Lemma~\ref{lem:cont-sign},
    \begin{equation}\label{eq:comparisonConjugation}
        e^{-\sup|f_g|}c^\alpha_\pm(\varphi)
        \leq c^\alpha_\pm(g\varphi g^{-1}) \leq
        e^{\sup|f_g|} c^\alpha_\pm(\varphi),
    \end{equation}
    where $f_g:M\to\R$ is the smooth function such that
    $g^*\alpha = e^{f_g}\alpha$. Let us recall that
    $g^*\alpha = e^{f_g}\alpha$ for $f_g$ of class $C^1$
    when $g\in\Gcont^1$ so that
    $h\mapsto \sup |f_h|$ is a continuous map $\Gcont\to\R$ 
    that extends to $\Gcont^1$.
    Therefore the identity (\ref{eq:comparisonConjugation}) extends
    to those $g,\varphi$ in $\Gcont^1$ by replacing $c^\alpha_\pm$
    with its extension $\bar{c}^\alpha_\pm$.
    This extended identity implies that $Z^1$ is normal.
    The theorem of Tsuboi \cite{Tsuboi3} then implies that
    $Z^1 = \{\id\}$ as $\phi_t^\alpha\notin Z^1$ for $t\neq 0$ by the normalization property of Theorem \ref{thm:contSS}.
    So $Z=\{\id\}$ which brings the conclusion.

    In the case of $\uGcont$, one needs to also consider the
    universal cover $\uGcont^1$ of $\Gcont^1$
    (it is a genuine universal cover since $\Gcont^1$
    is locally contractible \cite[\S 3]{Tsuboi3}).
    We denote $\Pi^1$ the cover map (extending the cover map $\Pi$) and
    define $Z^1\subset\uGcont^1$ as the intersection of the zero
    sets of $\bar{c}_\pm^\alpha$ as above.
    As Before, $Z^1$ is a normal subgroup of $\uGcont^1$.
    As $\Pi^1$ is a surjective group morphism,
    $\Pi^1Z^1$ is a normal subgroup of $\Gcont^1$ which
    does not contain $\phi_t^\alpha$ for $t\neq 0$ small enough.
    The theorem of Tsuboi then implies $\Pi^1 Z^1 = \{ \id\}$,
    which allows us to conclude.
\end{proof}

There is a counterpart to Lemma~\ref{lem:positivity} to the case 
of contactomorphisms that can be proved using the same procedure
inspired by Eliashberg-Polterovich. However, in the current case,
it can also be seen as a consequence of Proposition~\ref{prop:cont-nondeg}.

\begin{cor}\label{cor:cont-positivity}
    Given $\varphi\in\Gcont$ (resp. $\uGcont$),
    if $\varphi\cgeq\id$ and $\varphi\neq\id$, then
    $c^\alpha_+(\varphi) > 0$ for any supporting $\alpha$.
\end{cor}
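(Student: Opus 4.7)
The plan is to argue by contradiction using Proposition~\ref{prop:cont-nondeg}. Suppose $\varphi\cgeq\id$ with $\varphi\neq\id$, and, towards a contradiction, that $c_+^\alpha(\varphi)=0$. Monotonicity along $\id\cleq\varphi$ gives $c_-^\alpha(\varphi)\geq c_-^\alpha(\id)=0$, while Lemma~\ref{lem:cont-HoferInequality} (applied to any isotopy from $\id$ to $\varphi$) yields $c_-^\alpha(\varphi)\leq c_+^\alpha(\varphi)=0$. Hence $c_-^\alpha(\varphi)=c_+^\alpha(\varphi)=0$, and Proposition~\ref{prop:cont-nondeg} applies: in the $\Gcont$ case it immediately gives $\varphi=\phi_0^\alpha=\id$, contradicting $\varphi\neq\id$, and we are done.

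The main obstacle is the $\uGcont$ case, in which Proposition~\ref{prop:cont-nondeg} only yields $\Pi\varphi=\id\in\Gcont$. To finish, I would fix a non-negative isotopy $(\varphi_t)_{t\in[0,1]}$ from $\id$ to $\varphi$ in $\uGcont$. Since $\Pi:\uGcont\to\Gcont$ is a covering map with discrete fibres and $\varphi\neq\id$, the projected path $(\Pi\varphi_t)$ cannot be constantly the identity (otherwise $(\varphi_t)$ would be the unique constant lift of the constant identity path), so there exists $t_1\in[0,1]$ with $\Pi\varphi_{t_1}\neq\id$; this element satisfies $\Pi\varphi_{t_1}\cgeq\id$ by projection, hence the already-treated $\Gcont$ case gives $c_+^\alpha(\Pi\varphi_{t_1})>0$. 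Since the projection of any non-negative isotopy in $\uGcont$ witnessing $\varphi_{t_1}\cleq\phi_t^\alpha$ is a non-negative isotopy in $\Gcont$ witnessing $\Pi\varphi_{t_1}\cleq\phi_t^\alpha$, one obtains $c_+^\alpha(\varphi_{t_1})\geq c_+^\alpha(\Pi\varphi_{t_1})>0$; monotonicity along $\varphi_{t_1}\cleq\varphi$ then gives $c_+^\alpha(\varphi)\geq c_+^\alpha(\varphi_{t_1})>0$, contradicting $c_+^\alpha(\varphi)=0$.

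As announced in the paragraph preceding the statement, a second approach would mimic the Eliashberg-Polterovich construction used in the proof of Lemma~\ref{lem:positivity}: starting from a non-negative isotopy $(\varphi_t)$ from $\id$ to $\varphi$ and a point $(t_0,q_0)$ where its Hamiltonian is positive, one would reparametrize if needed so that positivity occurs at the start, conjugate by finitely many contactomorphisms $g_i$ whose translates of a neighbourhood of $q_0$ cover $M$, and then form the composed isotopy $(g_n\varphi_t g_n^{-1})\cdots(g_1\varphi_t g_1^{-1})$, which is non-negative and positive at $t=0$ by the same computation as in Lemma~\ref{lem:positivity}. Combining the triangle inequality, Lemma~\ref{lem:cont-HoferInequality} applied to the positive sub-interval, and the sign invariance of $c_+^\alpha$ (Lemma~\ref{lem:cont-sign}) then transfers positivity from the composition to $c_+^\alpha(\varphi)$ itself.
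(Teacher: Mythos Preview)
Your argument follows the paper's closely: both treat the $\Gcont$ case by the contradiction via Proposition~\ref{prop:cont-nondeg}, and for $\uGcont$ both pick a non-negative isotopy $(\varphi_t)$, find some $t_1$ with $\Pi\varphi_{t_1}\neq\id$, deduce $c_+^\alpha(\varphi_{t_1})>0$, and conclude by monotonicity.

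There is however a genuine (though easily repaired) gap in your $\uGcont$ step. You invoke ``the already-treated $\Gcont$ case'' on $\Pi\varphi_{t_1}$, but in the $\uGcont$ statement only $\uGcont$ is assumed orderable; the selectors $c_\pm^\alpha$ on $\Gcont$ are only known to be real-valued under orderability of $\Gcont$, so the quantity $c_+^\alpha(\Pi\varphi_{t_1})$ you use may not make sense. The paper avoids this detour by staying in $\uGcont$: one applies the very same contradiction directly to $\varphi_{t_1}\in\uGcont$ (if $c_+^\alpha(\varphi_{t_1})=0$ then, as in your first paragraph, $c_-^\alpha(\varphi_{t_1})=0$ and Proposition~\ref{prop:cont-nondeg} yields $\Pi\varphi_{t_1}=\id$, a contradiction). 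With this one-line fix your proof is correct and essentially identical to the paper's. Note also that your intermediate conclusion $\Pi\varphi=\id$ is never used and can be dropped. Your sketched Eliashberg--Polterovich alternative is exactly the approach the paper alludes to but does not carry out.
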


\begin{proof}
    In the case $\varphi\in\Gcont$, by monotonicity of $c^\alpha_\pm$
    and normalization, $c^\alpha_+(\varphi)\geq c^\alpha_-(\varphi)\geq 0$.
    Therefore, $c_+^\alpha(\varphi) = 0$ would imply $c_-^\alpha(\varphi)=0$
    so $\varphi=\id$ by Proposition~\ref{prop:cont-nondeg}.
    In the case $\varphi\in\uGcont$, since $\varphi\cgeq \id$, there
    exists a non-negative isotopy $(\varphi_t)$ from $\id$ to $\varphi_1 = \varphi$.
    Assuming furthermore $\varphi\neq\id$, this isotopy is non-constant and
    $\Pi\varphi_t \neq \Pi\id$ for some $t\in (0,1]$.
    By a similar argument as above, Proposition~\ref{prop:cont-nondeg} then
    implies $c_+^\alpha(\varphi_t)>0$ so $c_+^\alpha(\varphi)>0$ by monotonicity.
\end{proof}

\section{Metrics and pseudo-metrics}
\label{se:metrics}

\subsection{The spectral metrics}
\label{se:spectraldistance}

In this section, we define the natural pseudo-distance associated
with our spectral selectors, following a classical process
originated in Viterbo's seminal work \cite{Vit}, where he defined a
norm $\gamma$ on the compactly supported Hamiltonian diffeomorphisms
of $\R^{2n}$.
During the writing of this paper, the article of Nakamura \cite{Nakamura2023}
was prepublished. In this article, Nakamura defines $\dspec^\alpha$
with the aim of generating the interval topology (see
Proposition~\ref{prop:dspecTopology}).
Although the link with the $\alpha$-spectrum is out of the scope
of his work, the results of this section can also be found there.

Given a supporting contact form $\alpha$,
let us define the following pseudo-distances on $\Leg$ (resp. $\uLeg$):
$\forall \Lambda,\Lambda'\in\Leg$ (resp. $\uLeg$),
\begin{align*}
    &\text{the spectral distance} 
    &\dspec^\alpha(\Lambda,\Lambda') &:=
    \max(\ell_+^\alpha(\Lambda,\Lambda'),\ell_+^\alpha(\Lambda',\Lambda)),\\
    &\text{the gamma distance}
    &\gamma^\alpha(\Lambda,\Lambda') &:= \ell_+^\alpha(\Lambda,\Lambda')
    + \ell_+^\alpha(\Lambda',\Lambda).
\end{align*}
These pseudo-distance are symmetric, non-negative and satisfy the triangle inequality,
according to the basic properties of the spectral selectors
(the non-negativity follows from the Poincaré duality property).
The compatibility properties of the selectors also imply
\begin{equation*}
    \dspec^\alpha(\varphi(\Lambda),\varphi(\Lambda')) =
    \dspec^{\varphi^*\alpha}(\Lambda,\Lambda')
    \text{ and }
    \gamma^\alpha(\varphi(\Lambda),\varphi(\Lambda')) =
    \gamma^{\varphi^*\alpha}(\Lambda,\Lambda'),
\end{equation*}
for all $\varphi\in\cont(M,\xi)$,
making these pseudo-distances invariant under strict contactomorphisms
of $(M,\alpha)$ (see Remark~\ref{rem:invariantStructures} below for
a discussion on the apparent lack of invariance of contact metrics).
The normalization property regarding the Reeb flow also implies the
following invariance:
\begin{equation*}
    \gamma^\alpha(\phi^\alpha_t\Lambda,\Lambda') =
    \gamma^\alpha(\Lambda,\Lambda'),\quad \forall t\in\R,
\end{equation*}
so that $\gamma^\alpha$ is clearly degenerate.
Concerning non-degeneracy,
Proposition~\ref{prop:nondeg} and Lemma~\ref{lem:HoferInequality} imply
the following less direct properties.

\begin{cor}[{\cite[Theorems~2.17 and 2.26]{Nakamura2023}}]\label{cor:dspec-nondeg}
    On either $\Leg$ or $\uLeg$ orderable, one has
    \begin{equation*}
        \dspec^\alpha \leq \dSCH^\alpha \text{ and } \gamma^\alpha \leq \dHosc^\alpha,
    \end{equation*}
    for any supporting contact form $\alpha$. Moreover, $\dspec^\alpha(\Lambda,\Lambda') = 0$
    implies $\Lambda=\Lambda'$ on $\Leg$ (resp. $\Pi\Lambda=\Pi\Lambda'$ on $\uLeg$)
    while $\gamma^\alpha(\Lambda,\Lambda') = 0$ implies
    $\Lambda=\phi^\alpha_t\Lambda'$ for some $t\in\R$ on $\Leg$
    (resp. $\Pi\Lambda = \phi^\alpha_t\Pi\Lambda'$ on $\uLeg$).
\end{cor}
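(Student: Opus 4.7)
My plan is to reduce everything to Lemma~\ref{lem:HoferInequality}, the Poincaré duality property of $\ell_\pm^\alpha$, and the non-degeneracy Proposition~\ref{prop:nondeg}; no new contact-geometric input should be needed.

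For the inequality $\dspec^\alpha\leq \dSCH^\alpha$, I would fix $\Lambda_0,\Lambda_1$ and an arbitrary Legendrian isotopy $(\Lambda_t)_{t\in[0,1]}$ connecting them, with Hamiltonian $(H_t)$. Lemma~\ref{lem:HoferInequality} immediately yields $\ell_+^\alpha(\Lambda_1,\Lambda_0)\leq \int_0^1\max H_t\,\ud t\leq \int_0^1\max|H_t|\,\ud t$. To control $\ell_+^\alpha(\Lambda_0,\Lambda_1)$ using the same isotopy, I would invoke Poincaré duality: $\ell_+^\alpha(\Lambda_0,\Lambda_1)=-\ell_-^\alpha(\Lambda_1,\Lambda_0)\leq -\int_0^1\min H_t\,\ud t\leq \int_0^1\max|H_t|\,\ud t$. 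Taking the maximum of these two bounds and then the infimum over all isotopies gives the claim. Summing the two bounds rather than taking their maximum yields
\[
\ell_+^\alpha(\Lambda_1,\Lambda_0)+\ell_+^\alpha(\Lambda_0,\Lambda_1)\leq \int_0^1\bigl(\max H_t-\min H_t\bigr)\,\ud t=\int_0^1\osc(H_t)\,\ud t,
\]
and the infimum gives $\gamma^\alpha\leq \dHosc^\alpha$.

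For the non-degeneracy of $\dspec^\alpha$, suppose $\dspec^\alpha(\Lambda,\Lambda')=0$, so that $\ell_+^\alpha(\Lambda,\Lambda')\leq 0$ and $\ell_+^\alpha(\Lambda',\Lambda)\leq 0$. Poincaré duality rewrites the second inequality as $\ell_-^\alpha(\Lambda,\Lambda')\geq 0$, while $\ell_-^\alpha\leq \ell_+^\alpha\leq 0$ at $(\Lambda,\Lambda')$ forces $\ell_-^\alpha(\Lambda,\Lambda')=\ell_+^\alpha(\Lambda,\Lambda')=0$. Applying Proposition~\ref{prop:nondeg} with $t=0$ yields $\Lambda=\phi_0^\alpha\Lambda'=\Lambda'$ (resp.\ $\Pi\Lambda=\Pi\Lambda'$ in the universal cover). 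The case of $\gamma^\alpha$ is even more direct: by Poincaré duality, $\gamma^\alpha(\Lambda,\Lambda')=\ell_+^\alpha(\Lambda,\Lambda')-\ell_-^\alpha(\Lambda,\Lambda')$, so the vanishing hypothesis forces $\ell_+^\alpha(\Lambda,\Lambda')=\ell_-^\alpha(\Lambda,\Lambda')=:t$ for a common real $t$, and Proposition~\ref{prop:nondeg} again gives $\Lambda=\phi_t^\alpha\Lambda'$ (resp.\ $\Pi\Lambda=\phi_t^\alpha\Pi\Lambda'$).

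There is no substantial obstacle: the argument is entirely formal once the deeper content (Lemma~\ref{lem:HoferInequality} and Proposition~\ref{prop:nondeg}) is granted. The only care I would take is to make sure I use Poincaré duality as stated in Theorem~\ref{thm:LegSS} to pass freely between $\ell_-^\alpha$ and $\ell_+^\alpha$, since the definitions of $\dspec^\alpha$ and $\gamma^\alpha$ are written only in terms of $\ell_+^\alpha$.
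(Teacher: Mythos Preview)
Your proposal is correct and follows exactly the route the paper indicates: the corollary is stated immediately after the sentence ``Proposition~\ref{prop:nondeg} and Lemma~\ref{lem:HoferInequality} imply the following less direct properties,'' with no further details given. Your expansion via Poincar\'e duality to pass between $\ell_+^\alpha$ and $\ell_-^\alpha$, bounding both terms of $\dspec^\alpha$ and $\gamma^\alpha$ by the Hofer integrals, and then invoking non-degeneracy is precisely what the authors intend.
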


Nakamura's proof of the non-degeneracy properties of the Legendrian spectral metrics
follows a different path than ours: it is inspired by Rosen-Zhang work
\cite{RosenZhang2020} and provides a version of Chekanov's dichotomy.

The triangle inequality satisfied by $\ell_+^\alpha$ implies that
the maps
$\ell_\pm^\alpha(\cdot,\Lambda_0)$ are $\dspec^\alpha$-1-Lipschitz:
\begin{equation}\label{eq:dspecIneq}
    |\ell_\pm^\alpha(\Lambda,\Lambda_0)-\ell_\pm^\alpha(\Lambda',\Lambda_0)|
    \leq \dspec^\alpha(\Lambda,\Lambda'),\quad
    \forall \Lambda,\Lambda',\Lambda_0\in\Leg\text{ (resp. $\uLeg$)}.
\end{equation}
In particular, Corollary~\ref{cor:continuity} can be seen as a consequence
of Corollary~\ref{cor:dspec-nondeg}.

Similarly to the Legendrian case, given a supporting contact form $\alpha$,
one can define pseudo-distances $\dspec^\alpha$
and $\gamma^\alpha$ on $\Gcont$ (resp. $\uGcont$).
Since our space of interest is a group, it is customary to first
define the associated pseudo-norms
(see Section~\ref{se:Hofermetrics}): $\forall \varphi\in\Gcont$
(resp. $\uGcont$),
\begin{align*}
    &\text{the spectral norm}
    &\Nspec{\varphi}^\alpha &:=
    \max(c_+^\alpha(\varphi),c_+^\alpha(\varphi^{-1})),\\
    &\text{the gamma norm}
    &\gamma^\alpha(\varphi) &:= c_+^\alpha(\varphi) + c_+^\alpha(\varphi^{-1}),
\end{align*}
and then define the right-invariant pseudo-distances by $\dspec^\alpha(\varphi,\psi):=
\Nspec{\varphi\psi^{-1}}^\alpha$ and $\gamma^\alpha(\varphi,\psi) :=
\gamma^\alpha(\varphi\psi^{-1})$
(although the term ``spectral'' would suggest it actually comes from
spectral selectors, which we only conjecture for $c_\pm^\alpha$).
Although they are not left-invariant
(see however Remark~\ref{rem:invariantStructures}), they satisfy a compatibility property:
\begin{equation*}
    \dspec^\alpha(g\varphi,g\psi) = \Nspec{g\varphi\psi^{-1}g^{-1}}^\alpha
    = \Nspec{\varphi\psi^{-1}}^{g^*\alpha}
    = \dspec^{g^*\alpha}(\varphi,\psi).
\end{equation*}
These pseudo-norms and pseudo-distances share properties similar to their
Legendrian counterparts,
among which
\begin{equation}\label{eq:c-dspec_inequality}
    |c_\pm^\alpha(\varphi)-c_\pm^\alpha(\psi)| \leq \dspec^\alpha(\varphi,\psi).
\end{equation}

\begin{cor}[{\cite[Theorem~2.6]{Nakamura2023}}]\label{cor:cont-dspec-nondeg}
    On either $\Gcont$ or $\uGcont$ orderable, one has
    \begin{equation*}
        \Nspec{\cdot}^\alpha \leq \NSH{\cdot}^\alpha
        \text{ and }
        \gamma^\alpha \leq \NHosc{\cdot}^\alpha,
    \end{equation*}
    for any supporting contact form $\alpha$. Moreover,
    $\Nspec{\varphi}^\alpha = 0$ implies $\varphi=\id$ on $\Gcont$
    (resp. $\Pi\varphi = \id$ on $\uGcont$) while
    $\gamma^\alpha(\varphi) = 0$ implies $\varphi = \phi_t^\alpha$
    for some $t\in\R$ on $\Gcont$ (resp. $\Pi\varphi = \phi_t^\alpha$
    on $\uGcont$)
\end{cor}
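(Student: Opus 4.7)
The plan is to derive this corollary directly from Lemma~\ref{lem:cont-HoferInequality} and the non-degeneracy property established in Proposition~\ref{prop:cont-nondeg}, following exactly the same template as the Legendrian counterpart Corollary~\ref{cor:dspec-nondeg}. All the substantive work has already been done in Section~\ref{se:contSS}; this corollary is essentially a repackaging.

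For the inequalities, I would start by rewriting the pseudo-norms in terms of the pair $(c_-^\alpha, c_+^\alpha)$ using the Poincaré duality property $c_+^\alpha(\varphi^{-1}) = -c_-^\alpha(\varphi)$, so that
\begin{equation*}
    \Nspec{\varphi}^\alpha = \max\bigl(c_+^\alpha(\varphi), -c_-^\alpha(\varphi)\bigr)
    \quad\text{and}\quad
    \gamma^\alpha(\varphi) = c_+^\alpha(\varphi) - c_-^\alpha(\varphi).
\end{equation*}
Given any Hamiltonian $(H_t)$ generating an isotopy from $\id$ to $\varphi$, Lemma~\ref{lem:cont-HoferInequality} bounds $c_+^\alpha(\varphi)$ above by $\int_0^1 \max H_t\,\ud t$ and $-c_-^\alpha(\varphi)$ above by $-\int_0^1 \min H_t\,\ud t$, so the maximum is at most $\int_0^1 \max|H_t|\,\ud t$ and the difference is at most $\int_0^1 \osc(H_t)\,\ud t$. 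Taking the infimum over all generating Hamiltonians yields $\Nspec{\varphi}^\alpha \leq \NSH{\varphi}^\alpha$ and $\gamma^\alpha(\varphi) \leq \NHosc{\varphi}^\alpha$.

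For the non-degeneracy of $\Nspec{\cdot}^\alpha$: assuming $\Nspec{\varphi}^\alpha = 0$ gives both $c_+^\alpha(\varphi) \leq 0$ and $-c_-^\alpha(\varphi)\leq 0$; combined with the trivial inequality $c_-^\alpha(\varphi) \leq c_+^\alpha(\varphi)$ (itself a consequence of Lemma~\ref{lem:cont-HoferInequality}), one gets $0 \leq c_-^\alpha(\varphi) \leq c_+^\alpha(\varphi) \leq 0$, hence both selectors vanish. Proposition~\ref{prop:cont-nondeg} applied with $t=0$ then yields $\varphi = \phi_0^\alpha = \id$ on $\Gcont$ (resp. $\Pi\varphi = \id$ on $\uGcont$).

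For the non-degeneracy of $\gamma^\alpha$: if $\gamma^\alpha(\varphi)=0$ then $c_+^\alpha(\varphi) = c_-^\alpha(\varphi)$, and calling $t\in\R$ this common (real) value, Proposition~\ref{prop:cont-nondeg} yields $\varphi = \phi_t^\alpha$ on $\Gcont$ (resp. $\Pi\varphi = \phi_t^\alpha$ on $\uGcont$). Since the hard analytic content — namely Tsuboi's simplicity theorem used inside Proposition~\ref{prop:cont-nondeg} — is already available, no step of this proof presents an obstacle; the argument is a purely formal manipulation of the selector axioms from Theorem~\ref{thm:contSS} combined with the energy estimate of Lemma~\ref{lem:cont-HoferInequality}.
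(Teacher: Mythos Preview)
Your proof is correct and follows exactly the approach the paper intends: the corollary is stated without explicit proof, but the paper points to Proposition~\ref{prop:cont-nondeg} and Lemma~\ref{lem:cont-HoferInequality} (via the analogy with Corollary~\ref{cor:dspec-nondeg}), which is precisely what you invoke. The rewriting via Poincar\'e duality and the chain $0\leq c_-^\alpha\leq c_+^\alpha\leq 0$ are the expected formal steps.
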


Let us recall that a partially ordered metric space $(Z,d,\leq)$ is
a metric space $(Z,d)$ endowed with a partial order $\leq$ such that
$a\leq b\leq c$ implies $d(a,b)\leq d(a,c)$.
The following statement directly follows from the definitions.

\begin{prop}\label{prop:PartiallyOrderedMetric}
    Let $O$ be either $\Leg$, $\uLeg$, $\Gcont$ or $\uGcont$,
    associated to some contact manifold $(M,\ker\alpha)$
    (and possibly some closed Legendrian submanifold of $M$).
    If $O$ is orderable, then $(O,\dspec^\alpha,\cleq)$ is
    a partially ordered metric space.
\end{prop}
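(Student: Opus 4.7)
My strategy is to verify the order-compatibility condition $a\cleq b\cleq c\Rightarrow \dspec^\alpha(a,b)\leq \dspec^\alpha(a,c)$ by observing that when $a\cleq b$ the $\max$ defining $\dspec^\alpha(a,b)$ collapses to a single selector value, so the inequality reduces to the monotonicity of that selector in its first argument. The non-degeneracy piece needed to call $\dspec^\alpha$ a metric (or pseudo-metric on the universal covers) is already provided by Corollaries~\ref{cor:dspec-nondeg} and~\ref{cor:cont-dspec-nondeg}, so only the order-compatibility remains.

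In the Legendrian case $O=\Leg$ or $\uLeg$, I will assume $\Lambda_0\cleq\Lambda_1\cleq\Lambda_2$. Since $\phi^\alpha_0\Lambda_0=\Lambda_0\cleq\Lambda_j$ for $j=1,2$, the monotonicity of $\ell_-^\alpha$ in its first argument from Theorem~\ref{thm:LegSS} gives $\ell_-^\alpha(\Lambda_j,\Lambda_0)\geq 0$, and Poincaré duality then yields
\[
    \ell_+^\alpha(\Lambda_0,\Lambda_j) \;=\; -\ell_-^\alpha(\Lambda_j,\Lambda_0) \;\leq\; 0 \;\leq\; \ell_-^\alpha(\Lambda_j,\Lambda_0) \;\leq\; \ell_+^\alpha(\Lambda_j,\Lambda_0),
\]
so that $\dspec^\alpha(\Lambda_0,\Lambda_j)=\ell_+^\alpha(\Lambda_j,\Lambda_0)$. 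The required inequality then reduces to $\ell_+^\alpha(\Lambda_1,\Lambda_0)\leq \ell_+^\alpha(\Lambda_2,\Lambda_0)$, which is immediate from $\Lambda_1\cleq\Lambda_2$ and the monotonicity of $\ell_+^\alpha$.

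The group case $O=\Gcont$ or $\uGcont$ will run along the same lines, using the right-invariance of $\dspec^\alpha$ and the bi-invariance~\eqref{eq:bi-invariant property} of $\cleq$. Writing $\dspec^\alpha(\varphi,\psi)=\Nspec{\psi\varphi^{-1}}^\alpha$, I will translate $\varphi\cleq\psi$ into $\id\cleq \psi\varphi^{-1}$, deduce from normalization and monotonicity in Theorem~\ref{thm:contSS} that $c_+^\alpha(\psi\varphi^{-1})\geq 0$, and from Poincaré duality that $c_+^\alpha(\varphi\psi^{-1})=-c_-^\alpha(\psi\varphi^{-1})\leq 0$, thereby identifying $\dspec^\alpha(\varphi,\psi)$ with $c_+^\alpha(\psi\varphi^{-1})$ and analogously $\dspec^\alpha(\varphi,\chi)$ with $c_+^\alpha(\chi\varphi^{-1})$. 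Bi-invariance applied to $\psi\cleq\chi$ gives $\psi\varphi^{-1}\cleq \chi\varphi^{-1}$, and monotonicity of $c_+^\alpha$ closes the argument.

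As the authors announce, the statement ``directly follows from the definitions'', and indeed no step presents a serious obstacle. The only mildly conceptual point is the sign dichotomy for the two selector values entering $\dspec^\alpha(a,b)$ when $a$ and $b$ are comparable, which is what forces the monotonicity of $\dspec^\alpha$ along $\cleq$ to boil down to the monotonicity of a single selector.
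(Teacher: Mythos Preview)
Your proof is correct and is precisely the argument the paper has in mind when it says the statement ``directly follows from the definitions'': you use normalization, monotonicity and Poincar\'e duality from Theorems~\ref{thm:LegSS} and~\ref{thm:contSS} to collapse the $\max$ defining $\dspec^\alpha$ to a single selector whenever the arguments are $\cleq$-comparable, and then apply monotonicity of that selector. Your side remark about non-degeneracy on universal covers is also on point.
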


The topology induced by the spectral distance is already known
and has been introduced by Chernov-Nemirovski \cite{CheNem2020}.
It was the motivation of the recent work of Nakamura for
introducing this distance independently \cite{Nakamura2023}.

\begin{prop}[{\cite[Proposition~2.3]{Nakamura2023}}]\label{prop:dspecTopology}
    Let $O$ be either $\Leg$, $\uLeg$, $\Gcont$ or $\uGcont$,
    associated to some contact manifold $(M,\ker\alpha)$
    (and possibly some closed Legendrian submanifold of $M$)
    and let us assume that $O$ is orderable.
    The topology induced by the spectral distance $\dspec^\alpha$
    is the interval topology, that is the topology generated by
    the basis of open subsets
    \begin{equation*}
        (a,b) := \{ x\in O\ |\  a\cll x\cll b \},
    \end{equation*}
    where $a,b\in O$ satisfies $a\cleq b$.
\end{prop}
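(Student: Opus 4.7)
The plan is to check that the two topologies share compatible bases: each interval $(a,b)$ is $\dspec^\alpha$-open, and conversely every $\dspec^\alpha$-ball contains, around each of its points, an interval neighborhood. I would argue in the Legendrian case $O = \Leg$ or $\uLeg$; the argument for $O = \Gcont$ or $\uGcont$ is formally identical, using $c_\pm^\alpha$ in place of $\ell_\pm^\alpha$, the right-invariance $\dspec^\alpha(\varphi,\psi) = \Nspec{\varphi\psi^{-1}}^\alpha$, and the right-normalization $c_\pm^\alpha(\psi\phi^\alpha_t) = t + c_\pm^\alpha(\psi)$, which follows from the normalization property in Theorem~\ref{thm:contSS}.

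The preliminary step is a selector characterization of the strict relation: $a\cll x$ iff $\ell_-^\alpha(x,a) > 0$, and $x \cll b$ iff $\ell_+^\alpha(x,b) < 0$. For the first, if $a\cll x$ then by transitivity of $\cll$ one has $\phi_{-\varepsilon}^\alpha a \cll x$ for $\varepsilon>0$ small (using $\phi_{-\varepsilon}^\alpha a \cll a$), hence $\phi_{-\varepsilon}^\alpha a \cleq x$ and by normalization $\ell_-^\alpha(x, a) \geq \varepsilon > 0$. Conversely, $\ell_-^\alpha(x,a) > \varepsilon > 0$ gives $x \cgeq \phi_\varepsilon^\alpha a$; combined with $\phi_\varepsilon^\alpha a \cgg a$, Lemma~\ref{lem:mixedTransitivity} yields $a \cll x$. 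The second equivalence follows by Poincar\'e duality.

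For the first inclusion, given $x \in (a,b)$, I set $\varepsilon := \min(\ell_-^\alpha(x,a),\,-\ell_+^\alpha(x,b)) > 0$. The Lipschitz estimate (\ref{eq:dspecIneq}), upgraded via Poincar\'e duality so as to handle $\ell_-^\alpha$ and perturbations of either argument, then implies that $\dspec^\alpha(x,y) < \varepsilon$ forces $\ell_-^\alpha(y, a) > 0$ and $\ell_+^\alpha(y, b) < 0$, so $y \in (a,b)$ by the characterization. Hence $B_{\dspec^\alpha}(x,\varepsilon) \subset (a,b)$.

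For the converse inclusion the central computation is the containment $(\phi_{-\delta}^\alpha y, \phi_\delta^\alpha y) \subset B_{\dspec^\alpha}(y,\delta)$, valid for every $y\in O$ and $\delta>0$. Indeed, if $\phi_{-\delta}^\alpha y \cll z \cll \phi_\delta^\alpha y$, combining the characterization with the second-argument normalization $\ell_\pm^\alpha(z,\phi_t^\alpha y) = -t + \ell_\pm^\alpha(z,y)$ (itself a direct consequence of Poincar\'e duality applied to the first-argument normalization of Theorem~\ref{thm:LegSS}) gives $\ell_-^\alpha(z,y) > -\delta$ and $\ell_+^\alpha(z,y) < \delta$; these bound both $\ell_+^\alpha(z,y)$ and $\ell_+^\alpha(y,z) = -\ell_-^\alpha(z,y)$ strictly below $\delta$, whence $\dspec^\alpha(y,z) < \delta$. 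Given $y \in B_{\dspec^\alpha}(x,r)$, the choice $\delta := r - \dspec^\alpha(x,y) > 0$ then produces the required interval neighborhood of $y$ inside $B_{\dspec^\alpha}(x,r)$. The main obstacle is really just the careful bookkeeping in the selector characterization of $\cll$; once that is in hand, the rest reduces to formal use of the triangle inequality, Poincar\'e duality and normalization from Theorem~\ref{thm:LegSS}.
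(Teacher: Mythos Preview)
Your overall strategy is sound and close to the paper's, but the forward implication in your selector characterization of $\cll$ contains an error. From $a\cll x$ you correctly derive $\phi_{-\varepsilon}^\alpha a \cll x$ via transitivity, hence $x\cgeq \phi_{-\varepsilon}^\alpha a$; but this only gives $\ell_-^\alpha(x,a)\geq -\varepsilon$, not $\geq\varepsilon$. The sign is wrong: you need $x\cgeq \phi_{\varepsilon}^\alpha a$ for some $\varepsilon>0$, and this does \emph{not} follow from transitivity with $\phi_{-\varepsilon}^\alpha a\cll a$. The correct justification is the $C^1$-openness of $\cll$: since $a\cll x$ and $\phi_\varepsilon^\alpha a\to a$ as $\varepsilon\to 0^+$, one has $\phi_\varepsilon^\alpha a\cll x$ for small $\varepsilon>0$, whence $\ell_-^\alpha(x,a)\geq\varepsilon>0$. (Dually for $\ell_+^\alpha$.) This forward direction is genuinely needed in both of your subsequent steps --- to ensure $\varepsilon>0$ in the first inclusion, and to get the strict bounds $\ell_-^\alpha(z,y)>-\delta$, $\ell_+^\alpha(z,y)<\delta$ in the second --- so the gap is not cosmetic.

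Once repaired, your argument is correct but less direct than the paper's. The paper observes that the open $\dspec^\alpha$-ball $B_r(x)$ is \emph{exactly} the interval $(\phi_{-r}^\alpha x,\phi_r^\alpha x)$: membership in $B_r(x)$ amounts to $\ell_+^\alpha(\cdot,x)<r$ and $\ell_-^\alpha(\cdot,x)>-r$, which by definition of the selectors (together with Lemma~\ref{lem:mixedTransitivity} and openness of $\cll$) is equivalent to $\phi_{-r}^\alpha x\cll\cdot\cll\phi_r^\alpha x$. It then checks that these Reeb-intervals already form a basis of the interval topology. This avoids your detour through the Lipschitz estimate~(\ref{eq:dspecIneq}) for the first inclusion, and yields both containments at once.
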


\begin{proof}
    Indeed, given $r\geq 0$ and $x\in O$, the open $\dspec^\alpha$-ball 
    $B_r(x)$ centered at $x$ of
    radius $r$ in $O$ is exactly $(\phi^\alpha_{-r}x,\phi^\alpha_r x)$ and
    these subsets form a basis of open neighborhoods of both the interval
    topology and the $\dspec^\alpha$-topology.

    Let us show precisely that $B_r(x)=(\phi^\alpha_{-r}x,\phi^\alpha_r x)$
    in the case $O=\uLeg$, the other cases being similar.
    Let $\Lambda\in\uLeg$, then $\Lambda\in B_r(x)$ is equivalent to
    $\ell_+^\alpha(\Lambda,x) < r$ and $\ell_-^\alpha(\Lambda,x) > -r$
    (applying $\ell_-^\alpha\leq\ell_+^\alpha$ and the Poincaré duality).
    Now, by definition of $\ell_+^\alpha$,
    $\ell_+^\alpha(\Lambda,x) < r$ is equivalent to
    $\Lambda \cll \phi_r^\alpha x$ while $\ell_-^\alpha(\Lambda,x)>-r$
    is equivalent to $\Lambda \cgg \phi_{-r}^\alpha x$,
    bringing the conclusion.

    The fact that the intervals $(\phi^\alpha_{-r}x,\phi^\alpha_r x)$
    form a basis of the interval topology
    is a consequence of the fact that if $x\cll y$ then
    $x\cll \phi_\varepsilon^\alpha x\cll \phi_{-\varepsilon}^\alpha y \cll y$
    for $\varepsilon>0$ small enough, which comes from the openness
    of the relation $\cll$.
\end{proof}

This proposition incidentally shows that spectral metrics induce
a same topology, any complete contact form supporting $\xi$ being given.
It can be quantified as a corollary of Lemmata~\ref{lem:sign} and
\ref{lem:cont-sign}.

\begin{cor}\label{cor:changeOfForm}
    Let $(M,\xi)$ be a contact manifold
    and $\alpha$ be a complete contact form supporting $\xi$.
    Let $f:M\to\R$ be a bounded map and $\beta := e^f\alpha$.
    Then, on either $\Leg$, $\uLeg$, $\Gcont$ or $\uGcont$
    orderable for which selectors are well-defined,
    \begin{equation*}
        e^{\inf f} g^\alpha \leq g^\beta \leq
        e^{\sup f} g^\alpha,
    \end{equation*}
    where $g^\delta$ either stands for $\Nspec{\cdot}^\delta$
    or $\dspec^\delta$ for any complete contact form $\delta$.
    In particular, on the non-vanishing set of $g^\alpha$,
    \begin{equation*}
        \big|\log |g^\alpha| - \log |g^\beta|\big| \leq d_{C^0}(\alpha,\beta),
    \end{equation*}
    with $d_{C^0}(\alpha,\beta) = \sup |f|$ where $\alpha=e^f\beta$.
\end{cor}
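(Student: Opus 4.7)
The plan is to derive Corollary~\ref{cor:changeOfForm} as a direct consequence of the pointwise sign-comparison estimates provided by Lemmata~\ref{lem:sign} and \ref{lem:cont-sign}. These lemmata assert that for any single selector value $T^\alpha\in\{\ell_+^\alpha(\Lambda_1,\Lambda_0),\,c_+^\alpha(\varphi)\}$ and its counterpart $T^\beta$ under the conformal change $\beta=e^f\alpha$, the signs of $T^\alpha$ and $T^\beta$ agree and one has
\begin{equation*}
    \begin{cases}
        e^{\inf f}\,T^\alpha\leq T^\beta\leq e^{\sup f}\,T^\alpha & \text{if } T^\alpha\geq 0,\\
        e^{\sup f}\,T^\alpha\leq T^\beta\leq e^{\inf f}\,T^\alpha & \text{if } T^\alpha\leq 0.
    \end{cases}
\end{equation*}
The strategy is to propagate these termwise estimates through the definitions of $\dspec^\alpha$ and $\Nspec{\cdot}^\alpha$, both of which are maxima of exactly two such terms.

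The decisive observation is that $\dspec^\alpha$ and $\Nspec{\cdot}^\alpha$ are automatically non-negative: by Poincar\'e duality and $\ell_-^\alpha\leq\ell_+^\alpha$ one has $\ell_+^\alpha(\Lambda,\Lambda')+\ell_+^\alpha(\Lambda',\Lambda)\geq 0$ (so at least one summand is non-negative), and similarly $c_+^\alpha(\varphi)+c_+^\alpha(\varphi^{-1})\geq 0$. Writing $g^\alpha=\max(T_1^\alpha,T_2^\alpha)$ with $T_1^\alpha\geq 0\geq T_2^\alpha$ or $T_1^\alpha\geq T_2^\alpha\geq 0$, I would then split into the two cases. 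If $T_2^\alpha\geq 0$, applying the positive sandwich to both terms and taking maxima gives $e^{\inf f}g^\alpha=e^{\inf f}T_1^\alpha\leq T_1^\beta\leq g^\beta\leq e^{\sup f}\max(T_1^\alpha,T_2^\alpha)=e^{\sup f}g^\alpha$. If $T_2^\alpha<0$, the sign-invariance part of the lemmata forces $T_2^\beta<0$, so $g^\beta=T_1^\beta$ already satisfies the required bracket directly from the positive sandwich applied to $T_1^\alpha=g^\alpha$. In both cases one obtains $e^{\inf f}g^\alpha\leq g^\beta\leq e^{\sup f}g^\alpha$.

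The logarithmic estimate is then an immediate formal consequence. On the set $\{g^\alpha\neq 0\}$ one actually has $g^\alpha>0$, and the first sandwich forces $g^\beta>0$ as well, so taking logarithms gives $\inf f\leq\log g^\beta-\log g^\alpha\leq \sup f$, whence $|\log g^\alpha-\log g^\beta|\leq \sup|f|=d_{C^0}(\alpha,\beta)$.

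There is essentially no serious obstacle: the only point requiring minor care is the mixed-sign case in the second step, which is resolved by observing that the maxima defining $\dspec^\alpha$ and $\Nspec{\cdot}^\alpha$ are always realized by a non-negative term, so the negative term cannot interfere with the desired two-sided bound.
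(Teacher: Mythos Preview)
Your argument is correct and is precisely the intended derivation: the paper does not spell out a proof but merely records the result as an immediate corollary of Lemmata~\ref{lem:sign} and \ref{lem:cont-sign}, and your case analysis (using that at least one of the two terms in the $\max$ defining $\dspec^\alpha$ or $\Nspec{\cdot}^\alpha$ is non-negative, via Poincar\'e duality and $\ell_-^\alpha\leq\ell_+^\alpha$) is exactly how one fills in those details. The logarithmic consequence is handled correctly as well.
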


\begin{rem}[Invariant uniform structures and invariant metrics]
    \label{rem:invariantStructures}
    Let us assume $(M,\xi)$ is closed, for simplicity.
    Corollary~\ref{cor:changeOfForm} implies that one can not only
    endow orderable spaces $\Leg$, $\uLeg$, $\Gcont$ or $\uGcont$
    with a topology that is invariant by the natural action by contactomorphisms
    (left and right actions in the case of $\Gcont$ and $\uGcont$)
    but that this topology is induced by an invariant uniformity structure.
    Indeed, one can make sense of Cauchy sequences or
    uniform continuity by using any
    $\alpha$-spectral metric, given any auxiliary contact form $\alpha$.
    And since these spectral metrics are sent to one-another by the
    actions induced by contactomorphisms, the uniformity structure
    they induce is invariant.
    A consequence is that the spectral completion of these spaces will
    not depend on the choice of $\alpha$ either.
    In fact, a bit more is invariant: the notion of Lipschitz maps
    is also preserved as well as the notion of boundedness.
    The same story could also be applied to the Hofer-type pseudo-distance.

    One could ask whether one could have more: a natural invariant
    metric (bi-invariant in the case of $\Gcont$ and $\uGcont$). According to
    Fraser-Polterovich-Rosen \cite[Theorem~3.1]{FPR},
    such metric would essentially be discrete because of the possibility
    to squeeze a Darboux ball of $(M,\xi)$ into an arbitrarily small
    open subset. Therefore, the induced topology will not be that
    interesting, although discrete bi-invariant metrics could
    carry interesting information, especially regarding their
    asymptotic behaviors (see Sections~\ref{se:unboundedness},
    \ref{se:CSmetrics}
    and \ref{se:CSvsFPR} below).
\end{rem}

\subsection{Spectrally robust Legendrian interlinkings}\label{se:interlinked}

The notion of interlinked Legendrian submanifolds
was introduced by Entov and Polterovich
in \cite{EntPol2017}. We recall below the definition of interlinked
Legendrian submanifolds given in \cite{EntPol2022}.
Let $(M,\xi)$ be a cooriented contact manifold and $\alpha$ be a complete
contact form supporting $\xi$. An ordered pair $(\Lambda_0,\Lambda_1)$ of
disjoint Legendrian submanifolds is $\mu$-interlinked for some
positive number $\mu$ if for every
Hamiltonian map $H:\R\times M\to\R$ generating the contact flow
$(g_t)$ and satisfying $H\geq c$ 
for some $c>0$ there
exist $x\in \Lambda_0$ and $t\in (0,\mu/c]$ such that $g_t(x)\in\Lambda_1$. 
The number $\mu$ does depend on the specific choice of supporting $\alpha$.
A pair $(\Lambda_0,\Lambda_1)$ is called interlinked if it is $\mu$-interlinked
for some $\mu>0$. A pair $(\Lambda_0,\Lambda_1)$ is called $C^1$-robustly
interlinked if any pair $(\Lambda_0',\Lambda_1')$ obtained from a sufficiently
$C^1$-small Legendrian isotopy is interlinked.
We define spectral-robustness and Hofer-robustness
by replacing $C^1$-smallness with $\dspec^\alpha$ and $\dSH^\alpha$-smallness
respectively.

 \begin{thm}\label{thm:interlinked} Suppose there exists a closed Legendrian
     $\Lambda_*\subset M$ such that $\uLeg(\Lambda_*)$ is orderable. If
     $\Lambda_0,\Lambda_1\in\uLeg(\Lambda_*)$ satisfies
     $\mu:=\ell_+^\alpha(\Lambda_1,\Lambda_0)>0$, then
     $(\Pi\Lambda_0,\Pi\Lambda_1)$ is $\mu$-interlinked with respect
     to $\alpha$.
     In particular, $(\Pi\Lambda_0,\Pi\Lambda_1)$ is
     spectral-robustly interlinked
     (so Hofer and $C^1$-robustly interlinked).
 \end{thm}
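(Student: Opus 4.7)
The plan is to recast the $\mu$-interlinking condition as an assertion about the general order spectral selector $\ell_+(\cdot,\uLambda)$ introduced in Section~\ref{se:SSLeg}, applied with $\uLambda$ the lift to $\uLeg(\Lambda_*)$ of the Legendrian path $t\mapsto g_t\Pi\Lambda_0$ based at $\Lambda_0$. Because the Hamiltonian of this Legendrian isotopy is the restriction of $H$ to the moving Legendrian, the condition $H\geq c>0$ forces $\uLambda$ to be uniformly positive in the sense of Section~\ref{se:SSLeg}, and the assumed orderability of $\uLeg(\Lambda_*)$ therefore makes $\ell_+(\Lambda_1,\uLambda)\in\R$ well-defined.

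Two estimates on this selector are needed. First, an upper bound $\ell_+(\Lambda_1,\uLambda)\leq\mu/c$: applying Lemma~\ref{lem:HoferInequality} to the sub-isotopy $(\uLambda_s)_{s\in[0,t]}$ gives $\ell_-^\alpha(\uLambda_t,\Lambda_0)\geq ct$, so $\phi^\alpha_s\Lambda_0\cleq\uLambda_t$ whenever $s<ct$. Given $\varepsilon>0$, the defining property of $\mu=\ell_+^\alpha(\Lambda_1,\Lambda_0)$ yields $\Lambda_1\cleq\phi^\alpha_{\mu+\varepsilon/2}\Lambda_0$, and transitivity at $t=(\mu+\varepsilon)/c$ produces $\Lambda_1\cleq\uLambda_{(\mu+\varepsilon)/c}$; letting $\varepsilon\to 0$ concludes. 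Second, strict positivity $\ell_+(\Lambda_1,\uLambda)>0$ is immediate from the sign invariance of Lemma~\ref{lem:sign} applied at the base point $\uLambda_0=\Lambda_0$, since $\ell_+^\alpha(\Lambda_1,\Lambda_0)=\mu>0$ by hypothesis.

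Combining the two bounds, the spectrality statement in Proposition~\ref{prop:spectrality} produces $t_*:=\ell_+(\Lambda_1,\uLambda)\in(0,\mu/c]$ together with some $x\in\Pi\Lambda_0$ satisfying $g_{t_*}(x)\in\Pi\Lambda_1$, which is exactly $\mu$-interlinkedness. For the robustness claim, the Lipschitz inequality~(\ref{eq:dspecIneq}), combined with its mirror version obtained via the Poincaré duality property, shows that $(\Lambda_0',\Lambda_1')\mapsto\ell_+^\alpha(\Lambda_1',\Lambda_0')$ is $\dspec^\alpha$-continuous in both entries, so any sufficiently $\dspec^\alpha$-small perturbation of $(\Lambda_0,\Lambda_1)$ keeps the selector strictly positive and the first half of the argument then furnishes a (possibly smaller) interlinking constant $\mu'>0$ for the perturbed pair. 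Hofer-robustness and $C^1$-robustness follow respectively from $\dspec^\alpha\leq\dSCH^\alpha$ (Corollary~\ref{cor:dspec-nondeg}) and the $C^1$-continuity of $\dspec^\alpha$. The one subtle step is the calibration of constants in the upper bound, since Lemma~\ref{lem:HoferInequality} only rules out the \emph{strict} inequality $s<ct$; this is why the $\varepsilon$ margin and the passage to the limit are inserted in step two.
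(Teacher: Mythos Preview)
Your proof is correct and follows essentially the same route as the paper's. The paper packages your two estimates as a separate lemma (Lemma~\ref{lem:interlinked}), obtaining the upper bound by the direct Hamiltonian comparison $\phi^\alpha_{ct}\Lambda_0\cleq g_t\Lambda_0$ for $t\geq 0$ rather than via Lemma~\ref{lem:HoferInequality}; this yields the closed inequality at $s=ct$ immediately and makes your $\varepsilon$-passage unnecessary, but the content is identical.
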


In particular the hypothesis of Theorem \ref{thm:interlinked} is satisfied
whenever $\Lambda_0$ and $\Lambda_1$ are two different elements in $\uLeg$
satisfying $\Lambda_0\cleq\Lambda_1$ thanks to Lemma~\ref{lem:positivity}.
Moreover for any $\varphi\in\uGcont$ the sign invariance guarantees that
$\ell_+^\alpha(\Lambda_1,\Lambda_0)>0$ if and only if
$\ell_+^\alpha\left(\varphi(\Lambda_1),\varphi(\Lambda_0)\right)>0$
(Lemma~\ref{lem:sign}).  Therefore, Theorem~\ref{thm:interlinked} generalizes
\cite[Theorem~1.5 (i)]{EntPol2022} since $\uLeg$ of the zero-section of the $1$-jet bundle $J^1N$ is orderable for any closed manifold $N$ (see the third point in Example \ref{ex:orderable}).

Let us first prove the following lemma.

\begin{lem}\label{lem:interlinked}
    Let $\alpha$ be a complete contact form supporting the
    structure of $(M,\xi)$ and $\Lambda_*\subset M$ be a closed
    Legendrian submanifold such that $\uLeg(\Lambda_*)$ is orderable.
    Let $H:\R\times M\to\R$ be a Hamiltonian map such that
    it generates a contact flow $(g_t)_{t\in\R}$ and
    $c:=\inf H >0$.
    If $\Lambda_0,\Lambda_1\in\uLeg(\Lambda_*)$ satisfy
    $\ell_+^\alpha(\Lambda_1,\Lambda_0)>0$, then
    \begin{equation*}
        0<\ell_+\left(\Lambda_1,(g_t\Lambda_0)_{t\in\R}\right)
        \leq\frac{1}{c}\ell_+^\alpha(\Lambda_1,\Lambda_0).
    \end{equation*}
\end{lem}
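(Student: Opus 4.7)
The plan is to treat the positivity and the upper bound separately. First, I observe that the path $\uLambda := (g_t\Lambda_0)_{t\in\R}$ is a uniformly positive Legendrian isotopy in the sense introduced at the beginning of Section~\ref{se:spectralSelectors}: its $\alpha$-contact Hamiltonian at time $t$ is the restriction of $H_t$ to $g_t\Lambda_0$, and by hypothesis $\inf H = c > 0$, so $\inf_t \min (H_t|_{g_t\Lambda_0}) \geq c > 0$. Since $\uLambda_0 = \Lambda_0$, the sign invariance property (Lemma~\ref{lem:sign}) applied to this uniformly positive path immediately yields $\ell_+(\Lambda_1,\uLambda) > 0$ from the hypothesis $\ell_+^\alpha(\Lambda_1,\Lambda_0) > 0$, which takes care of the lower bound.

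For the upper bound, set $T := \ell_+^\alpha(\Lambda_1,\Lambda_0)$ and fix an arbitrary $\varepsilon > 0$. By the very definition of $\ell_+^\alpha$ together with the openness of $\cll$, one has $\Lambda_1 \cleq \phi_{T+\varepsilon}^\alpha\Lambda_0$. The key step is to compare the Reeb translate $\phi_{T+\varepsilon}^\alpha\Lambda_0$ with $g_{(T+\varepsilon)/c}\Lambda_0$ by exploiting the pointwise domination $H \geq c$. To do so, I reparameterize the $H$-flow on the interval $[0,(T+\varepsilon)/c]$ to the interval $[0,1]$ by setting
\begin{equation*}
    \Lambda'_s := g_{s(T+\varepsilon)/c}\Lambda_0, \qquad s \in [0,1].
\end{equation*}
A direct computation via the chain rule shows that the $\alpha$-contact Hamiltonian $\tilde{H}_s$ of the reparameterized Legendrian isotopy $(\Lambda'_s)$ is
\begin{equation*}
    \tilde{H}_s = \tfrac{T+\varepsilon}{c}\, H_{s(T+\varepsilon)/c}\big|_{\Lambda'_s},
\end{equation*}
which satisfies $\min \tilde{H}_s \geq \tfrac{T+\varepsilon}{c}\cdot c = T+\varepsilon$ uniformly in $s \in [0,1]$.

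Now I apply Lemma~\ref{lem:HoferInequality} to this isotopy from $\Lambda_0$ to $g_{(T+\varepsilon)/c}\Lambda_0$:
\begin{equation*}
    T + \varepsilon \leq \int_0^1 \min \tilde{H}_s \, \mathrm{d}s \leq \ell_-^\alpha\bigl(g_{(T+\varepsilon)/c}\Lambda_0,\Lambda_0\bigr),
\end{equation*}
which translates into $\phi_{T+\varepsilon}^\alpha\Lambda_0 \cleq g_{(T+\varepsilon)/c}\Lambda_0$ by definition of $\ell_-^\alpha$. Combining this with $\Lambda_1 \cleq \phi_{T+\varepsilon}^\alpha\Lambda_0$ via the transitivity of $\cleq$ yields $\Lambda_1 \cleq g_{(T+\varepsilon)/c}\Lambda_0$, and hence
\begin{equation*}
    \ell_+\bigl(\Lambda_1,\uLambda\bigr) \leq \tfrac{T+\varepsilon}{c}.
\end{equation*}
Letting $\varepsilon \to 0^+$ gives the desired upper bound $\ell_+(\Lambda_1,\uLambda) \leq T/c = \tfrac{1}{c}\ell_+^\alpha(\Lambda_1,\Lambda_0)$. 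The only delicate point is to confirm that the reparameterization produces the asserted Hamiltonian, which is a routine application of the definition in Section~\ref{se:LegendrianIsotopies}; once that is in hand, everything else reduces to monotonicity of $\ell_-^\alpha$ in the Hamiltonian and transitivity of $\cleq$.
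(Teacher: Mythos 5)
Your argument is correct and follows essentially the same route as the paper: sign invariance (Lemma~\ref{lem:sign}) gives the strict positivity, and the upper bound rests on the comparison $\phi^\alpha_{ct}\Lambda_0\cleq g_t\Lambda_0$ forced by $H\geq c$, followed by transitivity of $\cleq$ — the paper phrases this via the identity $\tfrac1c\ell_+^\alpha=\ell_+^{\alpha/c}$, while you derive the same comparison through a reparametrization and Lemma~\ref{lem:HoferInequality}. One cosmetic point: $\ell_-^\alpha\bigl(g_{(T+\varepsilon)/c}\Lambda_0,\Lambda_0\bigr)\geq T+\varepsilon$ only yields $\phi_t^\alpha\Lambda_0\cleq g_{(T+\varepsilon)/c}\Lambda_0$ for every $t<T+\varepsilon$ (the supremum need not be attained), but this suffices because any $t\in(T,T+\varepsilon)$ already satisfies $\Lambda_1\cleq\phi_t^\alpha\Lambda_0$, so your limit $\varepsilon\to 0^+$ goes through unchanged.
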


\begin{proof} The first inequality comes from the sign invariance property of
    Lemma~\ref{lem:sign}.

    For the second inequality, let us remark that
    $\frac{1}{c}\ell_+^\alpha=\ell_+^{\frac{1}{c}\alpha}$ since
    $\phi_{t}^{\frac{1}{c}\alpha}=\phi_{ct}^\alpha$ for all $t\in\mathbb{R}$.
    Moreover thanks to our hypothesis on the sign of
    $\ell_+^\alpha(\Lambda_1,\Lambda_0)$ we get the following equality
    \begin{equation*}
        \frac{1}{c}\ell_+^\alpha(\Lambda_1,\Lambda_0)=
        \ell_+^{\frac{1}{c}\alpha}(\Lambda_0,\Lambda_1)=
        \inf\left\{t\geq 0
        \ |\ \phi_\alpha^{ct}\Lambda_0\cleq\Lambda_1\right\}.
    \end{equation*}
    Since $\inf H=c>0$ we deduce that
    $\phi_\alpha^{ct}\Lambda_0\cleq g_t\Lambda_0$ for all
    $t\geq 0$. Therefore, we get the desired inequality
    $\ell_+(\Lambda_1,\uLambda_0)\leq\frac{1}{c}\ell_+^\alpha(\Lambda_1,\Lambda_0)$
    by transitivity of $\cleq$.  
\end{proof}

\begin{proof}[Proof of Theorem~\ref{thm:interlinked}]
    If $\Lambda_0,\Lambda_1\in\uLeg(\Lambda_*)$ satisfies
    $\mu:=\ell_+^\alpha(\Lambda_1,\Lambda_0)>0$, for a
    Hamiltonian $H\geq c>0$ generating a contact flow $(g_t)$,
    Lemma~\ref{lem:interlinked} implies that
    $0<\ell_+\left(\Lambda_1,(g_t\Lambda_0)_{t\in\R}\right)\leq\mu/c$.
    By spectrality of $\ell_+$ (Proposition~\ref{prop:spectrality}),
    there exists $t\in (0,\mu/c]$ such that
    $\Pi\Lambda_1$ intersects $g_t\Pi\Lambda_0$, which brings the
    conclusion.

    The spectral-robustness now follows from the
    $1$-$\dspec^\alpha$-Lipchitzness of
    $\ell^\alpha_+(\Lambda_1,\cdot)$ and
    $\ell^\alpha_+(\cdot,\Lambda_0)$ 
    (by (\ref{eq:dspecIneq}) and the Poincaré duality
    property).
    It implies Hofer and $C^1$-robustness by
    Corollary~\ref{cor:dspec-nondeg}.
\end{proof}

\subsection{Unboundedness of Hofer type pseudo-metrics}
\label{se:unboundedness}

The results of this section
have also been derived by Nakamura \cite{Nakamura2023}
except for Proposition~\ref{prop:supconjshape}.

\begin{cor}
    Let $\Lambda_*\subset (M,\xi)$ be a closed Legendrian submanifold of
    a contact manifold such that $\Leg$ (resp. $\uLeg$) is orderable
    and $\alpha$ be a complete contact form supporting $\xi$.
    Then $\dSCH^\alpha(\Lambda,\Lambda')=0$ if and only if
    $\Lambda=\Lambda'$ (resp. $\Pi\Lambda=\Pi\Lambda'$),
    for any $\Lambda,\Lambda'\in\Leg$ (resp. $\uLeg$).
    Moreover, for all $\Lambda\in\Leg$ (resp. $\uLeg$),
    $(\phi_t^\alpha\Lambda)_{t\in\R}$ is a geodesic for
    $\dSCH^\alpha$:
    \begin{equation*}
        \dSCH^\alpha(\phi_t^\alpha\Lambda,
        \phi_s^\alpha\Lambda) = |t-s|,\quad
        \forall t,s\in\R,
        \forall \Lambda\in\Leg \text{ (resp. $\uLeg$)}.
    \end{equation*}
    In particular, $\dSCH^\alpha$ is unbounded.
    When $M$ is closed, the analogous statements for $\dSH^\alpha$
    on $\Gcont$ (resp. $\uGcont$)
    hold when the latter is orderable.
\end{cor}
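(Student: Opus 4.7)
The whole statement is essentially a corollary of the spectral selectors' properties combined with the inequalities $\dspec^\alpha\leq\dSCH^\alpha$ (Corollary~\ref{cor:dspec-nondeg}) and $\Nspec{\cdot}^\alpha\leq\NSH{\cdot}^\alpha$ (Corollary~\ref{cor:cont-dspec-nondeg}). The strategy is to sandwich $\dSCH^\alpha(\phi_t^\alpha\Lambda,\phi_s^\alpha\Lambda)$ between $|t-s|$ from above and below: the upper bound comes from a concrete Reeb isotopy, the lower bound from the spectral pseudo-distance.

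First I would handle non-degeneracy: if $\dSCH^\alpha(\Lambda,\Lambda')=0$, then $\dspec^\alpha(\Lambda,\Lambda')=0$ by Corollary~\ref{cor:dspec-nondeg}, which forces $\Lambda=\Lambda'$ in $\Leg$ (resp. $\Pi\Lambda=\Pi\Lambda'$ in $\uLeg$) by the same Corollary. For the geodesic property, assume $s\leq t$ and consider the Legendrian isotopy $u\mapsto\phi_{s+u(t-s)}^\alpha\Lambda$, $u\in[0,1]$. Its $\alpha$-contact Hamiltonian map is the constant $(t-s)$ on each $\phi_{s+u(t-s)}^\alpha\Lambda$, hence $\dSCH^\alpha(\phi_s^\alpha\Lambda,\phi_t^\alpha\Lambda)\leq\int_0^1|t-s|\,\ud u=|t-s|$ by definition \eqref{eq:dSCH}.

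For the reverse inequality I would compute $\dspec^\alpha(\phi_t^\alpha\Lambda,\phi_s^\alpha\Lambda)$ using the normalization property of Theorem~\ref{thm:LegSS}: combining $\ell^\alpha_-(\Lambda,\Lambda)=0$ with $\ell^\alpha_-(\phi_s^\alpha\Lambda,\Lambda)=s$ and Poincaré duality gives $\ell^\alpha_+(\Lambda,\phi_s^\alpha\Lambda)=-\ell^\alpha_-(\phi_s^\alpha\Lambda,\Lambda)=-s$, hence $\ell^\alpha_+(\phi_t^\alpha\Lambda,\phi_s^\alpha\Lambda)=t+\ell^\alpha_+(\Lambda,\phi_s^\alpha\Lambda)=t-s$. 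Symmetrically $\ell^\alpha_+(\phi_s^\alpha\Lambda,\phi_t^\alpha\Lambda)=s-t$, so $\dspec^\alpha(\phi_t^\alpha\Lambda,\phi_s^\alpha\Lambda)=|t-s|$. Corollary~\ref{cor:dspec-nondeg} yields the lower bound $\dSCH^\alpha(\phi_t^\alpha\Lambda,\phi_s^\alpha\Lambda)\geq|t-s|$, and the two bounds together give the geodesic identity; unboundedness is immediate since $|t-s|$ is unbounded as $t,s$ vary in $\R$.

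The contactomorphism case follows verbatim by replacing $\ell^\alpha_\pm$, $\dspec^\alpha$, $\dSCH^\alpha$ by $c^\alpha_\pm$, $\Nspec{\cdot}^\alpha$, $\NSH{\cdot}^\alpha$, using Theorem~\ref{thm:contSS} and Corollary~\ref{cor:cont-dspec-nondeg}: one checks $\NSH{\phi^\alpha_t(\phi^\alpha_s)^{-1}}^\alpha=\NSH{\phi^\alpha_{t-s}}^\alpha\leq|t-s|$ via the constant Hamiltonian $t-s$, then $c^\alpha_\pm(\phi^\alpha_{t-s})=t-s$ via normalization gives $\Nspec{\phi^\alpha_{t-s}}^\alpha=|t-s|$, and right-invariance translates everything into the geodesic statement for $\dSH^\alpha$. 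No step is truly delicate once the Hamilton-Jacobi machinery of Theorems~\ref{thm:LegSS} and~\ref{thm:contSS} is available; the only subtlety to keep in mind is the asymmetric sign behavior of the normalization, which is the reason we had to go through Poincaré duality to evaluate $\ell^\alpha_+(\Lambda,\phi_s^\alpha\Lambda)$.
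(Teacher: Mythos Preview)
Your proof is correct and follows essentially the same approach as the paper: non-degeneracy via $\dspec^\alpha\leq\dSCH^\alpha$ together with Corollary~\ref{cor:dspec-nondeg}, the upper bound $\dSCH^\alpha\leq|t-s|$ from the constant Hamiltonian generating the Reeb isotopy, and the lower bound from $\dspec^\alpha(\phi_t^\alpha\Lambda,\phi_s^\alpha\Lambda)=|t-s|$. The only cosmetic difference is that you route the computation of $\ell_+^\alpha(\Lambda,\phi_s^\alpha\Lambda)$ through Poincar\'e duality, whereas one can also write $\ell_+^\alpha(\phi_t^\alpha\Lambda,\phi_s^\alpha\Lambda)=\ell_+^\alpha(\phi_{t-s}^\alpha(\phi_s^\alpha\Lambda),\phi_s^\alpha\Lambda)=t-s$ directly from normalization; both are fine.
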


\begin{rem}
    Note that the maps $c_\pm^{\alpha,\infty}(\psi) :=
    \lim_{n\to+\infty}\frac{c^\alpha_\pm(\psi^n)}{n}$ are
    well-defined and conjugation invariant for all
    $\psi\in\Gcont$ (resp. $\uGcont$) orderable,
    since the sequences $(c_+^\alpha(\psi^n))$ and
    $(c_-^\alpha(\psi^n))$ are respectively subadditive
    and superadditive. Moreover, they satisfy
    $c_\pm^{\alpha,\infty}(\phi^\alpha_t) = t$ for all $t\in\R$ and
    $c^\alpha_-\leq c^{\alpha,\infty}_- \leq
    c^{\alpha,\infty}_+ \leq c^\alpha_+$.
    Therefore, even the maps $\psi\mapsto \inf_{\varphi\in\Gcont}
    \Nspec{\varphi\psi\varphi^{-1}}^\alpha$ and
    $\psi\mapsto \inf_{\varphi\in\Gcont}
    \NSH{\varphi\psi\varphi^{-1}}^\alpha$ are unbounded.
\end{rem}

The non-degeneracy of $\dSCH^\alpha$ when $\Leg$ is orderable
was originally due to Hedicke \cite[Theorem~5.2]{Hedicke2022},
while the non-degeneracy of $\dSH^\alpha$ was proven by Shelukhin
without any condition on $\Gcont$ \cite{shelukhin},
as mentioned earlier in Section~\ref{se:Hofermetrics}.
The unboundedness of $\dSH^\alpha$ and $\dSCH^\alpha$ had already
been proven by Hedicke in the special case where
$(M,\xi)$ is a unit tangent bundle
with open cover and $\Lambda_*$ is a fiber of the bundle
(\emph{cf} Example~\ref{ex:orderable}.\ref{ex:it:SN})
\cite[Theorems~5.7 and 5.8]{Hedicke2022}.
For the open contact manifold $(\R^{2n}\times S^1,\ker\alpha_\mathrm{st})$
defined in Example~\ref{ex:orderable}.\ref{ex:it:JN} (here
$N:=\R^n$ and $\alpha_\mathrm{st} := \ud z-\lambda$),
some geodesics of $\dSH^{\alpha_\mathrm{st}}$ have also
been characterized in \cite{Arlove2023} using, among other things,
the spectrality of Sandon contact selectors \cite{San11}.

\begin{proof}
    According to Corollaries~\ref{cor:dspec-nondeg}
    and \ref{cor:cont-dspec-nondeg}, one has
    $\dspec^\alpha\leq\dSCH^\alpha$ on $\Leg$ or $\uLeg$
    orderable and $\dspec^\alpha\leq\dSH^\alpha$ on
    $\Gcont$ or $\uGcont$ orderable.
    The non-degeneracy statements directly follow from their
    counterpart relative to $\dspec^\alpha$.
    
    Let us prove that $(\phi_t^\alpha\Lambda)$ is a geodesic
    for $\Lambda\in\Leg$ orderable.
    By the above inequality, one gets
    \begin{equation*}
        \dSCH^\alpha(\phi_t^\alpha\Lambda,
        \phi_s^\alpha\Lambda) \geq
        \dspec^\alpha(\phi_t^\alpha\Lambda,
        \phi_s^\alpha\Lambda) = |t-s|,\quad
        \forall t,s\in\R,
    \end{equation*}
    the equality coming from the normalization property of
    $\ell_+^\alpha$.
    On the other hand, since $\phi^\alpha_{t-s}$ can be generated
    by the $\alpha$-contact Hamiltonian $H\equiv t-s$,
    $\dSCH^\alpha(\phi_t^\alpha\Lambda,\phi_s^\alpha\Lambda)
    \leq |t-s|$, for any $t,s\in\R$.
\end{proof}

For $n\geq 2$, let us consider the unit tangent bundle of the flat torus
$(\T^n:=\R^n/\Z^n,\left<\cdot,\cdot\right>)$, where
$\left<\cdot,\cdot\right>$ is the standard inner product of
$\R^n$, that can
be seen as $\T^n\times\sphere{n-1}$, that we endow with the contact form
$\alpha$ defined in
Example~\ref{ex:orderable}.\ref{ex:it:SN}. In this context, Eliashberg and
Polterovich \cite{EP00} used the Shape invariant
\cite{shapesikorav,shapeeliashberg} to construct maps $r_\pm(p,\cdot)
:\uGcont\to\R$, for all $p\in \sphere{n-1}$, which are compatible with
the order: if $\varphi\cleq \psi$ then $r_\pm(p,\varphi)\leq r_\pm(p,\psi)$.
They showed moreover that for Hamiltonian maps $H$ of the form $H(q,p):=f(p)$,
where $f:\sphere{n-1}\to\R$, one has $r_\pm(p,\phi^H_1) = f(p)$, where $\phi_1^H\in\uGcont$ is the lift of the path $(\phi_t^H)$ generated by $H$.  The
proposition was already noticed by the second author \cite[\S 5.2.2]{ArlovePhD}
and Nakamura \cite[Example~2.9]{Nakamura2023}.

\begin{prop}\label{prop:shape}
    For any $\varphi\in\uGcont$ and $p\in \sphere{n-1}$
    \[c^\alpha_-(\varphi)\leq r_\pm(p,\varphi)\leq c^\alpha_+(\varphi).\]
\end{prop}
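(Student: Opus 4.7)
The plan is to use only two inputs recalled in the excerpt: the monotonicity of $r_\pm(p,\cdot)$ under $\cleq$, and its normalization on Reeb flows, namely $r_\pm(p,\phi^\alpha_t) = t$ for all $t\in\R$ (which is the special case $f\equiv t$ of the formula $r_\pm(p,\phi^H_1)=f(p)$ for autonomous Hamiltonians $H(q,p) = f(p)$, since the Reeb flow of $\alpha$ on the unit tangent bundle is exactly generated by $H\equiv 1$ under this description).

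First I would prove the upper bound. By definition,
\[
c^\alpha_+(\varphi) = \inf\{\,t\in\R\ |\ \varphi\cleq\phi^\alpha_t\,\}.
\]
For any $t\in\R$ with $\varphi\cleq\phi^\alpha_t$, monotonicity of $r_\pm(p,\cdot)$ gives $r_\pm(p,\varphi)\leq r_\pm(p,\phi^\alpha_t)=t$. Taking the infimum over such $t$ yields $r_\pm(p,\varphi)\leq c^\alpha_+(\varphi)$.

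Symmetrically, for the lower bound,
\[
c^\alpha_-(\varphi) = \sup\{\,t\in\R\ |\ \varphi\cgeq\phi^\alpha_t\,\},
\]
and for any such $t$ monotonicity gives $r_\pm(p,\varphi)\geq r_\pm(p,\phi^\alpha_t)=t$, so taking the supremum yields $c^\alpha_-(\varphi)\leq r_\pm(p,\varphi)$. Combining these two inequalities establishes the proposition.

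There is no substantial obstacle here: the only subtlety is to make sure that the normalization $r_\pm(p,\phi^\alpha_t)=t$ is invoked correctly from the Eliashberg--Polterovich construction (it is the content of the fact that for $f\equiv t$ the time-one map of $H(q,p)=f(p)$ is precisely $\phi^\alpha_t$, viewed in $\uGcont$), and that both $c^\alpha_\pm(\varphi)$ are known to be finite real numbers thanks to Theorem~\ref{thm:contSS}, which is applicable because $\uGcont(S\T^n)$ is orderable (Example~\ref{ex:orderable}.\ref{ex:it:SN}).
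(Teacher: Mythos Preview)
Your proof is correct and follows essentially the same approach as the paper: both use monotonicity of $r_\pm(p,\cdot)$ together with the normalization $r_\pm(p,\phi^\alpha_t)=t$ (coming from the Eliashberg--Polterovich formula with the constant function), then pass to the infimum/supremum defining $c^\alpha_\pm$. The paper phrases this as $\varphi\cleq\phi^\alpha_{c+\varepsilon}$ and lets $\varepsilon\to 0$, which is exactly your infimum argument written slightly differently.
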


\begin{proof}
    Let us set $c:=c_+^\alpha(\varphi)$. This implies that
    $\varphi\cleq\phi_{c+\varepsilon}^\alpha$ for any $\varepsilon>0$ and so
    $r_ \pm(p,\varphi)\leq r_\pm(p,\phi_{c+\varepsilon}^\alpha)$. Since
    $\phi_{c+\varepsilon}^\alpha$ is generated by the constant function equal
    to $c+\varepsilon$ by the result of Eliashberg and Polterovich discussed
    above $r_\pm(p,\phi_{c+\varepsilon}^\alpha)=c+\varepsilon$. Letting
    $\varepsilon$ going to $0$ we get the inequality $r_\pm(p,\varphi)\leq
    c_+^\alpha(\varphi)$. The proof for the other inequality follows the same
    lines. 
\end{proof}

\begin{cor}\label{cor:shape}
   
    Let $H:\T^n\times\sphere{n-1}\to\R$ be a Hamiltonian map of the form
    $H(q,p):=f(p)$ for some smooth $f:\sphere{n-1}\to\R$ generating
    the flow $(\phi^H_t)$.
    Then $c^\alpha_-(\phi^H_1) = \min H$ and
    $c^\alpha_+(\phi^H_1) = \max H$ so that
    \begin{equation*}
        \gamma^\alpha\left(\phi^H_1\right) = \NHosc{\phi^H_1}^\alpha =
        \osc H.
    \end{equation*}
    In particular, the gamma pseudo-norms and the Hofer oscillation
    pseudo-norms are unbounded on $\uGcont(\T^n\times\sphere{n-1},\ker\alpha)$.
\end{cor}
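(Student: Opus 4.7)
The plan is to sandwich $c_\pm^\alpha(\phi^H_1)$ between two pairs of bounds: the general Hofer-type bounds coming from Lemma~\ref{lem:cont-HoferInequality} applied to the defining isotopy $(\phi^H_t)$, and the shape bounds coming from Proposition~\ref{prop:shape}. First, note that the orderability hypothesis on $\uGcont(\T^n\times\sphere{n-1})$ needed to define the selectors holds by Example~\ref{ex:orderable}.\ref{ex:it:SN} since $\T^n$ is closed. Moreover, since $H$ is autonomous with $H_t \equiv H$, Lemma~\ref{lem:cont-HoferInequality} applied to the flow $(\phi^H_t)$ joining $\id$ to $\phi^H_1$ gives directly
\begin{equation*}
    \min H \;\leq\; c_-^\alpha(\phi^H_1) \;\leq\; c_+^\alpha(\phi^H_1) \;\leq\; \max H.
\end{equation*}

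Next, I would invoke the Eliashberg--Polterovich shape computation recalled just above Proposition~\ref{prop:shape}: for $H(q,p) = f(p)$ one has $r_\pm(p,\phi^H_1) = f(p)$ for every $p \in \sphere{n-1}$. Plugging this into Proposition~\ref{prop:shape} yields
\begin{equation*}
    c_-^\alpha(\phi^H_1) \;\leq\; f(p) \;\leq\; c_+^\alpha(\phi^H_1), \quad \forall p \in \sphere{n-1}.
\end{equation*}
Taking infimum and supremum over $p$ gives the reverse inequalities $c_-^\alpha(\phi^H_1) \leq \min f = \min H$ and $c_+^\alpha(\phi^H_1) \geq \max f = \max H$. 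Combined with the Hofer-type bounds, this forces the equalities $c_-^\alpha(\phi^H_1) = \min H$ and $c_+^\alpha(\phi^H_1) = \max H$.

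The two remaining identities for the norms are then immediate consequences. By Poincaré duality (Theorem~\ref{thm:contSS}), $c_+^\alpha((\phi^H_1)^{-1}) = -c_-^\alpha(\phi^H_1) = -\min H$, so by definition of the gamma norm
\begin{equation*}
    \gamma^\alpha(\phi^H_1) = c_+^\alpha(\phi^H_1) + c_+^\alpha((\phi^H_1)^{-1}) = \max H - \min H = \osc H.
\end{equation*}
For the Hofer oscillation, the upper bound $\NHosc{\phi^H_1}^\alpha \leq \int_0^1 \osc H_t\,\ud t = \osc H$ is by definition, using $(\phi^H_t)$ as a candidate isotopy, while the matching lower bound $\NHosc{\phi^H_1}^\alpha \geq \gamma^\alpha(\phi^H_1) = \osc H$ is furnished by Corollary~\ref{cor:cont-dspec-nondeg}. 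Finally, to conclude unboundedness of both pseudo-norms on $\uGcont(\T^n\times\sphere{n-1},\ker\alpha)$, it suffices to choose a sequence of smooth maps $f_k : \sphere{n-1} \to \R$ with $\osc f_k \to +\infty$ and apply the identities just proved to $H_k := f_k \circ \pi_{\sphere{n-1}}$. There is no genuine obstacle here — the only subtle point is the sandwich step, which works precisely because the shape invariant and the selector inequalities of Lemma~\ref{lem:cont-HoferInequality} pinch the computation from opposite sides.
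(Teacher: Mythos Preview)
Your proof is correct and follows essentially the same approach as the paper: both sandwich $c_\pm^\alpha(\phi^H_1)$ between the Hofer-type bounds (the paper phrases it via $H\leq\max H$ and monotonicity rather than citing Lemma~\ref{lem:cont-HoferInequality}, but this is the same content) and the shape bounds from Proposition~\ref{prop:shape}, then invoke Corollary~\ref{cor:cont-dspec-nondeg} for the Hofer oscillation norm. Your write-up is slightly more explicit about the Poincar\'e duality step and the unboundedness, but the argument is the same.
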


\begin{proof}
    As $H\leq \max H$, one has $\phi^H_1\cleq \phi^\alpha_{\max H}$ so
    $c^\alpha_+(\phi^H_1)\leq\max H$. Conversely, Proposition~\ref{prop:shape}
    implies $c^\alpha_+(\phi^H_1)\geq r_+(p,\phi^H_1) = f(p)$ for all
    $p\in\sphere{n-1}$ so that $c^\alpha_+(\phi^H_1) = \max H$.
    Similarly, $c^\alpha_-(\phi^H_1) = \min H$ so
    $\gamma^\alpha(\phi^H_1) = \osc H$.
    By definition, $\NHosc{\phi_1^H}^\alpha\leq\osc H$ so the last
    equality is a consequence of Corollary~\ref{cor:cont-dspec-nondeg}.
\end{proof}

\begin{rem}\label{rem:spectraliteconj}
    For any closed contact manifold $(M,\ker\alpha)$ a Hamiltonian map $H:
    M\to\mathbb{R}$ that is invariant under the Reeb flow (\emph{i.e.}
    $H\circ\phi_t^\alpha=H$ for all $t\in\mathbb{R}$) generates a path
    $(\phi^H_t)$ of strict contactomorphisms: $(\phi_t^H)^*\alpha=\alpha$ for
    all $t\in\R$. In this case, for any critical point $p\in M$ of $H$ one can
    easily check that $\phi_t^H(p)=\phi_{H(p)t}^\alpha(p)$. In particular
    $\{tH(p)\ |\ \ud H(p)=0\}$ is contained in $\spec^\alpha(\phi_t^H)$. Since
    in $(\T^n\times\sphere{n-1},\ker\alpha)$ the Reeb flow is the geodesic
    flow, \emph{i.e.} $\phi_t^\alpha(q,p)=(q+tp,p)$, any Hamiltonian map
    $H:\T^n\times \sphere{n-1}\to\R$ of the form $H(q,p)=f(p)$ satisfies
    $H\circ\phi_t^\alpha\equiv H$. Therefore for such Hamiltonian maps, the
    previous corollary guarantees that
    $c_\pm^\alpha(\phi_1^H)\in\spec^\alpha(\phi_1^H)$. 
\end{rem}

Since $\Nspec{\cdot}^\alpha $ is not a conjugation invariant norm, a natural
question is to ask whether 
\begin{equation*}
    \varphi\mapsto
    \sup_{\psi\in\Gcont}
\Nspec{\psi\varphi\psi^{-1}}^\alpha
\end{equation*} is a well defined conjugation invariant
norm, \emph{i.e.} whether it takes finite values or not. This question was already
asked by Shelukhin \cite[Question 18]{shelukhin} for the Hofer-Shelukhin norm
$\NSH{\cdot}^\alpha$. We do not know the answer to this question, however if we
extend the action by conjugation to the whole group of coorientation
preserving contactomorphisms that are not necessarily isotopic to the identity
we have the following proposition.  

\begin{prop}\label{prop:supconjshape} 
    Seeing the time-one map of the Reeb flow $\phi^\alpha_1$
    as an element of $\uGcont(\T^n\times\sphere{n-1},\ker\alpha)$, 
    \begin{equation*}
        \underset{\psi\in\cont}\sup
    \Nspec{\psi\phi_1^\alpha\psi^{-1}}^\alpha=\underset{\psi\in\cont}\sup\NSH{\psi\phi_1^\alpha\psi^{-1}}^\alpha=+\infty.
\end{equation*}
\end{prop}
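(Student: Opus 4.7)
The plan is to exhibit an explicit family of contactomorphisms $\psi_A\in\cont(\T^n\times\sphere{n-1},\ker\alpha)$, parametrised by $A\in\mathrm{SL}_n(\Z)$, for which the conjugate $\psi_A\phi_1^\alpha\psi_A^{-1}$ identifies in $\uGcont$ with the time-one flow of a fiberwise Hamiltonian; Corollary~\ref{cor:shape} then computes its selectors exactly, giving a lower bound that grows with $\|A\|_{\mathrm{op}}$.

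For $A\in\mathrm{SL}_n(\Z)$, I would define $\psi_A(q,v):=\bigl(Aq,\,(A^T)^{-1}v/|(A^T)^{-1}v|\bigr)$. Using the formula $\alpha_{(q,v)}(\eta)=\langle v,\mathrm{d}\pi(\eta)\rangle$ for the contact form on the unit tangent bundle, a direct computation gives $\psi_A^*\alpha=|(A^T)^{-1}v|^{-1}\alpha$, so $\psi_A$ is a coorientation-preserving contactomorphism, with inverse $\psi_A^{-1}(x,y)=(A^{-1}x,A^T y/|A^T y|)$. Composing with $\phi_t^\alpha(q,v)=(q+tv,v)$ then yields
\begin{equation*}
\psi_A\circ\phi_t^\alpha\circ\psi_A^{-1}(x,y)=\Bigl(x+t\,AA^T y/|A^T y|,\,y\Bigr),\quad\forall t\in\R,
\end{equation*}
which is the $\alpha$-contact flow of the time-independent Hamiltonian $H_A(q,y):=|A^T y|$, smooth on $\T^n\times\sphere{n-1}$ since $A$ is invertible and depending only on the fiber variable $y\in\sphere{n-1}$. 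As the conjugate path $(\psi_A\phi_t^\alpha\psi_A^{-1})_{t\in[0,1]}$ is by definition the canonical lift used in the action of $\cont$ on $\uGcont$, it follows that $\psi_A\phi_1^\alpha\psi_A^{-1}=\phi_1^{H_A}$ in $\uGcont$.

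Corollary~\ref{cor:shape} then provides $c_+^\alpha(\phi_1^{H_A})=\max H_A=\max_{|y|=1}|A^T y|=\|A\|_{\mathrm{op}}$, so
\begin{equation*}
\Nspec{\psi_A\phi_1^\alpha\psi_A^{-1}}^\alpha\geq\|A\|_{\mathrm{op}},
\end{equation*}
and Corollary~\ref{cor:cont-dspec-nondeg} upgrades this to the same lower bound for $\NSH{\cdot}^\alpha$. One concludes since for $n\geq 2$ the group $\mathrm{SL}_n(\Z)$ contains matrices of arbitrarily large operator norm, for instance the shear matrix with $1$'s on the diagonal and a single entry $k\in\Z$ in position $(1,2)$, as $k\to\infty$.

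The main subtlety is that the naive lower bound coming from Corollary~\ref{cor:changeOfForm}, namely $\Nspec{\psi\phi_1^\alpha\psi^{-1}}^\alpha=\Nspec{\phi_1^\alpha}^{\psi^*\alpha}\geq e^{\inf g_\psi}$ (where $\psi^*\alpha=e^{g_\psi}\alpha$), is vacuous: a Jacobian computation shows that $\psi^*(\alpha\wedge(\mathrm{d}\alpha)^{n-1})=e^{ng_\psi}\alpha\wedge(\mathrm{d}\alpha)^{n-1}$, and integrating over the closed manifold forces $\inf g_\psi\leq 0$ for every contactomorphism $\psi$. The virtue of the matrix construction above is precisely that the conjugate Hamiltonian $H_A$ depends only on the fiber direction, which is the regime where the Eliashberg--Polterovich shape invariant, via Proposition~\ref{prop:shape}, determines the selectors exactly and produces an unbounded sequence.
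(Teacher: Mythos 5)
Your proof is correct and follows essentially the same route as the paper: both lift integer linear diffeomorphisms of $\T^n$ to contactomorphisms of the unit tangent bundle, observe that conjugating the Reeb flow yields the flow of a fiberwise Hamiltonian ($|A^T y|$ in your notation, $\|\psi_k^{-T}p\|$ in the paper's), and then apply the Eliashberg--Polterovich shape invariant via Proposition~\ref{prop:shape}/Corollary~\ref{cor:shape} together with $\NSH{\cdot}^\alpha\geq\Nspec{\cdot}^\alpha$ to get an unbounded lower bound. The only differences are cosmetic: you use shear matrices of large operator norm where the paper takes powers of a fixed matrix with eigenvalue $1+\sqrt{2}$, and you conjugate by $\psi_A$ rather than $\Psi_k^{-1}$.
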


\begin{proof}
    We recall that any diffeomorphism $\psi$ of the torus $\T^n$ can be lifted
    to a contactomorphism of $\T^n\times \sphere{n-1}$
    \begin{equation*}
    \Psi(q,p):=\left(\psi(q),\frac{\ud\psi(q)^{-T}\cdot
    p}{\left\|\ud\psi(q)^{-T}\cdot p\right\|}\right)
\end{equation*}
    whose conformal factor is $g(q,p):=-\ln\left(\left\|\ud\psi(q)^{-T}\cdot
    p\right\|\right)$, where $\ud\psi(q)^{-T}$ denotes the adjoint of
    $\ud\psi(q)^{-1}$. In particular, if $\psi\in\mathrm{GL}_n(\Z)$ is a linear
    diffeomorphism of the torus, the conformal factor $g$ of its lift is the
    function $g(q,p):=-\ln\left(\left\|\psi^{-T}\cdot p\right\|\right)$
    depending only on
    $\sphere{n-1}$.
    
    Consider the sequence of linear diffeomorphisms $(\psi_k:=\psi^k)_{k\in\N}$ where 
    \begin{equation*}
        \psi:=\left (\begin{array}{c|c|c}
                -1 & 2 & 0 \\
                \hline
                1 & -1 &0\\
                \hline
        0 & 0 & I_{n-2} \end{array}\right)\in \mathrm{GL}_n(\Z)
    \end{equation*} 
    and denote by $(\Psi_k)_{k\in\N}$ the lifted sequence of contactomorphisms.
    We note that for $k\geq 1$ the contactomorphism $\Psi_k$ is not isotopic to
    the identity since its action on the fundamental group of $\T^n\times
    \sphere{n-1}$ is given by the non trivial action of $\psi_k$ on
    $\pi_1(\T^n)=\Z^n$. We will show that $\underset{k\to+\infty}\lim
    c_+^\alpha\left(\Psi_k^{-1}\phi_1^\alpha\Psi_k\right)=+\infty$.

Indeed, $v_0:=(\sqrt{2},1,0,\ldots,0)^T\in\R^n$ is an eigenvector of
$\psi_k^{-T}=\left (\begin{array}{c|c|c}
1 & 2 & 0 \\
\hline
1 & 1 &0\\
\hline
0 & 0 & I_{n-2} \end{array}\right)^k$
associated to the eigenvalue $(1+\sqrt{2})^k$ for all $k\in\N$. Moreover the
Hamiltonian map of the path
$\left(\Psi^{-1}_k\phi_\alpha^{t}\Psi_k\right)$ is given by
$(q,p)\mapsto e^{-g_k(q,p)}=\|\psi_k^{-T}\cdot p\|$
depending only on $p$. By the result of Eliashberg and Polterovich
discussed above we deduce that
$r_\pm\left(p_0,\Psi^{-1}_k\phi_1^\alpha\Psi_k\right)=(1+\sqrt{2})^k$ where
$p_0:=\frac{v_0}{\|v_0\|}.$ Therefore
\[\Nspec{\Psi^{-1}_k\phi_1^\alpha\Psi_k}^\alpha=c_+^\alpha(\Psi^{-1}_k\phi_1^\alpha\Psi_k)\geq
(1+\sqrt{2})^k\] where the equality comes from the relation $\id\cleq
\Psi^{-1}_k\phi_1^\alpha\Psi_k$ and the inequality from Proposition
\ref{prop:shape}. Letting $k$ go to infinity implies the result for both
norms since $\NSH{\cdot}^\alpha\geq \Nspec{\cdot}^\alpha$. 
\end{proof}

\begin{rems}\ 
    \begin{enumerate}[1.]
\item Eliashberg and Polterovich showed that the maps $r_\pm(p,\cdot)$ are
    invariant under the action by conjugation of $\Gcont$ on $\uGcont$ for all
    $p\in \sphere{n-1}$. Together with Corollary \ref{cor:shape} it implies in
    particular that the conjugation invariant map
    $\gamma^\alpha_\infty:\uGcont\to\R$ defined by
    $\gamma^\alpha_\infty(\phi):=\underset{\psi\in\Gcont}
    \inf\gamma^\alpha(\psi^{-1}\phi\psi)$
    is also unbounded. However as shown in the previous proof the maps
    $r_\pm(p,\cdot)$ are not anymore invariant under the action of $\cont$ by
    conjugation.
\item It is interesting to note that if $(M,\ker\alpha)$ is a closed
    contact manifold such that the Reeb flow of $\alpha$ is $1$-periodic and
    $\uGcont(M)$ is orderable then 
\begin{equation*}
    \underset{\psi\in\cont}\sup\Nspec{\psi\varphi\psi^{-1}}^\alpha\leq
\NFPR{\varphi}<+\infty,\quad\forall\varphi\in\uGcont,
\end{equation*}
where the definition of $\NFPR{\cdot}$ is given in Section~\ref{se:CSvsFPR}.
It is not known whether there exists or not a
    closed contact manifold $(M,\xi)$ such that $\Gcont(M,\xi)$ is orderable
    and $\uGcont(M,\xi)$ is not bounded in the sense of \cite{BIP}, \emph{i.e.}
    so that there exists an unbounded conjugation invariant norm on
    $\uGcont(M,\xi)$.
\end{enumerate}
\end{rems}

\subsection{Colin-Sandon discriminant and oscillation metrics}
\label{se:CSmetrics}

Let $\Lambda_*$ be a closed Legendrian submanifold of a
cooriented contact manifold $(M,\xi)$ (not necessarily closed) and let
$\Leg := \Leg(\Lambda_*)$ (resp. $\uLeg:=\uLeg(\Lambda_*)$).
In this specific section,
given $\Lambda_0,\Lambda_1\in\Leg$ (resp. $\uLeg$),
by a path $\gamma : \Lambda_0\leadsto\Lambda_1$ we will mean
a $C^1$-continuous map $\gamma:[0,1]\to\Leg$ (resp. $[0,1]\to\uLeg$)
such that $\gamma(i)=\Lambda_i$ for $i\in\{0,1\}$.
The concatenation $\gamma_1\cdot\gamma_2$ of two paths
$\gamma_1:\Lambda_0\leadsto\Lambda_1$ and $\gamma_2:\Lambda_1\leadsto\Lambda_2$
in $\Leg$ (resp. in $\uLeg$) is by definition the path
$\Lambda_0\leadsto\Lambda_2$,
\begin{equation*}
    \gamma_1\cdot\gamma_2:t\mapsto
    \begin{cases}
        \gamma_1(2t) & t\in[0,1/2],\\
        \gamma_2(2t-1) & t\in [1/2,1].
    \end{cases}
\end{equation*}
The reverse path $\bar{\gamma}$ of a path $\gamma:\Lambda_0\leadsto\Lambda_1$
is the path $\Lambda_1\leadsto\Lambda_0$, $t\mapsto\gamma(1-t)$.
By definition of the topology defined on $\Leg$, for any path $\gamma:[0,1]\to\Leg$,
there exists a continuous map $j_\gamma:[0,1]\times\Lambda_*\to M$
such that for every $t\in[0,1]$,
$j_\gamma(t,\cdot)$ is a diffeomorphism between
$\Lambda_*$ and $\gamma(t)$.
A path $\gamma$ in $\Leg$ will be called \emph{embedded} if
there exists a smooth map $j_\gamma$ as above that
is a smooth embedding $[0,1]\times\Lambda_*\hookrightarrow M$.
A path $\gamma$ in $\uLeg$ will be called embedded if
$\Pi\gamma:t\mapsto\Pi(\gamma(t))$ is an embedded path.

In \cite[Section~8]{discriminante}, Colin and Sandon defined the
discriminant length of a path $\gamma$ in $\Leg$ (resp. $\uLeg$)
as the integral number
\begin{equation*}\label{eq:ldisc}
    \ldisc(\gamma) := \min\left\{ n\in\N\ \left|\
        \parbox{7cm}{there exist embedded paths $\gamma_1,\ldots,\gamma_n$
    such that $\gamma_1\cdots\gamma_n$ and $\gamma$ are in the same
homotopy class with fixed endpoints}\right.\right\}\in\N\bigcup\{+\infty\},
\end{equation*}
with convention $\ldisc(\gamma)=0$ if $\gamma$ is a constant map and $\min\emptyset=+\infty$. Colin-Sandon proved that $\ldisc$ takes values in $\N$ 
(with convention $0\in\N$) and
that it induces an integral metric on $\Leg$ (resp. $\uLeg$)
called the discriminant metric and defined by
\begin{equation}\label{eq:ddisc}
    \ddisc(\Lambda_0,\Lambda_1) = \min_{\gamma:\Lambda_0\leadsto\Lambda_1}
    \ldisc(\gamma),\quad \forall \Lambda_0,\Lambda_1\in\Leg
    \text{ (resp. $\uLeg$)}.
\end{equation}
Moreover, this metric is invariant under the action by contactomorphisms
of $(M,\xi)$ since embedded paths are preserved by this action.
Let us remark that in the case of the universal cover $\uLeg$,
there is a unique homotopy class of paths $\Lambda_0\leadsto\Lambda_1$
so that one can erase the minimum from Equation~(\ref{eq:ddisc}).

In addition to the discriminant length, Colin-Sandon defined
an oscillation norm (to be distinguished from the Hofer oscillation
norms). In order to properly define it, let us say that a path
$\gamma$ of $\Leg$ (resp. $\uLeg$) is monotone if it is either
a positive or a negative isotopy.
Then, for a path $\gamma$,
one defines the integral number $\losc^+(\gamma)$ by
\begin{equation*}\label{eq:loscp}
    \losc^+(\gamma) := \min\left\{ k\in\N\ \left|\
        \parbox{9cm}{there exist embedded monotone paths $\gamma_1,\ldots,\gamma_n$,
            $k$ of which are positive,
    such that $\gamma_1\cdots\gamma_n$ and $\gamma$ are in the same
homotopy class with fixed endpoints}\right.\right\},
\end{equation*}
with convention $\losc^+(\gamma)=0$ if $\gamma$ is constant
and defines $\losc^-(\gamma):=-\losc^+(\bar{\gamma})$.
Colin-Sandon proved that these numbers are finite for all paths and
they defined the oscillation norm of a path $\gamma$ by
\begin{equation*}\label{eq:losc}
    \losc(\gamma) := \losc^+(\gamma) - \losc^-(\gamma)\in\N.
\end{equation*}
Colin-Sandon proved that the induced distance $\dCSosc$
on $\Leg$ (resp. $\uLeg$) is non-degenerate if and only if $\Leg$
(resp. $\uLeg$) is orderable.
It is also a metric invariant under the action by the contactomorphisms.
We are primarily interested in the case of the universal cover $\uLeg$
for which $\dCSosc(\Lambda_0,\Lambda_1)$ is simply defined as
$\losc(\gamma)$ for any $\gamma:\Lambda_0\leadsto\Lambda_1$.
In the case of $\uLeg$, one can thus write
$\dCSosc$ as the difference $\dCSosc^+ - \dCSosc^-$ with
\begin{equation*}
    \dCSosc^\pm(\Lambda_0,\Lambda_1) := \losc^\pm(\gamma),\quad
    \forall \gamma:\Lambda_0\leadsto\Lambda_1.
\end{equation*}

In \cite{discriminante}, the unboundedness of these two distances was
only proven in the case of $\uLeg(\RP^n)$ (see
Example~\ref{ex:orderable}.\ref{ex:it:RP}), some $1$-dimensional
Legendrian knots, and $\uLeg(p(0_N))$ defined in
Example~\ref{ex:orderable}.\ref{ex:it:JN}.

\begin{thm}\label{thm:ddisc}
    Let us assume there exists a contact form $\alpha$ supporting $\xi$
    the Reeb flow of which is $T$-periodic for some $T>0$.
    If $\uLeg$ is orderable, then
    \begin{equation*}
        \begin{cases}
        \dspec^\alpha(\Lambda_0,\Lambda_1) < T\ddisc(\Lambda_0,\Lambda_1),\\
        \ell_+^\alpha(\Lambda_1,\Lambda_0) < T\dCSosc^+(\Lambda_0,\Lambda_1),
        \end{cases}
        \quad \forall \Lambda_0,\Lambda_1\in\uLeg \text{ with } \Lambda_0
        \neq\Lambda_1.
    \end{equation*}
    In particular, the discriminant metric and the
    Colin-Sandon oscillation metric are unbounded.
\end{thm}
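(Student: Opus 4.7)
The plan is to reduce both inequalities to a single \textbf{Key Lemma}: for any embedded path $\gamma:[0,1]\to\uLeg$, both $\ell_-^\alpha(\gamma(1),\gamma(0))$ and $\ell_+^\alpha(\gamma(1),\gamma(0))$ lie strictly in the open interval $(-T,T)$. The main ingredient is the $T$-periodicity of the Reeb flow: since $\phi^\alpha_T$ is the identity on $M$, the set $\spec^\alpha(\Lambda_1,\Lambda_0)$ for any pair in $\uLeg$ is a $T$-periodic subset of $\R$, which in addition avoids the lattice $T\Z$ entirely whenever $\Pi\Lambda_0\cap\Pi\Lambda_1=\emptyset$.

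To prove the key lemma, I would consider the function $f(t):=\ell_+^\alpha(\gamma(t),\gamma(0))$, continuous on $[0,1]$ by Corollary~\ref{cor:continuity} and satisfying $f(0)=0$ by normalization. Because $\gamma$ is embedded, the projected Legendrians $\Pi\gamma(t)$ for distinct $t\in[0,1]$ are pairwise disjoint; in particular $\Pi\gamma(t)\cap\Pi\gamma(0)=\emptyset$ for every $t\in(0,1]$, so spectrality (Proposition~\ref{prop:spectrality}) forces $f(t)\notin T\Z$ for every $t\in(0,1]$. Starting from $f(0)=0$ and avoiding $\pm T,\pm 2T,\dots$ thereafter, the intermediate value theorem confines $f$ to $(-T,T)$: if one had $f(t_0)\geq T$ for some $t_0>0$, then $f$ would attain the forbidden value $T\in T\Z$ at some earlier positive time, a contradiction. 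The identical argument applies to $\ell_-^\alpha$.

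With the lemma in hand, both inequalities will follow from the triangle inequalities of Theorem~\ref{thm:LegSS}. For the first, pick a representative of the (unique) homotopy class of paths from $\Lambda_0$ to $\Lambda_1$ in $\uLeg$ of the form $\gamma_1\cdots\gamma_n$ with $n:=\ddisc(\Lambda_0,\Lambda_1)$ embedded pieces, and write $\Lambda_{i-1},\Lambda_i$ for the endpoints of $\gamma_i$. The triangle inequality combined with the key lemma yields
\begin{equation*}
\ell_+^\alpha(\Lambda_1,\Lambda_0)\leq\sum_{i=1}^n\ell_+^\alpha(\Lambda_i,\Lambda_{i-1})<nT,
\end{equation*}
and symmetrically $\ell_+^\alpha(\Lambda_0,\Lambda_1)<nT$, whence $\dspec^\alpha(\Lambda_0,\Lambda_1)<T\ddisc(\Lambda_0,\Lambda_1)$; the condition $n\geq 1$ needed for strictness is automatic from $\Lambda_0\neq\Lambda_1$. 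For the oscillation inequality, take a decomposition realising $k:=\dCSosc^+(\Lambda_0,\Lambda_1)$ into $k$ positive and $m$ negative embedded monotone pieces. The key lemma bounds each positive contribution to the triangle inequality strictly by $T$, whereas each negative piece $\gamma_i$ satisfies $\gamma_i(1)\cll\gamma_i(0)$ and hence by sign invariance (Lemma~\ref{lem:sign}) contributes a strictly negative term. Summing therefore gives $\ell_+^\alpha(\Lambda_1,\Lambda_0)<kT$; the strictness persists even when $k=0$, since in that case $m\geq 1$ and the sum is a sum of strictly negative quantities.

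The main obstacle is the key lemma, where the work lies in converting the purely topological embeddedness of $\gamma$ into a strict spectral bound via the twin inputs of $T$-periodicity and spectrality; the remainder is bookkeeping with triangle inequalities. The unboundedness of $\ddisc$ and $\dCSosc$ then drops out by applying the two inequalities to $\Lambda_1:=\phi^\alpha_s\Lambda_0$ in $\uLeg$ for arbitrarily large $s>0$: normalization yields $\ell_+^\alpha(\Lambda_1,\Lambda_0)=s$, forcing $\ddisc(\Lambda_0,\Lambda_1)$ and $\dCSosc^+(\Lambda_0,\Lambda_1)$ to exceed $s/T$.
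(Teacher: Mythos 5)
Your proposal is correct and follows essentially the same route as the paper: its proof likewise considers $f_i(t)=\ell_+^\alpha(\gamma_i(t),\gamma_i(0))$ along each embedded piece, uses spectrality together with $\phi^\alpha_T=\id$ to rule out values in $T\Z$, confines $f_i$ to $(-T,T)$ by continuity, and concludes via the triangle inequality, treating negative monotone pieces as strictly negative contributions and deducing unboundedness from $\dspec^\alpha(\phi^\alpha_t\Lambda,\Lambda)=|t|$. The only differences are cosmetic: you package the avoidance argument as a separate key lemma (also stated, unnecessarily, for $\ell_-^\alpha$) and you invoke Lemma~\ref{lem:sign} for the strict negativity of negative pieces where the paper appeals to monotonicity (either way a one-line justification, e.g.\ via Lemma~\ref{lem:HoferInequality}, closes it).
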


Let us point out that the second inequality of Theorem~\ref{thm:ddisc}
implies
\begin{equation}\label{eq:dspecdCSosc}
    \dspec^\alpha(\Lambda_0,\Lambda_1) < T\dCSosc(\Lambda_0,\Lambda_1),
        \quad \forall \Lambda_0,\Lambda_1\in\uLeg \text{ with } \Lambda_0
        \neq\Lambda_1,
\end{equation}
as $\dCSosc^+(\Lambda_1,\Lambda_0)$ equals $-\dCSosc^-(\Lambda_0,\Lambda_1)$
and is always non-negative.

\begin{proof}
    Let us first prove the inequality involving the discriminant metric.
    Let $\Lambda_0\neq\Lambda$ be elements of $\uLeg$ and let
    $n:=\ddisc(\Lambda_0,\Lambda)\in\N^*$.
    By symmetry of the role of $\Lambda_0$ and $\Lambda$,
    it is enough to prove $\ell_+^\alpha(\Lambda,\Lambda_0) < nT$.

    By definition, there exists embedded paths
    $\gamma_i:\Lambda_{i-1}\leadsto\Lambda_i$, $1\leq i\leq n$,
    with $\Lambda_n=\Lambda$.
    Let us consider the maps $f_i:t\mapsto \ell_+^\alpha(\gamma_i(t),
    \gamma_i(0))$, $1\leq i\leq n$.
    By the normalization property of $\ell_+^\alpha$,
    $f_i(0) = 0$ for all $i$.
    By continuity of $\ell_+^\alpha$, the $f_i$'s are continuous
    maps.
    By spectrality of $\ell_+^\alpha$, if $f_i(t)=T$ for some $t\in(0,1]$
    and $1\leq i\leq n$, it would imply that
    $\Pi(\gamma_i(t))$ intersects $\phi^\alpha_T\Pi(\gamma_i(0))$.
    But $\phi^\alpha_T = \id$ by assumption so $\gamma_i$
    would not be an embedded path.
    Therefore, $f_i$ does not take the value $T$.
    By continuity, it implies that the $f_i$'s take their
    values in $(-T,T)$.
    Now, by the triangle inequality,
    \begin{equation*}
        \ell_+^\alpha(\Lambda,\Lambda_0) \leq
        f_1(1) + f_2(1) + \cdots + f_n(1)
        < nT,
    \end{equation*}
    the conclusion follows.

    For the second inequality, one can apply the same strategy.
    Let $k:=\dCSosc^+(\Lambda_0,\Lambda)$. By definition,
    there exists embedded monotone paths
    $\gamma_i:\Lambda_{i-1}\leadsto\Lambda_i$, $1\leq i\leq n$ for some $n\in\N$
    such that $k$ of them are positive, $n-k$ of them are negative and
    $\Lambda_n=\Lambda$.
    By the triangle inequality,
    \begin{equation}\label{eq:thmdosc}
        \ell_+^\alpha(\Lambda,\Lambda_0) \leq
        \ell_+^\alpha(\Lambda_n,\Lambda_{n-1}) +\cdots+
        \ell_+^\alpha(\Lambda_1,\Lambda_0).
    \end{equation}
    By the previous discussion, since $\gamma_i$ is embedded,
    $\ell_+^\alpha(\Lambda_i,\Lambda_{i-1}) < T$, $1\leq i\leq n$.
    Moreover, if $\gamma_i$ is negative, the monotonicity property implies
    that $\ell_+^\alpha(\Lambda_i,\Lambda_{i-1}) < 0$.
    As exactly $n-k$ of the $\gamma_i$'s
    are negative, the inequality (\ref{eq:thmdosc}) implies
    $\ell_+^\alpha(\Lambda,\Lambda_0) < kT$.

    The unboundedness of both metrics then follows from the unboundedness of
    the spectral metric (see also Equation~(\ref{eq:dspecdCSosc})): we recall
    that $\dspec^\alpha(\phi_t^\alpha\Lambda,\Lambda) = |t|$ for all $t\in\R$
    and $\Lambda\in\uLeg$.
\end{proof}

\begin{cor}\label{cor:ddiscGeodesic}
    Let us assume that there exists a contact form $\alpha$
    supporting $\xi$ the Reeb flow of which is $T$-periodic
    for some $T>0$.
    If $\Lambda\in\uLeg$ orderable is such that
    $\phi_t^\alpha\Pi\Lambda\cap\Pi\Lambda=\emptyset$ for all
    $t\notin T\Z$, then the isotopy
    $(\phi^\alpha_{tT}\Lambda)_{t\in\R}$ defines a geodesic of
    both the discriminant and the Colin-Sandon oscillation metrics, in the sense that
    \begin{equation*}
        d(\phi^\alpha_{tT}\Lambda,\phi^\alpha_{sT}\Lambda)
        = \big\lceil |t-s|\big\rceil,\quad
        \forall t,s\in\R \text{ with } t-s\notin\Z^*,
    \end{equation*}
    for $d=\ddisc$ or $\dCSosc$
    (when $t-s\in\Z^*$, the distance is $\lceil |t-s|\rceil + 1$).
\end{cor}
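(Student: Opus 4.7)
The plan is to match the lower bound coming from Theorem~\ref{thm:ddisc} with an explicit decomposition of the Reeb-flow isotopy. By contactomorphism-invariance of both $\ddisc$ and $\dCSosc$ and by symmetry of these distances, it suffices to treat the case $s=0$ and $t>0$; the case $t=0$ is trivial. I will set
\[
    N := \begin{cases} \lceil t\rceil & \text{if } t\notin\Z^*,\\ t+1 & \text{if } t\in\Z^*,\end{cases}
\]
so that $t/N < 1$ in every case. Since $r\mapsto\phi^\alpha_{rT}\Lambda$ has constant $\alpha$-Hamiltonian $T>0$, it is a positive isotopy, and Lemma~\ref{lem:positivity} guarantees that $\phi^\alpha_{tT}\Lambda\neq\Lambda$ in $\uLeg$.

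For the lower bound I will use the normalization of $\ell_+^\alpha$ together with Poincar\'e duality to compute $\ell_+^\alpha(\phi^\alpha_{tT}\Lambda,\Lambda) = tT$ and $\dspec^\alpha(\Lambda,\phi^\alpha_{tT}\Lambda) = tT$, and then apply Theorem~\ref{thm:ddisc}, which yields
\[
    tT < T\,\ddisc(\Lambda,\phi^\alpha_{tT}\Lambda)\quad\text{and}\quad tT < T\,\dCSosc^+(\Lambda,\phi^\alpha_{tT}\Lambda)\leq T\,\dCSosc(\Lambda,\phi^\alpha_{tT}\Lambda).
\]
Since both $\ddisc$ and $\dCSosc$ are integer-valued, the strict inequality $>t$ forces a value $\geq N$ in every case, giving $d\geq N$ for $d\in\{\ddisc,\dCSosc\}$.

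For the upper bound, I will partition $[0,t]$ into $N$ equal sub-intervals of length $t/N<1$ and let $\gamma_1,\ldots,\gamma_N$ be the consecutive restrictions of $r\mapsto\phi^\alpha_{rT}\Lambda$. Each $\gamma_i$ is positive, hence monotone. The key observation is that $\Pi\gamma_i$ is embedded: for distinct parameters $r,r'$ in the same sub-interval, $0<|r-r'|<1$ gives $(r-r')T\notin T\Z$, so the hypothesis provides $\phi^\alpha_{(r-r')T}\Pi\Lambda\cap\Pi\Lambda = \emptyset$ and hence $\Pi\gamma_i(r)\cap\Pi\gamma_i(r')=\emptyset$. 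The concatenation $\gamma := \gamma_1\cdots\gamma_N$ agrees with the original Reeb isotopy up to reparametrization, so $\ddisc(\Lambda,\phi^\alpha_{tT}\Lambda)\leq N$ and $\losc^+(\gamma)\leq N$. Running the same decomposition on the reversed path $\bar\gamma:r\mapsto\phi^\alpha_{(t-r)T}\Lambda$, which has Hamiltonian $-T$, produces $N$ embedded \emph{negative} pieces, so $\losc^+(\bar\gamma)=0$. Since $\dCSosc(\Lambda,\phi^\alpha_{tT}\Lambda)=\losc^+(\gamma)+\losc^+(\bar\gamma)$ on $\uLeg$, I conclude $\dCSosc(\Lambda,\phi^\alpha_{tT}\Lambda)\leq N$, matching the lower bound.

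The only real subtlety, rather than a serious obstacle, lies in the dichotomy of the formula: for $t\in\Z^*$ the $T$-periodicity of the Reeb flow makes any sub-path of parameter length exactly $1$ have endpoints with identical projection in $M$, so such a piece fails to be embedded; this is precisely what forces the extra cut and produces the $+1$ in the integer case.
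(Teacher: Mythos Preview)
Your proof is correct and follows essentially the same approach as the paper: the lower bound comes from the strict inequality of Theorem~\ref{thm:ddisc} combined with integrality of the metrics, and the upper bound comes from cutting the Reeb isotopy into equal pieces of parameter length $t/N<1$, each of which is embedded by the disjointness hypothesis. Your unified treatment of the integer and non-integer cases via the single definition of $N$ is a slight streamlining compared with the paper, which treats the two cases separately, but the substance is identical; likewise your explicit computation $\dCSosc=\losc^+(\gamma)+\losc^+(\bar\gamma)$ with $\losc^+(\bar\gamma)=0$ is exactly what the paper compresses into the remark that $\dCSosc=\dCSosc^+$ for a positive path.
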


For $\uGcont(\R^{2n}\times S^1,\xi_\mathrm{st})$, there is a characterization
of some geodesics of the discriminant and oscillation norms of Colin-Sandon
in \cite{Arlove2023}.

\begin{proof}
    Concerning the case $t-s\notin\Z^*$,
    by invariance of both metrics under contactomorphisms, it is enough to prove
    that $d(\phi_{tT}^\alpha\Lambda,\Lambda) = \lceil t\rceil$
    for every $t\in \R_+\setminus\N$. Let us fix such a $t$
    and consider the case $d=\ddisc$.
    Then Theorem~\ref{thm:ddisc} implies that this distance is at least
    $\lceil t\rceil$.
    The assumption on $\Lambda$ implies that paths
    $s\mapsto \phi^\alpha_{t_0+s(T-\varepsilon)}\Lambda$, $s\in[0,1]$,
    are embedded for every $\varepsilon\in (0,T)$ and $t_0\in\R$.
    Let $\varepsilon := T\left(1-\frac{t}{\lceil t\rceil}\right)\in(0,T)$.
    The path $s\mapsto \phi_{stT}^\alpha\Lambda$ is homotopic
    (with fixed endpoints) to
    the concatenation $\gamma_1\cdots\gamma_{\lceil t\rceil}$ of the
    paths
    \begin{equation*}
        \gamma_i: s\mapsto \phi_{(s+i-1)(T-\varepsilon)}^\alpha\Lambda,
        \quad \forall i\in\left\{ 1,\ldots,\lceil t\rceil\right\},
    \end{equation*}
    which are embedded paths, as was just remarked.
    By definition of the discriminant metric, it implies the reverse
    inequality:
    $\ddisc(\phi_{tT}^\alpha\Lambda,\Lambda) \leq \lceil t\rceil$.

    The argument to prove $\dCSosc(\phi_{tT}^\alpha\Lambda,\Lambda)=
    \lceil t\rceil$ for $t\in\R_+\setminus\N$ is the same
    once remarked that $\dCSosc(\phi_{tT}^\alpha\Lambda,\Lambda)
    = \dCSosc^+(\phi_{tT}^\alpha\Lambda,\Lambda)$
    in order to apply Theorem~\ref{thm:ddisc}
    (since the path $s\mapsto \phi_{stT}^\alpha\Lambda$ is positive).

    Concerning the case $t-s\in\Z^*$, Theorem~\ref{thm:ddisc} implies
    the optimal lower bound since both
    metrics take integral values, while a similar 
    decomposition of the isotopy gives the reverse
    inequality.
\end{proof}

\begin{exs}
    The space $\uLeg(\RP^n)$ described at
    Example~\ref{ex:orderable}.\ref{ex:it:RP} is a space
    for which Corollary~\ref{cor:ddiscGeodesic} applies
    by taking $\Lambda:=\RP^n$ and the contact form of $\RP^{2n+1}$
    induced by the Liouville form $\frac{1}{2}(y\ud x-x\ud y)$
    on $\R^{2(n+1)}$.
    It extends to lens spaces as well.

    When $N$ is a closed manifold, $\uLeg(p(0_N))$ described
    at Example~\ref{ex:orderable}.\ref{ex:it:JN} also applies
    by taking $\Lambda:=p(0_N)$ and the standard contact form
    $\alpha=\ud z-\lambda$.

    Let us give another family of examples of such a situation.
    Consider the unit tangent bundle $M:=SN$ of a Riemannian
    manifold $N$ all of whose geodesics are closed embedded curves
    (\emph{e.g} $\sphere{n}$, $\RP^n$, $\CP^n$, $\HP^n$,
    $\CaP$ for the metric induced by the round metric on the sphere).
    For the standard choice of contact form on $M$,
    the Reeb flow corresponds to the geodesic flow and
    $\uLeg(S_xN)$ is orderable given any $x\in N$
    (\emph{cf.} Example~\ref{ex:orderable}.\ref{ex:it:SN}).
    By the geometric assumption on the geodesics of $N$,
    $\Lambda:=S_x N$ satisfies the hypothesis of Corollary~\ref{cor:ddiscGeodesic}
    for any $x\in N$.
\end{exs}

\subsection{Equivalence of the Colin-Sandon oscillation
metric and the Fraser-Polterovich-Rosen metric}
\label{se:CSvsFPR}

Let us study the natural generalization of the norm
introduced by Fraser-Polterovich-Rosen \cite{FPR} in the
context of Legendrian isotopy classes.
Let $(M,\xi)$ be a contact manifold endowed with
a contact form $\alpha$ the Reeb flow of which is $1$-periodic.
If $\uGcont$ is orderable, the Fraser-Polterovich-Rosen norm
of $\varphi\in\uGcont$ is defined as
\begin{equation*}
    \NFPR{\psi} := \min\left\{ k\in\N\ |\
    \phi^\alpha_{-k} \cleq \psi\cleq \phi^\alpha_k \right\}.
\end{equation*}
If $\uLeg$ is orderable for some closed Legendrian submanifold of $M$,
one then naturally generalizes the Fraser-Polterovich-Rosen norm
as the distance defined by
\begin{equation*}
    \dFPR(\Lambda,\Lambda') := \min\left\{ k\in\N\ |\
        \phi^\alpha_{-k}\Lambda' \cleq \Lambda\cleq
    \phi^\alpha_k\Lambda' \right\},\quad
    \forall \Lambda,\Lambda'\in\uLeg.
\end{equation*}
This is indeed a non-degenerate distance by definition of
orderability.
Moreover, this distance is invariant under the action of
the universal cover of $\conto(M,\xi)$, as
$\phi_k^\alpha\in\tconto(M,\xi)$ belongs to the center of
this group for every $k\in\Z$.
This metric also endows the partially ordered space
$(\uLeg,\cleq)$ with the structure of a partially
ordered metric space (see above Proposition~\ref{prop:PartiallyOrderedMetric}).

In the original setting of $\uGcont$,
the domination of $\dFPR$ over the oscillation metric of Colin-Sandon
has been remarked by Fraser-Polterovich-Rosen
\cite[Remark~3.8]{FPR}.
The close link between $\dFPR$ and the spectral invariant $\ell^\alpha_\pm$
allows us to prove even more in the Legendrian case.

\begin{thm}\label{thm:CSvsFPR}
    Let $(M,\xi)$ be a contact manifold endowed with a contact
    form $\alpha$ the Reeb flow of which is $1$-periodic
    and let us assume that $\uLeg(\Lambda_*)$ is orderable
    for some closed Legendrian submanifold $\Lambda_*\subset M$.
    Then the associated Colin-Sandon oscillation metric
    and Fraser-Polterovich-Rosen metric satisfy
    \begin{equation*}
        \dFPR \leq \dCSosc + 1 \leq 3A\dFPR + 1,
    \end{equation*}
    where $A:=\dCSosc(\Lambda_0,\phi^\alpha_1\Lambda_0)$
    for some $\Lambda_0\in\uLeg(\Lambda_*)$.
    In particular, the two metrics are equivalent.
\end{thm}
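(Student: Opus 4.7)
The plan is to route both inequalities through the preliminary equivalence
\[x\cleq y\ \Longleftrightarrow\ \dCSosc^+(y,x)=0,\qquad\forall x,y\in\uLeg(\Lambda_*),\]
which I claim is the engine of the theorem. The $\Leftarrow$ direction is immediate from the definition of $\losc^+$: a decomposition of $y\leadsto x$ into embedded monotone pieces with no positive piece is either empty (so $x=y$) or exhibits a chain $y=z_0\cgg z_1\cgg\cdots\cgg z_n=x$, yielding $x\cleq y$. For $\Rightarrow$, the case $x=y$ is trivial, and when $x\neq y$ any non-negative isotopy joining them must have Hamiltonian strictly positive somewhere (otherwise it would be constant), so the Corollary preceding Examples~\ref{ex:orderable} upgrades $x\cleq y$ to $x\cll y$. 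The reverse of a strictly positive isotopy is strictly negative, and $C^1$-openness together with compactness of $[0,1]$ subdivides it into finitely many embedded negative monotone pieces, delivering a decomposition with $\losc^+=0$.

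For the first inequality the diagonal case is trivial. For $\Lambda\neq\Lambda'$, Theorem~\ref{thm:ddisc} applied with $T=1$ gives
\[\ell_+^\alpha(\Lambda,\Lambda')<\dCSosc^+(\Lambda',\Lambda)\quad\text{and}\quad\ell_+^\alpha(\Lambda',\Lambda)<\dCSosc^+(\Lambda,\Lambda').\]
Since the right-hand sides are non-negative integers and $t\mapsto\phi^\alpha_t\Lambda'$ is strictly positive, these strict inequalities upgrade to $\Lambda\cleq\phi^\alpha_{\dCSosc^+(\Lambda',\Lambda)}\Lambda'$ and $\phi^\alpha_{-\dCSosc^+(\Lambda,\Lambda')}\Lambda'\cleq\Lambda$. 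Setting $k:=\max\bigl(\dCSosc^+(\Lambda',\Lambda),\dCSosc^+(\Lambda,\Lambda')\bigr)$ and applying Reeb-monotonicity again sandwiches $\Lambda$ in $[\phi^\alpha_{-k}\Lambda',\phi^\alpha_k\Lambda']$, so $\dFPR(\Lambda,\Lambda')\leq k\leq\dCSosc(\Lambda,\Lambda')$, which is stronger than the claim $\dFPR\leq\dCSosc+1$.

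For the second inequality, set $k:=\dFPR(\Lambda,\Lambda')$ so that $\phi^\alpha_{-k}\Lambda'\cleq\Lambda\cleq\phi^\alpha_k\Lambda'$, and hence also $\Lambda\cleq\phi^\alpha_k\Lambda'\cleq\phi^\alpha_{2k}\Lambda$ (apply $\phi^\alpha_{2k}$, central in $\tconto$, to $\phi^\alpha_{-k}\Lambda'\cleq\Lambda$). The invariance of $\dCSosc$ under $\uGcont$, the transitivity of its action on $\uLeg$ (Lemma~\ref{lem:UisotopyExtension}), and the centrality of $\phi^\alpha_1$ in $\tconto$ combine to give $\dCSosc(\Lambda'',\phi^\alpha_1\Lambda'')=A$ for every $\Lambda''\in\uLeg$, and iterating the triangle inequality produces $\dCSosc(\Lambda,\phi^\alpha_{2k}\Lambda)\leq 2kA$. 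The preparatory equivalence kills both $\dCSosc^+(\phi^\alpha_k\Lambda',\Lambda)=0$ and $\dCSosc^+(\phi^\alpha_{2k}\Lambda,\phi^\alpha_k\Lambda')=0$, so the triangle inequality for $\dCSosc^+$ inserted at $\phi^\alpha_{2k}\Lambda$ yields
\[\dCSosc^+(\Lambda,\phi^\alpha_k\Lambda')\leq\dCSosc^+(\Lambda,\phi^\alpha_{2k}\Lambda)+\dCSosc^+(\phi^\alpha_{2k}\Lambda,\phi^\alpha_k\Lambda')\leq 2kA.\]
Therefore $\dCSosc(\Lambda,\phi^\alpha_k\Lambda')=\dCSosc^+(\Lambda,\phi^\alpha_k\Lambda')+\dCSosc^+(\phi^\alpha_k\Lambda',\Lambda)\leq 2kA$, and a last triangle inequality concludes $\dCSosc(\Lambda,\Lambda')\leq 2kA+kA=3kA=3A\dFPR(\Lambda,\Lambda')$.

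The main obstacle is the preparatory equivalence: converting the abstract relation $x\cleq y$ into a concrete decomposition with $\losc^+=0$ is where orderability enters silently (through the upgrade from non-negativity to strict positivity) and where the most delicate manipulation lives; once it is in hand, both inequalities follow by bookkeeping with the triangle inequality, invariance, and the centrality of~$\phi^\alpha_1$.
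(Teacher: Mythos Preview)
Your argument for the first inequality is correct and follows the paper's route through the strict inequality of Theorem~\ref{thm:ddisc}; both approaches in fact yield the slightly sharper bound $\dFPR\leq\dCSosc$ off the diagonal.

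The gap is in your preparatory equivalence $x\cleq y\Longleftrightarrow\dCSosc^+(y,x)=0$. The direction $\Leftarrow$ is fine, but your proof of $\Rightarrow$ misreads the Corollary preceding Examples~\ref{ex:orderable}: that Corollary requires the Hamiltonian $H_{t_0}$ to be positive \emph{as a function} (this is what its proof uses to get $H_t>0$ on a whole time-interval), whereas a non-constant non-negative isotopy only guarantees $H_{t_0}(p_0)>0$ at a single point $p_0$. The isotopy $t\mapsto j^1(tf)$ for $f\geq0$ with $\min f=0$ and $f\not\equiv0$ has Hamiltonian $H_t\equiv f$, so $\min H_t=0$ for every $t$ and the Corollary never applies. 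Worse, your own $\Leftarrow$ argument shows that $\dCSosc^+(y,x)=0$ with $x\neq y$ forces $x\cll y$; so the equivalence you state would upgrade every non-trivial $x\cleq y$ to $x\cll y$, and hence force $\ell_-^\alpha(y,x)>0$ by openness of $\cll$. But for $y=j^1f$ as above one has $\ell_-^\alpha(y,x)=\min f=0$, so the implication fails (the same phenomenon persists in $\uLeg(p(0_N))$ of Example~\ref{ex:orderable}.\ref{ex:it:JN}, where the Reeb flow \emph{is} $1$-periodic).

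This breaks your second inequality at the step $\dCSosc^+(\phi^\alpha_k\Lambda',\Lambda)=0$: from $k=\dFPR(\Lambda,\Lambda')$ you only have $\Lambda\cleq\phi^\alpha_k\Lambda'$, not $\Lambda\cll\phi^\alpha_k\Lambda'$. Replacing $k$ by $k+1$ and invoking Lemma~\ref{lem:mixedTransitivity} would repair the logic but only yield $\dCSosc\leq 3A(\dFPR+1)$. The paper avoids the issue by quoting instead the compatibility of $\dCSosc$ with $\cleq$ from \cite[Proposition~3.4]{discriminante} (namely $a\cleq b\cleq c\Rightarrow\dCSosc(a,b)\leq\dCSosc(a,c)$), applied to the chain $\phi^\alpha_{-k}\Lambda'\cleq\Lambda\cleq\phi^\alpha_k\Lambda'$; this is precisely the ingredient your equivalence was meant to replace, and it does not reduce to the upgrade $\cleq\Rightarrow\cll$.
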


\begin{proof}
The spectral distance satisfies
\begin{equation*}
    \dspec^\alpha(\Lambda,\Lambda') =
    \inf \left\{ t\in [0,+\infty)\ |\
        \phi_{-t}^\alpha\Lambda'\cleq \Lambda\cleq
    \phi_t^\alpha\Lambda' \right\},\quad
    \forall\Lambda,\Lambda'\in\uLeg,
\end{equation*}
so 
\begin{equation*}
    \left\lceil \dspec^\alpha(\Lambda,\Lambda')\right\rceil
    \leq \dFPR(\Lambda,\Lambda') \leq
    \left\lfloor \dspec^\alpha(\Lambda,\Lambda')\right\rfloor + 1,
    \quad\forall\Lambda,\Lambda'\in\uLeg.
\end{equation*}
In particular, when $\dspec^\alpha$ is not an integer,
$\dFPR=\lceil\dspec^\alpha\rceil$.
The inequality $\dFPR \leq \dCSosc +1$ is then a consequence of
Theorem~\ref{thm:ddisc} (see also inequality (\ref{eq:dspecdCSosc})).

The domination of $\dCSosc$ by $\dFPR$ had already been remarked by
Fraser-Polterovich-Rosen \cite[Remark~3.8]{FPR}.
It is a consequence of the fact that $\dCSosc$
induces a partially ordered metric space on $(\uLeg,\cleq)$
(see \cite[Proposition~3.4]{discriminante}).
Let us fix $A:=\dCSosc(\Lambda_0,\phi_1^\alpha\Lambda_0)$ for
some $\Lambda_0\in\uLeg$ and let us show that
\begin{equation}\label{eq:doscLeqAk}
    \dCSosc(\Lambda,\phi_k^\alpha\Lambda) \leq A|k|,\quad
    \forall k\in\Z,\forall \Lambda\in\uLeg.
\end{equation}
First, the left-hand side of the inequality does not
depend on the choice of $\Lambda$:
let $g\in\uGcont$ such that $g\Lambda = \Lambda_0$,
then
\begin{equation*}
    \dCSosc(\Lambda,\phi_k^\alpha\Lambda) =
    \dCSosc(g\Lambda,g\phi_k^\alpha g^{-1}g\Lambda) =
    \dCSosc(\Lambda_0,\phi_k^\alpha\Lambda_0),
\end{equation*}
where we have used the $\uGcont$-invariance of $\dCSosc$
and the fact that $\phi_k^\alpha$ commutes with $g^{\pm 1}$.
The inequality (\ref{eq:doscLeqAk}) now follows from
the triangle inequality associated with the invariance
of the distance
under the action of the Reeb flow.
Now, let $\Lambda,\Lambda'\in\uLeg$ and let
$k:=\dFPR(\Lambda,\Lambda')$.
By definition, $\phi_{-k}^\alpha\Lambda'\cleq\Lambda\cleq\phi_k^\alpha\Lambda'$
so $\dCSosc(\Lambda,\phi_{-k}^\alpha\Lambda') \leq \dCSosc(\phi_{-k}^\alpha\Lambda',
\phi_k^\alpha\Lambda')$ by compatibility of $\dCSosc$ with the
partial order $\cleq$.
Therefore,
\begin{equation*}
    \begin{split}
        \dCSosc(\Lambda,\Lambda')
        &\leq \dCSosc(\Lambda,\phi_{-k}^\alpha\Lambda')
    + \dCSosc(\phi_{-k}^\alpha\Lambda',\Lambda') \\
        &\leq \dCSosc(\phi_{-k}^\alpha\Lambda',\phi_k^\alpha\Lambda')
    + Ak \\
        &\leq 3Ak,
    \end{split}
\end{equation*}
which brings the conclusion as $k=\dFPR(\Lambda,\Lambda')$.
\end{proof}

\begin{rem}
    With slight adaptations,
    one can loosen the hypothesis
    in both Theorems~\ref{thm:ddisc} and \ref{thm:CSvsFPR}
    by asking for the
    existence of a 1-periodic positive contact isotopy
    $(\phi_t)$ with $\phi_0=\id$ instead of a 1-periodic
    Reeb flow.
    One should then replace the use of $\ell_+^\alpha$ with the
    use of
    \begin{equation*}
        \ell_+^\phi(\Lambda_1,\Lambda_0) :=
        \ell_+(\Lambda_1,(\phi_t\Lambda_0)_{t\in\R}),\quad
        \forall \Lambda_0,\Lambda_1\in \uLeg,
    \end{equation*}
    and make the necessary changes in the definition of $\dspec^\alpha$
    as well as $\dFPR$.
    As $(\phi_t)$ is not an autonomous flow, the triangle inequality
    is not accessible anymore but one still has
    \begin{equation*}
        \lceil\ell_+^\phi(\Lambda_2,\Lambda_0)\rceil
        \leq 
        \lceil\ell_+^\phi(\Lambda_2,\Lambda_1)\rceil +
        \lceil\ell_+^\phi(\Lambda_1,\Lambda_0)\rceil.
    \end{equation*}
    In this case, the inequalities stated in Theorem~\ref{thm:ddisc}
    are not open anymore but it still allows to show the unboundedness
    of the metrics and Theorem~\ref{thm:CSvsFPR}.
    Moreover, for any $\Lambda\in\uLeg$ such that $\phi_t\Pi\Lambda
    \cap\Pi\Lambda=\emptyset$ for all $t\in\R\setminus\Z$,
    $(\phi_t\Lambda)$ is a geodesic in the sense of Corollary~\ref{cor:ddiscGeodesic}.
\end{rem}

\section{Lorentzian geometry and time functions}\label{se:LorentzFinsler}

When $(M,\ker\alpha)$ is a closed cooriented contact manifold, the relations
$\cleq$  and $\cll$ on $\Gcont$ (resp. $\uGcont$) have the property to come from
a closed proper cone structure, \emph{i.e.} a distribution of closed sharp convex cones
with non empty interiors \cite{fathi,minguzzi2019causality,hedicke2022causal}.
Indeed, the Lie algebra $\mathfrak{g}$ of the infinite
dimensional Lie group $\Gcont$ is the space of contact vector fields. The
subset $\mathfrak{g}_{\geq 0}$ of contact vector fields
$X\in\mathfrak{g}\setminus\{0\}$ satisfying $\alpha(X)\geq 0$ is a closed
proper cone whose interior $\mathfrak{g}_{>0}$ consists of vector fields for
which the previous inequality is open. The closed proper cone structure used to
define $\cleq$ and $\cll$ is then given by right translating these cones of the
Lie algebra to the whole tangent space.  Note that left translations would give
rise to the same closed proper cone structure since $\mathfrak{g}_{\geq 0}$ is
invariant under the adjoint action of $\Gcont$ (which is the push forward).
This point of view recently led Abbondandolo-Benedetti-Polterovich
\cite{ABP2022}  and Hedicke \cite{Hedicke2022} to
introduce objects of Lorentzian geometry to study $(\Gcont,\cleq,\cll)$ such as
Lorentz-Finsler structure,  Lorentzian distances or time functions. Recall that a continuous real valued function $\tau$ on a partially ordered topological space $(\mathcal{O},\cleq)$ is a time function if $x\cleq y$ and $x\ne y$ implies that $\tau(x)<\tau(y)$.   An open question in
\cite{ABP2022} was about the existence or not of a time
function on $(\uGcont(\RP^{2n-1}),\cleq)$. In this section,
we give a positive answer to this question generalized to all orderable
$\Gcont,\uGcont,\Leg$ and $\uLeg$.
We show moreover that time functions cannot be invariant (see Theorem \ref{thm:noInvTimeFunction} for a more precise statement).

\begin{rems}\
    \begin{enumerate}[1.]
        \item In Lorentzian geometry, the existence of a
    time function is equivalent to stable causality. The notion of stable
    causality in this context is strictly stronger than the notions of
    causality and of strong causality \cite{minguzzisanchez,minguzzi2019causality}.
        \item From a different perspective,
    Chernov-Nemirovski imported Lorentzian geometric notions to the study of some
    Legendrian isotopy classes
    \cite{chernov2011legendrian,CheNem2020,chernovnemirovski2}. 
    The starting point of their study
    is that the space of null future pointing unparametrized geodesics of a
    globally hyperbolic Lorentzian manifold carries a canonical contact
    structure.  
        \item A consequence of a recent paper of Buhovsky-Stokić
    \cite{buhovsky2023flexibility} is that the Lie algebra of the group of
    Hamiltonian symplectomorphisms of any closed symplectic manifold has no
    non-trivial invariant convex cone. It would be interesting to know if the
    only non-trivial invariant convex cones of $\mathfrak{g}$ are
    $\pm\mathfrak{g}_{\geq 0}$ and $\pm\mathfrak{g}_{>0}$ for any closed
    cooriented contact manifold $(M,\xi)$.
    \end{enumerate}
\end{rems}

Let us fix a contact form $\alpha$ supporting $\xi$ and suppose that $M$ is closed.
According to Lemma~\ref{lem:contseparable}, when $M$ is closed, endowed with the $C^1$-topology $\uGcont$ is separable
(and so is $\Gcont$).
Then, let us fix $(\psi_n)_{n\in \N}$ a dense sequence on $\Gcont$
(resp. $\uGcont$) and consider
\begin{equation*}
    \tau^\alpha(\varphi) := a\sum_n \frac{c_+^\alpha(\varphi\psi_n)}{2^n
    \max(1,|c_\pm^\alpha(\psi_n)|)} + b,\quad
    \forall \varphi\in\Gcont\text{ (resp. $\uGcont$)},
\end{equation*}
where $a,b\in\R$ are normalization factors defined by the relations
(assuming the series does converge)
\begin{equation*}
    a = \left[\sum_n
    \frac{1}{2^n\max(1,|c_\pm^\alpha(\psi_n)|)}\right]^{-1}
    \text{ and } \tau^\alpha(\id) = 0.
\end{equation*}

\begin{thm}\label{thm:cont-time-function}
 $\tau^\alpha$ is a well-defined $\dspec^\alpha$-$1$-Lipschitz
    (so $\dSH^\alpha$-$1$-Lipschitz and $C^1$-continuous) map
    $\Gcont\to\R$ (resp. $\uGcont\to\R$) satisfying
    \begin{enumerate}[1.]
        \item\label{it:tau-identity} $\tau^\alpha(\id) = 0$,
        \item\label{it:Reeb-shift} $\tau^\alpha(\phi_t^\alpha\psi) = \tau^\alpha(\psi) + t$
        for all $t\in\R$ and $\psi\in\Gcont$ (resp. $\uGcont$),
    \item\label{it:time-function} $\varphi \cleq \psi$ with $\varphi\neq\psi$ implies
        $\tau^\alpha(\varphi) < \tau^\alpha(\psi)$.
    \end{enumerate}
\end{thm}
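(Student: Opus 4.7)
My plan is to verify the assertions in the order: (a) convergence of the series and well-definedness of the constants $a$ and $b$; (b) the identities (\ref{it:tau-identity}) and (\ref{it:Reeb-shift}) and the Lipschitz bound; (c) the strict monotonicity (\ref{it:time-function}), which is the only non-routine step.

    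For (a), I will combine the triangle inequalities satisfied by $c_\pm^\alpha$ with the inequality $c_-^\alpha\leq c_+^\alpha$ to derive the pointwise bound
    \[
        |c_+^\alpha(\varphi\psi_n)| \leq M_\varphi + M_{\psi_n},
        \qquad\text{where } M_\eta := \max\bigl(|c_+^\alpha(\eta)|,|c_-^\alpha(\eta)|\bigr).
    \]
    Dividing by $\max(1,M_{\psi_n})$ gives a $n$-th term of the series bounded by $(M_\varphi+1)\cdot 2^{-n}$, so the series converges absolutely; the same estimate shows that the series defining $a^{-1}$ is a positive finite number, and $b$ is then fixed by the requirement $\tau^\alpha(\id)=0$.

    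For (b), property (\ref{it:tau-identity}) is immediate from the choice of $b$. Property (\ref{it:Reeb-shift}) follows by plugging the normalization $c_+^\alpha(\phi^\alpha_t\psi\psi_n)=t+c_+^\alpha(\psi\psi_n)$ into the defining series and observing that $a\sum_n 2^{-n}/\max(1,|c_\pm^\alpha(\psi_n)|)=1$ by definition of $a$. The $\dspec^\alpha$-1-Lipschitz bound is obtained termwise: combining (\ref{eq:c-dspec_inequality}) with the right-invariance identity $\dspec^\alpha(\varphi\psi_n,\psi\psi_n)=\dspec^\alpha(\varphi,\psi)$ and summing gives the result. The $\dSH^\alpha$-Lipschitz character and $C^1$-continuity then follow from $\dspec^\alpha\leq\dSH^\alpha$ (Corollary~\ref{cor:cont-dspec-nondeg}) together with the fact that the $\dSH^\alpha$-topology is coarser than the $C^1$-topology (Section~\ref{se:Hofermetrics}).

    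The hard part is (c). The non-strict inequality $\tau^\alpha(\varphi)\leq\tau^\alpha(\psi)$ when $\varphi\cleq\psi$ is immediate termwise from the monotonicity of $c_+^\alpha$ applied to $\varphi\psi_n\cleq\psi\psi_n$ (right-compatibility of $\cleq$, see Proposition~\ref{prop:bi-inv}). To upgrade it to strictness when $\varphi\neq\psi$, I will exhibit a single index $m$ for which the corresponding term is strictly increased. Setting $x:=\psi\varphi^{-1}$, one has $x\cgeq\id$ and $x\neq\id$; Corollary~\ref{cor:cont-positivity} then gives $\varepsilon:=c_+^\alpha(x)>0$. The density of $(\psi_n)$ in the $C^1$-topology (Lemma~\ref{lem:contseparable}), combined with the fact that the $\dspec^\alpha$-topology is coarser than the $C^1$-topology, allows me to choose $m$ with $\dspec^\alpha(\psi_m,\varphi^{-1})<\varepsilon/3$. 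Using the right-invariance of $\dspec^\alpha$ and inequality (\ref{eq:c-dspec_inequality}) twice I obtain
    \[
        c_+^\alpha(\psi\psi_m)>c_+^\alpha(\psi\varphi^{-1})-\varepsilon/3 = 2\varepsilon/3
        \quad\text{and}\quad
        c_+^\alpha(\varphi\psi_m)<c_+^\alpha(\id)+\varepsilon/3 = \varepsilon/3,
    \]
    so the $m$-th term of the series for $\tau^\alpha(\psi)$ strictly exceeds that of $\tau^\alpha(\varphi)$ by a definite margin, while all other terms satisfy at least the non-strict comparison. I expect the only real subtlety to be ensuring that the freedom given by $C^1$-density of the sequence $(\psi_n)$ is compatible with the $\dspec^\alpha$-Lipschitz estimate, which is exactly what the aforementioned comparison of topologies provides.
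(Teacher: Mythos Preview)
Your plan coincides with the paper's proof almost line for line: the convergence estimate, the Lipschitz bound via \eqref{eq:c-dspec_inequality} and right-invariance, properties~\ref{it:tau-identity}--\ref{it:Reeb-shift}, and for \ref{it:time-function} the use of Corollary~\ref{cor:cont-positivity} together with density of $(\psi_n)$ near $\varphi^{-1}$. There is one genuine slip, however, in the last step.

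In part (c) you choose $m$ with $\dspec^\alpha(\psi_m,\varphi^{-1})<\varepsilon/3$ and then claim that ``right-invariance of $\dspec^\alpha$'' together with \eqref{eq:c-dspec_inequality} gives the two displayed bounds. But \eqref{eq:c-dspec_inequality} applied to $c_+^\alpha(\psi\psi_m)$ and $c_+^\alpha(\psi\varphi^{-1})$ yields
\[
    |c_+^\alpha(\psi\psi_m)-c_+^\alpha(\psi\varphi^{-1})|\leq
    \dspec^\alpha(\psi\psi_m,\psi\varphi^{-1})
    =\Nspec{\psi\,\psi_m\varphi\,\psi^{-1}}^\alpha,
\]
and this is \emph{not} $\dspec^\alpha(\psi_m,\varphi^{-1})=\Nspec{\psi_m\varphi}^\alpha$: you are left-multiplying both arguments by $\psi$, whereas $\dspec^\alpha$ is only right-invariant (indeed $\Nspec{\cdot}^\alpha$ is not conjugation-invariant, cf.\ the compatibility formula following Corollary~\ref{cor:cont-dspec-nondeg}). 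The same issue arises for the bound on $c_+^\alpha(\varphi\psi_m)$.

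The fix is immediate and is exactly what the paper does: keep the $C^1$-closeness all the way through. Having chosen $\psi_m$ $C^1$-close to $\varphi^{-1}$, use that left composition is $C^1$-continuous so that $\psi\psi_m$ is $C^1$-close to $\psi\varphi^{-1}$ and $\varphi\psi_m$ is $C^1$-close to $\id$; then invoke the $C^1$-continuity of $c_+^\alpha$ (Corollary~\ref{cor:cont-continuity}) directly. With this correction your argument is complete and matches the paper's.
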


\begin{proof}
    In order to simplify the notation let $c:=c_+^\alpha$ and let $F_n=2^n\max(1,|c_\pm^\alpha(\psi_n)|)=2^n\max(1,|c(\psi_n)|,|c(\psi_n^{-1})|)$ for all $n\in\N$.
    For any $\varphi\in\Gcont$ (resp. $\uGcont$), according to the triangle inequality,
    \begin{equation*}
        c(\varphi)-c(\psi_n^{-1}) \leq c(\varphi\psi_n) \leq
        c(\varphi)+c(\psi_n),
    \end{equation*}
    where the left-hand side inequality comes from the
    decomposition $\varphi=(\varphi\psi_n)\psi_n^{-1}$. Therefore\begin{equation}\label{eq:inégalite}
    \begin{aligned}
    |c(\varphi\psi_n)|&\leq \max\{|c(\varphi)+c(\psi_n)|,|c(\varphi)-c(\psi_n^{-1})|\}\\
    &\leq\max \{|c(\varphi)|+|c(\psi_n)|, |c(\varphi)|+|c(\psi_n^{-1})|\}.
    \end{aligned}
    \end{equation}
 Dividing \eqref{eq:inégalite}  by $F_n$ we get $\left|\frac{c(\varphi\psi_n)}{F_n}\right|\leq \frac{1}{2^n}\left(|c(\varphi)|+1\right).$ Therefore the series $\sum\limits_{n\in\N} \frac{c(\varphi\psi_n)}{F_n}$ is absolutely converging and thus  $\tau^\alpha$ as  well.

    Since $c$ is $\dspec^\alpha$-1-Lipschitz (Equation (\ref{eq:c-dspec_inequality})),
    and $\dspec^\alpha$ is right-invariant, one gets
    \begin{equation*}
        |\tau^\alpha(\varphi)-\tau^\alpha(\psi)| \leq
        a \sum_n \frac{\dspec(\varphi\psi_n,\psi\psi_n)}{
        2^n\max(1,|c_\pm^\alpha(\psi_n)|)}
        \leq \dspec(\varphi,\psi),
    \end{equation*}
    so $\tau^\alpha$ is $\dspec$-1-Lipschitz which implies that it
    is Hofer-1-Lipschitz as well as $C^1$-continuous
    according to Corollary~\ref{cor:cont-dspec-nondeg}.

    Property \ref{it:tau-identity} is true by construction while
    property \ref{it:Reeb-shift} is a direct consequence of the
    normalization property of $c$.

    Finally, suppose $\varphi\cleq\psi$ with $\varphi\neq\psi$, then
    by Corollary~\ref{cor:cont-positivity},
    $c(\psi\varphi^{-1})>2\varepsilon$ for some $\varepsilon>0$
    while $c(\varphi\varphi^{-1})=0$.
    By $C^1$-density of $(\psi_n)$ and $C^1$-continuity of $c$
    (Corollary~\ref{cor:cont-continuity}),
    $c(\varphi\circ\psi_k)<\varepsilon<c(\psi\circ\psi_k)$
    for some $k$ such that $\psi_k$ is close to
    $\varphi^{-1}$.
    Since $c$ is non-decreasing,
    $c(\varphi\circ\psi_n)\leq c(\psi\circ\psi_n)$ 
    for all $n$ and property \ref{it:time-function} follows.
\end{proof}

One defines similarly time functions on orderable $\Leg$ (resp. $\uLeg$) . More precisely, take a dense sequence $(\Lambda_n)$ of $\Leg$ (resp. $\uLeg$), whose existence is guaranteed by Lemma \ref{lem:legseparable} and fix some $\Lambda_*\in\Leg$ (resp. $\uLeg$). Consider
\begin{equation*}
    \tau^\alpha_{\Lambda_*}(\Lambda) :=
    a \sum_n \frac{\ell^\alpha_+(\Lambda,\Lambda_n)}
    {2^n\max(1,|\ell_\pm^\alpha(\Lambda_*,\Lambda_n)|)} + b,\quad
    \forall \Lambda\in\Leg \text{ (resp. $\uLeg$)},
\end{equation*}
where $a=\left[\sum_n
    \frac{1}{2^n\max(1,|\ell_\pm^\alpha(\Lambda_*,\Lambda_n)|)}\right]^{-1}$ and $b\in\R$
is such that $\tau^\alpha_{\Lambda_*}(\Lambda_*) = 0$.

One proves similarly the Legendrian counterpart of Theorem~\ref{thm:cont-time-function}

\begin{thm}\label{thm:leg-time-function}
    $\tau^\alpha_{\Lambda_*}$ is a well-defined $\dspec^\alpha$-$1$-Lipschitz
    (so $\dSCH^\alpha$-$1$-Lipschitz and $C^1$-continuous) map
    $\Leg\to\R$ (resp. $\uLeg\to\R$) satisfying
    \begin{enumerate}[1.]
        \item\label{it:leg-tau-identity} $\tau^\alpha_{\Lambda_*}(\Lambda_*) = 0$,
        \item\label{it:leg-Reeb-shift}
            $\tau^\alpha_{\Lambda_*}(\phi_t^\alpha\Lambda) =
            \tau^\alpha_{\Lambda_*}(\Lambda) + t$
        for all $t\in\R$ and $\Lambda\in\Leg$ (resp. $\uLeg$),
    \item\label{it:leg-time-function} $\Lambda \cleq \Lambda'$ with
        $\Lambda\neq\Lambda'$ implies
        $\tau^\alpha_{\Lambda_*}(\Lambda) < \tau^\alpha_{\Lambda_*}(\Lambda')$.
    \end{enumerate}
\end{thm}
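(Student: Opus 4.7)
The proof will closely parallel that of Theorem~\ref{thm:cont-time-function}, with the spectral selector $c_+^\alpha$ replaced by $\ell_+^\alpha(\cdot,\Lambda_n)$ and with the identity replaced by the chosen basepoint $\Lambda_*$. Throughout, let $F_n := 2^n\max(1,|\ell_\pm^\alpha(\Lambda_*,\Lambda_n)|)$ so that $\tau^\alpha_{\Lambda_*}(\Lambda) = a\sum_n \ell_+^\alpha(\Lambda,\Lambda_n)/F_n + b$.

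The first step is to verify absolute convergence of the defining series. Applying the triangle inequalities of Theorem~\ref{thm:LegSS} to the decompositions $\ell_+^\alpha(\Lambda,\Lambda_n) \leq \ell_+^\alpha(\Lambda,\Lambda_*) + \ell_+^\alpha(\Lambda_*,\Lambda_n)$ and (using Poincaré duality) $\ell_+^\alpha(\Lambda,\Lambda_n) \geq \ell_+^\alpha(\Lambda,\Lambda_*) - \ell_+^\alpha(\Lambda_n,\Lambda_*) = \ell_+^\alpha(\Lambda,\Lambda_*) + \ell_-^\alpha(\Lambda_*,\Lambda_n)$, one obtains $|\ell_+^\alpha(\Lambda,\Lambda_n)| \leq |\ell_+^\alpha(\Lambda,\Lambda_*)| + \max(|\ell_+^\alpha(\Lambda_*,\Lambda_n)|,|\ell_-^\alpha(\Lambda_*,\Lambda_n)|)$, so that $|\ell_+^\alpha(\Lambda,\Lambda_n)|/F_n \leq 2^{-n}(|\ell_+^\alpha(\Lambda,\Lambda_*)|+1)$. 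This also shows that the normalizing constant $a$ is well defined and positive.

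Next, the $\dspec^\alpha$-1-Lipschitz property follows immediately from inequality (\ref{eq:dspecIneq}): for each $n$, $|\ell_+^\alpha(\Lambda,\Lambda_n)-\ell_+^\alpha(\Lambda',\Lambda_n)| \leq \dspec^\alpha(\Lambda,\Lambda')$, and summing weighted by $1/F_n$ (which sum to $1/a$) yields $|\tau^\alpha_{\Lambda_*}(\Lambda)-\tau^\alpha_{\Lambda_*}(\Lambda')| \leq \dspec^\alpha(\Lambda,\Lambda')$. The implications to $\dSCH^\alpha$-Lipschitzness and $C^1$-continuity then follow from Corollary~\ref{cor:dspec-nondeg} and Corollary~\ref{cor:continuity}. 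Property \ref{it:leg-tau-identity} holds by definition of $b$, and property \ref{it:leg-Reeb-shift} is an immediate consequence of the normalization property $\ell_+^\alpha(\phi_t^\alpha\Lambda,\Lambda_n) = t + \ell_+^\alpha(\Lambda,\Lambda_n)$ applied termwise.

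The core of the proof is property \ref{it:leg-time-function}. Assume $\Lambda\cleq\Lambda'$ with $\Lambda\neq\Lambda'$. By monotonicity, $\ell_+^\alpha(\Lambda,\Lambda_n)\leq\ell_+^\alpha(\Lambda',\Lambda_n)$ for every $n$, which gives the non-strict inequality $\tau^\alpha_{\Lambda_*}(\Lambda)\leq\tau^\alpha_{\Lambda_*}(\Lambda')$. For strictness, I would argue as in the contact case: Lemma~\ref{lem:positivity} applied to the uniformly positive path $(\phi_t^\alpha\Lambda)$ yields $\ell_+^\alpha(\Lambda',\Lambda) > 2\varepsilon$ for some $\varepsilon>0$, while $\ell_+^\alpha(\Lambda,\Lambda)=0$. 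By $C^1$-continuity in the second argument (Corollary~\ref{cor:continuity} combined with Poincaré duality) and the $C^1$-density of $(\Lambda_n)$, one can pick an index $k$ with $\Lambda_k$ so close to $\Lambda$ that simultaneously $\ell_+^\alpha(\Lambda,\Lambda_k)<\varepsilon<\ell_+^\alpha(\Lambda',\Lambda_k)$. This strict inequality at index $k$, together with the non-strict termwise inequalities, gives $\tau^\alpha_{\Lambda_*}(\Lambda)<\tau^\alpha_{\Lambda_*}(\Lambda')$. The main potential obstacle is making sure that the positivity statement applies: this is precisely Lemma~\ref{lem:positivity} (valid in both $\Leg$ and $\uLeg$ under orderability), so no additional work beyond invoking it is needed.
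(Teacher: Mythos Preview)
Your proof is correct and follows precisely the route the paper indicates (``One proves similarly the Legendrian counterpart of Theorem~\ref{thm:cont-time-function}''): the convergence estimate via the triangle inequality and Poincar\'e duality, the Lipschitz bound via~(\ref{eq:dspecIneq}), and the strictness via Lemma~\ref{lem:positivity} together with $C^1$-density are the exact Legendrian translations of the steps in the proof of Theorem~\ref{thm:cont-time-function}. Your remark that continuity of $\ell_+^\alpha$ in its \emph{second} argument follows from Corollary~\ref{cor:continuity} combined with Poincar\'e duality is the one small point that requires care beyond the contactomorphism case, and you handle it correctly.
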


Since the binary relations considered on $\Leg$ and $\uLeg$ are invariant by
the left action of $\Gcont$ and $\uGcont$ respectively, and the ones considered
on $\Gcont$ and $\uGcont$ are invariant by the action by conjugation, it is
natural to ask if there exists a time function on these spaces that can be
invariant with respect to these actions. We show that it is not possible.

\begin{thm}[Non existence of invariant time function]
    \label{thm:noInvTimeFunction}
    \ \\ \vspace{-0.5cm}
    \begin{enumerate}[1.]
    \item Let $\Leg$ (resp. $\uLeg$) such that $\Leg$ (resp. $\uLeg$) is
        orderable and $\tau$ a time function on $(\Leg,\cleq)$ (resp.
        $(\uLeg,\cleq)$). Then there exist $\Lambda,\Lambda'\in \Leg$ (resp. $\uLeg$) and
        $g\in \Gcont$ (resp. $\uGcont$) such that the time difference
        between $\Lambda$ and $\Lambda'$ is not the same as the time difference between
        $g\Lambda$ and $g\Lambda'$, \emph{i.e.} $\tau(g\Lambda)-\tau(g\Lambda')\neq
        \tau(\Lambda)-\tau(\Lambda')$.
    \item Let $O$ be either $\Gcont$ or $\uGcont$ such
        that $O$ is orderable. Let $\tau$ be a time function. Then there exist
        $\varphi,g\in O$ such that $\tau(g^{-1}\varphi g)>\tau(\varphi)$. 
\end{enumerate}
\end{thm}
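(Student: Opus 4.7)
The plan is to argue both parts by contradiction: the invariance hypothesis in Part~1 upgrades $\tau$ to a continuous group homomorphism $\Gcont\to\R$ (resp.~$\uGcont\to\R$), while the hypothesized inequality in Part~2 upgrades $\tau$ to a conjugation-invariant continuous function. The strict monotonicity of $\tau$ along the positive Reeb isotopy is then designed to contradict each structural rigidity.

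For Part~1, fix a basepoint $\Lambda_0$ and define $C(g):=\tau(g\Lambda_0)-\tau(\Lambda_0)$. The invariance hypothesis makes $C$ independent of $\Lambda_0$, and the decomposition
\[\tau(gh\Lambda_0)-\tau(\Lambda_0)=\big(\tau(gh\Lambda_0)-\tau(h\Lambda_0)\big)+\big(\tau(h\Lambda_0)-\tau(\Lambda_0)\big)=C(g)+C(h)\]
exhibits $C$ as a continuous group homomorphism (continuity uses the continuity of $\tau$ and of the action, see Section~\ref{se:topologyLeg}). Applying the time-function property along the positive Reeb isotopy $s\mapsto\phi^\alpha_{st}\Lambda_0$ gives $C(\phi^\alpha_t)>0$ for $t>0$. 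The contradiction comes from the fact that $\Gcont$, as the identity component of the contactomorphism group of a closed contact manifold, is perfect (by Rybicki's theorem, or deducible from the Tsuboi simplicity result used in the proof of Proposition~\ref{prop:cont-nondeg}), so every continuous homomorphism $\Gcont\to\R$ is trivial. For the $\uGcont$ case, perfectness of $\Gcont$ implies $\uGcont=[\uGcont,\uGcont]\cdot\ker\Pi$, so $\uGcont/[\uGcont,\uGcont]$ is a quotient of the discrete abelian group $\pi_1(\Gcont)=\ker\Pi$; a continuous $C:\uGcont\to\R$ therefore factors through a discrete quotient, and since $\uGcont$ is connected its continuous image reduces to a single point, again forcing $C\equiv 0$.

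For Part~2, I first observe that the inequality $\tau(g^{-1}\varphi g)\leq\tau(\varphi)$ for every $g,\varphi$, applied with $g$ replaced by $g^{-1}$ and then with $\varphi$ replaced by $g^{-1}\varphi g$, yields the reverse inequality, so $\tau$ is conjugation invariant. The plan is then to exploit the identity $g\phi^\alpha_tg^{-1}=\phi^{(g^{-1})^*\alpha}_t$ of (\ref{eq:ReebConjugation}): conjugation invariance of $\tau$ forces $\tau(\phi^\beta_t)=\tau(\phi^\alpha_t)$ for every $\beta=(g^{-1})^*\alpha$ in the $\Gcont$-orbit of $\alpha$ under pullback, whereas the time-function property forces $t\mapsto\tau(\phi^\alpha_t)$ to be strictly increasing. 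Since $\phi^\beta_t$ has $\alpha$-contact Hamiltonian $e^{-h}$ (where $\beta=e^h\alpha$), one would like to pick $g$ so that $\phi^\beta_t$ is placed strictly above or strictly below $\phi^\alpha_t$ in the partial order for some $t>0$, using Lemma~\ref{lem:cont-HoferInequality} together with the scaling bounds of Lemma~\ref{lem:cont-sign}, which would contradict the equality of $\tau$-values.

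The principal difficulty of Part~2 is the volume constraint $\int_M e^{(n+1)h}\,\alpha\wedge(\ud\alpha)^n=\int_M\alpha\wedge(\ud\alpha)^n$ on the closed manifold, which forbids $h$ from having constant sign and thus blocks a naive pointwise Hamiltonian comparison between $\phi^\beta_t$ and $\phi^\alpha_t$. I expect this to be the main obstacle: overcoming it will require either an iterative construction combining several conjugates $g_i\phi^\alpha_tg_i^{-1}$ whose conformal factors have complementary regions of positivity and negativity and commute because their supports can be made disjoint by squeezing, or a transfer of the problem to the Legendrian setup via the contact-graph construction of Example~\ref{ex:orderable}.\ref{ex:it:SH} that would allow a reduction to Part~1 (at the cost of assuming further orderability of $\uLeg(\Delta\times\{0\})$).
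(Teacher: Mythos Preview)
Your Part~1 argument is correct in spirit but takes a heavier route than the paper. You turn the invariance hypothesis into a continuous homomorphism $C:\Gcont\to\R$ and kill it with perfectness of $\Gcont$ (Rybicki/Tsuboi). Two remarks: (i)~your use of the Reeb flow to get $C(\phi^\alpha_t)>0$ tacitly assumes $\phi^\alpha_t\in\Gcont$, i.e.\ $M$ closed; for open $M$ you must instead take any compactly supported non-negative Hamiltonian positive at a point of $\Lambda_0$, which works just as well. (ii)~In the $\uLeg$ case your ``factors through a discrete quotient'' sentence is imprecise: what you really use is that $C(\uGcont)\subset C(\ker\Pi)$ is countable while also connected, hence a point. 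By contrast, the paper's proof is a two-line direct construction: pick $\Lambda_1=\phi^\alpha_\varepsilon\Lambda_0$ disjoint from $\Lambda_0$, and a non-negative Hamiltonian supported near a point of $\Lambda_0$ and away from $\Lambda_1$; its flow $g_t$ moves $\Lambda_0$ non-trivially in the non-negative direction while fixing $\Lambda_1$, so $\tau(g_t\Lambda_0)-\tau(g_t\Lambda_1)>\tau(\Lambda_0)-\tau(\Lambda_1)$. This avoids any appeal to perfectness and works uniformly for open $M$.

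Your Part~2 is genuinely incomplete, and the obstacle you identify is real but your proposed workarounds miss the point. The reduction to conjugation invariance is fine, but attacking it via conjugates of the \emph{global} Reeb flow is doomed precisely because of the volume constraint you note: no single conjugate $g\phi^\alpha_tg^{-1}$ can sit strictly above or below $\phi^\alpha_t$ in $\cleq$ on a closed $M$. The paper bypasses this entirely by working \emph{locally}: in a Darboux ball in $\R^{2n+1}$ take $\varphi$ generated by a radial bump $H(p)=\rho(|p|^2)$ and conjugate by (a cut-off of) the contact dilation $\Phi_a(x,y,z)=(e^ax,e^ay,e^{2a}z)$ with $a<0$; the conjugated Hamiltonian is $e^{-2a}H\circ\Phi_a\geq H$ pointwise with strict inequality on the support, so $\varphi\cleq g^{-1}\varphi g$ and $\varphi\neq g^{-1}\varphi g$, hence $\tau(\varphi)<\tau(g^{-1}\varphi g)$ directly. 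The whole construction lives in a Darboux chart and is then transplanted into any $(M,\xi)$. No iteration, no volume argument, no passage to $M\times M\times\R$ is needed; the missing idea in your attempt is simply to replace the Reeb flow by a compactly supported bump whose conjugate by a dilation dominates it everywhere.
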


\begin{proof}[Proof of Theorem \ref{thm:noInvTimeFunction} part 1]

    Let us prove it for the case $\uLeg$.
For any $\Lambda_0\in \uLeg$, by applying a Reeb flow for a small timespan,
one obtains $\Lambda_1\in\uLeg$ such that $\Pi\Lambda_1\cap
\Pi\Lambda_0=\emptyset$ and
$\Lambda_0\cleq\Lambda_1$.
We now consider a point $p\in\Pi\Lambda_0$
and a neighborhood $U$ of $p$
such that $U$ does not intersect $\Pi\Lambda_1$. Let
$h:M\to [0,+\infty)$ be a Hamiltonian map compactly supported in $U$ such that
$h(p)>0$. The induced contact flow $(g_t)$ satisfies
$\Lambda_0\cleq g_t\Lambda_0$ and
$\Lambda_0\neq g_t\Lambda_0$ for $t>0$ small enough. Moreover since
$\supp(h)\cap\Pi\Lambda_1=\emptyset$,
we deduce that $g_t\Lambda_1=\Lambda_1$. This implies that
$\tau(g_t\Lambda_0)-\tau(g_t\Lambda_1)>\tau(\Lambda_0)-\tau(\Lambda_1)$
for $t>0$ small enough.
\end{proof}

Let us remark that to prove the second part of Theorem \ref{thm:noInvTimeFunction} it is enough to construct two elements $g,\varphi\in
\Gcont$ (resp. $g\in\Gcont$ and $\varphi\in\uGcont$) such that 
\begin{equation}\label{pas d'invariance}
    \varphi\cleq g^{-1}\varphi g \text{ and } \varphi\ne g^{-1}\varphi g.
\end{equation}
We will first construct such elements when the contact manifold is the standard
Euclidean contact manifold $(\R^{2n+1},\xi_\mathrm{st})$ and then transport
this construction to any cooriented contact manifold using Darboux charts. 

\begin{proof}[Proof of Theorem \ref{thm:noInvTimeFunction} part 2]
        We will prove that there exist $g,\varphi\in\Gcont$ (resp. $\uGcont$)
        such that (\ref{pas d'invariance}) is satisfied.
        Let us first remark that it is enough to prove it in
        the standard contact vector space
        $(\R^{2n+1},\xi_\mathrm{st})$ (we recall that in an
        open manifold, $\Gcont$ stands for the set of time-one maps
        of compactly supported contact flows).
        Indeed, there would exists contact isotopies $(g_t)$, $(\varphi_t)$
        and $(h_t)$ supported in some open ball $B\subset\R^{2n+1}$ such that
        $g_0=\id=\varphi_0$, $(h_t)$ is non-negative with
        $h_0=\varphi_1$ and $h_1=g_1^{-1}\varphi_1 g_1$, and
        $\varphi_1\neq g_1^{-1}\varphi_1 g_1$.
        Now, given any contact $(2n+1)$-manifold $(M,\xi)$, there exists a contact embedding of
        $B$ inside $(M,\xi)$ by the Darboux neighborhood theorem.
        Since the contact isotopies $(g_t)$, $(\varphi_t)$ and
        $(h_t)$ are compactly supported in $B$, they naturally extend by the
        identity to contact isotopies of $(M,\xi)$.
        Seen in $(M,\xi)$, $(h_t)$ is still non-negative so that
        $\varphi:=\varphi_1$ and $g:=g_1$ still satisfy (\ref{pas d'invariance})
        as needed to conclude.

        Let us now prove the existence of such $g,\varphi\in\Gcont$
        (resp. $\uGcont$) for $(\R^{2n+1},\xi_\mathrm{st})$ to finish the proof.
        The standard contact form $\alpha_\mathrm{st}$ is $\ud z-\sum_i y_i\ud x_i$
        where $(x,y,z)$ denotes the usual coordinate
        functions on $\R^{2n+1}$.  Let $\rho :
        [0,+\infty)\to [0,+\infty)$ be a smooth non-increasing function supported in
        $[0,1/4]$ such that $\rho(0)=1$. Let $H:\R^{2n+1}\to\R$ be the
        Hamiltonian map defined by $H(p):=\rho(|p|^2)$ for all $p\in\R^{2n+1}$ and
        where $|\cdot|$ denotes the usual Euclidean norm. We denote by $(\varphi_t)$ the
        contact flow generated by $H$.  Finally for all
        $a\in\mathbb{R}$ we denote by $\Phi_a$ the non compactly supported
        contactomorphism of $\mathbb{R}^{2n+1}$ defined as 
        $(x,y,z)\mapsto (e^ax,e^ay,e^{2a}z)$.

        Let $a<0$ be sufficiently close to $0$ so that
        $\Phi_a^{-1}(B_0(1/2))$ is strictly included in $B_0(1)$, where
        $B_0(r)$ denotes the open ball centered at $0$ of radius $r>0$. Then the
        support of $\Phi_a^{-1}\circ \varphi_t\circ\Phi_a$, that is 
        $\Phi_a^{-1}(\supp \varphi_t)$,
        is contained in $B_0(1)$ and contains strictly
        $\supp \varphi_t$ for all $t\in[0,1]$. Moreover since the compactly supported
        Hamiltonian function
        \begin{equation*}
            h_a :\R^{2n+1}\to[0,+\infty),\quad (x,y,z)\mapsto e^{-2a}h(e^ax,e^ay,e^{2a}z),
        \end{equation*}
        generates the contact flow $(\Phi_a^{-1}\varphi_t\Phi_a)$ 
        and trivially satisfies $h_a\geq h$ we deduce that 
        $(\Phi_a^{-1}\varphi_t\Phi_a)$ is a non-negative compactly
        supported contact isotopy, so
        \begin{equation*}
            \varphi_1 \cleq \Phi_a^{-1}\varphi_1\Phi_a\text{ and } \varphi_1\ne
            \Phi_a^{-1}\varphi_1\Phi_a.
        \end{equation*}
        In order to conclude, one needs to replace $\Phi_a$ with a compactly supported
        contactomorphism.
        Since $\Phi_a^{-1}\varphi_t\Phi_a = g^{-1}\varphi_tg$ for any $t\in\R$
        and any diffeomorphism $g$ agreeing with $\Phi_a$ on
        $\Phi_a^{-1}B_0(1/2)$, one can take any such
        $g$ in $\Gcont$.
        Such a $g$ can be induced by a compactly supported Hamiltonian
        map obtained by cutting off the Hamiltonian map generating
        $(\Phi_{ta})_{t\in[0,1]}$.
        Finally,
        the elements $g$ and $\varphi_1$ satisfy the relations \eqref{pas d'invariance},
        as needed.
    \end{proof}

\bibliographystyle{amsplain}
\bibliography{biblio} 

\end{document}